\numberwithin{equation}{section}
\theoremstyle{plain}
\newtheorem{thm}{Theorem}
\numberwithin{thm}{section}
\newtheorem{prop}[thm]{Proposition}
\newtheorem{lm}[thm]{Lemma}
\newtheorem{claim}[thm]{Claim}
\newtheorem{cor}[thm]{Corollary} 
\theoremstyle{definition} 
\newtheorem{definition}[thm]{Definition} 
\numberwithin{example}{section}
\numberwithin{conjecture}{section}
\newtheorem{condition}[thm]{Condition}
\theoremstyle{remark} 
\newtheorem{remark}[thm]{Remark}
\theoremstyle{ass}
\newtheorem{ass}[thm]{Assumption}
\theoremstyle{prob} 
\newtheorem{prob}[thm]{Problem}
\def\R{{\mathbb R}}
\def\N{{\mathbb N}}
\title{
\textsc{
 Semimartingales on Rays, Walsh Diffusions,  
 and   Related  Problems of Control and Stopping} \thanks{~~  Research   supported in part by  the National Science Foundation under  grant   NSF-DMS-14-05210. We are indebted to   Tomoyuki Ichiba  for joint work on related matters and for the initial suggestion on the control problems studied here; and  to   Mykhaylo Shkolnikov for discussions on an earlier work   which inspired part of the present one. }
} 
\author{IOANNIS KARATZAS
\thanks{~
Department of Mathematics,  Columbia University, New York, NY 10027 (E-mail: {\it ik1@columbia.edu}), and       \textsc{Intech} Investment Management,  One Palmer Square, Suite 441, Princeton, NJ 08542, USA  (E-mail: {\it ikaratzas@intechjanus.com}). 
	}
\and	MINGHAN YAN
\thanks{~
Department of Mathematics,  Columbia University, %509 Mathematics Building, 2990 Broadway, 
New York, NY 10027 (E-mail: {\it  my2379@math.columbia.edu}).
	} 
}	
\begin{document}
\maketitle 

\begin{abstract}  
\noindent \small  
We introduce a class of continuous planar processes, called ``semimartingales on rays", and develop for them a change-of-variable formula involving   quite general  classes of test functions. Special cases of such planar processes are diffusions     which choose, once they reach the origin, the rays for their subsequent voyage according to a fixed probability measure in the manner of Walsh (1978). We develop existence and uniqueness results   up to an explosion time for these ``Walsh diffusions", study their asymptotic behavior, and develop tests for explosions in finite time. We use these results to find an optimal strategy, in a problem of stochastic control with discretionary  stopping involving  Walsh diffusions.
\end{abstract}

\noindent {\small {\it Key Words:} Semimartingales on rays, tree-topology, Walsh semimartingales and diffusions,  Skorokhod reflection,  local time, stochastic calculus,  explosion times, Feller's test, stochastic control, optimal  stopping. }

%%%%%%%%%%%
\section{Introduction and Summary}
%%%%%%%%%%%

A pathwise construction was given recently in \cite{IKPY} for   so-called {\it \textsc{Walsh} semimartingales} on the plane. A typical such process is a two-dimensional continuous semimartingale, whose motion away from the origin follows a scalar ``driver semimartingale" $\, U(\cdot)\,$  along rays emanating from the origin. Once at the origin, the process chooses a new ray for its voyage randomly, according to a given probability measure on angles.  When the driver $\,U (\cdot) \,$ is a Brownian motion, this  \textsc{Walsh} semimartingale  becomes the renowned {\it \textsc{Walsh}  Brownian motion,} a process introduced by \textsc{Walsh} (1978) in the epilogue of \cite{W}; it was studied by \textsc{Barlow, Pitman \& Yor} (1989) in \cite{BPY} and by many other authors after them (the introduction of \cite{IKPY} contains a comprehensive   literature survey).  %\footnote{I added this sentence, please check. (Minghan)} 
The recent work \cite{IKPY} established  stochastic integral equations     that these     \textsc{Walsh} semimartingales  satisfy, as well as additional features of their singular nature  
at the origin. Taken together, these equations and properties  gave a \textsc{Freidlin-Sheu-}type  
change-of-variable formula for any processes satisfying them.  
Previous results in this regard include also \cite{FS} for diffusion processes on   graphs, \cite{P} for semimartingales on trees, and \cite{FK}, \cite{HT} for \textsc{Walsh}'s Brownian motion. Local martingale problems for {\it \textsc{Walsh} diffusions,} where the driver   % $\, U (\cdot) \,$ 
is an \textsc{It\^o} diffusion, were also considered in \cite{IKPY}.      

%\smallskip
%The present paper has three main purposes. 
%: to refine the previous work \cite{IKPY} in two different directions, and to introduce a novel optimization  problem of stochastic control with discretionary  stopping  for \textsc{Walsh} semimartingales and diffusions. 
In \cite{IKPY} we constructed \textsc{Walsh} semimartingales and showed their many properties, but %as an afterthought, 
 as we realize now,  we
  did not provide the best definition for them. We present in Section 2 a novel %definition for 
  view of \textsc{Walsh} semimartingales, %by introducing them 
  as a {\it subclass} of the newly introduced   %\footnote{~I added the word ``introduced" back. How do you think? (Minghan)}  
  {\it semimartingales on rays} (Definitions \ref{def: semirays} and \ref{def: Walshsemi}). We extract three characterizing and non-overlapping properties --- continuity in the tree-topology, radial semimartingale property, and angular measure --- of \textsc{Walsh} semimartingales, with semimartingales on rays defined by the first two. This is by far the most stripped-down and portable definition of \textsc{Walsh} semimartingales we can offer; and though semimartingales on rays are initially introduced as a primitive version of \textsc{Walsh} semimartingales, this more general class  may well deserve   further study. A more subtle %refinement 
  advantage of the current approach is that  the ``thinned process" $\, R^{X}_{A}(\cdot) \,$ in \eqref{eq: RA}, assumed to be a semimartingale in Theorem 4.1 of \cite{IKPY}, is {\it shown} here to be a semimartingale (Theorem \ref{Thm: Gen}(ii)). %\footnote{~I put this sentence back to this paragraph, where it belongs better. I also deleted the brief description of the three purposes, feeling it to be some kind of a repetition. Please check. (Minghan)}

%\smallskip
Our second purpose is {\it relaxing the boundedness requirements, from  ``locally" to ``near the origin"}, for both derivatives of test functions in the resulting stochastic calculus  and for the coefficients of \textsc{Walsh}  diffusions with angular dependence. This relaxation is prompted by our interest in studying related control problems.  Not only does it achieve  our goal, but it also corresponds  naturally  to a simple fact about these ``\textsc{Walsh}-type" processes: except at the origin, their motions are one-dimensional; thus boundedness should only be posed near the origin and along the rays, rather than locally on the plane  as   assumed in \cite{IKPY}. Following this thread, we also define \textsc{Walsh} diffusions with state-space open   only in the tree-topology, then generalize to this setting many   one-dimensional results (Section 5.5 of \cite{KS}; see also \cite{ES1}-\cite{ES3}) to \textsc{Walsh} diffusions: existence, uniqueness, asymptotic behavior, and explosion tests %for  \textsc{Walsh} diffusions 
(Theorems \ref{Thm: Walshnodrift}, \ref{Thm: Walshgen}, \ref{Thm: XS} and \ref{Thm: Sfinite}). The latter two results are especially interesting in how similar they are to their one-dimensional analogues --- and at the same time simpler and more revealing.

%\smallskip
Finally, the power of the approach and of the calculus developed here,  is further illustrated in Section 5. We study there an   optimization problem  involving both {\it control}   and {\it stopping} of a \textsc{Walsh} diffusion on the unit disc   with an absorbing boundary, and %\footnote{~I added ``and", because I feel that it was not clear in the old version whether this clause was describing the disc or the boundary. Please check.(Minghan)}
  for which a ``reward" function is specified.  This can be seen as the analogue in the \textsc{Walsh} setting of the problem studied in \cite{KS1}. By handling interesting new aspects arising  from the  roundhouse singularity  at the origin, we explicitly solve the ``pure" optimal stopping problem  and, quite a bit more surprisingly,  the ``mixed" stochastic   control problem with discretionary stopping, in Theorems \ref{thm: solstop}, \ref{thm: ControlStop} %, respectively, 
  and under very  mild assumptions. We also specify the underlying dynamic programming equations, which are not given rigorous meaning but used as guidelines in finding our optimal strategy.
  
  %\smallskip
 To summarize: Section 2 gives succinct definitions for semimartingales on rays and for \textsc{Walsh} semimartingales, and extends the stochastic calculus in \cite{IKPY} along two directions, namely, to semimartingales on rays and to   functions with relaxed conditions (Theorems \ref{Thm: Gen} and \ref{prop: itoformula}). Sections 3 and 4 define and study \textsc{Walsh} diffusions  as described in the second main purpose above. Section 5 deals with stochastic control and stopping problems. Proofs of selected technical results can be found in the Appendix, Section 6.

\section{A Stochastic Calculus for Semimartingales on Rays} \label{sec2}
%%%%%%%%%%%

%Throughout this work, 
Whenever   a function $\, f \, $ is defined on a subset of $\,\R^{2}\,$, we will write ``$f(r, \theta)$" (or sometimes ``$f_\theta(r  )$") 
to mean its expression in polar coordinates;  we have for example $\, f(r, \theta) = f (x) \,,$ where $\, x = (r \cos \theta, r \sin \theta) \,$ in Euclidean coordinates. We   write $\, \text{arg}(x) \in [0, 2\pi) \,$ for the  argument of a generic vector $\, x \in \mathbb R^{2} \setminus \{ {\bm 0} \}\,$. 

%We   note that 
The polar coordinates $\, ( 0, \theta), \,\, \theta \in [0, 2\pi)\,$ are identical and identified with $\, \bm 0 \in \R^{2}\,$. Thus, whenever we define a function $\, f\,$ via polar coordinates as $\, f(r, \theta) \,$, we must make sure $\, f(0, \theta) \equiv f({\bm 0})\,$ is constant.

%%%%%%%%%%%%%%%%%%%
\subsection{Semimartingales on Rays}
\label{sec2.1}
%%%%%%%%%%%%%%%%%%

We shall introduce in this section a class of processes called ``semimartingales on rays"; this class includes the \textsc{Walsh} semimartingales that will be studied later.

Indispensable in the study of such semimartingales is the so-called ``tree-metric"  on the Euclidean plane.  

\begin{definition}\label{def: tree}
We define the {\it tree-metric} (cf. \cite{FK}, \cite{HT})  on the plane as follows:
\begin{equation} 
\label{eq: tree}
\varrho (x_{1},x_{2}) \,:=\, (r_1 + r_2) \,  \mathbf{ 1}_{ \{ \theta_1 \neq \theta_2 \} } + |r_1 - r_2| \,  \mathbf{ 1}_{ \{ \theta_1 = \theta_2 \} }\,, \qquad  x_{1}, \,  x_{2} \in \mathbb{R}^{2},
\end{equation} 
where $\, (r_1, \theta_1)\,$, $\,(r_2, \theta_2)\,$ are the expressions in polar coordinates of  $\, x_{1} \,$ and $\, x_{2}\,$, respectively. 

We shall call {\it tree-topology}   the topology on the plane induced by this   metric.
\end{definition}

\begin{remark}\label{rmk: tree}
It is   checked    that the  
recipe of (\ref{eq: tree}) defines   a metric on the plane. The distance in the tree-metric between two points on the plane, is the shortest distance of going from one point to the other along rays emanating from the origin. Thus, {\it the tree-topology is stronger than the usual topology on the plane.}   
\end{remark}

\begin{prop}\label{prop: semirays}
Assume that a function $\, x: [0, \infty) \rightarrow \R^{2}\,$ is continuous in the tree-topology. Then, whenever $\, \Vert x(t) \Vert \neq 0 \,$ holds for all $\, t  \in  [t_{1}, t_{2}]\,$, the mapping $\, t \mapsto \text{arg}(x(t)) \,$ is constant on $\, [t_{1}, t_{2}]\,$. 
\end{prop}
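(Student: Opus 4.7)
The plan is to exploit the fact that the tree-metric separates different rays rather drastically: any two points on distinct rays are at tree-distance equal to the \emph{sum} of their Euclidean norms. This should force $\arg(x(\cdot))$ to be locally constant at every time where $x$ is away from the origin, and then connectedness of $[t_{1}, t_{2}]$ upgrades ``locally constant'' to ``constant''.

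More concretely, fix $t_{0} \in [t_{1}, t_{2}]$ and set $r_{0} := \Vert x(t_{0}) \Vert > 0$, $\theta_{0} := \text{arg}(x(t_{0}))$. By continuity of $x(\cdot)$ in the tree-topology, applied with $\varepsilon = r_{0}/2$, there is some $\delta > 0$ such that $\varrho(x(t), x(t_{0})) < r_{0}/2$ for all $t \in [t_{1}, t_{2}]$ with $|t - t_{0}| < \delta$. Suppose, for contradiction, that for some such $t$ we had $\text{arg}(x(t)) \neq \theta_{0}$. Then by the very definition \eqref{eq: tree} of the tree-metric,
\begin{equation*}
\varrho(x(t), x(t_{0})) \,=\, \Vert x(t) \Vert + r_{0} \,\geq\, r_{0} \,>\, r_{0}/2,
\end{equation*}
contradicting the choice of $\delta$. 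Hence $\text{arg}(x(t)) = \theta_{0}$ on the neighborhood $(t_{0} - \delta, t_{0} + \delta) \cap [t_{1}, t_{2}]$.

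This shows that the mapping $\varphi : t \mapsto \text{arg}(x(t))$ is locally constant on $[t_{1}, t_{2}]$ (here we use crucially that $\Vert x(t) \Vert \neq 0$ throughout the interval, so that each point is a valid ``center'' $t_{0}$ of the above argument). To conclude, I would invoke the elementary fact that a locally constant function on a connected set is constant: for each value $\theta \in [0, 2\pi)$ the preimage $\varphi^{-1}(\theta)$ is open by local constancy, and its complement $\bigcup_{\theta' \neq \theta} \varphi^{-1}(\theta')$ is also open, making $\varphi^{-1}(\theta)$ clopen in the connected set $[t_{1}, t_{2}]$; hence it is either empty or the full interval. This yields the claim.

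There is really no serious obstacle here; the only point requiring any care is remembering that the hypothesis $\Vert x(t) \Vert \neq 0$ must hold on the entire closed interval, since otherwise the ``local constancy at $t_{0}$'' argument breaks down precisely at times where $x(t_{0}) = \bm 0$ (the origin being glued to every ray). The proof is really just unpacking the definition of $\varrho$ together with a connectedness argument.
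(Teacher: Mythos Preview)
Your proof is correct and rests on the same key inequality as the paper's: if $\arg(x(t)) \neq \arg(x(t_{0}))$ then $\varrho(x(t), x(t_{0})) \geq \Vert x(t_{0}) \Vert$. The organization differs slightly, however. The paper argues by contradiction via a ``first time of change'': it introduces $t_{3} := \inf\{t \geq t_{1} : \arg(x(t)) \neq \arg(x(t_{1}))\}$, uses that tree-continuity implies ordinary Euclidean continuity to conclude $\arg(x(t_{3})) = \arg(x(t_{1}))$, and then picks a sequence $t_{(n)} \downarrow t_{3}$ with differing arguments to reach the same contradiction you obtain. Your route---local constancy plus connectedness---is a touch more direct, since it avoids the detour through Euclidean continuity of $t \mapsto x(t)/\Vert x(t)\Vert$; the paper's route has the minor advantage of being entirely self-contained without invoking the clopen argument. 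Both are short and essentially equivalent.
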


\begin{proof}
Clearly,  showing that $\, t \mapsto \text{arg}(x(t)) \,$ is constant, is equivalent to showing that $\, t \mapsto \frac{x(t)}{\Vert x(t)\Vert}\,$ is constant. By way of contradiction, let us assume that $\, \Vert x(t) \Vert \neq 0 \,$ holds for all $\, t \in [t_{1}, t_{2}]\,$ but  the ratio $\, \frac{x(t)}{\Vert x(t)\Vert} \,$ is {\it not} constant on the interval $\, [t_{1}, t_{2}]\,$. 
From Remark \ref{rmk: tree}, the function $\, x: [0, \infty) \rightarrow \R^{2}\,$ is also continuous in the usual sense. Thus the mapping $\, t  \mapsto \frac{x(t)}{\Vert x(t)\Vert}\,$ is continuous on $\, [t_{1}, t_{2}]\,$ in the usual sense, so we have
$$\,\frac{x(t_{3})}{\Vert x(t_{3})\Vert} = \frac{x(t_{1})}{\Vert x(t_{1})\Vert}\,\qquad \text{for} \qquad \, t_{3} :=\inf \left\{ t \geq t_{1}: \frac{x(t)}{\Vert x(t)\Vert} \neq \frac{x(t_{1})}{\Vert x(t_{1})\Vert} \right\} < t_{2}\, .$$ 
It follows that  there exists a sequence $\, \{ t_{(n)} \}_{n=1}^{\infty} \subseteq (t_{3}, t_{2}]\,$ with $\, t_{(n)} \downarrow t_{3}\,$ and $\,\frac{x(t_{(n)})}{\Vert x(t_{(n)})\Vert} \neq \frac{x(t_{1})}{\Vert x(t_{1})\Vert} = \frac{x(t_{3})}{\Vert x(t_{3})\Vert}\,,$ therefore also $\, \text{arg}(x(t_{(n)})) \neq \text{arg}(x(t_{3}))\,$. We have then  $$\, \varrho (x(t_{(n)}),x(t_{3})) \,= \,\Vert x(t_{(n)}) \Vert + \Vert x(t_{3})\Vert \geq \Vert x(t_{3})\Vert > 0 \, , \qquad \forall \,\,\, n \in \N\,,$$ contradicting the continuity of $\, x(\cdot)\,$ in the tree-topology.
\end{proof}

Proposition \ref{prop: semirays} shows that  any  process,  which is continuous in the tree-topology, does not change  the ray along which it travels  when away from the origin; any such change  to a new ray can happen only when the process is  at the origin.

\begin{definition} {\bf Semimartingales on Rays:} 
\label{def: semirays}
We place ourselves on a filtered probability space $\,(\Omega, \mathcal{F}, \mathbb{P}), \,$ $\, \mathbb{F} = \{ \mathcal{F}(t) \}_{0 \leq t < \infty} \,$ that satisfies the ``usual conditions", i.e., $\, \mathbb{F}\,$ is right-continuous and $\,\mathcal{F}(0)\,$ contains every $\,\mathbb{P}-$negligible event. On this space, we are given   a continuous scalar    semimartigale $\, U(\cdot)\,$. 

We say that a two-dimensional process $\, X(\cdot)  \,$ is a {\it semimartingale on rays driven by} $\, U(\cdot)\,$, if: 

\smallskip
\noindent
(i) ~~ It is adapted, and is continuous in the tree-topology. 

\noindent
(ii) ~ Its radial part $\, \Vert X(\cdot)\Vert\,$ is the \textsc{Skorokhod} reflection (cf. Section 3.6.C in \cite{KS}) of $\,U(\cdot)\,$, i.e., 
 \begin{equation}
 \label{eq: ||X||}
\Vert X( t)\Vert \, =\, U  ( t) +  \Lambda ( t)\,, \qquad \text{where} \quad 
 \Lambda ( t)= \max_{0 \le s \le \, t} \big( - U(s)\big)^{+}\,, \qquad 0 \le t < \infty\,  .
\end{equation}
\end{definition}

\begin{remark} {\it Terminology:} 
\label{rmk: semirays}
We do not assume explicitly in Definition \ref{def: semirays}, that  $\, X(\cdot)\,$ is a two-dimensional semimartingale;   only its radial part $\, \Vert X(\cdot)\Vert\,$ is clearly seen from (\ref{eq: ||X||}) to be a semimartingale. 

But  $\, X(\cdot)\,$ will indeed turn  out to be a semimartingale, thanks to the  assumption that it is  continuous in the tree-topology.  This fact is implied by the general result of Theorem \ref{Thm: Gen} below. In light of this theorem we   use the terminology ``semimartingale on rays" here, leaving it somewhat unjustified for  the moment. 
\end{remark}

%%%%%%%%%%%%%%%%%
\subsection{A Generalized Change-of-Variable Formula}
%%%%%%%%%%%%%%%%%

The property of ``moving along rays" given by Proposition \ref{prop: semirays},  suggests   considering test functions on the plane that have  good properties only along every ray emanating from the origin.  %(see also Section 4 of \cite{IKPY}). 
In this vein, we  develop a   generalized \textsc{Freidlin-Sheu}-type change of variable formula  in Theorem \ref{Thm: Gen}. 

\begin{definition}
\label{def: D}
Let $\, \mathfrak D\,$ be the class of \textsc{Borel}-measurable  functions $ \, g:    \mathbb R^{2}   \rightarrow \R\,,$ such that % with the following properties: 

\smallskip
\noindent
(i) ~~\,for every $\,\theta \in [0, 2 \pi)\,$, the function $\,  r \mapsto g_{\theta}(r)   :=  g(r, \theta)    \,$ is differentiable on $\, [0, \infty),$   and the derivative $\, r \mapsto g_{\theta}^{\prime}(r) \,$ is absolutely continuous on $\, [0, \infty)$;\newpage

\noindent
(ii) ~the function $\, \theta \mapsto g_{\theta}^{\prime}(0+) \,$ is bounded; and

\noindent
(iii) there exist  a real number $\, \eta > 0\,$ and a \textsc{Lebesgue}-integrable function $\, c : (0, \eta] \rightarrow [0, \infty)$  such that $\, | g^{\prime \prime}_\theta (r) | \leq c(r)\,$ holds for all $\, \theta \in [0, 2\pi)\,$ and $\, r \in (0, \eta]  $. 
\end{definition}

\begin{definition}
\label{def: G}
For every given function  $ \, g :    \mathbb R^{2}   
\rightarrow \R\,$ in the class  $\,\mathfrak{D} \,,$ we    set  for every $\, x \in \R^{2} \setminus \{ \bf 0 \}\,$: 
 $$\, g^{\prime} (x)  :=  g^{\prime}_\theta (r),\,\quad \, g^{\prime \prime} (x)  :=  g^{\prime \prime}_\theta (r)\,, \quad \text{where} \,\, (r, \theta)\,\, \text{is the expression of}  \,\, x\, \,  \text{in polar coordinates.}$$
\end{definition}

\begin{prop}\label{prop: D}
Every function $\, g \in \mathfrak{D}\,$ has the following properties:

\smallskip
\noindent
{\bf (i)} ~The mappings $\, (r, \theta) \mapsto g_{\theta}^{\prime}(r) \,$ and $\, (r, \theta) \mapsto g_{\theta}^{\prime\prime}(r) \,$ are \textsc{Borel}-measurable on $\, \R^{2} \setminus \{ \bm 0 \}\,$, and the mapping $\, \theta \mapsto g_{\theta}^{\prime}(0+) \,$ is \textsc{Borel}-measurable on $\, [0,2\pi)$. Also, for the  constant $\, \eta >0 \,$  %that appears 
in Definition \ref{def: D}\,(iii) for $ g\,$, we have $$\, \sup_{\, 0 <r\leq\eta  \atop \theta \in [0, 2 \pi)} |g_{\theta}^{\prime}(r)| \,< \,\infty \,  .$$   
{\bf (ii)} ~The function $\, g \,$ is continuous in the tree-topology.

\end{prop}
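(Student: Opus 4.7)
The plan is to handle the two assertions essentially separately, with (i) serving as a toolbox whose supremum bound is the key input for the continuity statement in (ii).

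For the Borel-measurability claims in (i), I would express each derivative as a pointwise limit of Borel-measurable functions. Since $(r,\theta)\mapsto g(r\cos\theta, r\sin\theta)$ is Borel-measurable (as a composition of a Borel function with a continuous one) and since $g_\theta$ is assumed differentiable on $[0,\infty)$, one has
\[
g_\theta'(r)\,=\,\lim_{n\to\infty} n\bigl[g_\theta(r+1/n)-g_\theta(r)\bigr]
\]
everywhere, exhibiting $(r,\theta)\mapsto g_\theta'(r)$ as a Borel-measurable function. Because $g_\theta'$ is absolutely continuous in $r$, hence continuous, one also gets $g_\theta'(0+)=g_\theta'(0)=\lim_{n\to\infty}g_\theta'(1/n)$, which (for each fixed $1/n>0$) is Borel-measurable in $\theta$ by what was just shown; the limit then gives Borel-measurability of $\theta\mapsto g_\theta'(0+)$. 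An analogous limsup-of-difference-quotients argument, applied to the absolutely continuous function $r\mapsto g_\theta'(r)$, handles $(r,\theta)\mapsto g_\theta''(r)$.

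Next I would establish the supremum bound of (i) using the fundamental theorem of calculus for absolutely continuous functions: for each $\theta\in[0,2\pi)$ and $r\in(0,\eta]$,
\[
g_\theta'(r)\,=\,g_\theta'(0+)+\int_0^r g_\theta''(s)\,\ud s\,,
\]
so $|g_\theta'(r)|\le \sup_{\theta}|g_\theta'(0+)|+\int_0^\eta c(s)\,\ud s$. The first term is finite by Definition \ref{def: D}(ii), and the second by Definition \ref{def: D}(iii), giving the claimed bound $M:=\sup_{0<r\le\eta,\,\theta}|g_\theta'(r)|<\infty$.

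For part (ii), I would split at the origin. If $x\ne\bm 0$ with polar expression $(r,\theta)$, $r>0$, and $x_n\to x$ in the tree-metric, then for all large $n$ one must have $\mathrm{arg}(x_n)=\theta$ (otherwise $\varrho(x_n,x)\ge r>0$), whence $x_n=(r_n,\theta)$ with $r_n\to r$, and $g(x_n)=g_\theta(r_n)\to g_\theta(r)=g(x)$ by continuity of the differentiable function $g_\theta$. The interesting case is $x=\bm 0$: tree-convergence $x_n\to\bm 0$ is equivalent to $r_n:=\|x_n\|\to 0$, and writing $x_n=(r_n,\theta_n)$ with $r_n\le\eta$ eventually, the mean-value estimate
\[
|g(x_n)-g(\bm 0)|\,=\,\Bigl|\int_0^{r_n} g_{\theta_n}'(s)\,\ud s\Bigr|\,\le\, r_n\cdot M
\]
tends to $0$, where $M$ is the uniform bound obtained in (i). The main obstacle in the whole argument is precisely the continuity at the origin, because it requires controlling $g$ along \emph{every} ray simultaneously; this is exactly where the uniform-in-$\theta$ boundedness furnished by (ii)+(iii) of Definition \ref{def: D} enters decisively, and is the reason the supremum estimate in (i) is packaged together with (ii).
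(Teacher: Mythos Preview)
Your proof is correct and follows essentially the same approach as the paper's: derivatives as pointwise limits for the measurability claims, the fundamental-theorem-of-calculus identity $g_\theta'(r)=g_\theta'(0+)+\int_0^r g_\theta''(s)\,\ud s$ for the supremum bound, and the integral representation $g_\theta(r)-g(\bm 0)=\int_0^r g_\theta'(s)\,\ud s$ combined with that bound for tree-continuity at the origin. You supply more detail than the paper does (in particular the limsup device for $g_\theta''$, which is only defined a.e.), but the underlying strategy is identical.
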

\noindent
{\it Proof:} {\bf (i)} The first claim is a consequence of the measurability of the function $\, g \,$ and of Definition \ref{def: D}(i), because of the definition of derivatives as limits. The second comes from the fact $\, g_{\theta}^{\prime}(r) -g_{\theta}^{\prime}(0+) = \int_{0}^{r} g^{\prime \prime}_\theta (y)\,{\rm d}y\,$ and the requirements (ii) and (iii) in  Definition \ref{def: D}.

\smallskip
\noindent
{\bf (ii)}    By the fact $\, g_{\theta}(r) - g({\bm 0}) = \int_{0}^{r} g^{\prime }_\theta (y)\,{\rm d}y \,$ and the second claim of  {\bf (i)}, the function $\, g \,$ is continuous at the origin in the tree-topology. The continuity at other points is equivalent to the continuity of the functions $\, r \mapsto g_{\theta}(r)\,$ for all $\,\theta \in [0,2\pi)\,$, and this is implied by (i) of Definition \ref{def: D}.\qed

\smallskip
The class $\, \mathfrak D\,$ includes the functions in Definition 4.1 of \cite{IKPY}.  In contrast to that definition, which assumes the derivatives to be locally bounded, {\it here we     only assume  some boundedness near the origin.} The reason why this will suffice for the development of a stochastic calculus for semimartingales on rays, is provided by the following two lemmas; these  supply the keys to the main result of this section, Theorem \ref{Thm: Gen}. 

\begin{lm}\label{lm: finite}
Let $\, f: \mathbb{R}^{2} \rightarrow [0, \infty) \,$ be a \textsc{Borel}-measurable function with the following properties:

\smallskip
\noindent
(i) \, For every $\, \theta \in [0, 2 \pi) \,$, the function $\, r \mapsto f(r, \theta)\,$ is locally integrable on $\, [0, \infty)\,$.

\noindent
(ii) \,There exist  a real number $\, \eta > 0\,$ and a \textsc{Lebesgue}-integrable function $\, c : [0, \eta] \rightarrow [0, \infty)$  such that $\, f(r, \theta)  \leq c(r)\,$ holds  for all $\, \theta \in [0, 2\pi)\,$ and $\, r \in [0, \eta] \,$. 

\smallskip
Then for any semimartingale on rays $\, X(\cdot)\,$ in the context of Definition \ref{def: semirays}, we have 
\[
\mathbb{P} \bigg( \int_{0}^{T} f(X(t)) \, {\rm d} \langle U \rangle (t) < \infty , \quad \forall \,\, 0\leq T < \infty \bigg) \, = \, 1 \,.
\]
\end{lm}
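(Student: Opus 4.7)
The plan is to perform a pathwise decomposition of the integral according to the excursions of the continuous non-negative radial process $\Vert X(\cdot)\Vert$ away from the origin, and then to apply the one-dimensional occupation-times formula on each such excursion, treating ``small'' and ``big'' excursions separately. Two preliminary observations from (\ref{eq: ||X||}) will be essential: because $\Lambda(\cdot)$ is continuous and of bounded variation, $\langle \Vert X\Vert\rangle = \langle U\rangle$ as measures on $[0,T]$; and because $\Lambda$ increases only on $\{t: \Vert X(t)\Vert = 0\}$, these two measures agree with the quadratic-variation measure of $\Vert X\Vert$ on every open excursion away from the origin.

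First, I would write $[0,T] = \{t:\Vert X(t)\Vert = 0\}\,\sqcup\,\bigsqcup_{j}(c_{j}, d_{j})$, where $\{(c_{j}, d_{j})\}_{j}$ enumerates the countably many connected components of the open set $\{t\in[0,T]: \Vert X(t)\Vert > 0\}$. By Proposition~\ref{prop: semirays}, applied on any compact subinterval of $(c_{j}, d_{j})$, the map $t\mapsto \text{arg}(X(t))$ is identically equal to some (random) value $\theta_{j}\in[0,2\pi)$ on all of $(c_{j}, d_{j})$. The contribution from the zero set is handled at once: there $f(X(t)) = f(\bm{0}) \le c(0) < \infty$ by hypothesis~(ii) with $r=0$, so the integral over this set is at most $c(0)\,\langle U\rangle(T) < \infty$ almost surely.

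On each excursion interval $(c_{j}, d_{j})$ I would then apply the one-dimensional occupation-times formula to the non-negative continuous semimartingale $\Vert X(\cdot)\Vert$ and the Borel function $r\mapsto f(r,\theta_{j})$, obtaining
\[
\int_{c_{j}}^{d_{j}} f\bigl(\Vert X(t)\Vert,\theta_{j}\bigr)\,d\langle \Vert X\Vert\rangle(t) \,=\, \int_{0}^{\infty} f(r,\theta_{j})\,\Delta_{j} L^{r}\,dr,
\]
with $\Delta_{j} L^{r} := L^{r}_{\Vert X\Vert}(d_{j}) - L^{r}_{\Vert X\Vert}(c_{j})$ and $\sum_{j}\Delta_{j} L^{r}\le L^{r}_{\Vert X\Vert}(T)$ for every $r>0$ (by disjointness of the intervals and monotonicity of local time). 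I would then split the sum over $j$ according to whether $h_{j}:=\max_{t\in[c_{j}, d_{j}]}\Vert X(t)\Vert$ is $\le \eta$ or $>\eta$. For the small excursions ($h_{j}\le\eta$), the uniform-in-$\theta$ bound $f(r,\theta_{j})\le c(r)$ on $[0,\eta]$ and an interchange of summation with integration (Tonelli) give the single bound $\int_{0}^{\eta} c(r)\, L^{r}_{\Vert X\Vert}(T)\,dr$, which is a.s.\ finite because $r\mapsto L^{r}_{\Vert X\Vert}(T)$ is c\`adl\`ag with support in the compact set $[0,\max_{t\le T}\Vert X(t)\Vert]$ --- hence bounded --- and $c\in L^{1}([0,\eta])$.

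The only remaining task concerns the ``big'' excursions, and here my claim is that $\bigl|\{j: h_{j}>\eta\}\bigr|<\infty$ almost surely. If this set were infinite, one could extract a sequence of pairwise disjoint subintervals of $[0,T]$ whose lengths tend to $0$ (since their total length is at most $T$), on each of which the continuous function $\Vert X(\cdot)\Vert$ oscillates by at least $\eta$ --- contradicting its uniform continuity on the compact interval $[0,T]$. For each of these finitely many big $j$, the contribution is bounded by $\bigl(\sup_{r\ge 0} L^{r}_{\Vert X\Vert}(T)\bigr)\cdot \int_{0}^{h_{j}} f(r,\theta_{j})\,dr$, which is a.s.\ finite by hypothesis~(i) (local integrability of $r\mapsto f(r,\theta_{j})$) and the boundedness of the local-time map. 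Summing the finitely many finite contributions and combining with the earlier estimates yields finiteness of the integral on $[0,T]$ for each fixed $T<\infty$; taking a countable intersection over $T\in\N$ then produces the required almost-sure statement. The main conceptual point --- and the only mild subtlety --- is the dichotomy between the two kinds of excursions: possibly infinitely many small ones, tamed by the uniform-in-$\theta$ bound near the origin, versus only finitely many big ones, on each of which the mere local integrability of $f(\cdot,\theta)$ suffices.
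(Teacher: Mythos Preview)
Your proof is correct and follows essentially the same approach as the paper: the occupation-times formula for $\Vert X\Vert$, the uniform-in-$\theta$ bound $f\le c$ near the origin, and the observation that only finitely many excursions can reach level $\eta$ on $[0,T]$. The paper organizes the split by the \emph{level} of $\Vert X\Vert$ (treating $\{\Vert X\Vert\le\eta\}$ with one global application of the occupation-times formula, and $\{\Vert X\Vert>\eta\}$ via the stopping times $\tau^\eta_k$) rather than by excursion height as you do, which spares it the separate treatment of the zero set and the Tonelli summation over small excursions; but the substance of the two arguments is the same.
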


\begin{lm}\label{lm: gprime}
In the context of Definitions \ref{def: semirays} and \ref{def: D}  we have, for every semimartingale  $X(\cdot)$ on rays  and for every function $\, g \in \mathfrak{D} ,$ the properties 
\[
\textit{(i)} \quad \mathbb{P} \bigg( \sup_{0\leq t \leq T , \,\, X(t) \neq {\bm 0}} \big| g{^\prime}\big(X(t)\big)\big| < \infty , \quad \forall \,\, 0\leq T < \infty \bigg) \, = \, 1 \,, \qquad\qquad\qquad\qquad\qquad\qquad\qquad\qquad\qquad\qquad\qquad
\]
\[
\textit{(ii)} \,\,\,\, \mathbb{P} \left( \int_{0}^{T} \mathbf{ 1}_{\{ X(t) \neq \mathbf{ 0}\}} \, \big\vert g^{\prime\prime}\big( X(t) \big)\big\vert \, {\rm d} \langle U \rangle (t) < \infty , \quad \forall \,\, 0\leq T < \infty\right) \, = \, 1 \,.\qquad\qquad\qquad\qquad\qquad\qquad\qquad
\]
\end{lm}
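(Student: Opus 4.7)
The plan is to decompose the path of $X(\cdot)$ over $[0,T]$ by whether $\|X(t)\|$ lies in $(0,\eta]$ or above $\eta$, with $\eta$ as in Definition \ref{def: D}(iii), and to exploit the crucial structural fact that only \emph{finitely many} rays are visited above the level $\eta$ on any compact time interval. First I would fix $T < \infty$ and work pathwise on a $\mathbb{P}$-full set: continuity of $X(\cdot)$ in the tree-topology (hence in the usual topology, by Remark \ref{rmk: tree}) yields $M := \sup_{0 \le t \le T} \|X(t)\| < \infty$. Next I would show that the set
\[
\Theta \,:=\, \big\{\text{arg}(X(t)) : 0 \le t \le T,\, \|X(t)\| > \eta\big\}
\]
is finite: if it were infinite, one could pick times $t_1 < t_2 < \cdots$ in $[0,T]$ with $\|X(t_i)\| > \eta$ and pairwise distinct arguments, and Proposition \ref{prop: semirays} would force a zero of $\|X(\cdot)\|$ between any two of these; this gives infinitely many oscillations of $\|X\|$ between $0$ and $\eta$ on the compact interval $[0,T]$, contradicting uniform continuity. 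Write $\Theta = \{\theta_1, \ldots, \theta_N\}$.

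For (i), I would split $\{t \in [0,T]: X(t) \neq \bm 0\}$ into $A_1 := \{\|X(t)\| \in (0, \eta]\}$ and $A_2 := \{\|X(t)\| \in (\eta, M]\}$. On $A_1$, the last assertion of Proposition \ref{prop: D}(i) bounds $|g'(X(t))|$ by the finite constant $\sup_{0 < r \le \eta,\,\theta} |g'_\theta(r)|$. On $A_2$, $\text{arg}(X(t)) \in \{\theta_1, \ldots, \theta_N\}$, and for each $i$ the function $g'_{\theta_i}$ is absolutely continuous, hence continuous and therefore bounded on the compact interval $[0, M]$; the maximum of these finitely many bounds controls $|g'(X(\cdot))|$ on $A_2$.

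For (ii), I would use the same split. On $A_1$, I would invoke Lemma \ref{lm: finite} with
\[
f(r,\theta) \,:=\, c(r)\, \mathbf{1}_{(0,\eta]}(r),
\]
extended by $f(0,\theta) := 0$; both hypotheses of Lemma \ref{lm: finite} hold because $c$ is Lebesgue-integrable on $(0,\eta]$, and $|g''(X(t))| \le f(X(t))$ on $A_1$. For $A_2$, noting that $\langle \|X\| \rangle = \langle U\rangle$ because $\Lambda(\cdot)$ in (\ref{eq: ||X||}) is of bounded variation, I would dominate
\[
\int_0^T \mathbf{1}_{A_2}(t)\, |g''(X(t))|\, d\langle U\rangle(t) \,\le\, \sum_{i=1}^N \int_0^T \mathbf{1}_{(\eta, M]}\big(\|X(t)\|\big)\, \big|g''_{\theta_i}(\|X(t)\|)\big|\, d\langle \|X\|\rangle(t),
\]
and then apply the occupation time formula to the continuous semimartingale $\|X(\cdot)\|$ to rewrite each summand as $\int_\eta^M |g''_{\theta_i}(r)|\, L^r_{\|X\|}(T)\, dr$; this is finite because $r \mapsto L^r_{\|X\|}(T)$ is bounded on $\mathbb{R}$ (standard property of semimartingale local times) and $g''_{\theta_i}$ is Lebesgue-integrable on $[\eta, M]$ since $g'_{\theta_i}$ is absolutely continuous.

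The hard part is this last step for (ii) on $A_2$: Definition \ref{def: D} offers only pointwise-in-$\theta$ integrability of $g''_\theta$ and absolute continuity of $g'_\theta$ away from the origin, with no uniformity across rays beyond the threshold $\eta$. The decisive insight that rescues us is the finiteness of $\Theta$ from the first paragraph: tree-topology continuity, combined with the compactness of $[0,T]$, forces the process to explore only finitely many rays above any fixed positive level, converting an apparently uncountable uniformity requirement into a finite maximum (for (i)) or a finite sum (for (ii)), after which the individual-ray regularity plus the occupation time formula finish the job.
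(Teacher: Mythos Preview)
Your argument is correct and, for part (i), matches the paper's proof essentially step for step: the paper uses the stopping times $\tau^\eta_k$ of (\ref{eq: tau rec}) to index the finitely many excursions above level $\eta$ on $[0,T]$, which is exactly your observation that $\Theta$ is finite, and then bounds $|g'|$ on each excursion via the continuity of $g'_{\theta_i}$ on a compact interval.

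For part (ii), however, you have worked harder than necessary. The paper dispatches (ii) in one line as a direct consequence of Lemma \ref{lm: finite}: setting $f(r,\theta) := |g''_\theta(r)|$ for $r>0$ and $f(0,\theta):=0$, hypothesis (i) of Lemma \ref{lm: finite} holds because $g'_\theta$ is absolutely continuous on $[0,\infty)$ (Definition \ref{def: D}(i)), and hypothesis (ii) holds by Definition \ref{def: D}(iii). There is no need to split into $A_1$ and $A_2$ and redo the occupation-time computation on $A_2$; that computation is already contained in the proof of Lemma \ref{lm: finite} itself. Your route is not wrong, just redundant.
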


 \medskip
To prove Lemma \ref{lm: finite}, we recall the   {\it (right) local time} $\,L^{\Xi}(T, a)\,$ accumulated at the site $\, a \in \R\,$ during the time-interval $[0,T]$ by  a generic one-dimensional continuous semimartingale $\, \Xi (\cdot)\,$,  namely 
\begin{equation} 
\label{LT}
L^{\Xi}(T, a) \, :=\, \lim_{\varepsilon \downarrow 0} \frac{1}{\, 2\,\varepsilon\, }\int^{T}_{0} {\bf 1}_{\{ a \le \Xi(t) < a+\varepsilon \}} \,  {\mathrm d} \langle \Xi   \rangle (t)\,, \qquad 0 \le T < \infty  \,,\,\, \,\,\,a \in \R \, .  
\end{equation}
From the theory of semimartingale local time (e.g., section 3.7 in \cite{KS}),  the identity\newpage
\begin{equation}
\label{eq: LDens}
\int_{0}^{T} k \big(\Xi (t)\big)\, {\rm d} \langle \Xi   \rangle (t) \, = \, 2 \int_{- \infty}^{\infty} k (a)\, L^{\Xi}(T, a)\, {\rm d} a \, , \qquad 0\leq T < \infty \, 
\end{equation}
holds a.e.\,on the underlying probability space for every \textsc{Borel}-measurable function $\, k : \R \rightarrow [0, \infty)\,$. If, in addition, the continuous semimartingale $\, \Xi (\cdot)\,$ is {\it nonnegative}, then its local time admits the representation 
\begin{equation}
\label{eq: L0}
 L^{\Xi}(\cdot \,, 0) \, =\,  \int^{\cdot}_{0} {\bf 1}_{\{\Xi(t) \, =\,  0\}}\, {\mathrm d}  \Xi  (t) \, .  \qquad
\end{equation}
From now on, we will always write ``$L^{\Xi}(\cdot)$" to denote the semimartingale local time $L^{\Xi}(\cdot \,, 0)$ at the origin. 

\medskip
\noindent
{\it Proof of Lemma \ref{lm: finite}:} By condition (ii) of Lemma \ref{lm: finite}, we have $\, f(x) \leq c (\Vert x\Vert ) \,$ whenever $\, x \in \R^{2}\,$ and $\, \Vert x\Vert \leq \eta\,$. In conjunction with  (\ref{eq: LDens}) and (\ref{eq: ||X||}), we have on the one hand
$$ \int_{0}^{T} f\big(X(t)\big) \, {\bf 1}_{\{ \Vert X(t)\Vert \leq \eta\}}\, {\rm d} \langle U \rangle (t) \leq \int_{0}^{T} c(\Vert X(t)\Vert ) \, {\bf 1}_{\{ \Vert X(t)\Vert \leq \eta\}}\, {\rm d} \langle \Vert X\Vert \rangle (t) = 2 \int_{0}^{\eta} c (r) L^{\Vert X\Vert}(T, r) {\rm d} r \, ;$$  whereas, by   the theory of semimartingale local time (e.g., section 3.7 in \cite{KS}),   the mapping $\, r \mapsto L^{\Vert X\Vert }(T, r, \omega) \,$ is RCLL (right-continuous with left-limits), hence bounded on $\, [0, \eta]$, for $\, \mathbb P-$a.e.$\, \,\omega \in \Omega\,$. Thus,  the integrability of $\, c \,$ gives the $\, \mathbb P-$a.e. finiteness of the last expression above.

\smallskip
%\noindent
%$\bullet\,$
Let us  define now for every $\, \varepsilon > 0\,$ the stopping times $\,\tau_{-1}^{\varepsilon} \equiv 0 \,$, $\,\tau_{0}^{\varepsilon} : = \inf \big\{ t\geq 0 : \Vert X(t) \Vert = 0 \big\}$  and 
\begin{equation} 
\label{eq: tau rec}
\tau^{\varepsilon}_{2\ell+1} \, :=\,  \inf \big\{ t > \tau^{\varepsilon}_{2\ell} : \Vert X(t) \Vert \geq \varepsilon \big\} \, , \qquad \tau^{\varepsilon}_{2\ell+2} \, :=\,  \inf \big\{ t > \tau^{\varepsilon}_{2\ell+1} : \Vert X(t) \Vert \, =\, 0 \big\}  
\end{equation}
recursively, for $\,\ell \in \mathbb N_{0}\,.$ With $\, \Theta(\cdot) := \text{arg}(X(\cdot)) \,$, we have on the other hand
\[
\int_{0}^{T} f\big(X(t)\big) \, {\bf 1}_{\{ \Vert X(t)\Vert > \eta\}}\, {\rm d} \langle U \rangle (t) \, \leq \, \int_{0}^{T} f\big(X(t)\big) \, \Big( \sum_{\ell \in \mathbb N_{0}\cup \{ -1\}}  \, {\bf 1}_{(\tau^\eta_{2 \ell + 1} , \, \tau^\eta_{2\ell+2}) }(t)\Big) \, {\rm d} \langle U \rangle (t)
\]
\[
=  \sum_{\{ \ell: \,\, \tau^\eta_{2 \ell + 1} < T\}} \int_{T \wedge \tau^\eta_{2 \ell + 1}}^{T \wedge \tau^\eta_{2 \ell + 2}} f\big(\Vert X(t)\Vert , \Theta(t) \big)\, {\rm d} \langle U \rangle (t)  
=   \sum_{\{ \ell: \,\, \tau^\eta_{2 \ell + 1} < T\}} \int_{T \wedge \tau^\eta_{2 \ell + 1}}^{T \wedge \tau^\eta_{2 \ell + 2}} f\big(\Vert X(t)\Vert , \Theta(\tau^\eta_{2 \ell + 1})\big) \, {\rm d} \langle \Vert X\Vert \rangle (t)
\]
\[
\leq \sum_{\{ \ell: \,\, \tau^\eta_{2 \ell + 1} < T\}} \int_{0}^{T} f\big(\Vert X(t)\Vert , \Theta( \tau^\eta_{2 \ell + 1})\big) \, {\rm d} \langle \Vert X\Vert \rangle (t)\, =\, 2 \sum_{\{ \ell: \,\, \tau^\eta_{2 \ell + 1} < T\}} \int_{0}^{\infty} f\big(r, \Theta( \tau^\eta_{2 \ell + 1})\big) L^{\Vert X\Vert }(T, r)\, {\rm d} r 
\]
\[
= \, 2 \sum_{\{ \ell: \,\, \tau^\eta_{2 \ell + 1} < T\}} \int_{0}^{M(T)} f\big(r, \Theta( \tau^\eta_{2 \ell + 1})\big) L^{\Vert X\Vert}(T, r)\, {\rm d} r \, , \qquad \text{where} \quad M(T): = \max_{0\leq t\leq T} \Vert X(t)\Vert \, .
\]
We have  used (\ref{eq: ||X||}) and Proposition \ref{prop: semirays} for the second equality, and (\ref{eq: LDens}) for the third. The last equality follows from the theory of semimartingale local time.

\smallskip
{\it We claim that the last expression above is a.e.\,\,finite.} Indeed, $\, r \mapsto L^{\Vert X\Vert}(T, r, \omega) \,$ is a.e.\,\,bounded on $\, [0, M(T, \omega)]\,,$ just as before; thus, by condition (i) of Lemma \ref{lm: finite}, each integral in the last expression is a.e.\,finite. Moreover, the set $\, \{ \ell:   \tau^\eta_{2 \ell + 1} < T\} \,$ is a.e.\,\,finite; for otherwise the continuity of the path $\, t \mapsto \Vert X(t, \omega) \Vert \,$ would be violated. The validity of this finiteness   claim follows.

\smallskip
With all the considerations above, Lemma \ref{lm: finite} is   seen to have been established. \qed

\begin{remark}
\label{rmk: 01law}
Lemma \ref{lm: finite} can   be thought of as an analogue of the \textsc{Engelbert-Schmidt} $0$-$1$ law (cf.$\,$\cite{ES1} and Section 3.6.E of \cite{KS}), as it gives a condition   guaranteeing the finiteness of some integral functional of the process $\, X(\cdot)$. In contrast to the necessary and sufficient condition of local integrability considered in the \textsc{Engelbert-Schmidt} $0$-$1$ law, the condition here is only sufficient, due to the difficulty in dealing with the ``roundhouse singularity" at the origin.  
\end{remark}

\noindent
{\it Proof of Lemma \ref{lm: gprime}:}   The claim (ii) is a direct consequence of Lemma \ref{lm: finite} and of Definition \ref{def: D}. For the claim (i), we observe by Definition \ref{def: D} and Proposition \ref{prop: semirays} that  \newpage
\[
\sup_{0\leq t \leq T , \,\, 0< \Vert X(t) \Vert \leq \eta } \big| g^{\prime} \big( X(t) \big) \big| \, \leq  \, \Big(\, \sup_{\theta \in [0, 2\pi) }  g_{\theta}^{\prime} (0+) \Big)  \, + \, \int_{0}^{\eta} c(r) \, {\rm d}r \, < \, \infty\, , \quad \text{a.e.,} \quad \text{and}
\]
\[
\sup_{0\leq t \leq T , \,\, \Vert X(t) \Vert > \eta } \big| g^{\prime} \big( X(t) \big) \big| \, \leq  \, \sup_{\{ \ell: \,\, \tau^\eta_{2 \ell + 1} < T\}} \bigg( \,\sup_{ t \in [ \tau^\eta_{2 \ell + 1},\, \tau^\eta_{2 \ell + 2} \wedge T ]} \big| g_{\Theta (\tau^\eta_{2 \ell + 1}) }^{\prime} \big( \Vert X(t)\Vert \big)\big|  \bigg) \, < \, \infty\, ,   \quad \text{a.e.,}
\]
thanks also to the a.e. finiteness of the set $\, \big\{ \ell:    \tau^\eta_{2 \ell + 1} < T\big\}\,$. The claim (i) follows then.\qed

\bigskip
Now we can state and prove the main result of this section, a generalized \textsc{Freidlin-Sheu}-type identity for semimartingales on rays; it extends %the result of 
Theorem 4.1 in \cite{IKPY}.

\begin{thm}
\label{Thm: Gen}
{\bf A Generalized Change-of-Variable Formula:}   Let $\, X(\cdot)  \,$ be a  semimartingale on rays with driver $\, U(\cdot)\,$, in the context of Definition \ref{def: semirays}.

\smallskip
\noindent
{\bf (i)}  
\, Then for every function $\, g \in \mathfrak{D}\,$, the process  $\, g(X(\cdot)) \,$ is a continuous semimartingale  and satisfies %the identity
\begin{equation}
\label{eq: gen}
g \big(X(\cdot)\big) = g \big(X(0)\big) \,+ \int_0^{\, \cdot} \mathbf{ 1}_{\{ X(t) \neq \mathbf{ 0}\}} \left( g^{\prime} \big(X(t)\big) \,    \mathrm{d} U(t)  + \frac{1}{\,2\,} \, g^{\prime \prime} \big(X(t)\big) \, \mathrm{d} \langle U \rangle (t) \right) + V_{g}^{X}(\cdot)\, .
\end{equation}
Here $\, V_{g}^{X}(\cdot)\,$ is a continuous process of finite variation on compact intervals, with
\begin{equation}
\label{VgL}
\big| V_{g}^{X}(t_{2}) - V_{g}^{X}(t_{1})\big| \,\leq \,\, \Big( \sup_{\theta \in [0, 2\pi)} |g_{\theta}^{\prime}(0+)| \Big) \big( L^{\Vert X\Vert }(t_{2}) - L^{\Vert X\Vert }(t_{1}) \big) \, , \qquad \forall \,\, 0 \leq t_{1} < t_{2} < \infty \, .
\end{equation}
{\bf (ii)} \, In particular, for every set $\, A \in \mathcal{B}([0, 2\pi))\,$ and with the recipe   $\, g^{A} (r, \theta) : = r \, {\bf 1}_{A}(\theta) \,,$ we have $\, g^{A}  \in \mathfrak{D}\,;$  therefore, the ``thinned process" $\,R_{A}^{X}(\cdot)\,$ below is a continuous semimartingale:
\begin{equation} 
\label{eq: RA}
\,R_{A}^{X}(\cdot) :=  g^{A} \big(X(\cdot)\big) =  \lVert X (\cdot) \rVert \cdot {\bf 1}_A \big(\text{arg}\big(X(\cdot)\big) \big)  \,.
\end{equation}
 %\smallskip
%\noindent
{\bf (iii)} \,   Assume   that  there exists a probability measure $\, \bm \nu\,$ on   $\,   \mathcal{B}([0, 2\pi)) $  such that, for every set $\, A \in \mathcal{B}([0, 2\pi))\,,$  the semimartingale local time at the origin for the process $\, R_{A}^{X}(\cdot)\,$ in \eqref{eq: RA}  has   the ``partition property" 
\begin{equation}
\label{eq: LTA}
L^{R_{A}^{X}}(\cdot) \, \equiv \, {\bm \nu} (A) \, L^{ \,\lVert X \rVert}(\cdot) \,.  
\end{equation}
Then for every function $\, g \in \mathfrak{D} \,$,  the decomposition  (\ref{eq: gen}) holds with
\begin{equation}
\label{eq: VXg}
V_{g}^{X}(\cdot) = \, \Big( \int_{0}^{2\pi} g^{\prime}_\theta (0+) \,  {\bm \nu}({\mathrm d} \theta) \Big)\, L^{\Vert X\Vert }(\cdot) \, .
\end{equation}
\end{thm}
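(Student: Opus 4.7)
For each $\varepsilon > 0$ I would use the stopping times $\tau_\ell^\varepsilon$ of (\ref{eq: tau rec}) to partition $[0,T]$. On every excursion interval $[\tau^\varepsilon_{2\ell+1}, \tau^\varepsilon_{2\ell+2}]$ the radial part $\|X\|$ is strictly positive, so Proposition \ref{prop: semirays} forces the argument $\Theta(\cdot) := \text{arg}(X(\cdot))$ to remain constant there, equal to $\theta_\ell := \Theta(\tau^\varepsilon_{2\ell+1})$; simultaneously $\Lambda$ is constant, so $d\|X\| = dU$ and $d\langle\|X\|\rangle = d\langle U\rangle$. Since $g'_{\theta_\ell}$ is absolutely continuous, the one-dimensional generalized It\^o formula applied to $g_{\theta_\ell}(\|X(\cdot)\|)$ gives, on each such interval,
\begin{equation*}
g(X(t \wedge \tau^\varepsilon_{2\ell+2})) - g(X(\tau^\varepsilon_{2\ell+1})) \,=\, \int_{\tau^\varepsilon_{2\ell+1}}^{t \wedge \tau^\varepsilon_{2\ell+2}} g'(X) \, dU \,+\, \tfrac{1}{2}\int_{\tau^\varepsilon_{2\ell+1}}^{t \wedge \tau^\varepsilon_{2\ell+2}} g''(X) \, d\langle U\rangle .
\end{equation*}
Summing over $\ell$ and letting $\varepsilon \downarrow 0$, with the integrability and dominated-convergence bounds supplied by Lemmas \ref{lm: finite} and \ref{lm: gprime}, produces the stochastic-integral expression on the right side of (\ref{eq: gen}) in a ucp sense. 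Defining $V_g^X$ as the difference between $g(X(\cdot))-g(X(0))$ and this integral term then yields a continuous process, by continuity of $g(X(\cdot))$ from Proposition \ref{prop: D}(ii). For the finite-variation bound (\ref{VgL}) I would control the remaining inner increments $g(X(\tau^\varepsilon_{2\ell+1})) - g(X(\tau^\varepsilon_{2\ell}))$: each equals $g_{\theta_\ell}(\varepsilon) - g(\mathbf{0}) = \int_0^\varepsilon g'_{\theta_\ell}(y)\, dy$, whose modulus is at most $\varepsilon\,\sup_\theta |g'_\theta(0+)| + o(\varepsilon)$ by Proposition \ref{prop: D}(i), and $\varepsilon \cdot \#\{\ell : \tau^\varepsilon_{2\ell+1}\le t\}$ converges to $L^{\|X\|}(t)$ by the standard downcrossing representation of semimartingale local time at the origin.

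\textbf{Part (ii).} It suffices to verify directly that $g^A(r,\theta) = r\,\mathbf{1}_A(\theta)$ belongs to $\mathfrak D$: it is linear in $r$ along every ray, with radial derivative $\mathbf{1}_A(\theta) \in \{0,1\}$ (bounded in $\theta$) and vanishing second radial derivative. The claim then follows from part (i).

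\textbf{Part (iii).} First, specialize (\ref{eq: gen}) to $g = g^A$ from part (ii):
\begin{equation*}
R_A^X(t) \,=\, R_A^X(0) \,+\, \int_0^t \mathbf{1}_{\{X(s) \neq \mathbf{0}\}} \, \mathbf{1}_A(\Theta(s)) \, dU(s) \,+\, V_{g^A}^X(t) .
\end{equation*}
Since $R_A^X \ge 0$ is a continuous semimartingale, formula (\ref{eq: L0}) gives $L^{R_A^X}(\cdot) = \int_0^\cdot \mathbf{1}_{\{R_A^X(s)=0\}}\, dR_A^X(s)$. The stochastic-integral piece above vanishes on $\{R_A^X = 0\}$ (on this set either $X = \mathbf{0}$, where $\mathbf{1}_{\{X \neq \mathbf{0}\}}$ kills the integrand, or $X \neq \mathbf{0}$ with $\Theta \notin A$, where $\mathbf{1}_A(\Theta)=0$); moreover the bound (\ref{VgL}) forces $V_{g^A}^X$ to be absolutely continuous with respect to $L^{\|X\|}$, hence supported on $\{X = \mathbf{0}\} \subseteq \{R_A^X = 0\}$. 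Therefore $L^{R_A^X} \equiv V_{g^A}^X$, and the hypothesis (\ref{eq: LTA}) identifies $V_{g^A}^X = \bm\nu(A)\, L^{\|X\|}$, establishing (\ref{eq: VXg}) for $g = g^A$. For general $g \in \mathfrak D$, approximate the bounded Borel map $\phi(\theta) := g'_\theta(0+)$ uniformly by simple functions $\phi_n = \sum_k c_{k,n}\mathbf{1}_{A_{k,n}}$, set $\tilde g_n(r,\theta) := r\,\phi_n(\theta)$, and use linearity and the just-proved case to get $V_{\tilde g_n}^X = \bigl(\int_0^{2\pi} \phi_n \, d\bm\nu\bigr) L^{\|X\|}$; then (\ref{VgL}) applied to $g - \tilde g_n \in \mathfrak D$ (whose radial derivative at the origin is $\phi - \phi_n$) yields $|V_g^X(t) - V_{\tilde g_n}^X(t)| \le \|\phi - \phi_n\|_\infty\, L^{\|X\|}(t) \to 0$ uniformly on compacts, delivering (\ref{eq: VXg}).

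The main obstacle is establishing the FV inequality (\ref{VgL}) in part (i). Away from the origin everything reduces cleanly to one-dimensional It\^o calculus along a fixed ray, but close to the origin $\Theta$ may switch rapidly, so a naive path-by-path estimate on the inner intervals $[\tau^\varepsilon_{2\ell}, \tau^\varepsilon_{2\ell+1}]$ does not directly produce a bound proportional to $L^{\|X\|}$. Matching the two sides demands the precise downcrossing asymptotic $\varepsilon \cdot \#\{\ell : \tau^\varepsilon_{2\ell+1}\le t\} \to L^{\|X\|}(t)$ combined with the uniform boundedness of $g'_\theta(0+)$ guaranteed by Definition \ref{def: D}(ii). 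Once (\ref{VgL}) is secured, parts (ii) and (iii) follow as outlined.
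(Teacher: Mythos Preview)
Your argument for parts (i) and (ii) is essentially the paper's: the same excursion decomposition via the stopping times $\tau_\ell^\varepsilon$, the one-dimensional generalized It\^o rule on the outer intervals, the limit $\varepsilon\downarrow 0$ controlled by Lemmas \ref{lm: finite}--\ref{lm: gprime}, and the downcrossing representation $\varepsilon\,N(T,\varepsilon)\to L^{\|X\|}(T)$ to obtain (\ref{VgL}).

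Your route to part (iii), however, differs from the paper's. The paper establishes the intermediate convergence
\[
\sum_{\{\ell:\,\tau^\varepsilon_{2\ell+1}<T\}} \varepsilon\, g'_\theta(0+)\Big|_{\theta=\Theta(\tau^\varepsilon_{2\ell+1})} \;\longrightarrow\; V_g^X(T)
\]
in probability and then proves directly that this discrete sum also converges to $\bigl(\int g'_\theta(0+)\,\bm\nu(d\theta)\bigr)L^{\|X\|}(T)$, referring for the latter step to the proof of Theorem~4.1 in \cite{IKPY}. You instead first identify $V_{g^A}^X$ with $L^{R_A^X}$ via (\ref{eq: L0}) and the absolute continuity $dV_{g^A}^X\ll dL^{\|X\|}$ forced by (\ref{VgL}); the hypothesis (\ref{eq: LTA}) then gives (\ref{eq: VXg}) for $g=g^A$, and you extend to general $g$ by uniformly approximating the bounded Borel map $\theta\mapsto g'_\theta(0+)$ with simple functions, using the linearity $V^X_{g_1+g_2}=V^X_{g_1}+V^X_{g_2}$ and the bound (\ref{VgL}) applied to $g-\tilde g_n$. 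This is a clean and fully self-contained alternative: it avoids revisiting the discrete approximation and leans only on what parts (i)--(ii) already deliver, at the cost of the extra (easy) observation that $L^{R_A^X}=V_{g^A}^X$. The paper's approach, by contrast, stays closer to the underlying excursion picture and reuses machinery from \cite{IKPY}.
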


\noindent
{\it Proof:} {\bf (i)} We employ a method   similar  to that used in the proof of Theorem 4.1 in \cite{IKPY}. With $\, \mathbb N_{-1} :=  \mathbb N_{0} \cup \{-1\}\,$ and the sequence of stopping times $\,\{\tau_{k}^{\varepsilon}\}_{k \in \mathbb N_{-1}}\,$   defined as in (\ref{eq: tau rec}), we have the decomposition
$$
g\big(X(T)\big) \,= \,g\big(X(0)\big) + \sum_{\ell \in \mathbb N_{-1}} \Big( g\big(X(T \wedge \tau^\varepsilon_{2\ell+2})\big) - g\big(X(T \wedge \tau^\varepsilon_{2\ell+1})\big) 
\Big) 
$$
\begin{equation} 
\label{eq: gXe}
~~~~~~~~~~~~+ \sum_{\ell \in \mathbb N_{0}}   \Big( g\big(X(T \wedge \tau^\varepsilon_{2\ell+1})\big) - g\big(X(T \wedge \tau^\varepsilon_{2\ell})\big) \Big) \, .
\end{equation}

% \newpage
 \noindent
 $\bullet~$ 
 Recalling the notation $\,\Theta(\cdot)   =  \text{arg} (X(\cdot)) \,$, we   write the first summand above as 
\[
\sum_{\ell \in \mathbb N_{-1}}  \Big( g\big(X(T \wedge \tau^\varepsilon_{2\ell+2})\big) - g\big(X(T \wedge \tau^\varepsilon_{2\ell+1})\big)  \Big)  = \sum_{\ell \in \mathbb N_{-1}}  \Big( g_{\theta}(\Vert X(T \wedge \tau^\varepsilon_{2\ell+2})\Vert ) - g_{\theta}(\Vert X(T \wedge \tau^\varepsilon_{2\ell+1})\Vert ) \Big)\Big|_{\theta \, =\, \Theta 
(T \wedge \tau^\varepsilon_{2 \ell +1})} \,  
\]
\begin{equation*} 
~~~~~~~~~~\, =\, \sum_{\ell \in \mathbb N_{-1}} \int^{T\wedge \tau^{\varepsilon}_{2\ell+2}}_{T\wedge \tau^{\varepsilon}_{2\ell+1}} \Big( g^{\prime}_{\theta}(\Vert X(t) \Vert) \,  {\mathrm d} \Vert X(t) \Vert + \frac{1}{\, 2\, } \, g^{\prime\prime}_{\theta}(\Vert X(t) \Vert) \, {\mathrm d} \langle \Vert X \Vert\rangle (t) \Big) \Big|_{\theta \, =\, \Theta 
(t)}  
\end{equation*}
\[
~~~~~\, =\, \int^{T}_{0} \Big( \sum_{\ell \in \mathbb N_{-1}} {\bf 1}_{(\tau^\varepsilon_{2 \ell + 1}, \, \tau^\varepsilon_{2\ell+2}) }(t)\Big) \Big( g^{\prime}\big(X(t)\big) \,{\mathrm d} U(t) + \frac{1}{\, 2\, } \, g^{\prime\prime} \big(X(t)\big) \, {\mathrm d}   \langle U \rangle(t) \Big) .
\]\newpage
For the second equality of this string,  we have used Proposition \ref{prop: semirays} and the generalized \textsc{It\^o}'s rule (cf. Problem 3.7.3 in \cite{KS}; although $\, \theta = \Theta 
(T \wedge \tau^\varepsilon_{2 \ell +1})\,$ is a random variable, a careful look into the proof of the generalized \textsc{It\^o}'s rule will justify its application here). The third equality     is valid  because of (\ref{eq: ||X||}), and of the fact that the process $\, \Lambda(\cdot)\,$ that appears there is flat off the set $\, \{ 0 \leq t < \infty : \Vert X(t) \Vert = 0 \}\,$.  Now with the help of Lemma \ref{lm: gprime}, we let $\,\varepsilon \downarrow 0\,$ and obtain the convergence in probability
\begin{equation}\label{limit1}
\sum_{\ell \in \mathbb N_{-1}}  \Big ( g\big(X(T \wedge \tau^\varepsilon_{2\ell+2})\big) - g\big(X(T \wedge \tau^\varepsilon_{2\ell+1})\big)  \Big) \xrightarrow[\varepsilon \downarrow 0]{} \int_{0}^{T} \mathbf{ 1}_{\{ X(t) \neq \mathbf{ 0}\}} \left( g^{\prime} \big(X(t)\big) \,    \mathrm{d} U(t)  + \frac{1}{\,2\,} \, g^{\prime \prime} \big(X(t)\big) \, \mathrm{d} \langle U \rangle (t) \right) .
\end{equation}
 $\bullet~$  
By Definition \ref{def: D} and Proposition \ref{prop: D}, the  process $\, g(X(\cdot))\,$ is   adapted and continuous. Thus the process
\[
V_{g}^{X}(\cdot): = \, g\big(X(\cdot)\big) - g \big(X(0)\big) - \int_{0}^{\,\cdot} \mathbf{ 1}_{\{ X(t) \neq \mathbf{ 0}\}} \Big( g^{\prime} \big(X(t)\big) \,    \mathrm{d} U(t)  + \frac{1}{\,2\,} \, g^{\prime \prime} \big(X(t)\big) \, \mathrm{d} \langle U \rangle (t) \Big)
\]
is also adapted and continuous, and we have from (\ref{eq: gXe}), (\ref{limit1}) the convergence in probability 
\begin{equation}
\label{limit2}
\, \sum_{\ell \in \mathbb N_{0}}   \Big ( g\big(X(T \wedge \tau^\varepsilon_{2\ell+1})\big) - g\big(X(T \wedge \tau^\varepsilon_{2\ell})\big) \Big) \xrightarrow[\varepsilon \downarrow 0]{} V_{g}^{X}(T)\, .
\end{equation}
From (\ref{eq: gXe})-(\ref{limit2}), the representation (\ref{eq: gen}) follows.

\medskip
\noindent
 $\bullet~$
Let us concentrate now on the summand on the left-hand side of the above display (\ref{limit2}). We recall     the constant $\, \eta > 0 \,$ and the function $\, c \,$ in Definition \ref{def: D} (iii), and note for  $\, 0 < \varepsilon \leq \eta\,$ the decompositions 
\[
\sum_{\ell \in \mathbb N_{0}}  \Big ( g\big(X(T \wedge \tau^\varepsilon_{2\ell+1})\big) - g\big(X(T \wedge \tau^\varepsilon_{2\ell}) \big) \Big) =   \sum_{\{\ell \, : \, \tau^{\varepsilon}_{2\ell + 1} < T\}} \big( g_{\theta} ( \varepsilon)  - g_{\theta} (0) \big)\Big\vert_{\theta \, =\, \Theta(\tau^{\varepsilon}_{2\ell + 1})} + O(\varepsilon)
\]
\begin{equation} 
\label{eq: gXe2}
=\, \sum_{\{\ell \, : \, \tau^{\varepsilon}_{2\ell + 1} < T\}} \Big( \varepsilon g^{\prime}_{\theta}(0+) +  \int^{\varepsilon}_{0} (\varepsilon - r) g_{\theta}^{\prime\prime}(r) {\mathrm d} r  \Big) \Big \vert_{\theta \, =\, \Theta(\tau^{\varepsilon}_{2\ell + 1})} + O(\varepsilon) ,
\end{equation}
where we have used Proposition \ref{prop: D}(i) to obtain the term $\, O(\varepsilon) $. We also have
\[
\bigg \lvert \sum_{\{\ell  : \, \tau^{\varepsilon}_{2\ell + 1} < T\}}\Big(  \int^{\varepsilon}_{0} (\varepsilon - r) \,g_{\theta}^{\prime\prime}(r) {\mathrm d} r \Big) \Big \vert_{\theta  = \Theta 
(\tau^{\varepsilon}_{2\ell + 1})}   \bigg| \, \le  \sum_{ \{\ell  : \, \tau^{\varepsilon}_{2\ell + 1} < T\}} \varepsilon \int_{0}^{\varepsilon} c(r)  {\rm d} r =  \big( \varepsilon N(T, \varepsilon) \big) \int_{0}^{\varepsilon} c(r)  {\rm d} r  \xrightarrow[\varepsilon \downarrow 0]{} 0
\]
in probability, where we   set 
$$\,
N(T, \varepsilon) \,  :=  \,\sharp \, \big\{ \ell \in \mathbb{N}_{-1}: \tau^{\varepsilon}_{2 \ell+1 } < T \big\}
$$ 
and use  Theorem VI.1.10 in \cite{RY} for the convergence $\,\, \varepsilon N(T, \varepsilon) \xrightarrow[\varepsilon \downarrow 0]{} L^{\Vert X\Vert} (T) \,\,$ in probability (the ``downcrossings" representation of local time). This, in conjunction with (\ref{limit2}) and (\ref{eq: gXe2}), gives the convergence 
\begin{equation}
\label{Vg2}
\sum_{\{\ell \, : \, t_{1} \leq \tau^{\varepsilon}_{2\ell + 1} < t_{2}\}} \Big( \varepsilon g^{\prime}_{\theta}(0+)  \Big) \bigg\vert_{\theta \, =\, \Theta(\tau^{\varepsilon}_{2\ell + 1})} \,\, \xrightarrow[\varepsilon \downarrow 0]{} \,\,\, V_{g}^{X}(t_{2}) - V_{g}^{X}(t_{1}) \qquad \text{in probability,}
\end{equation}
for fixed $\, 0 \le t_1 < t_2 < \infty\,.$ On the other hand, with $\, \, C_{g}:= \sup_{\theta \in [0, 2\pi)} |g_{\theta}^{\prime}(0+)|\,$, we have again
\[
\sum_{\{\ell \, : \, t_{1} \leq \tau^{\varepsilon}_{2\ell + 1} < t_{2}\}} \Big( \varepsilon \vert g^{\prime}_{\theta}(0+) \vert \Big) \Big\vert_{\theta \, =\, \Theta(\tau^{\varepsilon}_{2\ell + 1})} \, \leq \, C_{g} \cdot \varepsilon \big( N(t_{2}, \varepsilon) - N(t_{1}, \varepsilon) \big) \xrightarrow[\varepsilon \downarrow 0]{} \, C_{g} \big( L^{\Vert X\Vert }(t_{2}) - L^{\Vert X\Vert }(t_{1}) \big)\, ,
\]
in probability. Together with (\ref{Vg2}),  this last convergence in probability leads to the estimate (\ref{VgL}), which in turn implies that the process $\, V_{g}^{X}(\cdot)\,$ is of finite variation on compact intervals. Thus the process $\, g(X(\cdot))\,$ is a continuous semimartingale, and the last claim of {\bf (i)} is justified. The claim {\bf (ii)} follows readily. %\newpage

\smallskip
\noindent
{\bf (iii)} Finally, we   need to argue that the ``partition of local time" property (\ref{eq: LTA}) leads to the representation \eqref{eq: VXg}. By virtue of (\ref{Vg2}), it suffices to show\newpage
\[
\sum_{\{\ell \, : \, \tau^{\varepsilon}_{2\ell + 1} < T\}} \Big( \varepsilon g^{\prime}_{\theta}(0+)  \Big) \Big\vert_{\theta \, =\, \Theta(\tau^{\varepsilon}_{2\ell + 1})}\,\, \xrightarrow[\varepsilon \downarrow 0]{} \,\,\bigg( \int_{0}^{2\pi} g^{\prime}_\theta (0+) \, {\bm \nu}({\mathrm d} \theta) \bigg) \,L^{\Vert X\Vert }(\cdot) \, .
\]
This can be done in exactly the same manner as in the last part of the proof of Theorem 4.1 in \cite{IKPY}, so we refer to that proof for this part. \qed

\begin{definition}{\bf \textsc{Walsh} Semimartingales:} 
\label{def: Walshsemi}
A given semimartingale on rays $\, X(\cdot)\,$ as in   Definition \ref{def: semirays}, which satisfies the ``partition of local time" property (\ref{eq: LTA}) for some probability measure $\, \bm \nu\,$ on    $\,   \mathcal{B}([0, 2\pi)) ,$ will be called   {\it \textsc{Walsh} semimartingale with  driver  $\, U(\cdot)\,$ and  angular measure  $\,\bm\nu\,$.}  
\end{definition}

Let us stress that      \textsc{Walsh} semimartingales    %defined as in   Theorem 2.1 in \cite{IKPY} (see also Definition \ref{def: Walshsemi} below), 
are also   semimartingales on rays, but the converse need not be  true; cf.\,\,Remark 9.4 in \cite{IKPY}. In fact, Definition \ref{def: semirays}   makes no provision regarding the behavior of $\, X(\cdot)\,$ at the origin --- i.e., about the manner in which $\, X(\cdot)\,$ chooses the next ray    when  it ``tries to  extricate itself  from the origin".

\begin{remark} {\it The Planar Semimartingale Property;   
 \textsc{Walsh} Semimartingales.}  
\label{rmk: gen1}
Theorem \ref{Thm: Gen}   generalizes Theorem 4.1 of \cite{IKPY} to a larger  class   of functions, namely, the class $\,\mathfrak D\,$ of Definition \ref{def: D}.  Its part (i)   provides     results on a larger class of processes, for which the ``partition of local time" property (\ref{eq: LTA}) may not hold. 

With $\, g_{1}(r, \theta) = r \cos \theta, \,\, g_{2}(r, \theta) = r \sin\theta\,$, we deduce from Theorem \ref{Thm: Gen}(i) that a process $\, X(\cdot)\,$ as in  Definition \ref{def: semirays} is indeed a two-dimensional semimartingale. If this semimartingale $\, X(\cdot)\,$ satisfies also the ``partition of local time" property  (\ref{eq: LTA})  for some probability measure $\, \bm \nu\,$ on   $\,   \mathcal{B}([0, 2\pi)) $, then %Theorem \ref{Thm: Gen}(ii) gives the decompositions 
\begin{equation}
\label{eq: X1}
g_{1}\big(X(\cdot)\big) = g_{1} \big(X(0)\big) \,+ \int_0^{\, \cdot} \mathbf{ 1}_{\{ X(t) \neq \mathbf{ 0}\}}\,  \cos \big( \text{arg}(X(t)) \big) \,    \mathrm{d} U(t)   \, + \, \gamma_{1}\, L^{\Vert X \Vert} (\cdot) \, ,
\end{equation}
\begin{equation}
\label{eq: X2}
g_{2}\big(X(\cdot)\big) = g_{2} \big(X(0)\big) \,+ \int_0^{\, \cdot} \mathbf{ 1}_{\{ X(t) \neq \mathbf{ 0}\}} \, \sin \big( \text{arg}(X(t)) \big) \,    \mathrm{d} U(t)   \, + \, \gamma_{2} \, L^{\Vert X \Vert} (\cdot) \, ,
\end{equation}
hold by virtue of Theorem \ref{Thm: Gen}(iii), where 
$$\, 
\gamma_{1}: = \int_{0}^{2 \pi} \cos (\theta) \, {\bm \nu}({\rm d} \theta) \,\quad \text{and} \quad   \, \gamma_{2}: = \int_{0}^{2 \pi} \sin (\theta) \, {\bm \nu}({\rm d} \theta) \, .
$$ 
The equations (\ref{eq: X1}), (\ref{eq: X2}) are equivalent to the stochastic integral equations in Theorem 2.1 of \cite{IKPY}, after some slight adjusting of notation. Thus,    Definition \ref{def: Walshsemi} above is consistent with the terminology in \cite{IKPY} for \textsc{Walsh} semimartingales.

By virtue of (\ref{eq: LTA}), the probability measure $\,\bm\nu\,$ captures the ``intensity  of excursions of $\, X(\cdot)\,$ away from the origin"     along the rays in a  given set of angles. Thus, under the property (\ref{eq: LTA}),     when   the process $\, X(\cdot)\,$ finds itself at the origin,  it chooses the next ray for its voyage  according to this ``angular measure" $\,\bm\nu$.    
\end{remark}

%%%%%%%%%%%%%%%%%%%%%%
\subsection{A Refined Stochastic Calculus}
\label{sec: refine}
%%%%%%%%%%%%%%%%%%%%%%

%\footnote{I rewrote this subsection, adding semimartingales on rays. I do not want to give readers the impression that there is anything too special about Walsh semimartigales to make this stochastic calculus work. I made other changes accordingly, including its proof, which is, basically the same as the previous version. Please check. (Minghan)}In fact, a 
A further refinement  of the change-of-variable formula \eqref{eq: gen}, \eqref{eq: VXg} is possible   for a class of test-functions that    extends  the class  $\, \mathfrak{D} \,$ of Definition \ref{def: D}, as follows. This will not be used until Section 5. %For our use, we shall present here only its version for \textsc{Walsh} Semimartingales; the version for semimartingales on rays simply replaces the last term in (\ref{eq: ito}) by a finite-variation process, as in (\ref{eq: gen}).

%For  \textsc{Walsh} Semimartingales   a refinement of the change-of-variable formula \eqref{eq: gen}, \eqref{eq: VXg} is possible   for a class of test-functions that    extends  the class  $\, \mathfrak{D} \,$ of Definition \ref{def: D},   as follows.

\begin{definition}
\label{def: D3}
Let $\, \mathfrak{C} \,$   be the class of \textsc{Borel}-measurable  
functions $ \, g:    \R^{2} \rightarrow \R\,$,  such that: 

\smallskip
\noindent
(i) ~~\,for every $\,\theta \in [0, 2 \pi)\,$, the function $\,  r \mapsto g_{\theta}(r)   :=  g(r, \theta)    \,$ is the difference of two convex and continuous functions on $\, [0, \infty )\,$, and thus the left- and right-derivatives $\, r \mapsto D^{\pm} g_\theta (r)\,$ exist and are of finite variation on compact subintervals of $\, (0, \infty)\,$;

\noindent
(ii) ~the function $\, \theta \mapsto D^{+} g_\theta(0) \,$ is well-defined and bounded;

\noindent
(iii) ~~there exist a real number $\,\eta >0\,$ and a finite measure $\, \mu \,$ on $\, \big( (0, \eta), \, \mathcal{B}( (0, \eta)) \big)\,$, such that for all $\, \theta \in [0, 2\pi)\,$ and $\, 0 < r_{1} < r_{2} \leq \eta \,$, we have $\, | D^{2} g_\theta( [r_{1}, r_{2}) ) | \leq \mu \big( [r_{1}, r_{2}) \big)\,$. Here we denote by $\, D^{2} g_\theta\,$ the ``second-derivative" measure of $\, g_\theta \,$, i.e.,\newpage $$\, D^{2} g_\theta \big( [r_{1}, r_{2}) \big) = D^{-} g_\theta(r_{2})-D^{-} g_\theta(r_{1})\, \qquad \forall \, 0 < r_{1} < r_{2} <\infty \, .$$
\end{definition}

 For this larger class of functions, we have the following  extension of the \textsc{Freidlin-Sheu}-type change of variable formula  developed in Theorem \ref{Thm: Gen}; its proof is  in the Appendix, Section 6.  The summation that appears in \eqref{eq: ito} right below makes sense, because the summand is nonzero only for countably many $\,\theta$'s; indeed, $\, \Theta (\cdot)\,$ is constant on each excursion interval of $\, \Vert X(\cdot) \Vert \,$ away from the origin, and on each generic path  there are at most countably-many such intervals.

\begin{thm}
\label{prop: itoformula}
We let $\, X(\cdot)\,$ be a semimartingale on rays with angular measure $\,\bm \nu\,$, and recall the notation $\, \Theta (\cdot)   = \text{arg} \big( X(\cdot) \big) $. Then, for any function $\, g \in \mathfrak{C}\,$ as in Definition \ref{def: D3}, the process $\, g(X(\cdot)) \,$ is a continuous semimartingale   and satisfies the \textsc{Freidlin-Sheu}-type identity
\begin{equation}
\label{eq: ito}
g \big(X(\cdot)\big) = g \big(X(0)\big) \,+ \int_0^{\, \cdot} \mathbf{ 1}_{\{ X(t) \neq \mathbf{ 0}\}}  \,\,D^{-} g_{\Theta (t)} \big( \Vert X(t)\Vert  \big) \,  {\mathrm d} \Vert X(t) \Vert \,~~~~~~~~~~~~~~~~~~~~~~~~~~~~~~~~~~~~~~~~~~~~~~~~~~~~~
\end{equation}
\[
~~~~~~~~~~~~~~~~~~ +   \sum_{\theta \in [0, 2\pi) } \int_{0}^{\,\cdot}  \int_{0}^{\infty}  {\bf 1}_{ \{ X(t) \neq {\bm 0}, \,\, \Theta (t) = \theta \} }   \, D^{2} g_\theta ({\rm d} r)\, L^{\Vert X \Vert} ({\rm d}t , r) \, + \,V_{g}^{X}(\cdot) ,
\]
with the process $\, V_{g}^{X}(\cdot)\,$ satisfying (\ref{VgL}). Furthermore, for any function $\, f \,$ as in   Lemma \ref{lm: finite}, we have 
\begin{equation}
\label{eq: density}
\int_{0}^{\,\cdot} f(X(t)) \, {\rm d} \langle \Vert X \Vert \rangle (t) \, = \, 2\, \sum_{\theta \in [0, 2\pi) } \int_{0}^{\,\cdot} \int_{0}^{\infty} {\bf 1}_{ \{ X(t) \neq {\bm 0}, \,\, \Theta (t) = \theta \} } \,    f(r,\theta)   \, {\rm d}r   \, L^{\Vert X \Vert} ({\rm d}t , r)  .
\end{equation}
Finally, if $\, X(\cdot)\,$ is a \textsc{Walsh} semimartingale with angular measure $\,\bm \nu\,$, then (\ref{eq: ito}) holds with
\begin{equation}
\label{eq: VXg2}
V_{g}^{X}(\cdot) = \, \Big( \int_{0}^{2\pi} D^{+} g_\theta(0) \,  {\bm \nu}({\mathrm d} \theta) \Big)\, L^{\Vert X\Vert }(\cdot)\, \, .
\end{equation}
\end{thm}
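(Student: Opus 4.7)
The plan is to mimic the excursion-based decomposition used in the proof of Theorem \ref{Thm: Gen}(i), but to replace the one-dimensional generalized \textsc{It\^o} rule for twice-differentiable functions by the full \textsc{It\^o}--\textsc{Tanaka} formula for differences of convex functions (Theorem VI.1.5 in \cite{RY}), which is applicable to each radial slice $g_{\theta}$ by Definition \ref{def: D3}(i). Continuity of $g(X(\cdot))$ follows because a function in $\mathfrak{C}$ is again continuous in the tree-topology (same argument as in Proposition \ref{prop: D}(ii), since each $g_\theta$ is continuous on $[0,\infty)$ and the values $g_\theta(0)$ all coincide).

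Fix $T>0$ and introduce the stopping times $\{\tau^\varepsilon_k\}_{k\in\mathbb{N}_{-1}}$ as in (\ref{eq: tau rec}), splitting
\[
g(X(T)) - g(X(0)) \,=\, A_\varepsilon(T) + B_\varepsilon(T)
\]
exactly as in (\ref{eq: gXe}), with $A_\varepsilon(T)$ the ``away-from-origin'' sum and $B_\varepsilon(T)$ the ``near-origin'' sum. By Proposition \ref{prop: semirays}, $\Theta(\cdot)$ is constant on each excursion interval $(\tau^\varepsilon_{2\ell+1},\tau^\varepsilon_{2\ell+2})$ with value $\theta^\varepsilon_\ell := \Theta(\tau^\varepsilon_{2\ell+1})$; applying the \textsc{It\^o}--\textsc{Tanaka} formula on each such interval to the convex-difference function $g_{\theta^\varepsilon_\ell}$ and summing over $\ell$ yields
\[
A_\varepsilon(T) \,=\, \int_0^T \mathbf{1}_{E_\varepsilon}(t)\, D^{-} g_{\Theta(t)}(\|X(t)\|)\,{\rm d}\|X(t)\| \,+ \sum_{\theta \in [0,2\pi)} \int_0^T\!\!\int_0^\infty \mathbf{1}_{E_\varepsilon}(t)\mathbf{1}_{\{\Theta(t)=\theta\}}\, D^{2}g_\theta({\rm d}r)\, L^{\|X\|}({\rm d}t,r),
\]
where $E_\varepsilon := \bigcup_\ell (\tau^\varepsilon_{2\ell+1},\tau^\varepsilon_{2\ell+2})$ and only countably many $\theta$'s contribute because a continuous path has at most countably many excursions. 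Letting $\varepsilon\downarrow 0$, $\mathbf{1}_{E_\varepsilon}\uparrow\mathbf{1}_{\{X(t)\neq{\bm 0}\}}$, and monotone/dominated convergence --- controlled near the origin by Definition \ref{def: D3}(iii) together with Lemma \ref{lm: finite}, and away from the origin by the fact that $L^{\|X\|}(T,r)$ vanishes for $r>M(T):=\sup_{0\le t\le T}\|X(t)\|$ --- delivers the first two terms on the right-hand side of (\ref{eq: ito}).

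The sum $B_\varepsilon(T)$ is handled as in the proof of Theorem \ref{Thm: Gen}(i), now via the \textsc{Taylor} expansion
\[
g_\theta(\varepsilon) - g_\theta(0) \,=\, \varepsilon\, D^{+}g_\theta(0) \,+ \int_{(0,\varepsilon)} (\varepsilon - r)\, D^{2}g_\theta({\rm d}r),
\]
whose remainder is bounded in modulus by $\varepsilon\,\mu((0,\varepsilon))$. Since $\varepsilon N(T,\varepsilon)\to L^{\|X\|}(T)$ in probability (downcrossings representation), this remainder vanishes, and the linear term converges exactly as in (\ref{Vg2}) to a continuous finite-variation process $V_g^X(\cdot)$ obeying (\ref{VgL}) --- proving in particular that $g(X(\cdot))$ is a continuous semimartingale. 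The density identity (\ref{eq: density}) is obtained from the same excursion decomposition applied to $\int_0^{\cdot} f(X(t))\,{\rm d}\langle\|X\|\rangle(t)$, combined with the occupation-density formula (\ref{eq: LDens}) on each excursion (on which $\Theta$ is constant). Finally, the \textsc{Walsh} identification (\ref{eq: VXg2}) follows by reproducing verbatim the final step of the proof of Theorem \ref{Thm: Gen}(iii), with $D^{+} g_\theta(0)$ playing the role of $g'_\theta(0+)$.

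The hard part will be the $\varepsilon\downarrow 0$ limit in the local-time term of $A_\varepsilon$: both the signed measure $D^{2}g_\theta$ on $(0,\infty)$ and the random Radon measure $L^{\|X\|}({\rm d}t,r)$ are only of finite variation, with no global integrability, so the limit requires a careful truncation. Near the origin this is tamed by the uniform-in-$\theta$ majorant $\mu$ from Definition \ref{def: D3}(iii) and Lemma \ref{lm: finite}; away from the origin one uses that on each sample path at most finitely many excursion angles $\theta^\varepsilon_\ell$ with $\tau^\varepsilon_{2\ell+1}<T$ correspond to excursions reaching any fixed height $\eta>0$, so the tail in $r$ can be controlled path-by-path by boundedness of $r\mapsto L^{\|X\|}(T,r)$ on $[0,M(T)]$.
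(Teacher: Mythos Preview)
Your proposal is correct and follows essentially the same route as the paper: the excursion decomposition of Theorem~\ref{Thm: Gen}, with the one-dimensional \textsc{It\^o}--\textsc{Tanaka} formula replacing the generalized \textsc{It\^o} rule on each excursion, and the same downcrossings argument for $V_g^X$. Two minor differences are worth noting. First, the paper front-loads the ``hard part'' you identify --- the well-definedness, continuity, adaptedness, and finite variation of the local-time process in (\ref{eq: ito}) --- as a standalone extension of Lemma~\ref{lm: gprime}(ii), proved by exactly the truncation you describe (the majorant $\mu$ near the origin, finitely many $\eta$-crossing excursions away from it); you fold this into the $\varepsilon\downarrow 0$ passage, which works but leaves adaptedness unaddressed. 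Second, for the density identity (\ref{eq: density}) the paper takes a shorter path than your direct excursion argument: it observes that any $f$ as in Lemma~\ref{lm: finite} is the second derivative of some $g\in\mathfrak{D}\subset\mathfrak{C}$, applies both Theorem~\ref{Thm: Gen} and the just-established (\ref{eq: ito}) to this $g$, and reads off (\ref{eq: density}) by comparing the two decompositions.
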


%We close this section with a note that, all the stochastic calculus developed here easily extends to Walsh diffusions with an explosion time, to be introduced soon, with the symbol ``$\cdot$" replace by ``$\cdot \wedge S_{n}$". We will use Theorems \ref{Thm: Gen} and \ref{prop: itoformula} in this way repeatedly in later sections.

%%%%%%%%%%%
\section{A Study of Walsh Diffusions with Angular Dependence} 
\label{sec3}
%%%%%%%%%%%

In this section  we  provide conditions   under which existence and uniqueness in distribution hold, up to an explosion time, for processes   we call  \textsc{Walsh} {\it diffusions  with angular dependence.}   In the three subsections that follow we  discuss, respectively,    the basic setting,   the driftless case,  and   the case with drift. 

In the rest of this work, we consider an arbitrary but fixed probability measure $\, \bm \nu\,$ on the space $\, ( [0, 2 \pi),$ $ \mathcal{B}([0, 2\pi))) $, which will always be the ``angular measure" of our \textsc{Walsh} diffusions. 

\subsection{Walsh Diffusions with Angular Dependence; Explosions}

We will consider \textsc{Walsh} diffusions with values in  a \textsc{Borel}  
set  {\it which is open in the tree-topology and contains the origin.} More precisely, we fix  a measurable function $\, \ell : [0, 2\pi) \rightarrow (0, \infty] \,$ which is bounded away from zero, and  consider the set 
\begin{equation}
\label{eq:set_I}
I\,  : = \, \big\{ x \in \R^{2} : \, 0 < \Vert x \Vert < \ell(\text{arg}(x)) \,\,\text{or} \,\,  x = {\bm 0}   \big\} \, = \big\{ (r, \theta): \, 0 \leq r < \ell(\theta), \,\, 0 \le \theta < 2 \pi \big\}
\end{equation}
expressed in Euclidean and   polar coordinates, respectively. We consider also the punctured set $\, \check{I} := \, I \setminus \{ \bm 0 \}, $ as well as the closure $\, \overline{I}\,$   of $\, I\,$ under the tree-topology in the collection of all the ``extended rays"; that is, even when $\, \ell(\theta) = \infty \,$ holds for some $\,\theta$'s, we set  $\, \overline{I}\,= \,\big\{ (r, \theta): \, 0 \leq r \leq \ell(\theta), \,\, 0 \le \theta < 2 \pi \big\}\, . $
  
  \smallskip
Finally, we consider   a strictly increasing  sequence of measurable functions $\,\{ \ell_{n}\}_{n=1}^{\infty}\,,$ where each $\,\ell_{n}: [0, 2\pi) \rightarrow (0, \infty)\,$ is bounded away from zero, and  such that $\, \ell_{n} (\theta) \uparrow \ell (\theta)\,$ as $\, n \uparrow \infty\,$, $\, \forall \, \theta \in [0, 2\pi)\,$. We set $$\, I_{n}\,: = \, \big\{ (r, \theta): \, 0 \leq r < \ell_{n}(\theta) , \,\, 0 \le \theta < 2 \pi\big\}, \,\, \,\,\,\,\forall \,\,\, n \in \mathbb{N}. $$ 
By the generalized \textsc{It\^o} rule (Theorem 3.7.1 in \cite{KS}), we see that (\ref{eq: ||X||}) implies \newpage
\begin{equation}
\label{eq: ||X||2}
\Vert X(\cdot)\Vert = \Vert X(0) \Vert + \int_{0}^{\,\cdot} {\bf 1}_{\{ \Vert X(t) \Vert >0 \} } \, {\rm d}U(t) \, + L^{\Vert X \Vert}(\cdot) \, . \qquad
\end{equation}

%\footnote{I rewrote the part starting from here and before Definition 3.1, as I think that we may directly use ``\textsc{Walsh} semimartingale" instead of writing its properties again and again. Also, we have included a rigorous definition for Walsh diffusions below, so readers should not be confused. Again, I tried my best to make all other possible   changes regarding this one. (Minghan)}
To introduce \textsc{Walsh} diffusions, let us fix \textsc{Borel}-measurable functions $\, {\bm b}: \, \check{I} \rightarrow \R \,$ and $\, {\bm s}: \, \check{I} \rightarrow \R \,$  and consider finding a \textsc{Walsh} semimartingale $\, X (\cdot) ,$ with angular measure $\,{\bm \nu}\,$ and %planar process $\, X (\cdot) ,$  continuous in the tree-topology and satisfying (\ref{eq: ||X||2}), with 
driven by an \textsc{It\^o} process $\, U (\cdot) \,$, whose instantaneous drift and dispersion depend at any given time $\,t\,$ on the current position   $\, X ( t) \,$ through the functions $\, {\bm b}\,$ and $\, {\bm s}\,$. With this dispensation, the equation  \eqref{eq: ||X||2} becomes  
\begin{equation}
\label{eq: WalshDiff}
\Vert X(\cdot)\Vert = \Vert X(0) \Vert + \int_{0}^{\,\cdot} {\bf 1}_{\{ \Vert X(t) \Vert >0 \} } \big[ {\bm b} \big( X(t) \big) \, {\rm d}t + {\bm s}\big( X(t) \big) \, {\rm d}W(t)\big] \, + L^{\Vert X \Vert}(\cdot)\, .
\end{equation}
It is important to note that  (\ref{eq: WalshDiff})   represents the radial part $\, \Vert X(\cdot)\Vert\,$ as a reflected \textsc{It\^o} process, whose local characteristics depend at each time $t$ on the full position $\, X( t)\,$ and not just the radial part $\Vert X(t) \Vert$. This is the so-called ``angular dependence", as introduced in Section 8 of \cite{IKPY}.

Furthermore, for the sake of uniqueness,  %with the measure $\, \bm\nu\,$ as specified at the beginning of this section  and with the notation of (\ref{eq: RA}),   
we impose also the requirement 
\begin{equation}
\label{conditionWalsh}
\, \int^{\cdot}_{0}{\bf 1}_{\{ X(t) \, =\,  {\bm 0} \}} \, {\mathrm d} t \,\equiv \,0 \,, \qquad ~~ %\text{and} \qquad  ~~ L^{R_{A}^{X}} (\cdot) \, \equiv \, {\bm \nu}(A) \, L^{ \lVert X \rVert}(\cdot) \,, \,\quad \forall \,\,\, A \in \mathcal B ([0, 2\pi)) .
\end{equation}
since  the property (\ref{eq: WalshDiff}) does permit an arbitrary amount of time to be spent by $\, X (\cdot)\,$ at the origin.    %We note also that the second requirement of (\ref{conditionWalsh}) is just (\ref{eq: LTA}), which implies that $\, \bm \nu\,$ is the angular measure. Since a process $\, X (\cdot)\,$ satisfying only (\ref{eq: WalshDiff})  can spend an arbitrary amount of time at the origin, we impose the ``non-stickiness" condition in the first requirement of (\ref{conditionWalsh})  in order to guarantee  uniqueness. 

\smallskip
Following Section 5.5 of \cite{KS}, we define \textsc{Walsh} Diffusions with explosion times, as follows.

\begin{definition} {\bf \textsc{Walsh} Diffusion:} 
\label{def: WalshDiff}
 A \textsc{Walsh} diffusion {\it with state-space  $\, I  ,$ associated with the triple $\,(\bm b ,  \bm s , \bm \nu )\,$ and defined up to an explosion time,}  is a triple $\, (X, W), \, (\Omega, \mathcal{F}, \mathbb{P}), \, \mathbb{F} = \{ \mathcal{F}(t) \}_{0 \leq t < \infty} \,$, such that: 

\smallskip
\noindent
(i) ~~ $\, (\Omega, \mathcal{F}, \mathbb{P}), \, \mathbb{F}= \{ \mathcal{F}(t) \}_{0 \leq t < \infty}  \,$ is a filtered probability space satisfying the usual conditions.

\smallskip
\noindent
(ii) ~~The process $ \, \{ X(t), \, \mathcal{F}(t); \, 0 \leq t < \infty \}\,$ is  adapted, $\, \overline{I}-$valued, and continuous in the tree-topology with $\, X(0) \in I \, $ a.s.; and $\, \{ W(t), \, \mathcal{F}(t); \, 0 \leq t < \infty \}\,$ is a standard one-dimensional Brownian motion. 

\smallskip
\noindent
(iii) ~~With $\,S_{n}:= \inf \big\{ t\geq 0: \, \Vert X(t) \Vert \geq \ell_{n} \big(\text{arg}(X(t)\big) \big\} = \inf \big\{ t \geq 0: \, X(t) \notin I_{n}\,\big\}$, we have
\[
\mathbb{P} \bigg( \int_{0}^{T\wedge S_{n}} {\bf 1}_{\{ \Vert X(t) \Vert >0 \} }\Big( \big|{\bm b}\big(X(t)\big)\big| + {\bm s}^{2}\big(X(t)\big) \Big)\, {\rm d}t < \infty \, , \quad 0\leq T < \infty \,\bigg) \, = \, 1 \, .  
\]
(iv) ~~For every $\, n \in \mathbb{N}\,$, the process $ \, \Vert X(\cdot \wedge S_{n})\Vert\,$ is a semimartingale that satisfies
\[
\mathbb{P} \bigg( \Vert X(T\wedge S_{n} )\Vert = \Vert X(0) \Vert + \int_{0}^{T\wedge S_{n}} {\bf 1}_{\{ \Vert X(t) \Vert >0 \} } \big[ {\bm b} \big( X(t) \big) \, {\rm d}t \, +\, {\bm s}\big( X(t) \big) \, {\rm d}W(t)\big] \, \qquad\qquad\qquad\qquad
\]
\[
\qquad\qquad\qquad \qquad\qquad \qquad\qquad \qquad + \, L^{\Vert X (\cdot \wedge S_{n}) \Vert}(T) , \quad 0 \leq T < \infty \bigg) = \, 1 \, .
\]
(v) ~~~We have   $\, \int^{\cdot}_{0}{\bf 1}_{\{ X(t) \, =\,  {\bm 0} \}} \, {\mathrm d} t \,\equiv \,0 \, $, and for every $\, n \in \mathbb{N}\,$ the ``partition of local time property" 
\[
~~~~~~~~~~~~~~~~~~~~~~~~~~~   L^{R_{A}^{X}(\cdot \wedge S_{n})} (T) \, = \, {\bm \nu}(A) \, L^{ \lVert X(\cdot \wedge S_{n}) \rVert}(T) \,, \qquad \forall \,\, A \in \mathcal B ([0, 2\pi))\, , \,\,  
   \,\, \mathbb{P-}\text{a.s.} ~~~~~~~~~ ~~~~~~~~~~~~~\qed
\]

\smallskip
Abusing terminology slightly, we shall  also call the state-process $\, X (\cdot)\,$ a {\it \textsc{Walsh} diffusion,} omitting the underlying probability space and Brownian motion.   We shall refer to   
\begin{equation}
\label{eq: S}
S : = \, \lim_{n \rightarrow \infty} S_{n} 
\end{equation}
as the {\it explosion time}  of $\, X (\cdot)$ from $\, I  $, and stipulate that $\, X(t) = X(S)\, , \,\, S \leq t < \infty\,$. We note that the assumption of continuity of $\, X (\cdot) \,$ on $\, \overline{I} ,$ in the topology induced by the tree-metric, implies that
\begin{equation}
\label{eq: explos}
S = \inf \{ t \geq 0: \, X(t) \notin I \}\,, % \qquad \text{and} 
\qquad X(S) \in  \big \{ (r, \theta): r = \ell (\theta) , \,\, 0 \le \theta < 2 \pi\big \} \,\,\, \text{a.e. on} \,\,\, \{ S<\infty\} .
\end{equation}
\end{definition}

\begin{remark}
By Theorem \ref{Thm: Gen}(ii), the processes $\, R_{A}^{X}(\cdot \wedge S_{n})\, , \, A \in \mathcal B ([0, 2\pi)), \, n \in \mathbb{N} \,$ are   continuous semimartingales. Moreover, the sets $\, I_{n}, \, n \in \mathbb{N}\,$ and $\, I\,$ are open in the topology induced by the tree-metric;  thus, the continuity of $\, X(\cdot)\,$ in the above topology  implies that $\, S_{n}, \, n \in \mathbb{N}\,$ and $\, S\,$ are stopping times. %It should be noted that 
We do not assume %the 
continuity up to time $\, \infty\,$, thus $\, X(S) \,$ may not be defined on the event $\, \{ S= \infty\}\,$. 
\end{remark}

%%%%%%%%%%%%%%%%%%%%
\subsection{The Driftless Case: Method of Time-Change}
\label{sec: timechange}
%%%%%%%%%%%%%%%%%%%%

%In this subsection, 
We study here \textsc{Walsh} diffusions with drift  $\, {\bm b} \equiv 0 \,$ and state-space $\, I =  \R^{2} = \{ (r, \theta): \, 0 \leq r < \infty , \,\, 0 \le \theta < 2 \pi\}\,$. To employ the method of time-change, we shall first establish in our setting   results  analogous to the \textsc{Dambis-Dubins-Schwarz} representation for local  martingales, and to the non-explosion property (Problem 5.5.3 in \cite{KS}).

\begin{definition} {\bf \textsc{Walsh} Brownian Motion:} 
\label{def: WalshBM}
  A    \textsc{Walsh} 
  semimartingale     $\, X(\cdot) $  will be called   {\it \textsc{Walsh} Brownian motion}, if its driver   $\, U(\cdot)\equiv B(\cdot)\,$  
    is  a Brownian motion;  see Definitions \ref{def: Walshsemi} and \ref{def: semirays}. 

  \end{definition}

  This terminology is  consistent with the   construction of the \textsc{Walsh} Brownian motion in \cite{BPY}; this is  thanks to Proposition 7.2 in \cite{IKPY},  and to Remark \ref{rmk: gen1} here. We note at this point that a \textsc{Walsh} Brownian motion is also a \textsc{Walsh} diffusion  with   state-space $\, I = \R^{2} \,,$ $\,{\bm b} \equiv 0\,,$   $\,{\bm \sigma} \equiv 1\,,$ and $\, \mathbb{P} (S = \infty) =1\,.$

\begin{prop}
\label{prop: DDS}
{\bf A \textsc{Dambis-Dubins-Schwarz}-Type Representation:}~~ Let $\, X(\cdot)\,$ be a \textsc{Walsh} semimartingale driven by a continuous local martingale $\, U(\cdot)\,,$ and with angular measure $\, \bm \nu \,$. 

There exists then, on a possibly extended probability space,  a \textsc{Walsh} Brownian motion $\, Z(\cdot)\,$ with the same angular measure  and with the property  $\, X(\cdot) = Z\big(\langle U \rangle (\cdot) \big)\,$.
\end{prop}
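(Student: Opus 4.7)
The plan is to use the classical Dambis-Dubins-Schwarz theorem to replace the driver $U(\cdot)$ by a standard Brownian motion, and then show that the natural time-change of $X(\cdot)$ produces a Walsh Brownian motion. More precisely, I would first apply the scalar DDS theorem to the continuous local martingale $U(\cdot)$: on a (possibly extended) filtered probability space there exists a standard Brownian motion $B(\cdot)$ such that $U(t)=B(\langle U\rangle(t))$ for all $t\ge 0$. Define the right-continuous inverse $\rho(u):=\inf\{s\ge 0:\langle U\rangle(s)>u\}$ and set $Z(u):=X(\rho(u))$ (extending in the usual way if $\langle U\rangle_\infty<\infty$). The goal is then to verify the three defining properties of a Walsh Brownian motion with angular measure $\bm\nu$ for $Z(\cdot)$: continuity in the tree-topology, the identity \eqref{eq: ||X||} with driver $B(\cdot)$, and the partition of local time \eqref{eq: LTA}.

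For the continuity of $Z$ in the tree-topology, the key observation is that $X(\cdot)$ is constant on every interval on which $\langle U\rangle$ is flat. Indeed, $U(\cdot)$ is constant on such an interval (being a continuous local martingale), so by \eqref{eq: ||X||} the Skorokhod reflection $\Vert X(\cdot)\Vert$ is constant there; if this constant is positive, Proposition \ref{prop: semirays} forces $X(\cdot)$ to be constant, while if the constant is zero, $X(\cdot)\equiv\bm 0$ because the origin is a single point. Hence $Z(u)=X(\rho(u))$ is insensitive to the jumps of $\rho$, and since $X$ is continuous in the tree-topology and $\rho$ is right-continuous with the left-limit property matching $X$ on flat pieces, $Z$ is continuous in the tree-topology.

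For the radial representation, I would use that the Skorokhod reflection commutes with monotone time-changes: from \eqref{eq: ||X||},
\begin{equation*}
\Vert Z(u)\Vert=\Vert X(\rho(u))\Vert=U(\rho(u))+\max_{0\le s\le\rho(u)}(-U(s))^+=B(u)+\max_{0\le v\le u}(-B(v))^+,
\end{equation*}
using $\langle U\rangle(\rho(u))=u$ and that the running-maximum functional of $U\circ\rho$ coincides with that of $B$. Thus $\Vert Z(\cdot)\Vert$ is precisely the Skorokhod reflection of the Brownian motion $B(\cdot)$, making $Z(\cdot)$ a semimartingale on rays driven by $B(\cdot)$.

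For the partition-of-local-time property, I would apply the time-change identity to each thinned process: since $R_A^Z(u)=R_A^X(\rho(u))$ and $\langle R_A^Z\rangle(u)=\langle R_A^X\rangle(\rho(u))=\langle\Vert X\Vert\rangle(\rho(u))\cdot\mathbf 1_A(\cdot)$-type accumulation transforms correctly, one obtains $L^{R_A^Z}(u)=L^{R_A^X}(\rho(u))$, and similarly $L^{\Vert Z\Vert}(u)=L^{\Vert X\Vert}(\rho(u))$; combined with the hypothesized identity $L^{R_A^X}(\cdot)=\bm\nu(A)L^{\Vert X\Vert}(\cdot)$ for $X$, this yields \eqref{eq: LTA} for $Z$, so that $Z(\cdot)$ is a Walsh Brownian motion with angular measure $\bm\nu$. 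Finally, by construction $X(t)=Z(\langle U\rangle(t))$, since $\rho(\langle U\rangle(t))$ lies in a flat piece of $\langle U\rangle$ containing $t$, on which $X$ is constant. The main obstacle I anticipate is the careful handling of the flat spots of $\langle U\rangle$, and in particular making sure that no spurious change of ray can occur on such an interval when $X$ is at the origin --- which is ruled out precisely because the origin is a single point in $\R^2$ and the polar description $(0,\theta)$ is identified with $\bm 0$ throughout the paper.
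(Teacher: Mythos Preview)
Your approach is essentially the paper's own: time-change $X$ by the right-inverse of $\langle U\rangle$, check tree-continuity via constancy of $X$ on flat pieces of $\langle U\rangle$ (using Proposition~\ref{prop: semirays} and \eqref{eq: ||X||}), verify the Skorokhod-reflection identity for $\Vert Z\Vert$ against $B$, and transfer the partition-of-local-time property through the time-change of local times. The one place where the paper is more careful is the case $\langle U\rangle(\infty)<\infty$: ``extending in the usual way'' understates the issue, because one must continue $Z$ itself (not merely $B$) as a \textsc{Walsh} Brownian motion beyond time $\langle U\rangle(\infty)$, and the paper does this by first extending $B$ and then invoking the explicit folding/unfolding construction of \cite{IKPY} to build $Z(\langle U\rangle(\infty)+\cdot)$ from $B(\langle U\rangle(\infty)+\cdot)$.
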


\begin{proof}
Let us  assume first that $\, \langle U \rangle (\infty) = \infty \,$. Define
\begin{equation}
\label{eq: timechange}
T(s): = \,\inf \{ t \geq 0 : \langle U \rangle (t) > s \} \, , \quad Z(s) :=\, X\big(T(s)\big)\, , \quad \mathcal{G}(s) : = \, \mathcal{F}\big(T(s)\big)\, , \quad 0 \leq s < \infty \, .
\end{equation}
Recall that $\, U(\cdot)\,$ is a continuous local martingale. Thus, by the proof of Theorem 3.4.6 in \cite{KS}, we have:

\smallskip
\noindent
(i) ~~ With $\, B(\cdot) := U\big(T(\cdot)\big) \,$, the process $\{ B(s) , \, \mathcal{G}(s) , \, 0 \leq s < \infty \}\,$ is   Brownian motion, and $\, U(t) = B\big(\langle U \rangle (t)\big)\, , \,\, 0\leq t < \infty \,$. 

\noindent
(ii) ~~There exists $\, \Omega^{\ast}  
\in \cal F\,$ with $\, \mathbb{P} (\Omega^{\ast}) =1 \,$, such that for every $\,   \omega \in \Omega^{\ast} \,$, we have
\begin{equation}
\label{Uconstant}
\langle U \rangle (t_{1}, \omega ) = \langle U \rangle (t_{2} ,\omega) \,\,\,\,\text{for some} \,\, 0\leq t_{1} < t_{2}< \infty \quad \Rightarrow \quad t \mapsto U(t, \omega) \,\,\text{ is constant on} \,\, [t_{1} , t_{2}] .
\end{equation}
Since $\, X(\cdot)\,$ is continuous in the tree-topology, we see from Proposition \ref{prop: semirays} that the constancy of $\, X(\cdot)\,$ on some interval $\, [t_{1},t_{2}]\,$ is implied by the constancy of $\, \Vert X(\cdot)\Vert\,$ on $\, [t_{1},t_{2}]\,$, which by (\ref{eq: ||X||}) can be implied by the same constancy of $\, U(\cdot)\,$. Thus the above property (ii) is still true if we replace (\ref{Uconstant}) by 
\begin{equation}
\label{Xconstant}
\langle U \rangle (t_{1}, \omega ) = \langle U \rangle (t_{2} ,\omega) \,\,\,\,\text{for some} \,\, 0\leq t_{1} < t_{2}< \infty \quad \Rightarrow \quad t \mapsto X(t, \omega) \,\,\text{ is constant on} \,\, [t_{1} , t_{2}] .
\end{equation}
In the spirit of Problem 3.4.5(iv) in \cite{KS}, this implies the continuity in the tree-topology of the process $\, Z(\cdot) :=\, X(T(\cdot))\,$. Moreover, we observe from (\ref{eq: ||X||}) that 
\[
\Vert Z(s) \Vert = \, U\big(T(s)\big) + \max_{0 \le t \le \, T(s)} \big( - U(t)\big)^{+} = U\big(T(s)\big) + \max_{0 \le t \le \, s} \Big( - U\big(T(t)\big)\Big)^{+}\, = B(s) + \max_{0 \le t \le \, s} \big( - B(t) \big)^{+} .
\]
The second equality uses the fact,   implied by (\ref{Uconstant}),  that $\, U(\cdot)\,$ is constant on $\, [ T(t-) , T(t)]\,$ for every $\, t \,$.

Finally, we claim that  {\it the    ``partition of local time"  
property (\ref{eq: LTA}) of $\, X(\cdot)\,$ is inherited by $\,Z(\cdot)\,$.} Indeed,   
\[
\, R_{A}^{Z}(\cdot) =  \lVert Z (\cdot) \rVert \cdot {\bf 1}_A \big(\text{arg}\big(Z(\cdot)\big) \big) = R_{A}^{X}\big( T(\cdot)\big) 
\]
%implies in conjunction with  Theorem \ref{Thm: Gen}(i) that the   process $\, R_{A}^{Z}(\cdot)\,$ is a continuous semimartingale  of the time-changed filtration $\, \{ \mathcal{G}(s) \}\,; $  and using (\ref{eq: gen}) and time-change,  that $\, \langle R_{A}^{Z}\rangle (\cdot) \equiv \langle R_{A}^{X} \rangle \big( T(\cdot)\big) \,$. 
 is continuous, in the same way   %the continuity of 
$  Z(\cdot) = X(T(\cdot)) $ is. % \footnote{~I am a bit uneasy with this statement. (IK)} 
Then by Theorem \ref{Thm: Gen} and time-change (Proposition 3.4.8 in \cite{KS}), we obtain that $\, R_{A}^{Z}(\cdot)\,$ is a continuous semimartingale  of the filtration $\, \{ \mathcal{G}(s) \}\,$, and that $\, \langle R_{A}^{Z}\rangle (\cdot) \equiv \langle R_{A}^{X} \rangle \big( T(\cdot)\big) \,$. %\footnote{~I rearranged this part. In particular, I added Proposition 4.8 in your Brownian motion book. Please check. (Minghan)}
Now it is easy to use (\ref{LT}) to obtain $\, L^{R_{A}^{Z}}(\cdot) \equiv L^{R_{A}^{X}}\big(T(\cdot)\big)\,;$ in particular, $\, L^{\Vert Z \Vert}(\cdot) \equiv L^{\Vert X \Vert}\big(T(\cdot)\big)\,$. Thus (\ref{eq: LTA}) implies $\, L^{R_{A}^{Z}}(\cdot) \equiv {\bm \nu}(A)\, L^{\Vert Z \Vert}(\cdot) \,$, which is the claim.\newpage

%\smallskip
It is clear at  this point that $\, Z(\cdot)\,$ is a \textsc{Walsh} Brownian motion with the same angular measure $\, \bm \nu\,$ as $\, X(\cdot)$, and that $\, X(\cdot) = Z\big(\langle U \rangle (\cdot) \big)\,$ holds, thanks to (\ref{Xconstant}).

\smallskip
\noindent
$\bullet~$ Next, we consider the case $\, \mathbb{P} \big( \langle U \rangle (\infty) < \infty\big) > 0 \,$. We shall argue   this case heuristically, as a rigorous argument is straightforward but  laborious. On the event $\,\{\langle U \rangle (\infty) < \infty\}\,$,   the limit $\, \lim_{t \rightarrow \infty} U(t) \,$ exists; therefore, so do the limits $\, \lim_{t \rightarrow \infty} \Vert X(t)\Vert \,$ and   $\, \lim_{t \rightarrow \infty}  X(t) \,$, thanks to (\ref{eq: ||X||}) and the continuity of $\, X(\cdot)\,$ in the tree-topology. It follows that  (\ref{eq: timechange}) is still well-defined; the only problem is that $\, Z(\cdot)\,$ need not be a \textsc{Walsh} Brownian motion anymore: it ``runs out of gas" from the time $\, \langle U \rangle (\infty)\,$ onwards, as does $\, B(\cdot) \,$.

\smallskip
We deal with this issue as follows: 
On the event $\,\{\langle U \rangle (\infty) < \infty\}\,$, we   keep $\, Z(\cdot)\,$ running on the time interval $\,[\langle U \rangle (\infty) , \infty )\,$, by first redefining $\, B(\cdot)\,$ on $\,[\langle U \rangle (\infty) , \infty )\,$ to make it a Brownian motion, as described in  
Problem 3.4.7 of \cite{KS}; then   following  the ``folding and unfolding" scheme in the proof of Theorem 2.1 in \cite{IKPY}, to construct pathwise a \textsc{Walsh} Brownian motion $\, Z(\langle U \rangle (\infty) + \cdot)\,$ with angular measure $\, \bm \nu\,$ and driven by $\, B(\langle U \rangle (\infty)+\cdot )\,$. This ``continued" process $\, Z(\cdot)\,$   satisfies all the required properties.
\end{proof}

We also have the following   result, regarding the absence of explosions  for \textsc{Walsh} diffusions with  $\, {\bm b} \equiv 0 \,$ and state-space $\, I = \R^{2} \,$. Its proof is in the Appendix, Section 6.

\begin{prop}
\label{nonexplo}
Suppose $\, X (\cdot) \,$ is a \textsc{Walsh} diffusion associated with the triple $\, ({\bf 0}, {\bm s}, {\bm \nu}) \,$ on the Euclidean plane $\,  \R^{2} = \{ (r, \theta): \, 0 \leq r < \infty , \,\, 0 \le \theta < 2 \pi\}\,$ and defined up to an explosion time $\, S$. Then   $\, S= \infty $ a.e. 
\end{prop}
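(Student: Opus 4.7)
The plan is to argue by contradiction. Suppose $\mathbb P(S<\infty)>0$; since the state space is $\R^{2}$ with $\ell_{n}(\theta)\uparrow\infty$, the continuity of $X(\cdot)$ in the tree-topology together with \eqref{eq: explos} forces $\|X(S_{n})\|\to\infty$ and in fact $\|X(t)\|\to\infty$ as $t\uparrow S$ on $\{S<\infty\}$. I introduce the continuous local martingale $U(t):=\int_{0}^{t}\mathbf 1_{\{\|X(s)\|>0\}}\,\bm s(X(s))\,\mathrm dW(s)$ and its quadratic variation $A(t):=\int_{0}^{t}\mathbf 1_{\{\|X(s)\|>0\}}\,\bm s^{2}(X(s))\,\mathrm ds$, both well-defined and finite on each $[0,S_{n}]$ by condition (iii) of Definition \ref{def: WalshDiff}. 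Setting $A(S):=\lim_{t\uparrow S}A(t)\in[0,\infty]$, I split the analysis according to whether $A(S)$ is finite or infinite.

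On the event $\{S<\infty,\,A(S)<\infty\}$, the classical \textsc{Dambis-Dubins-Schwarz} theorem (Theorem 3.4.6 in \cite{KS}) produces a Brownian motion $B$ with $U(t)=B(A(t))$, so $U$ extends continuously to $t=S$ and is bounded on $[0,S)$. Combining condition (iv) of Definition \ref{def: WalshDiff} with \textsc{Skorokhod}'s lemma gives $\|X(t)\|=\|X(0)\|+U(t)+\max_{0\le s\le t}\bigl(-\|X(0)\|-U(s)\bigr)^{+}$ on $[0,S)$; boundedness of $U$ then forces $\|X(\cdot)\|$ to be bounded on $[0,S)$, contradicting $\|X(t)\|\to\infty$.

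On the complementary event $\{S<\infty,\,A(S)=\infty\}$, I apply Proposition \ref{prop: DDS} to each stopped \textsc{Walsh} semimartingale $X(\cdot\wedge S_{n})$, whose driver is the continuous local martingale $U(\cdot\wedge S_{n})$. The explicit construction $Z(s):=X(T(s))$ with $T(s):=\inf\{t\ge 0:A(t)>s\}$ is consistent in $n$ and, since $A(S)=\infty$, is well-defined for every $s\in[0,\infty)$; this yields a single \textsc{Walsh} Brownian motion $Z(\cdot)$ with angular measure $\bm\nu$, satisfying $X(t)=Z(A(t))$ for $t\in[0,S)$. Because $\|Z(\cdot)\|$ is a reflected Brownian motion, it is recurrent at the origin with $\liminf_{s\to\infty}\|Z(s)\|=0$; picking $s_{k}\uparrow\infty$ with $\|Z(s_{k})\|\le 1$ and setting $t_{k}:=T(s_{k})\in[0,S)$ yields $t_{k}\uparrow S$ while $\|X(t_{k})\|=\|Z(s_{k})\|\le 1$, again contradicting $\|X(t)\|\to\infty$.

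The principal technical hurdle I foresee is the patching step in Case 2: Proposition \ref{prop: DDS} is stated for \textsc{Walsh} semimartingales defined on all of $[0,\infty)$, so I need either to invoke it on each $[0,S_{n}]$ and verify that the pathwise recipe $Z(s)=X(T(s))$ produces the same Walsh Brownian motion for every $n$, or to rerun its proof verbatim with $[0,S)$ replacing $[0,\infty)$ and $A(S)$ replacing $\langle U\rangle(\infty)$. Once that consistency is established, Case 1 reduces to the boundedness of a \textsc{Skorokhod} reflection and Case 2 reduces to the recurrence of reflected Brownian motion, both of which instantly contradict $\|X(t)\|\to\infty$ and yield $S=\infty$ almost surely.
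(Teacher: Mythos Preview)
Your proof is correct and follows the same dichotomy as the paper: split on whether the quadratic variation $A(S)$ is finite or infinite, and in each branch contradict $\|X(t)\|\to\infty$ as $t\uparrow S$.

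The one noteworthy difference is in your Case 2. You invoke the \textsc{Walsh} DDS representation (Proposition \ref{prop: DDS}) to realize $X$ as a time-changed \textsc{Walsh} Brownian motion, then use recurrence of the reflected Brownian motion $\|Z\|$; this is what creates the patching concern you flag, since Proposition \ref{prop: DDS} is stated for processes on all of $[0,\infty)$. The paper sidesteps this entirely by working only with the \emph{scalar} local martingale $U$ and appealing to Problems 3.4.11 and 5.5.3 in \cite{KS}: on the event $\{A(t\wedge S)=\infty\}$ one gets directly $\liminf_n\|X(t\wedge S_n)\|=0$ and $\limsup_n\|X(t\wedge S_n)\|=\infty$, which contradicts the continuity of $X$ in the tree-topology (the limit $\lim_n X(t\wedge S_n)$ must exist in $\overline{\R^2}$). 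No \textsc{Walsh}-level representation is needed, only the radial part. Your route via Proposition \ref{prop: DDS} is valid and the patching can indeed be carried out as you indicate (the recipe $Z(s)=X(T(s))$ is consistent across $n$ and defines $Z$ on all of $[0,\infty)$ when $A(S)=\infty$), but the paper's scalar argument is lighter and avoids the issue altogether.
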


Now we can state the existence-and-uniqueness result for a   \textsc{Walsh} diffusion without drift. As in the scalar case, we recall Remark \ref{rmk: 01law} and define the   sets 
\begin{equation}
\label{sets: I,Z}
\mathcal{I}({\bm s}) : = \Big\{ (r, \theta)\in \check{\R}^{2} : \int_{-\varepsilon}^{\varepsilon} \frac{{\rm d} y}{{\bm s}^{2}(r+y , \theta) } \, = \infty, \, \, \forall \, \varepsilon \in (0,r) \Big\} \,, \quad \mathcal{Z}({\bm s}) : = \big\{ x \in \check{\R}^{2} :   {\bm s}(x) = 0 \big\} .  
\end{equation}
 Since the \textsc{Engelbert-Schmidt} $0$-$1$ law is critical for establishing the one-dimensional existence-and-uniqueness result, we need to impose the following additional condition, in order to ensure that the above two sets are both bounded away from the origin, and that the integral process $\, T(\cdot)\,$ in the proof Theorem \ref{Thm: Walshnodrift} does not explode when the \textsc{Walsh} Brownian motion considered there is near the origin.

\begin{condition}\label{sigma}
{\it 
There exist an $\, \eta > 0\,$ and an  
integrable function $\, c : (0, \eta] \rightarrow [0, \infty)$, such that 
$$\,
 \qquad  \qquad  \qquad 
 \frac{1}{{\bm s}^{2}(r, \theta)}  \leq c(r)\, \qquad \text{holds for all} ~~\,\,\,\theta \in [0, 2\pi)\,,\,\,\,r \in (0, \eta] \,. \qquad   \qquad  \qquad  \qed
 $$
 }
\end{condition}
 
  Under this condition,   the following existence-and-uniqueness result, for a   \textsc{Walsh} diffusion without drift, is a two-dimensional analogue of  Theorem 5.5.4 in \cite{KS}; its proof is also in the Appendix.

\begin{thm}\label{Thm: Walshnodrift}
Suppose the function $\, {\bm  s}: \check{\R}^{2} \rightarrow \R \,$ satisfies  Condition \ref{sigma}. Then, for any given initial distribution $\, \bm\mu\,$ on $ \mathcal{B} (\R^{2})   ,$  there exists a non-explosive and unique-in-distribution \textsc{Walsh} diffusion $\, X (\cdot)\,$ with values in $\, \R^{2} \,$ and associated with the triple $\, ({\bf 0}, {\bm  s}, {\bm \nu})  ,$   if and only if $\, \, \mathcal{I}({\bm s}) = \mathcal{Z}({\bm s})\,$.
\end{thm}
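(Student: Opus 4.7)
The plan is to adapt the one-dimensional Engelbert--Schmidt framework (Theorem 5.5.4 in \cite{KS}) to the Walsh setting. The essential tools are the time-change representation of Proposition \ref{prop: DDS}, the non-explosion Proposition \ref{nonexplo}, the near-origin calculus provided by Lemma \ref{lm: finite}, and the uniqueness in distribution of Walsh Brownian motion (Proposition 7.2 in \cite{IKPY}). For the sufficiency of $\mathcal I(\bm s) = \mathcal Z(\bm s)$ (existence), begin with a Walsh Brownian motion $Z(\cdot)$ of angular measure $\bm\nu$ and initial law $\bm\mu$ on a suitable filtered probability space, driven by a Brownian motion $B(\cdot)$, and introduce the additive functional
\[
A(t) \, :=\, \int_0^t \bm s^{-2}\bigl(Z(u)\bigr)\, \mathbf{1}_{\{Z(u) \neq \bm 0\}}\, \mathrm{d} u .
\]
Its a.s.\ finiteness for each $t$ is obtained by splitting according to $\|Z(u)\| \le \eta$ versus $\|Z(u)\| > \eta$: on the first piece, Lemma \ref{lm: finite} applied to $f(r,\theta) = \bm s^{-2}(r,\theta)$ applies directly, with Condition \ref{sigma} supplying the required bound $c(r)$; on the second, a ray-wise application of the scalar Engelbert--Schmidt $0$--$1$ law together with $\mathcal I(\bm s) = \mathcal Z(\bm s)$ gives local integrability of $1/\bm s^2$ along each ray, and hence finiteness on each ray-excursion of $Z$ away from the origin. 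The same ingredients show $A$ is strictly increasing with $A(\infty) = \infty$ a.s. Setting $T := A^{-1}$ and $X(t) := Z(T(t))$, a time-change argument---essentially the reverse of Proposition \ref{prop: DDS}---produces a Brownian motion $W$ such that the radial part of $X$ obeys the equation required by Definition \ref{def: WalshDiff} with drift $\bm 0$ and dispersion $\bm s$; the partition-of-local-time property (\ref{eq: LTA}) and the flatness requirement (\ref{conditionWalsh}) pass through the time change.

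For uniqueness, let $X(\cdot)$ be any Walsh diffusion with the prescribed triple. Its driver $U(t) := \int_0^t \bm s(X(u))\, \mathbf 1_{\{X(u) \neq \bm 0\}}\, \mathrm{d} W(u)$ is a continuous local martingale with $\langle U\rangle(t) = \int_0^t \bm s^2(X(u))\, \mathbf 1_{\{X(u) \neq \bm 0\}}\, \mathrm{d} u$. Proposition \ref{prop: DDS} furnishes a Walsh Brownian motion $Z$ of angular measure $\bm\nu$ such that $X(\cdot) = Z(\langle U\rangle(\cdot))$. Writing $T := \langle U\rangle^{-1}$ and changing variables $u = \langle U\rangle(v)$, one obtains
\[
T(s) \, =\, \int_0^s \bm s^{-2}\bigl(Z(u)\bigr)\, \mathbf{1}_{\{Z(u) \neq \bm 0\}}\, \mathrm{d} u ,
\]
so $T$ is a deterministic functional of the path of $Z$. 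Since $Z$ is unique in distribution given $(\bm\mu,\bm\nu)$, so is $X = Z \circ T$; non-explosion of this $X$ is granted by Proposition \ref{nonexplo} (or directly from $A(\infty) = \infty$).

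For necessity, suppose first $\mathcal Z(\bm s) \not\subseteq \mathcal I(\bm s)$, and pick $x_0$ with $\bm s(x_0) = 0$ yet $1/\bm s^2$ locally integrable near $x_0$; then $X \equiv x_0$ and the non-trivial solution produced by the construction above are both Walsh diffusions starting at $x_0$, violating uniqueness. Conversely, if $\mathcal I(\bm s) \not\subseteq \mathcal Z(\bm s)$, any Walsh diffusion passing through such a point would force the associated Walsh Brownian motion $Z$ to linger there on a set of positive Lebesgue measure---impossible by the classical Engelbert--Schmidt dichotomy applied along the ray through $x_0$. The principal obstacle throughout is controlling $A$ near the origin, where the one-dimensional arguments do not transfer directly due to the roundhouse singularity of $Z$; this is precisely what Condition \ref{sigma} together with Lemma \ref{lm: finite} is designed to overcome, after which the argument reduces, ray by ray, to the classical Engelbert--Schmidt theory.
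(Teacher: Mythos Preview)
Your approach is essentially the same as the paper's: construct $X$ from a Walsh Brownian motion $Z$ by a time change through the additive functional $\int_0^{\cdot} \bm s^{-2}(Z(u))\,\mathbf{1}_{\{Z(u)\neq\bm 0\}}\,\mathrm{d}u$, and conversely use the Dambis--Dubins--Schwarz-type representation (Proposition~\ref{prop: DDS}) to recover $Z$ from an arbitrary $X$ and thereby establish uniqueness. The logical structure and the tools you invoke are the right ones.

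There is, however, a genuine gap in your treatment of the additive functional. You assert that $A(t)<\infty$ for every $t$, because ``$\mathcal{I}(\bm s)=\mathcal{Z}(\bm s)$ gives local integrability of $1/\bm s^2$ along each ray.'' This is not correct: the equality $\mathcal{I}(\bm s)=\mathcal{Z}(\bm s)$ says only that $1/\bm s^2$ is locally integrable along the ray at points \emph{outside} $\mathcal{Z}(\bm s)$; at points \emph{in} $\mathcal{Z}(\bm s)=\mathcal{I}(\bm s)$ the function $1/\bm s^2$ is by definition \emph{not} locally integrable. Consequently $A$ explodes precisely at $R:=\inf\{s\geq 0:\,Z(s)\in\mathcal{I}(\bm s)\}$, and the paper devotes a separate lemma (Lemma~\ref{lm: R=Ainfty}) to establishing exactly that $A(s)<\infty$ for $s<R$ while $A(R+\varepsilon)=\infty$ for every $\varepsilon>0$. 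This is what makes the inverse $T=A^{-1}$ well-defined on all of $[0,\infty)$ with values in $[0,R]$, and is what causes the resulting $X$ to become motionless upon hitting $\mathcal{Z}(\bm s)$ (cf.\ Remark~\ref{rmk: Walshnodrift}). Relatedly, your claim that ``the same ingredients'' yield $A(\infty)=\infty$ is too quick: on the event $\{R=\infty\}$ this requires a separate argument, and the paper supplies one (Lemma~\ref{lm: infinite}) based on the observation that the contributions to $A$ over successive excursions of $Z$ away from the origin are i.i.d.\ and strictly positive, hence sum to infinity.
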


\begin{remark}\label{rmk: Walshnodrift}
Assuming $\,  \mathcal{I}({\bm s}) = \mathcal{Z}({\bm s})\,$ and Condition \ref{sigma}, the \textsc{Walsh} diffusion in Theorem \ref{Thm: Walshnodrift} becomes motionless once it hits the set $\,  \mathcal{I}({\bm s})\,,$ but keeps   moving before that time. This can be seen in the same spirit as in Remarks 5.5.6, 5.5.8 of \cite{KS}.
\end{remark}

%%%%%%%%%%%%%%%%%%
\subsection{The General Case: Removal of Drift by Change of Scale}
%%%%%%%%%%%%%%%%%%
 
 Let us move now on to the study of \textsc{Walsh} diffusions with  drift, via the   method of ``removal of drift" followed by reduction to the driftless case of the previous subsection.  
 We  recall the set $\, I\,$ %: = \{ (r, \theta): \, 0 \leq r < \ell(\theta) , \,\, 0 \le \theta < 2 \pi\} \,,$ 
 from  \eqref{eq:set_I}, with a function $\, \ell : [0, 2 \pi) \rightarrow (0, \infty] \,$ which is measurable and bounded away from zero. We recall also the class $\, \mathfrak{D}\,$ of functions in Definition \ref{def: D}, and   adjust it presently to ``fit" the domain $\, I $.
 
\begin{definition}\label{def: D2}
Let $\, \mathfrak D_{I} \,$ be the class of \textsc{Borel}-measurable  functions $ \, g:    I   \rightarrow \R\,$ satisfying: \newpage

%\smallskip
\noindent
(i) ~for every $\,\theta \in [0, 2 \pi)\,$, the function $\,  r \mapsto g_{\theta}(r)   :=  g(r, \theta)    \,$ is differentiable on $\, [0, \ell(\theta) )\,$  and the derivative $\, r \mapsto g_{\theta}^{\prime}(r) \,$ is absolutely continuous on $\, [0, \ell(\theta) )\,$;

\noindent
(ii) ~the function $\, \theta \mapsto g_{\theta}^{\prime}(0+) \,$ is bounded;

\noindent
(iii)  there exist a constant $\,\eta \,$ with $\, 0 < \eta < \inf_{\theta \in [0, 2\pi)} \ell(\theta) \,$ and a \textsc{Lebesgue-}integrable function $\, c : (0, \eta] \rightarrow [0, \infty)$, such that for all $\, \theta \in [0, 2\pi)\,$ and $\, r \in (0, \eta] \,$, we have $\, | g^{\prime \prime}_\theta (r) | \leq c(r)\,$. 
\end{definition}

\begin{remark}\label{rmk: DIextend}
%Quite clearly, we may also extend 
The class $\,\mathfrak{D}_{I}\,$ of functions in Definition \ref{def: D2} can be generalized in the same manner as in Definition \ref{def: D3},  to an extended class that we shall denote by $\,\mathfrak{C}_{I}\,$. Then it is also easy to generalize Theorems \ref{Thm: Gen} and \ref{prop: itoformula} to \textsc{Walsh} semimartingales with values in $\, I \,,  $ and to functions in $\,\mathfrak{D}_{I}\,$ and $\,\mathfrak{C}_{I}\,$, respectively. We will later apply these adjusted versions, still under the names of Theorems \ref{Thm: Gen} and \ref{prop: itoformula}.  %\footnote{I made some changes here, please check. (Minghan)}%In particular, this stochastic calculus works for any function $\, g \in \mathfrak{C}_{B}\,$ and process $\, X  (\cdot) \in \mathcal{A} (x)\,$ with $\, x \in B\,$.
\end{remark}

We shall work throughout this subsection in the most general setting of Definition \ref{def: WalshDiff} for \textsc{Walsh} diffusions, and impose the following condition on the functions $\, {\bm b}: \, \check{I} \rightarrow \R \,$ and $\, {\bm s}: \, \check{I} \rightarrow \R \,$. 

\begin{condition}
\label{bsigma}
{\it
(i) ~  We have $\, {\bm s} (x)  \neq  0 \,$, $\, \forall \, x \in \check{I} \,$.

\noindent
(ii)~~ For every fixed $\, \theta \in [0, 2 \pi)\,$, both functions below are   locally integrable on $\, (0, \ell (\theta)):$ $$\, r \, \longmapsto \, \frac{{\bm b}(r, \theta)}{{\bm s}^{2}(r, \theta)} \qquad \text{and} \qquad r \, \longmapsto  \,  \frac{1}{{\bm s}^{2}(r, \theta)} \,.
$$  

\noindent
(iii) ~ There exists a constant $\,\eta \,$ with $\, 0 < \eta < \inf_{\theta \in [0, 2\pi)} \ell(\theta) \,$, such that $$\, \sup_{\, 0 <r \leq\eta  \atop \theta \in [0, 2 \pi)} \left( \frac{1+ \vert {\bm b}(r, \theta)\vert}{{\bm s}^{2}(r, \theta)}\right) \, < \infty \, .$$  }
\end{condition}

Under this Condition \ref{bsigma}, we define the {\it radial scale function} $\, p : \, I \rightarrow [0, \infty) \,$ by
\begin{equation}
\label{eq: p}
p(r, \theta) =p_{\theta}(r) \, :=\,  \int^{r}_{0} \exp \Big( - 2 \int^{y}_{0} \frac{ {\bm b}(z, \theta)\, }{\,   {{\bm s}}^{2}(z, \theta)\, } {\mathrm d} z \Big) {\mathrm d} y \, ,  \qquad (r, \theta) \in I \,, 
\end{equation}
as well as the {\it scale mapping} $\, \mathcal{P}: \, I \rightarrow J ,$ where 
\begin{equation}
\label{eq: P}
J : = \big\{ (r, \theta): \, 0 \leq r < p_{\theta}(\ell(\theta) -) , \,\, 0 \le \theta < 2 \pi\big\} \qquad \text{and} \qquad 
 \mathcal{P}(r, \theta) : = \, \big( p(r, \theta) , \theta \big) , \quad (r, \theta) \in I \, .
\end{equation}
These are well-defined, as $\, p(0, \theta) \equiv 0 \,$ and $\,\mathcal{P}(0, \theta) = (0, \theta) \equiv \bm 0 \,$. Moreover, since the mapping $\, r \mapsto p(r, \theta) \,$ is strictly increasing on $\, [0, \ell(\theta)) \,$ for every $\, \theta \in [0, 2\pi)\,$, we see that the mapping $\, \mathcal{P} \,$ is invertible; we denote by $\,\mathcal{Q} : J \rightarrow I \,$  its inverse. From (\ref{eq: P}), we have the representation 
\begin{equation}
\label{eq: Qmap}
\mathcal{Q}(r, \theta) = \, \big( q(r, \theta) , \theta \big) , \qquad (r, \theta) \in J
\end{equation}
where  $\, q : J \rightarrow [0, \infty) \,$ is a function with the property  that, for every $\, \theta \in [0, 2\pi) \,$, the mappings $\, r \mapsto p_{\theta}(r) \,$ and $\, r \mapsto q_{\theta}(r) := q(r, \theta) \,$ are inverses of each other. 

We   extend $\,  \mathcal{P}\,$ to $\, \overline{I}\,$ and $\,\mathcal{Q}\,$ to $\, \overline{J}\,$ continuously, with the aid of Proposition \ref{prop: pq}(iii) below; here $\,I\,$ and $\,J\,$ are equipped with the tree-topology, and closures are as described at the beginning of Subsection 3.1.  %\footnote{I made a small rearrangement here, please check. (Minghan)}

%We equip  both sets $\,I\,$ and $\,J\,$  with the tree-topology, and consider their closures in the extended rays as in the beginning of Subsection 3.1. Finally, we extend $\,  \mathcal{P}\,$ to $\, \overline{I}\,$ and $\,\mathcal{Q}\,$ to $\, \overline{J}\,$ continuously, with the aid of Proposition \ref{prop: pq}(iii) below.    

The following fact can be checked in a very direct manner; its proof is omitted.  

\begin{prop}\label{prop: pq}
Assume Condition \ref{bsigma} holds for $\, {\bm b}: \, \check{I} \rightarrow \R \,$ and $\, {\bm s}: \, \check{I} \rightarrow \R \,$. Then:

\noindent
{\bf (i)} ~~ The mapping $\, \theta \mapsto p_{\theta}(\ell(\theta)-)\,$ is bounded away from zero, thus $\, J\,$ is open in the tree-topology.

\noindent
{\bf (ii)} ~ We have $\, p \in \mathfrak{D}_{I}\,$, $\, q \in \mathfrak{D}_{J} \,$, $\, p_{\theta}(0) \equiv 0 \equiv q_{\theta}(0)\,$, $\, p_{\theta}^{\prime}(0+) \equiv 1 \equiv q_{\theta}^{\prime}(0+)\,$, and that 
\[
p_{\theta}^{\prime\prime} (r) \, = \, -\frac{2\, {\bm b} (r, \theta)}{{\bm s}^{2}(r, \theta)}  p_{\theta}^{\prime} (r)\, , \quad q_{\theta}^{\prime}(r) \, =\, \frac{1}{ \,  p_{\theta}^{\prime}\big(q_{\theta}(r)\big)\,} \, , \quad  q_{\theta}^{\prime\prime}(r) \, =\, \frac{ 2\, {\bm b}( q_{\theta}(r), \theta)}{ \,     \,{\bm  s}^{2} \big( q_{\theta}(r), \theta\big) \cdot \big( p_{\theta}^{\prime}(q_{\theta}(r)) \big)^2} \,\, 
\]
hold for every $\,\theta \in [0, 2\pi)\,$ and a.e. $\, r \in (0, \ell(\theta) ) \,$.

\noindent
{\bf (iii)} ~The mappings $\, \mathcal{P}: \, I \rightarrow J \,$ and $\,\mathcal{Q} : J \rightarrow I \,$ are both continuous in the tree-topology. 
\end{prop}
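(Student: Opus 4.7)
The plan is to verify each of the three claims by direct computation, the common ingredient being Condition \ref{bsigma}(iii), which supplies a finite constant $C$ with $|\bm b(r,\theta)|/\bm s^2(r,\theta)\le C$ for every $(r,\theta)\in(0,\eta]\times[0,2\pi)$.

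For part \textbf{(i)}, I would use this bound to derive the two-sided estimate $e^{-2C\eta}\le p_\theta'(r)\le e^{2C\eta}$ on $[0,\eta]$, yielding $p_\theta(\ell(\theta)-)\ge p_\theta(\eta)\ge\eta\,e^{-2C\eta}$ uniformly in $\theta$. The openness of $J$ in the tree-topology is then a two-case check: around a non-origin point $(r_*,\theta_*)\in J$ the tree-ball of radius $r_*$ lies on the single ray at angle $\theta_*$ and is trapped between $0$ and $p_{\theta_*}(\ell(\theta_*)-)$; around the origin the uniform lower bound just produced yields a tree-ball of positive radius contained in $J$.

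For part \textbf{(ii)}, the values $p_\theta(0)=0$ and $p_\theta'(0+)=1$ read off the definition \eqref{eq: p}, using the continuity of the exponential together with the vanishing of the exponent at $0$. Differentiation of \eqref{eq: p} gives the displayed formula for $p_\theta''$. Since $p_\theta'>0$ throughout $[0,\ell(\theta))$, standard inverse-function differentiation provides the formulas for $q_\theta'$ and $q_\theta''$, together with $q_\theta(0)=0$ and $q_\theta'(0+)=1$. Membership of $p$ in $\mathfrak D_I$ and of $q$ in $\mathfrak D_J$ is a clause-by-clause check against Definition \ref{def: D2}: clause (i) holds because $p_\theta'$ and $q_\theta'$ are absolutely continuous, being antiderivatives of locally integrable functions; clause (ii) is the uniform value $1$ at $0+$; clause (iii) follows, for $p$, from the majorant $|p_\theta''(r)|\le 2C\,e^{2C\eta}$ on $(0,\eta]$, and for $q$ from the analogous constant majorant obtained by combining Condition \ref{bsigma}(iii) with the lower bound on $p_\theta'$ from part (i), which makes $1/(p_\theta')^2$ uniformly bounded on the relevant range.

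For part \textbf{(iii)}, I would split the tree-convergence $(r_n,\theta_n)\to(r_*,\theta_*)$ in $I$ by whether $r_*=0$. If $r_*>0$, the tree-metric forces $\theta_n=\theta_*$ eventually and $r_n\to r_*$, so the claim reduces to the ordinary continuity of $r\mapsto p_{\theta_*}(r)$ on $[0,\ell(\theta_*))$. If $r_*=0$, then $r_n\to 0$ regardless of the angles, and the uniform estimate $p_\theta(r)\le r\,e^{2C\eta}$ on $[0,\eta]$ yields $p(r_n,\theta_n)\to 0$, hence $\mathcal P(r_n,\theta_n)\to\bm 0$ in the tree-topology of $J$. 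The argument for $\mathcal Q$ is symmetric, using the uniform majorant for $q_\theta$ near $0$ established in part (ii). Extension to the tree-closures $\overline I$ and $\overline J$ is by the same uniform estimates. The main obstacle throughout is the behavior at the origin, where the tree-topology insists on angular information and thereby forces uniform-in-$\theta$ control of $p_\theta$ and $q_\theta$; supplying this control is precisely the role of Condition \ref{bsigma}(iii).
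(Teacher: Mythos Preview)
Your proposal is correct and is exactly the kind of direct verification the paper has in mind: the paper explicitly omits the proof, stating that ``the following fact can be checked in a very direct manner.'' Your clause-by-clause check against Definition~\ref{def: D2}, driven by the uniform bound from Condition~\ref{bsigma}(iii), is precisely that direct check; the only items you leave implicit are the Borel-measurability of $p$ and $q$ (routine from the measurability of ${\bm b}/{\bm s}^2$) and the choice of a strictly smaller $\eta'$ for $q$ in clause~(iii), both of which are immediate.
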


We have then the following ``removal-of-drift" result.\newpage

\begin{prop}
\label{prop: scaling}
Assume that Condition \ref{bsigma} holds, and consider 
the function $\, \widetilde{{\bm s}} : \, \check{J} \rightarrow \R\,$   given by 
\begin{equation}
\label{eq: sigmatilde}
\widetilde{{\bm s}} ( r, \theta) := \,   p^{\prime}_{\theta} \big( q_{\theta}( r)\big) \, {\bm s} \big( q_{\theta}(r) , \, \theta \big) \,, \qquad  (r, \theta) \in \check{J} .
\end{equation}
 If $\, X (\cdot) \,$ is a \textsc{Walsh} diffusion with state-space $\, I  ,$ associated with the triple $\, ({\bm b}, {\bm s}, {\bm \nu}) \,$ and defined up to an explosion time $\, S\,,$  then $ \, Y(\cdot): = \mathcal{P} (X(\cdot)) \,   $ in the notation of \eqref{eq: P} is a \textsc{Walsh} diffusion  associated with the triple $\, ({\bm 0},  \widetilde{\bm s}, {\bm \nu}) \,$ and defined up to the same explosion time $\, S \,$, with   state-space  $\, J   $ and the same underlying probability space and Brownian motion as $\, X (\cdot) \,;$ and vice versa.    
\end{prop}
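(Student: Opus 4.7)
The plan is to apply the generalized Freidlin--Sheu formula of Theorem \ref{Thm: Gen}(iii)---in the form adjusted for state-space $\, I\,$ mentioned in Remark \ref{rmk: DIextend}---to the radial scale function $\, p \in \mathfrak{D}_{I}\,$ of Proposition \ref{prop: pq}(ii), composed with the stopped process $\, X(\cdot \wedge S_{n})\,$ for each $\, n \in \mathbb{N}\,$. The latter is a \textsc{Walsh} semimartingale with angular measure $\, \bm \nu\,$, driven by the \textsc{It\^o} process $\, U_{n}(\cdot) = \Vert X(0)\Vert + \int_{0}^{\,\cdot \wedge S_{n}} {\bf 1}_{\{\Vert X \Vert > 0\}} \big[ {\bm b}(X)\, {\rm d}t + {\bm s}(X)\, {\rm d}W\big]\,$, by virtue of Definition \ref{def: WalshDiff}. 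The change-of-variable formula together with its refinement \eqref{eq: VXg} then produces the full semimartingale decomposition of $\, p(X(\cdot \wedge S_{n}))\,$.

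The next step is to read off the three ingredients of this decomposition. The Lebesgue integrand inside the ``stochastic calculus'' piece is $\, p^{\prime}_{\theta}(r)\, {\bm b}(r, \theta) + \tfrac{1}{2}\, p^{\prime\prime}_{\theta}(r)\, {\bm s}^{2}(r, \theta)\,$, which vanishes identically on $\,\check{I}\,$ thanks to the ODE $\, p^{\prime\prime}_{\theta} = -2 ({\bm b}/{\bm s}^{2})\, p^{\prime}_{\theta}\,$ from Proposition \ref{prop: pq}(ii); so the drift is removed. The stochastic integrand reduces to $\, p^{\prime}_{\theta}(\Vert X \Vert)\, {\bm s}(X)\, {\rm d}W = \widetilde{{\bm s}}(Y)\, {\rm d}W\,$, by virtue of $\,\Vert Y \Vert = p_{\theta}(\Vert X \Vert)\,$, $\,\text{arg}(Y) = \text{arg}(X)\,$ and the definition \eqref{eq: sigmatilde}. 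The local-time coefficient is $\, \int_{0}^{2\pi} p^{\prime}_{\theta}(0+)\, \bm\nu ({\rm d}\theta) = 1\,$, since $\, p^{\prime}_{\theta}(0+) \equiv 1\,$; and since the stochastic integrand vanishes on $\,\{\Vert X \Vert = 0\} = \{\Vert Y \Vert = 0\}\,$, the representation \eqref{eq: L0} applied to $\,\Vert Y(\cdot\wedge S_{n})\Vert = p(X(\cdot\wedge S_{n}))\,$ yields $\, L^{\Vert Y(\cdot\wedge S_{n})\Vert} \equiv L^{\Vert X(\cdot\wedge S_{n})\Vert}\,$. Putting these pieces together gives the radial identity of Definition \ref{def: WalshDiff}(iv) for $\,Y\,$ with coefficients $\,({\bm 0}, \widetilde{{\bm s}})\,$.

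To verify the partition-of-local-time property of Definition \ref{def: WalshDiff}(v) for $\,Y\,$, I apply the same formula to $\, h_{A}(x) := p(x)\, {\bf 1}_{A}(\text{arg}(x))\,$, which also lies in $\,\mathfrak{D}_{I}\,$. An identical computation gives $\, R^{Y}_{A}(\cdot\wedge S_{n}) = h_{A}(X(\cdot\wedge S_{n}))\,$ with vanishing drift, a stochastic integrand supported on $\,\{\Vert X \Vert > 0,\, \Theta \in A\} \subseteq \{R^{Y}_{A} > 0\}\,$, and local-time coefficient $\,\bm\nu(A)\,$; another application of \eqref{eq: L0} then yields $\, L^{R^{Y}_{A}(\cdot\wedge S_{n})} = \bm\nu(A)\, L^{\Vert Y(\cdot\wedge S_{n})\Vert}\,$. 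Tree-continuity of $\,Y\,$ on $\,\overline{J}\,$ is inherited from that of $\,X\,$ on $\,\overline{I}\,$ via the tree-continuity of $\,\mathcal P\,$ in Proposition \ref{prop: pq}(iii); and choosing as exhausting sequence $\, \ell_{n}^{Y}(\theta) := p_{\theta}(\ell_{n}(\theta))\,$---bounded away from zero thanks to Condition \ref{bsigma}(iii) and the corresponding property of $\,\{\ell_{n}\}\,$---the process $\,Y\,$ exits $\,J_{n}\,$ precisely when $\,X\,$ exits $\,I_{n}\,$, so the explosion time for $\,Y\,$ agrees with $\,S\,$. The converse direction is symmetric: apply the same machinery to $\,q \in \mathfrak{D}_{J}\,$ and to the driftless triple $\,({\bm 0},\widetilde{{\bm s}},\bm \nu)\,$, with the formula for $\, q^{\prime\prime}_{\theta}\,$ in Proposition \ref{prop: pq}(ii) regenerating the prescribed drift $\,{\bm b}\,$ for $\,X = \mathcal{Q}(Y)\,$.

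I expect the main subtlety to lie in the identification of local times---both $\, L^{\Vert Y\Vert} \equiv L^{\Vert X\Vert}\,$ and $\, L^{R^{Y}_{A}} \equiv \bm\nu(A)\, L^{\Vert Y \Vert}\,$: one must verify with care that the relevant stochastic integrands vanish on the corresponding zero sets so that \eqref{eq: L0} applies cleanly, and that the exhausting sequence $\,\{\ell_{n}^{Y}\}\,$ really is uniformly bounded away from zero, so that $\,\{J_{n}\}\,$ is admissible in the sense of Definition \ref{def: WalshDiff}.
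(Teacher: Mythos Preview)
Your proposal is correct and follows essentially the same route as the paper: apply Theorem~\ref{Thm: Gen}(iii) to $\,p\,$ (and to $\,h_{A}=p\cdot\mathbf{1}_{A}\circ\text{arg}\,$) so that the drift vanishes by the ODE for $\,p''_{\theta}\,$, identify the local times via \eqref{eq: L0} using $\{\Vert Y\Vert=0\}=\{\Vert X\Vert=0\}$ and $\{R_{A}^{Y}=0\}=\{X=\bm 0\}\cup\{\Theta\notin A\}$, and invoke Proposition~\ref{prop: pq}(iii) for tree-continuity; the paper omits your explicit treatment of the exhausting sequence $\ell_{n}^{Y}=p_{\theta}\circ\ell_{n}$ and the converse via $\,q\,$, but the arguments match. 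The only item you do not mention is the non-stickiness condition $\int\mathbf{1}_{\{Y=\bm 0\}}\,{\rm d}t\equiv 0$ in Definition~\ref{def: WalshDiff}(v), which is immediate from $\{Y=\bm 0\}=\{X=\bm 0\}$.
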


\begin{proof}
We prove only the first claim, as the converse part can be established in the same way. Assume that  $\, X (\cdot) \,$   is a \textsc{Walsh} diffusion  with state-space  $\, I \,$ associated with the triple $\, ({\bm b}, {\bm s}, {\bm \nu}) \,$ and up to an explosion time $\, S\,$, and let $  Y(\cdot) = \mathcal{P} (X(\cdot))    $. It follows that $  Y(\cdot) $ is $\, \overline{J} \,$-valued and continuous in the tree-topology. 
We recall Definition \ref{def: G}. 
By Definition \ref{def: WalshDiff}, Theorem \ref{Thm: Gen}(iii) %(rather, its obvious generalization to processes valued in $\, I \,$ and functions in the class $\, \mathfrak{D}_{I}$)
 and Proposition \ref{prop: pq}, direct calculation gives 
\begin{equation}
\label{eq: ||Y||}
\Vert Y(\cdot \wedge S_{n}) \Vert = p \big( X(\cdot \wedge S_{n}) \big) = \Vert Y(0) \Vert + \int_{0}^{\,\cdot \wedge S_{n}} \, {\bf 1}_{ \{ X(t) \neq {\bm 0} \} } \, \widetilde{\bm s} \big( Y(t)\big) {\rm d} W(t) \, + \, L^{ \Vert X(\cdot \wedge S_{n} ) \Vert}( \cdot ).
\end{equation}
From $ \, Y(\cdot) = \mathcal{P} (X(\cdot)) \,   $ it is clear that the equality $\, \{ t: \, \Vert Y(t) \Vert = 0 \} = \{ t: \, \Vert X(t) \Vert = 0 \} \,$ holds pathwise, so  by (\ref{eq: ||Y||}) and (\ref{eq: L0}) we have 
\begin{equation}
\label{eq: LY}
L^{ \Vert Y(\cdot \wedge S_{n} ) \Vert}( \cdot ) = \int^{\cdot}_{0} {\bf 1}_{\{\Vert Y(t \wedge S_{n} ) \Vert \, =\,  0\}}\, {\mathrm d}  \Vert Y(t \wedge S_{n} )\Vert \, = \, L^{ \Vert X(\cdot \wedge S_{n} ) \Vert}( \cdot )  
\end{equation}
%\newpage 
\noindent
and   (\ref{eq: ||Y||}) turns into 
\[
\Vert Y(\cdot \wedge S_{n}) \Vert  = \Vert Y(0) \Vert + \int_{0}^{\,\cdot \wedge S_{n}} \, {\bf 1}_{ \{ Y(t) \neq {\bm 0} \} } \, \widetilde{\bm s} \big( Y(t)\big) {\rm d} W(t) \, + \, L^{ \Vert Y(\cdot \wedge S_{n} ) \Vert}( \cdot ) \, .
\]
Therefore, it suffices to verify that (v) of Definition \ref{def: WalshDiff} holds for $\, Y(\cdot)\,$. 

It  is apparent that $\, \int^{\cdot}_{0}{\bf 1}_{\{ Y(t) \, =\,  {\bm 0} \}} \, {\mathrm d} t \, =\,\int^{\cdot}_{0}{\bf 1}_{\{ X(t) \, =\,  {\bm 0} \}} \, {\mathrm d} t \equiv \,0 \, $ holds. On the other hand, since $$\, R_{A}^{Y}(\cdot) = \lVert Y (\cdot) \rVert \cdot {\bf 1}_A \big(\text{arg}\big(Y(\cdot)\big) \big) = p \big( X(t) \big) \cdot {\bf 1}_A \big(\text{arg}\big(X(\cdot)\big) \big) \, ,$$ we obtain the following, in the same way as in the derivation of (\ref{eq: ||Y||}): 
\[
R_{A}^{Y}(\cdot \wedge S_{n}) = R_{A}^{Y}(0) + \int_{0}^{\,\cdot \wedge S_{n}} \, {\bf 1}_{ \{ X(t) \neq {\bm 0} \} } \cdot {\bf 1}_{A}\big(\text{arg}\big(X(\cdot)\big) \big)\, \widetilde{\bm s} \big( Y(t)\big) {\rm d} W(t) \, + \,{\bm \nu} (A)\, L^{ \Vert X(\cdot \wedge S_{n} ) \Vert}( \cdot ) \, .
\]
Moreover, we observe $\, \{ t: \, R_{A}^{Y}(t) = 0 \} = \{ t:  X(t) = {\bm 0} \,\,\, \text{or} \,\,\, {\bf 1}_{A}\big(\text{arg}\big(X(t)\big) \big) = 0 \}\,$, thus
\[
L^{R_{A}^{Y}(\cdot \wedge S_{n})}(\cdot) = \int^{\cdot}_{0} {\bf 1}_{\{R_{A}^{Y}(t \wedge S_{n}) \, =\,  0\}}\, {\mathrm d}  R_{A}^{Y}(t \wedge S_{n}) \, = \, {\bm \nu} (A)\, L^{ \Vert X(\cdot \wedge S_{n} ) \Vert}( \cdot ) = {\bm \nu} (A)\, L^{ \Vert Y(\cdot \wedge S_{n} ) \Vert}( \cdot ) \, ;
\]
we have used (\ref{eq: LY}) for the last equality. Now (v) of Definition \ref{def: WalshDiff} is seen to hold for $\, Y(\cdot)\,$.
\end{proof}

We obtain the following result regarding   existence and uniqueness of a general \textsc{Walsh} diffusion.
%\newpage
\begin{thm}\label{Thm: Walshgen}
Assume  Condition \ref{bsigma} holds for   $\, {\bm b}: \, \check{I} \rightarrow \R \,$ and $\, {\bm  s}: \, \check{I} \rightarrow \R \,$. Then, for every   initial distribution $\, \bm\mu\,$ on the \textsc{Borel}  subsets of $\, I\,$, there exists a unique-in-distribution \textsc{Walsh} diffusion $\, X (\cdot)\,$ with state-space $\, I \,,$ associated with the triple $\, ({\bm b}, {\bm s}, {\bm \nu}) \,$ and defined up  to an explosion time $\, S\,$. 
\end{thm}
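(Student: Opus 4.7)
The plan is to reduce to the driftless case already handled in Theorem \ref{Thm: Walshnodrift} via the scale change of Proposition \ref{prop: scaling}. Set $\widetilde{\bm s}$ as in \eqref{eq: sigmatilde} and extend it to a Borel function $\widehat{\bm s}:\check{\R}^2\to\R$ by prescribing $\widehat{\bm s}\equiv\widetilde{\bm s}$ on $\check J$ and, say, $\widehat{\bm s}\equiv 1$ on $\check{\R}^2\setminus\check J$; along each ray this gluing is done on $\{r\ge p_\theta(\ell(\theta)-)\}$ which, by Proposition \ref{prop: pq}(i), is bounded away from the origin uniformly in $\theta$. I would then verify, using Proposition \ref{prop: pq}(ii) together with Condition \ref{bsigma}, that $\widehat{\bm s}$ meets the hypotheses of Theorem \ref{Thm: Walshnodrift}: (a) Condition \ref{sigma} holds, because $p'_\theta(q_\theta(r))\to 1$ as $r\downarrow 0$ and Condition \ref{bsigma}(iii) gives a uniform bound on $1/{\bm s}^2$ near the origin, so $1/\widehat{\bm s}^{\,2}$ is bounded by a constant on some $\{0<r\le\eta'\}$; (b) $\mathcal I(\widehat{\bm s})=\mathcal Z(\widehat{\bm s})$, which follows on $\check J$ from the change of variables $z=q_\theta(r+y)$ transforming $\int_{-\varepsilon}^{\varepsilon}{\rm d}y/\widetilde{\bm s}^{\,2}(r+y,\theta)$ into $\int {\rm d}z/\bigl(p'_\theta(z)\,{\bm s}^2(z,\theta)\bigr)$ and then invoking Condition \ref{bsigma}(ii), and on the complement from $\widehat{\bm s}\equiv1$.

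\smallskip
\textbf{Existence.} Given an initial law $\bm\mu$ on $I$, push it forward to an initial law $\bm\mu\circ\mathcal{Q}$ on $J\subset\R^2$. Theorem \ref{Thm: Walshnodrift} yields a non-explosive Walsh diffusion $\widehat Y(\cdot)$ on $\R^2$ associated with $({\bm 0},\widehat{\bm s},{\bm\nu})$, carried by some filtered probability space with Brownian motion $W(\cdot)$. Define the stopping time $\widehat S_n:=\inf\{t\ge0:\widehat Y(t)\notin J_n\}$ with $J_n:=\mathcal P(I_n)$, and $\widehat S:=\lim_n\widehat S_n$; set $Y(\cdot):=\widehat Y(\cdot\wedge\widehat S)$ and, finally,
\[
X(t):=\mathcal Q\big(Y(t)\big),\qquad 0\le t<\infty,
\]
with $X(t)\equiv X(\widehat S)$ for $t\ge\widehat S$. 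Since $\mathcal Q$ is continuous in the tree-topology (Proposition \ref{prop: pq}(iii)) and sends ${\bm 0}$ to ${\bm 0}$, $X(\cdot)$ is $\overline I$-valued and tree-continuous, with explosion time $S=\widehat S$. The Walsh-diffusion properties (iii)--(v) of Definition \ref{def: WalshDiff} for $X(\cdot)$ follow from the converse direction of Proposition \ref{prop: scaling}, once we localize at $\widehat S_n=S_n$ and apply the generalized It\^o rule of Theorem \ref{Thm: Gen} to the ${\mathfrak D}_J$-function $q$.

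\smallskip
\textbf{Uniqueness in distribution.} Let $X^{(1)}(\cdot)$, $X^{(2)}(\cdot)$ be two Walsh diffusions on $I$ associated with $({\bm b},{\bm s},{\bm\nu})$ and with the same initial law $\bm\mu$. By the forward direction of Proposition \ref{prop: scaling}, $Y^{(i)}(\cdot):=\mathcal P(X^{(i)}(\cdot))$ is a Walsh diffusion on $J$ associated with $({\bm 0},\widetilde{\bm s},{\bm\nu})$, up to the same explosion time $S^{(i)}$. Extending each $Y^{(i)}$ past $S^{(i)}$ by the same ``continuation'' device used in the second half of the proof of Proposition \ref{prop: DDS} (on an enlarged space, folding and unfolding a fresh Walsh Brownian driver governed by $\widehat{\bm s}$ outside $J$), we obtain non-explosive $\R^2$-valued Walsh diffusions $\widehat Y^{(i)}$ associated with $({\bm 0},\widehat{\bm s},{\bm\nu})$ and the pushforward initial law. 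Theorem \ref{Thm: Walshnodrift} gives $\widehat Y^{(1)}\stackrel{d}{=}\widehat Y^{(2)}$; pulling back through $\mathcal Q$ and stopping at exit from $J$ yields $X^{(1)}\stackrel{d}{=}X^{(2)}$.

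\smallskip
The main technical obstacle is the verification of Condition \ref{sigma} and the identity $\mathcal I(\widehat{\bm s})=\mathcal Z(\widehat{\bm s})$ for the extended coefficient $\widehat{\bm s}$, especially the careful change-of-variables argument near $r=0$ and the control of $p'_\theta$ uniformly in $\theta$ afforded by Proposition \ref{prop: pq}; everything else is a routine localization plus an application of Proposition \ref{prop: scaling} in both directions.
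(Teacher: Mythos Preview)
Your overall strategy---reduce to the driftless case via Proposition \ref{prop: scaling} and then invoke Theorem \ref{Thm: Walshnodrift}---is exactly what the paper does. The gap is in the choice of extension.

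You set $\widehat{\bm s}\equiv 1$ on $\check{\R}^2\setminus\check J$, and then assert $\mathcal I(\widehat{\bm s})=\mathcal Z(\widehat{\bm s})$. This can fail at boundary points of $J$. Take any $(r_0,\theta_0)$ with $r_0=p_{\theta_0}(\ell(\theta_0)-)<\infty$. On the right of $r_0$ you have $\widehat{\bm s}=1$, but on the left your change-of-variables gives
\[
\int_{r_0-\varepsilon}^{r_0}\frac{{\rm d}y}{\widetilde{\bm s}^{\,2}(y,\theta_0)}
=\int_{q_{\theta_0}(r_0-\varepsilon)}^{\ell(\theta_0)}\frac{{\rm d}z}{p'_{\theta_0}(z)\,{\bm s}^2(z,\theta_0)}\, ,
\]
and Condition \ref{bsigma}(ii) only guarantees local integrability of $1/{\bm s}^2(\cdot,\theta_0)$ on the \emph{open} interval $(0,\ell(\theta_0))$, not up to the endpoint. (Concretely: $I=B$, ${\bm b}\equiv 0$, ${\bm s}(r,\theta)=1-r$ satisfies Condition \ref{bsigma}, yet the integral above diverges.) In that situation $(r_0,\theta_0)\in\mathcal I(\widehat{\bm s})$ while $\widehat{\bm s}(r_0,\theta_0)=1\neq 0$, so $\mathcal I(\widehat{\bm s})\neq\mathcal Z(\widehat{\bm s})$ and Theorem \ref{Thm: Walshnodrift} does not apply.

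The paper avoids this by extending with $\widetilde{\bm s}\equiv 0$ on $J^c$. Then $\mathcal Z(\widetilde{\bm s})=J^c$, and automatically $\mathcal I(\widetilde{\bm s})=J^c$ as well (any point of $J^c$ has $\widetilde{\bm s}=0$ on at least one side; any point of $\check J$ has a two-sided neighbourhood inside $J$, where your change-of-variables argument gives finiteness). This choice has a second payoff: by Remark \ref{rmk: Walshnodrift} the $\R^2$-valued driftless diffusion becomes \emph{motionless} upon hitting $J^c$, so a Walsh diffusion on $J$ absorbed at $\partial J$ \emph{is already} a Walsh diffusion on $\R^2$ for this extended $\widetilde{\bm s}$, and vice versa. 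Both existence and uniqueness then follow immediately from Theorem \ref{Thm: Walshnodrift}, with no need for your ``continuation past the explosion time'' construction---a step which, as written, would in any case require some care to show the glued process is a bona fide Walsh diffusion for a single filtration and Brownian motion.
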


\begin{proof}
In light of Proposition \ref{prop: scaling}, it suffices to show   existence and uniqueness for the \textsc{Walsh} diffusion $\, Y (\cdot)\,$ in $\, J \,$ associated with the triple $\, ({\bm 0}, \widetilde{\bm s}, {\bm \nu}) \,$  up to an explosion time $\, S\,$, given any initial distribution $\, \bm\mu\,$. \newpage 

%\smallskip
We shall reduce this to Theorem \ref{Thm: Walshnodrift}, which considers the full state space $\, \R^{2}\,$, not $\, J \,$. In addition to (\ref{eq: sigmatilde}), let us define $\,\widetilde{\bm s}(r, \theta) : = 0\,$ for $\, (r, \theta) \in J^{c} := \{ (r, \theta): \, r \geq p_{\theta} \big(\ell(\theta) \big) , \,\, 0 \le \theta < 2 \pi\}\,$. It is now straightforward, using Condition \ref{bsigma}, to check that $\,\widetilde{\bm s} \,$ satisfies Condition \ref{sigma} in Section \ref{sec: timechange}, and that $\, \mathcal{I}(\widetilde{\bm s}) = \mathcal{Z}(\widetilde{\bm s}) = J^{c}\,$. By Theorem \ref{Thm: Walshnodrift}, there exists a  unique-in-distribution, non-explosive  \textsc{Walsh} diffusion $\, Y (\cdot) \,$ with values in $\, \R^{2}\,$ associated with the triple $\, ({\bm 0}, \widetilde{\bm s}, {\bm \nu}) \,$, given any initial distribution in $\, J   $. Moreover, by Remark \ref{rmk: Walshnodrift}, $\, Y (\cdot)\,$ becomes motionless once it hits $\, \mathcal{I}(\widetilde{\bm s}) =  J^{c}\,$, i.e., once it exits from the set $\, J\,$. Thus it is clear by definition that $\, Y(\cdot)\,$ is also a \textsc{Walsh} diffusion in $\, J ,$  with explosion time $\, S: = \inf\{ t: \, Y(t) \notin J \} \,$. 

\smallskip
On the other hand, assume that $\, Y(\cdot) \,$ is a \textsc{Walsh} diffusion with values in $\, J \,$ associated with the triple $\, ({\bm 0}, \widetilde{\bm s}, {\bm \nu}) \,$, up to an explosion time $\, S = \inf\{ t: \, Y(t) \notin J\} \,;$  note that we stipulate $\, Y (t) =Y(S) \,$ for $\, S \leq t < \infty\,$. Thus by setting $\,\widetilde{\bm s} \equiv 0 \,$ on $\, J^{c} \,$ as before, we see immediately that $\, Y (\cdot)\,$ is also a \textsc{Walsh} diffusion with values in $\, \R^{2}\,$ associated with the triple $\, ({\bm 0}, \widetilde{\bm s}, {\bm \nu}) \,$. By Theorem \ref{Thm: Walshnodrift}, its probability law is uniquely determined, for any given initial distribution.
\end{proof}

%%%%%%%%%%%
\section{Explosion Test for Walsh Diffusions with Angular Dependence}\label{sec4}
%%%%%%%%%%%

Throughout this section, we have for every $\, x \in I: = \{ (r, \theta): \, 0 \leq r < \ell(\theta) , \,\, 0 \le \theta < 2 \pi\}\,$  a \textsc{Walsh} diffusion $\, (X, W), \, (\Omega, \mathcal{F}, \mathbb{P}^{x}), \, \mathbb{F} = \{ \mathcal{F}(t) \}_{0 \leq t < \infty} \,$ with values in $\, I \,$, associated with the triple $\, ({\bm b}, {\bm  s}, {\bm \nu}) \,$ and up to an explosion time $\, S\,$,  with $\, X(0) = x \,$, $\, \mathbb{P}^{x}-$a.e. Here $\, \ell : [0, 2 \pi) \rightarrow (0, \infty] \,$ is measurable and bounded away from zero, and the functions $\, {\bm b}: \, \check{I} \rightarrow \R \,$, $\, {\bm s}: \, \check{I} \rightarrow \R \,$ are assumed to satisfy Condition \ref{bsigma}. 

For different initial conditions $\, x\,$, these \textsc{Walsh} diffusions (including the underlying probability space) are different; but we shall use $\, X (\cdot)\,$ to denote every one of them. We shall let the measures $\, \mathbb{P}^{x}\,$ distinguish them, since all the conclusions we will draw are about their probability distributions.

\smallskip
We   develop in this section   analogues of all the results in Section 5.5.C of \cite{KS}. The two main results are Theorem \ref{Thm: XS}, on the asymptotic behavior of $\, X (\cdot) ;$  and Theorem \ref{Thm: Sfinite} on the test for explosions in finite time. 

\subsection{Preliminaries; Explosion in  Finite Expected  Time}

We first note that if $\, X (\cdot)\,$ starts at the origin and $\, A\in \mathcal{B} ( [0, 2\pi)) \,$ satisfies $\, {\bm\nu}(A) =0\,$, then $\, X  (\cdot)  \,$ never visits any region in   the state-space whose rays correspond to angles in $\, A\,$, with positive probability.

\begin{prop}\label{prop: zeroangle}
For every $\, A\in \mathcal{B} ( [0, 2\pi)) \,$ with $\, {\bm\nu}(A) =0\,$, we have $\, R_{A}^{X}(\cdot) \equiv 0\,,$ $\mathbb{P}^{\bm 0}-$a.e. in %the notation of 
\eqref{eq: RA}. 

%\[
%R_{A}^{X}(\cdot) : = \lVert X (\cdot) \rVert \cdot {\bf 1}_A \big(\text{arg}\big(X(\cdot)\big) \big) \equiv 0\, , \qquad \mathbb{P}^{\bm 0}-\text{a.e.}
%\]
In other words, the set $\,\{ t \ge 0 : \, X(t, \omega) \neq {\bm 0} \,\,\, \text{and} \,\,\, \text{arg}\big(X(t, \omega)\big) \in A \}\,$ is empty, for $ \mathbb{P}^{\bm 0}-$a.e. $\, \omega \in \Omega\,$.  
\end{prop}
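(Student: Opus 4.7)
The strategy is to apply the change-of-variable formula of Theorem \ref{Thm: Gen}(iii) to the specific test function $g^{A}(r,\theta) = r \, \mathbf{1}_{A}(\theta)$, then reduce to the driftless case via the scale transformation of Proposition \ref{prop: scaling}, and finally exploit the fact that a nonnegative continuous local martingale starting at the origin is identically zero.

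First, I would localize at the stopping times $S_{n}$ and apply Theorem \ref{Thm: Gen}(iii) to $X(\cdot \wedge S_{n})$ with $g=g^{A}$. This is legal because Theorem \ref{Thm: Gen}(ii) tells us $g^{A} \in \mathfrak{D}$, and property (v) of Definition \ref{def: WalshDiff} gives the partition-of-local-time assumption required by Theorem \ref{Thm: Gen}(iii). A direct calculation from Definition \ref{def: G} yields $(g^{A})^{\prime}_{\theta}(r) \equiv \mathbf{1}_{A}(\theta)$ (in particular $(g^{A})^{\prime}_{\theta}(0+) = \mathbf{1}_{A}(\theta)$) and $(g^{A})^{\prime\prime}_{\theta}(r) \equiv 0$. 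Since $\int_{0}^{2\pi}(g^{A})^{\prime}_{\theta}(0+)\,\bm\nu(\mathrm{d}\theta)=\bm\nu(A)=0$ and $R^{X}_{A}(0)=g^{A}(\bm 0)=0$ under $\mathbb{P}^{\bm 0}$, the identity \eqref{eq: gen}--\eqref{eq: VXg} collapses to
\[
R^{X}_{A}(\cdot \wedge S_{n}) \,=\, \int_{0}^{\,\cdot \wedge S_{n}} \mathbf{1}_{\{X(t) \neq \bm 0\}} \, \mathbf{1}_{A}(\arg X(t)) \, \bigl[\, {\bm b}(X(t))\,\mathrm{d}t + {\bm s}(X(t))\,\mathrm{d}W(t)\,\bigr].
\]

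The obstruction is that the right-hand side is not a priori a local martingale: the drift term could, in principle, push $R^{X}_{A}$ away from zero. (Nonnegativity plus vanishing local time at $0$ alone is insufficient, as one sees from $Z=(W^{+})^{2}$.) To overcome this, I would pass to the scale-transformed process $Y(\cdot) := \mathcal{P}(X(\cdot))$, which by Proposition \ref{prop: scaling} is a \textsc{Walsh} diffusion on $J$ associated with the triple $(\bm 0, \widetilde{\bm s}, \bm\nu)$, driven by the \emph{same} Brownian motion $W$, and started at $Y(0)=\mathcal{P}(\bm 0)=\bm 0$ under $\mathbb{P}^{\bm 0}$. Repeating Step 1 for $Y$ (whose drift coefficient vanishes) gives
\[
R^{Y}_{A}(\cdot \wedge S_{n}) \,=\, \int_{0}^{\,\cdot \wedge S_{n}} \mathbf{1}_{\{Y(t) \neq \bm 0\}} \, \mathbf{1}_{A}(\arg Y(t)) \, \widetilde{\bm s}(Y(t))\,\mathrm{d}W(t),
\]
which is a nonnegative continuous local martingale starting at $0$, hence identically zero a.s.

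Finally, I would translate back. By Proposition \ref{prop: pq}, the map $r \mapsto p_{\theta}(r)$ is strictly increasing with $p_{\theta}(0)=0$, so $R^{Y}_{A}(t) = p_{\arg X(t)}(\|X(t)\|)\,\mathbf{1}_{A}(\arg X(t))$ vanishes if and only if $R^{X}_{A}(t)$ does. Therefore $R^{X}_{A}(\cdot \wedge S_{n}) \equiv 0$ for every $n$. Letting $n \uparrow \infty$, using the continuity of $R^{X}_{A}$ and the stipulation $X(t) = X(S)$ for $t \geq S$, we conclude $R^{X}_{A}(\cdot) \equiv 0$ on $[0,\infty)$, which is the desired assertion; the second formulation of the proposition follows at once from the definition \eqref{eq: RA}. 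The one delicate point in the whole argument is precisely the scale-function reduction, without which the drift term would prevent us from invoking the nonnegative-local-martingale principle.
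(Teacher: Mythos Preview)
Your argument is correct and is a genuinely different (and in some ways cleaner) route from the one the paper takes.

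The paper argues by going all the way back to the explicit pathwise construction: it observes, via Proposition~\ref{prop: scaling} and the proofs of Theorems~\ref{Thm: Walshnodrift} and~\ref{Thm: Walshgen}, that $Y=\mathcal{P}(X)$ is a time-change of a \textsc{Walsh} Brownian motion $Z$; then it invokes the excursion-theoretic construction of $Z$ from \cite{IKPY}, in which the ray of each excursion is chosen by an i.i.d.\ sequence with law $\bm\nu$, so that a set $A$ with $\bm\nu(A)=0$ is almost surely never selected. Your proof bypasses the excursion picture entirely: after the same scale reduction to $Y$, you read off from the \textsc{Freidlin--Sheu} formula (exactly the display for $R^{Y}_{A}$ that appears in the proof of Proposition~\ref{prop: scaling}) that $R^{Y}_{A}(\cdot\wedge S_n)$ is a nonnegative continuous local martingale issued from $0$, hence a supermartingale with zero expectation and therefore identically zero. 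This is self-contained within the stochastic calculus of Section~\ref{sec2} and does not appeal to \cite{IKPY} or to the time-change representation. What the paper's route buys is intuition---one ``sees'' why $A$ is never visited---whereas your route buys economy: it uses only results already proved here and the elementary fact that a nonnegative local martingale starting at $0$ is trivial.
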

\begin{proof}
From the proofs of Theorem \ref{Thm: Walshgen}, Proposition \ref{prop: scaling} and Theorem \ref{Thm: Walshnodrift}, we see that $ \, Y(\cdot): = \mathcal{P} (X(\cdot)) \,$ is a driftless \textsc{Walsh} diffusion, and that it is also a time-changed \textsc{Walsh} Brownian motion with angular measure $\, {\bm \nu} \,$. But a \textsc{Walsh} Brownian motion with angular measure $\, {\bm \nu} \,$ can be constructed as in the proof of Theorem 2.1 in \cite{IKPY}, by assigning every excursion of a reflected Brownian motion an angle via a sequence of I.I.D random variables distributed as $\, {\bm \nu} \,$. Therefore, if $Y(\cdot)$  starts at the origin, it almost surely never visits any rays with angles in a set $\, A\in \mathcal{B} ( [0, 2\pi)) \,$ with $\, {\bm\nu}(A) =0\,$, because the aforementioned I.I.D. random variables will   not be valued in $\, A\,$ with any positive probability. This property is inherited by the time-changed \textsc{Walsh} Brownian motion $\, Y (\cdot) \,,$ and then by the process $\, X(\cdot) = \mathcal{Q} (Y(\cdot))\,$. 
\end{proof}
Next, we note that $\,X (\cdot)\,$ has the strong \textsc{Markov} property. By Theorem \ref{Thm: Walshgen}, the probability 
\begin{equation}
\label{eq: transition}
\,\mathfrak{h}( x ; \,\Gamma) \, : = \,\mathbb{P}^{x} \big( X(\cdot)\in\Gamma\big) \,
\end{equation}
is uniquely determined,  for all $\, x \in I \,$ and $\,  \Gamma \in \mathcal{B } \big( C(\overline{I}) \big) \,$. Here $\, C(\overline{I})\,$ is the collection of all $\, \overline{I}$-valued  functions on $\, [0, \infty)\,$ which are continuous in the tree-topology and get absorbed upon hitting the boundary $  \partial I: = \{ (r, \theta):   r = \ell(\theta) , \,\, 0 \le \theta < 2 \pi \}  $; the   \textsc{Borel} subsets of this space are generated by its finite-dimensional cylinder sets. Since we constructed $\, X (\cdot)\,$ in the last section through scaling and time-change, it is clear that the mapping $\, x \mapsto \mathfrak{h}( x ;\, \Gamma)\,$ is measurable on $\, I  ,$  for every $\,\, \Gamma \in \mathcal{B } \big( C(\overline{I}) \big) \,$. %\newpage 

The following result can be proved by connecting to local martingale problems through a combination of adaptations of Propositions 6.1 and 9.1 in \cite{IKPY}, that allow an explosion time; we will omit its proof.

\begin{prop}
\label{prop: SMarkov}
For every $\, x \in I\,$, the process $\, X (\cdot)\,$ is time-homogeneous and strongly Markovian, in the sense that for every stopping time $\, T \,$ of $\, \mathbb{F}\,$ and every set $ \, \Gamma \in \mathcal{B } \big( C(\overline{I}) \big) \,$  we have 
\[
\mathbb{P}^{x} \big(X (T+ \cdot) \in \Gamma\, \big\vert \,\mathcal{F}(T) \big) \, = \, \mathfrak{h}( X(T) ; \,\Gamma) \, , \qquad \, \mathbb{P}^{x}-\text{a.e. on}\,\,\, \{ T < S \} \, .
\]
\end{prop}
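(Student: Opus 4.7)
The plan is to follow the standard route from a well-posed local martingale problem to the strong Markov property, adapted to accommodate the explosion time $S$. First, I would formulate a localized martingale problem characterizing the Walsh diffusion: for each function $g \in \mathfrak{D}_I$ satisfying the boundary condition $\int_0^{2\pi} g'_\theta(0+)\, \bm\nu({\rm d}\theta) = 0$, and for each $n \in \mathbb{N}$, the stopped process
\[
g(X(\cdot \wedge S_n)) - g(X(0)) - \int_0^{\cdot \wedge S_n} \mathbf{1}_{\{X(t) \neq \bm 0\}} \Big( \bm b(X(t))\, g'(X(t)) + \tfrac12 \bm s^{2}(X(t))\, g''(X(t)) \Big)\, {\rm d}t
\]
should be a continuous local martingale. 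That any Walsh diffusion in the sense of Definition \ref{def: WalshDiff} satisfies this is immediate from Theorem \ref{Thm: Gen}(iii), applied on each interval $[0, S_n]$, together with the partition-of-local-time property and the boundary condition on $g$, exactly as in the adaptation of Proposition 6.1 of \cite{IKPY}.

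Next, one needs the converse and uniqueness: any solution of this localized martingale problem (on paths that are $\overline{I}$-valued, continuous in the tree-topology, absorbed at $\partial I$, and with $\int_0^\cdot \mathbf{1}_{\{X(t)={\bm 0}\}}\,{\rm d}t \equiv 0$) is in fact a Walsh diffusion associated with $(\bm b, \bm s, \bm\nu)$ in the sense of Definition \ref{def: WalshDiff}, up to enlargement of the probability space to accommodate the driving Brownian motion. This is the adaptation of Proposition 9.1 of \cite{IKPY}; combined with Theorem \ref{Thm: Walshgen}, it gives well-posedness of the martingale problem. Measurability of $x \mapsto \mathfrak{h}(x;\Gamma)$ is then inherited from the explicit scale-and-time-change construction supplied by Propositions \ref{prop: scaling} and \ref{prop: DDS}, in which the underlying Walsh Brownian motion depends measurably on the starting point.

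Finally, for the strong Markov property I would apply the standard Stroock--Varadhan disintegration argument. Fix $x \in I$, a stopping time $T$, a set $A \in \mathcal{F}(T)$ with $A \subseteq \{T < S\}$, and $\Gamma \in \mathcal{B}(C(\overline{I}))$. Set $\widetilde X(\cdot) := X((T+\cdot) \wedge S)$ and $\widetilde S_n := (S_n - T)^+$. By optional sampling applied to each of the local martingales above, conditional on $\mathcal{F}(T)$ the process $\widetilde X(\cdot)$ solves the same localized martingale problem with (random) initial condition $X(T)$; uniqueness from the previous step then forces the regular conditional law of $\widetilde X(\cdot)$ given $\mathcal{F}(T)$ to equal $\mathfrak{h}(X(T);\cdot)$, which is the desired identity. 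The main obstacle is the careful handling of the localization: since the martingale problem is formulated only up to the $S_n$'s and not directly up to $S$, one must pass $n \to \infty$ using continuity of $X(\cdot)$ in the tree-topology, the identification $S = \lim_{n\to\infty} S_n$ from \eqref{eq: S}, and the observation that $\{\widetilde S_n \uparrow S - T\}$ on $\{T < S\}$. Equally delicate is verifying that item (v) of Definition \ref{def: WalshDiff}, in particular the partition-of-local-time property for the thinned processes $R_A^{\widetilde X}$, transfers to $\widetilde X$ under the time-shift; this is done pathwise on each interval $[0, \widetilde S_n]$ using the definition \eqref{LT} and the constancy of the direction $\operatorname{arg}(X(\cdot))$ on excursions away from the origin furnished by Proposition \ref{prop: semirays}.
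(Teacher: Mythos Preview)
Your proposal is correct and follows exactly the route the paper indicates: the paper omits the proof but states explicitly that it ``can be proved by connecting to local martingale problems through a combination of adaptations of Propositions 6.1 and 9.1 in \cite{IKPY}, that allow an explosion time,'' which is precisely the program you outline. Your added detail on handling the localization via $S_n \uparrow S$ and transferring the partition-of-local-time property to the shifted process is the right way to make the ``allow an explosion time'' adaptation precise.
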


\smallskip
Now we recall the radial scale function $\, p : I \mapsto [0, \infty) \,$ in (\ref{eq: p}), and  observe from (\ref{eq: ||Y||}) that $\, p \,$ turns $\, X(\cdot)\,$ into a reflected local martingale, which is the radial part of the driftless \textsc{Walsh} diffusion $\, Y= \mathcal{P}(X)\,$. By analogy with  one-dimensional diffusions, we introduce the {\it speed measure} 
\begin{equation}
\label{eq: speed}
{\bm m}_{\theta} ({\rm d} r ): = \, \frac{2 \,{\rm d} r}{p_{\theta}^{\prime}(r) {\bm s}^{2}(r, \theta)} \, , \qquad (r, \theta) \in \check{I}
\end{equation}
as well as the   {\it \textsc{Feller}  function} 
\begin{equation}
\label{eq: v}
v(r, \theta) = v_{\theta}(r) : = \int_{0}^{r} p_{\theta}^{\prime}(y)  {\bm m}_{\theta} ([0, y] )  {\rm d} y \, = \, \int_{0}^{r} \big(  p_{\theta}(r) - p_{\theta}(y) \big) {\bm m}_{\theta} ({\rm d} y ) \, , \quad  (r, \theta) \in I\, .
\end{equation}

We have the following result regarding the functions $\, p \,$ and $\, v\,$. 

\begin{prop}\label{prop: v/p}
{\bf (i)} ~~The function $\, v: \, I \rightarrow [0, \infty) \,$ of \eqref{eq: v} is in the class $\,\mathfrak D_{I}\,$ (cf. Definition \ref{def: D2}) with $\, v_{\theta}^{\prime}(0+) \equiv 0 \,;$  and for every $\,\theta \in [0, 2\pi)\,$, we have 
\begin{equation}
\label{eq: vdifferential}
{\bm b} (r, \theta) v_{\theta}^{\prime}(r) + \frac{1}{2} {\bm s}^{2} (r, \theta) v_{\theta}^{\prime\prime}(r) \, = \, 1\, , \,\,\qquad \text{for a.e.} \, \, \,\, r \in (0, \ell(\theta) ) \, . \quad
\end{equation}
%\newpage
%\noindent
{\bf (ii)} ~ For every $\, \theta \in [0, 2\pi)\,$, the function $ \, r \longmapsto \big( v_\theta (r) / p_{\theta}(r) \big) \,$  %$$\, r \longmapsto \frac{v_{\theta}(r)}{p_{\theta}(r)} \,$$ 
is strictly increasing on $\, (0, \ell (\theta) ) \,$ with $\,\big(  v_{\theta} / p_{\theta} \big) (0+) = 0 \,$. Thus $\,   \big( v_{\theta} / p_{\theta}\big) (\ell (\theta)-)\,$ is well-defined (but may be $\,\infty$).

\noindent
{\bf (iii)} ~The implication $\,\, p_{\theta}(\ell(\theta)-) = \infty \,\,\Rightarrow \,\, v_{\theta}(\ell(\theta)-) = \infty\,$  \,holds for every $\, \theta \in [0, 2\pi)\,$.
\end{prop}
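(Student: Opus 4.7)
My plan is to extract everything from the explicit formulas obtained by differentiating the integral representation (\ref{eq: v}); once $v'_\theta$ and $v''_\theta$ are in hand, all three parts essentially fall into place. Differentiating once gives $v'_\theta(r) = p'_\theta(r)\,\bm m_\theta([0,r])$, and combining $p'_\theta(0+)=1$ from Proposition \ref{prop: pq}(ii) with $\bm m_\theta([0,r])\to 0$ as $r\downarrow 0$ yields $v'_\theta(0+)=0$. A second differentiation, using $p''_\theta = -(2\bm b/\bm s^2)\,p'_\theta$ again from Proposition \ref{prop: pq}(ii) and the density $2/(p'_\theta\bm s^2)$ of $\bm m_\theta$ read off from (\ref{eq: speed}), delivers the identity (\ref{eq: vdifferential}) after a routine algebraic rearrangement.

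To verify $v\in\mathfrak D_I$, I would check the three conditions of Definition \ref{def: D2} in turn. Absolute continuity of $v'_\theta$ on compact subintervals of $[0,\ell(\theta))$ follows from its product representation, since $p'_\theta$ is absolutely continuous (Proposition \ref{prop: pq}(ii)) and $r\mapsto\bm m_\theta([0,r])$ has locally integrable density by Condition \ref{bsigma}(ii). Condition (ii) of Definition \ref{def: D2} is automatic because $v'_\theta(0+)\equiv 0$. Condition (iii) demands a uniform-in-$\theta$ bound on $|v''_\theta|$ near the origin: by Condition \ref{bsigma}(iii) the quantity $(1+|\bm b|)/\bm s^2$ is uniformly bounded on $(0,\eta]\times[0,2\pi)$, which keeps $p'_\theta$ uniformly bounded above and away from zero on this strip, and hence $v'_\theta$ uniformly bounded there; the ODE (\ref{eq: vdifferential}) then yields the required bound on $|v''_\theta|$.

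For part (ii), I would rewrite the ratio using the second form in (\ref{eq: v}) as
\[
\frac{v_\theta(r)}{p_\theta(r)} \,=\, \int_0^r \Bigl(1 - \frac{p_\theta(y)}{p_\theta(r)}\Bigr)\,\bm m_\theta(\mathrm{d}y).
\]
As $r$ increases, both the pointwise integrand (because $p_\theta$ is strictly increasing) and the domain of integration strictly grow, while $\bm m_\theta$ charges every neighborhood of $0$; this gives strict monotonicity in $r$. The limit $(v_\theta/p_\theta)(0+)=0$ comes from l'H\^opital applied with $v'_\theta(0+)=0$ and $p'_\theta(0+)=1$. Part (iii) then follows by a short contradiction: $v_\theta(r_0)>0$ for any $r_0\in(0,\ell(\theta))$, so if $p_\theta(\ell(\theta)-)=\infty$ while $v_\theta(\ell(\theta)-)<\infty$, the ratio would tend to $0$ at $\ell(\theta)-$, violating the strict increase from $0$ established in (ii).

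The only step that needs genuine care, as opposed to routine calculus, is the uniform-in-$\theta$ verification of Definition \ref{def: D2}(iii); this is where Condition \ref{bsigma}(iii) must be used in full to control $p'_\theta$, $1/\bm s^2$ and $\bm b/\bm s^2$ simultaneously on a neighborhood of the origin.
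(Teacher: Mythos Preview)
Your proposal is correct and follows essentially the same approach as the paper. The paper's proof is extremely terse---it simply asserts that (i) ``can be checked in a very direct manner,'' writes down the same integral representation $\frac{v_\theta(r)}{p_\theta(r)} = \int_0^r \big(1 - \frac{p_\theta(y)}{p_\theta(r)}\big)\,\bm m_\theta({\rm d}y)$ that you use for (ii), and says (iii) ``follows clearly from (ii)''---so your write-up is exactly the natural expansion of those hints, including the contradiction argument for (iii).
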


\begin{proof}
The claim {\bf (i)} can be checked in a very direct manner. Moreover, we have by (\ref{eq: v}) that
\[
 \frac{v_{\theta}(r)}{p_{\theta}(r)} = \int_{0}^{r} \Big(  1 - \frac{p_{\theta}(y)}{p_{\theta}(r)} \Big) {\bm m}_{\theta} ({\rm d} y )\, ,
\]
and {\bf (ii)} is then immediate from this, and from the fact that $\, p_{\theta}(r)\,$ is positive and strictly increasing on $\, (0, \ell (\theta) ) \,$. Finally, {\bf (iii)} follows clearly from {\bf (ii)}.
\end{proof}

Now we give  a sufficient condition for $\, X (\cdot)\,$ to explode in   finite expected time.

\begin{prop}\label{prop: finiteexpectexplo}
We have $\, \mathbb{E}^{x} [ S ] < \infty\,$ for every $\, x \in I ,\,$  if 
\begin{equation}
\label{eq: finiteexpectexplo}
\,{\bm \nu} \big(\big\{ \theta : \, p_{\theta}(\ell (\theta)-) < \infty \big\}\big) > 0\qquad\text{and}\qquad \sup_{\theta \in [0, 2\pi)} \, \Big( \frac{v_{\theta}}{p_{\theta}} \Big) (\ell (\theta)-) \, <\, \infty\, .
\end{equation}
In particular, we have $\,\, \mathbb{E}^{x} [ S ] < \infty\,$ for every $\, x \in I ,$ \, if $\,\,\,\sup_{\theta \in [0, 2\pi)} v_{\theta}(\ell (\theta)-) \, < \, \infty\,$.
\end{prop}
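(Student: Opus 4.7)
The plan is to apply the generalized Freidlin--Sheu identity of Theorem~\ref{Thm: Gen}(iii) (extended to $\mathfrak{D}_{I}$ per Remark~\ref{rmk: DIextend}) to the Feller function $v$ from \eqref{eq: v}. Proposition~\ref{prop: v/p}(i) gives $v \in \mathfrak{D}_{I}$, $v_{\theta}'(0+) \equiv 0$, and the pointwise ODE $\bm{b}v_{\theta}' + \tfrac{1}{2}\bm{s}^{2}v_{\theta}'' = 1$. Substituting the Walsh-diffusion dynamics from Definition~\ref{def: WalshDiff}(iv) and using $\int_{0}^{T}\mathbf{1}_{\{X=\bm{0}\}}\,\mathrm{d}t \equiv 0$, the ``It\^o drift'' collapses to $t \wedge S_{n}$, while $\int v_{\theta}'(0+)\,\bm{\nu}(\mathrm{d}\theta) = 0$ kills the $L^{\Vert X\Vert}$-term. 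Consequently $v(X(t \wedge S_{n})) - v(x) - (t \wedge S_{n})$ is a continuous local martingale. Localizing by a reducing sequence $\tau_{k} \uparrow \infty$, taking expectations, applying Fatou to the martingale part, and passing to the limit by monotone convergence (first in $k$, then in $n, t$) yields the master inequality
\[
\mathbb{E}^{x}[S] \ \le\ \liminf_{t,\,n \to \infty}\mathbb{E}^{x}\bigl[v(X(t \wedge S_{n}))\bigr] - v(x).
\]
The ``in particular'' assertion is then immediate: when $V_{*} := \sup_{\theta} v_{\theta}(\ell(\theta)-) < \infty$, the function $v$ is bounded by $V_{*}$ throughout $\overline{I}$, hence $\mathbb{E}^{x}[S] \le V_{*} - v(x) < \infty$.

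For the principal statement, Proposition~\ref{prop: v/p}(ii) gives $v \le Cp$ on $I$ with $C := \sup_{\theta}(v_{\theta}/p_{\theta})(\ell(\theta)-)$, reducing the master inequality to a bound on $\mathbb{E}^{x}[p(X(t \wedge S_{n}))]$. Repeating the Freidlin--Sheu procedure with the scale function $p$ in place of $v$---using $p_{\theta}'(0+) \equiv 1$ and the identity $\bm{b}p_{\theta}' + \tfrac{1}{2}\bm{s}^{2}p_{\theta}'' = 0$ read off Proposition~\ref{prop: pq}(ii)---produces the decomposition
\[
p(X(\cdot \wedge S_{n})) \ =\ p(x) + (\text{continuous local martingale}) + L^{\Vert X\Vert}(\cdot \wedge S_{n}),
\]
so the whole matter collapses to showing $\mathbb{E}^{x}\bigl[L^{\Vert X\Vert}(S)\bigr] < \infty$.

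This last estimate is the main obstacle, and it is exactly where $\bm{\nu}(A) > 0$ becomes indispensable. The plan is to pick $A' \subseteq A$ with $\bm{\nu}(A') > 0$ and $M := \sup_{\theta \in A'} p_{\theta}(\ell(\theta)-) < \infty$ (feasible by a countable exhaustion of $A$), and then exploit the excursion structure of the driftless \textsc{Walsh} diffusion $Y = \mathcal{P}(X)$ coming from Propositions~\ref{prop: scaling} and~\ref{prop: DDS}: per unit of local time accumulated at the origin, $\Vert Y \Vert$ performs excursions whose angles are distributed according to $\bm{\nu}$, and along any $A'$-ray the It\^o excursion measure of those excursions which reach height $M$ contributes intensity at least $1/M$, giving a total ``fatal-excursion'' intensity $\ge \bm{\nu}(A')/M > 0$ per unit local time. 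Consequently $L^{\Vert X\Vert}(S)$ is stochastically dominated by $(M/\bm{\nu}(A')) \cdot \mathrm{Exp}(1)$, yielding $\mathbb{E}^{x}[L^{\Vert X\Vert}(S)] \le M/\bm{\nu}(A')$ and closing the argument. The delicate point is to make this excursion-theoretic bookkeeping rigorous within the Walsh framework.
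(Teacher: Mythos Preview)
Your ``in particular'' case is correct and essentially matches the paper's method in that degenerate situation. For the principal statement, however, your route diverges from the paper's and leaves a real gap.

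The paper does not pass through $\mathbb{E}^{x}[L^{\Vert X\Vert}(S)]$ at all. Instead it builds a single test function
\[
M_{\theta}(r) := -v_{\theta}(r) + C_{2}(\theta)\,p_{\theta}(r) + C_{1},
\qquad
C_{1} := \frac{\int_{0}^{2\pi}(v_{\theta}/p_{\theta})(\ell(\theta)-)\,\bm{\nu}(\mathrm{d}\theta)}{\int_{0}^{2\pi} p_{\theta}(\ell(\theta)-)^{-1}\,\bm{\nu}(\mathrm{d}\theta)},
\quad
C_{2}(\theta) := -\frac{C_{1}}{p_{\theta}(\ell(\theta)-)} + \Big(\frac{v_{\theta}}{p_{\theta}}\Big)(\ell(\theta)-),
\]
chosen so that $\int_{0}^{2\pi} M_{\theta}'(0+)\,\bm{\nu}(\mathrm{d}\theta)=0$ (killing the local-time term in Theorem~\ref{Thm: Gen}(iii)) and $\bm{b}M_{\theta}'+\tfrac12\bm{s}^{2}M_{\theta}''=-1$. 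The key observation---and this is where the hypothesis $\sup_{\theta}(v_{\theta}/p_{\theta})(\ell(\theta)-)<\infty$ enters directly---is that the monotonicity of $r\mapsto (v_{\theta}/p_{\theta})(r)$ from Proposition~\ref{prop: v/p}(ii) forces $M\ge 0$ on $I$. One then gets $\mathbb{E}^{x}[\tau_{n}]\le M(x)$ for a reducing sequence $\tau_{n}\uparrow S$, and the result follows in one stroke. No bound on $\mathbb{E}^{x}[L^{\Vert X\Vert}(S)]$ is needed.

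Your excursion-theoretic bound on $\mathbb{E}^{x}[L^{\Vert X\Vert}(S)]$ is morally right, but ``making this bookkeeping rigorous within the Walsh framework'' is not a triviality you can defer: the paper develops no It\^o excursion theory for Walsh Brownian motion, so you would need to import or redevelop that machinery (identifying the excursion measure, the Poisson-point-process description relative to $L^{\Vert Z\Vert}$, etc.). That is a substantial detour compared to the three-line construction of $M$. A smaller point: your ``master inequality'' as written does not follow from Fatou in the direction you need; what actually works is to keep the localizing times $\tau_{k}$ in place and use the uniform bound $v\le Cp$ together with the monotonicity of $L^{\Vert X\Vert}$---but then you are right back to needing $\mathbb{E}^{x}[L^{\Vert X\Vert}(S)]<\infty$.
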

%\newpage
\begin{proof}
Assume that (\ref{eq: finiteexpectexplo}) holds. Then we can define
\begin{equation}
\label{eq: c1}
C_{1}: = \frac{\int_{0}^{2 \pi} \big( \frac{v_{\theta}}{p_{\theta}}\big) (\ell (\theta)-) \, {\bm \nu} ({\rm d}\theta) }{\int_{0}^{2 \pi} \frac{1}{p_{\theta}(\ell (\theta)-)}\, {\bm \nu} ({\rm d}\theta)} \, , \qquad C_{2} (\theta): = \, - \frac{C_{1}}{p_{\theta}(\ell (\theta)-)} + \Big( \frac{v_{\theta}}{p_{\theta}} \Big) (\ell (\theta)-) \, ,
\end{equation}%\newpage 
\begin{equation}\label{eq: M}
\, M(r, \theta) \equiv  M_{\theta}(r) : = -v_{\theta}(r) + C_{2} (\theta)\, p_{\theta}(r)+C_{1}\, \qquad (r, \theta) \in I \, .
\end{equation}%\newpage 
Note that the expression for $\, C_{2}(\theta)\,$ in (\ref{eq: c1}) is meaningful even in the case $\, p_{\theta}(\ell (\theta)-)= \infty \,$. \newpage   

\smallskip
Now $\, M \,$ is a well-defined function on $\, I \,$, as $\, M(0, \theta) \equiv C_{1}\,$. Since $\, \theta \mapsto p_{\theta}(\ell(\theta)-)\,$ is bounded away from zero by Proposition \ref{prop: pq} (i), we see that $\, \theta \mapsto C_{2}(\theta)\,$ is bounded, and that $\, M \in \mathfrak{D}_{I}\,$, thanks to Proposition \ref{prop: v/p} (i). Moreover, by Propositions \ref{prop: v/p} and \ref{prop: pq}, it is easy to check 
\begin{equation}
\label{eq: MDeri}
\int_{0}^{2\pi} M_{\theta}^{\prime}(0+) \, {\bm \nu}({\rm d} \theta)  =  0 \qquad\,\, \,\text{and} \,\, \qquad\,\, {\bm b} (r, \theta) M_{\theta}^{\prime}(r) + \frac{1}{2} {\bm s}^{2} (r, \theta) M_{\theta}^{\prime\prime}(r) = -1\, . \quad
\end{equation}
Recalling Definition \ref{def: G}, we apply Theorem \ref{Thm: Gen} %(again, its obvious generalization to processes valued in $\,I\,$ and functions in $\,\mathfrak{D}_{I}$) 
and obtain the $\mathbb{P}^{x}-$a.e. equality
\begin{equation}
\label{eq: M(X)}
M(X(T \wedge S_{n})) = M(x) -(T \wedge S_{n} ) + \int_{0}^{T \wedge S_{n}} {\bf 1}_{\{ X(t) \neq {\bm 0} \}}\, M^{\prime}(X(t))\, {\bm s} (X(t)) \,{\rm d}W(t) \, , \quad 0 \leq T < \infty  
\end{equation}
where $\, S_{n}\,$ is as in Definition \ref{def: WalshDiff} (iii). With 
\[
\tau_{n} : =   \inf \bigg\{ t: \int_{0}^{t \wedge S_{n}} {\bf 1}_{\{ X(u) \neq {\bm 0} \}}\, \big( M^{\prime}(X(u))\, {\bm s} (X(u)) \big)^{2} \,{\rm d}u \, \geq \, n \bigg\}   \wedge S_{n} \, , \quad
\]
taking expectations in (\ref{eq: M(X)}) yields
\begin{equation}
\label{eq: ESn}
\mathbb{E}^{x} \,\big[M(X(\tau_{n})) + \tau_{n} \big] \, = \, M(x) \, , \qquad \forall \, n \in \mathbb{N} \, .
\end{equation}
On the other hand, we have by Proposition \ref{prop: v/p}(ii) that
\[
M(r, \theta) \,=  \,C_{1} \,
\bigg( 1-\, \frac{p_{\theta}(r)}{p_{\theta}(\ell (\theta)-)} \bigg)\,  + \, p_{\theta}(r) \, \bigg[  \Big( \frac{v_{\theta}}{p_{\theta}} \Big) \big(\ell (\theta)-\big) - \Big(\frac{v_{\theta}}{p_{\theta}} \Big) (r)  \bigg]  \, \geq \, 0 \, , \quad\,\, \forall \, (r, \theta) \in I \, .
\]
Thus (\ref{eq: ESn}) implies $\,\, \mathbb{E}^{x} [ \tau_{n} ] \leq M(x) \, , \,\,\, \forall \, n \in \mathbb{N} \,$. Letting $\, n \rightarrow \infty\,$ we get $\, \, \mathbb{E}^{x} [ S ] \leq M(x) < \infty \,$. 

\smallskip
Finally, we note that $\,\,\sup_{\theta \in [0, 2\pi)} v_{\theta}(\ell (\theta)-) \, < \, \infty\,$ implies (\ref{eq: finiteexpectexplo}), thanks to Proposition \ref{prop: v/p} (iii) and Proposition \ref{prop: pq}(i). Proposition \ref{prop: finiteexpectexplo} is now proved. 
\end{proof}

%%%%%%%%%%%%%%%%%%%%%%%%%%%%%%%%%%%%%
\subsection{Asymptotic Behavior     
Near the Explosion Time}
%%%%%%%%%%%%%%%%%%%%%%%%%%%%%%%%%%%%%

Throughout this subsection and the next one, we use the notation $\, \Theta (t) : = \text{arg} (X(t)) \,$ whenever $\, X(t) \neq \bm 0\,$, and recall from (\ref{eq: RA})  the process  $$\, R_{A}^{X}(\cdot) : = \lVert X (\cdot) \rVert \cdot {\bf 1}_A \big(\Theta(\cdot) \big)\, , \quad \,\forall \, A \in \mathcal{B} ([0,2\pi))\, .$$ We recall also the functions $\,\{ \ell_{n}\}_{n=1}^{\infty}\,$ and the sets $\, \{ I_{n}\}_{n=1}^{\infty}\, $ at the beginning of Subsection 3.1.

The following main result of this subsection discusses the behavior of $\, X(t) \,$ as $\, t \,$ approaches $\, S\,$. 

\begin{thm}
\label{Thm: XS}
{\bf Starting At the Origin:} 
Let $\, x = \bm 0\,$ in the context specified at the beginning of this section. With $\, p \,$ defined as in (\ref{eq: p}), we distinguish two cases: 

\smallskip
\noindent
{\bf (i)} ~ ${\bm \nu} (\{ \theta : \, p_{\theta}(\ell (\theta)-) < \infty \}) > 0\, .~$  \\ Then the limit in the tree-topology $\, \lim_{t \uparrow S} X(t)\,$ exists $\,\mathbb{P}^{\bm 0}$-a.e. in the extended rays, and $\,\,  X(S): = \lim_{t \uparrow S} X(t)$ takes values in the set $\,\partial I : = \{ (r, \theta): \, r = \ell(\theta), \,\,\,\theta \in [0,2\pi) \} \,$.  Moreover, we have in this case 
\begin{equation}
\label{eq: XSdistri}
\mathbb{P}^{\bm 0} \big( \Theta(S) \in A \big) \,\, = \,\, \frac{\int_{A} \, \frac{1}{p_{\theta}(\ell (\theta)-)} \, {\bm\nu}({\rm d} \theta) }{\int_{0}^{2\pi} \frac{1}{p_{\theta}(\ell (\theta)-)} \, {\bm\nu}({\rm d} \theta)} \,\,\, , \qquad\,\,\, \forall \, A \in \mathcal{B} ([0,2\pi))\, .
\end{equation}
%\newpage
\noindent
{\bf (ii)} ~~ ${\bm \nu} (\{ \theta : \, p_{\theta}(\ell (\theta)-) < \infty \}) = 0 \,.~$ \\ Then $\,\mathbb{P}^{\bm 0}-$a.e., we have that the limit $\, \lim_{t \uparrow S} X(t)\,$ does not exist, and that 
\begin{equation}
\label{eq: recur}
{\bm \nu} \bigg( \, \overline{ \Big\{ \theta: \, \sup_{ 0 \leq t < S  } R^{X}_{\{\theta\}}(t) \, \geq \, \ell_{n}(\theta) \Big\} } \,\bigg) \, = \, 1 \, , \qquad \forall \, n \in \mathbb{N}  
\end{equation}
holds, where the closure is taken in $\, [0, 2\pi) .$  In particular,   $\, \mathbb{P}^{\bm 0} \big( S = \infty \big) =1\,.$ \\ Moreover, whenever $\, {\bm \nu} (\{ \theta\}) > 0  \,$ holds,  we have\newpage  
\begin{equation}
\label{eq: singleton}
\sup_{\, 0 \leq t < S  } R^{X}_{\{\theta\}}(t) \, = \,\ell (\theta) \, , \quad\mathbb{P}^{\bm 0}-\text{a.e.}
\end{equation}
\end{thm}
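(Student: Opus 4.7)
The plan is to reduce both cases to the driftless setting via the scaling Proposition \ref{prop: scaling} and then analyze the resulting driftless \textsc{Walsh} diffusion through its \textsc{Dambis-Dubins-Schwarz} representation (Proposition \ref{prop: DDS}) as a time-changed \textsc{Walsh} Brownian motion. Set $Y(\cdot) := \mathcal{P}(X(\cdot))$; by Proposition \ref{prop: scaling} this is a driftless \textsc{Walsh} diffusion on $J$ with the same explosion time $S$, and it admits the representation $Y(\cdot) = Z\big(\langle U_Y \rangle (\cdot)\big)$ for a \textsc{Walsh} Brownian motion $Z$ with angular measure $\bm\nu$. Since $\mathcal{Q}:\overline{J}\to\overline{I}$ is a tree-topological homeomorphism preserving angles, all the claims at $\partial I$ reduce to the corresponding ones for $Y$ at $\partial J = \{(r,\theta): r = p_\theta(\ell(\theta)-)\}$.

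For Case (i), I would apply Theorem \ref{Thm: Gen}(iii) (in the $J$-adjusted form of Remark \ref{rmk: DIextend}) to the test functions $g^A_\theta(r) := \mathbf{1}_A(\theta)\cdot r/p_\theta(\ell(\theta)-)$, for $A \in \mathcal{B}([0,2\pi))$. These lie in $\mathfrak{D}_J$: the map $\theta \mapsto p_\theta(\ell(\theta)-)$ is bounded away from zero by Proposition \ref{prop: pq}(i), and $(g^A)''\equiv 0$. The formula gives that $g^A(Y(\cdot)) - \alpha_A\, L^{\|Y\|}(\cdot)$ is a local martingale, where $\alpha_A := \int_A (p_\theta(\ell(\theta)-))^{-1}\bm\nu(\mathrm{d}\theta)$. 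Taking $A = [0,2\pi)$ and noting $g^A(Y)\le 1$, optional stopping at $\tau_k \wedge S_n \wedge T$ with $\tau_k := \inf\{t: L^{\|Y\|}(t) > k\}$ makes $M(\cdot\wedge\tau_k\wedge S_n\wedge T)$ a bounded martingale; passage to the limit yields $\mathbb{E}^{\bm 0}[L^{\|Y\|}(S)] \le 1/\beta$, where $\beta := \alpha_{[0,2\pi)} \in (0, \infty)$. Since $Z$ is recurrent to $\bm 0$ and Condition \ref{bsigma} transferred through Proposition \ref{prop: pq} controls $\widetilde{\bm s}$ near the origin, one concludes $L^{\|Y\|}(\infty) = \infty$ on $\{S = \infty\}$, forcing $\mathbb{P}^{\bm 0}(S = \infty) = 0$; the limit $\lim_{t \uparrow S} Y(t) \in \partial J$ then exists by the continuity built into Definition \ref{def: WalshDiff}. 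Applying optional stopping to the general-$A$ martingale at $S$ and using that $\|Y(S)\| = p_{\Theta(S)}(\ell(\Theta(S))-)$ on the exit event, one obtains $\mathbb{P}^{\bm 0}(\Theta(S) \in A) = \alpha_A / \beta$, which is \eqref{eq: XSdistri}.

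For Case (ii), the hypothesis gives $\beta = 0$, so \emph{no} excursion of $Z$ ever reaches $\partial J$; in the It\^o excursion picture, the Poisson intensity of escape excursions per unit local time is zero. Hence $S = \infty$ a.s., and non-existence of $\lim_{t \uparrow S} X(t)$ then follows from recurrence of $Z$ to the origin combined with \eqref{eq: singleton}. For \eqref{eq: singleton}, note that $\bm\nu(\{\theta\}) > 0$ forces $p_\theta(\ell(\theta)-) = \infty$ by the case hypothesis; the \textsc{Walsh} excursion decomposition then gives that excursions of $Z$ at angle $\{\theta\}$ occur as a Poisson process with intensity $\bm\nu(\{\theta\})\, \mathrm{d}L \otimes \bm n(\mathrm{d}e)$, each with positive probability of reaching any finite height, so $\sup_t R^Y_{\{\theta\}}(t) = \infty$ a.s.; conjugating by $q_\theta$ yields \eqref{eq: singleton}. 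Claim \eqref{eq: recur} then follows from this together with a density argument: for each $n$, the set of excursion angles at which $Z$ has reached height $p_\theta(\ell_n(\theta)) < \infty$ becomes a.s.\ dense in the $\bm\nu$-support of $[0,2\pi)$ as time progresses, by the Poisson sampling of angles from $\bm\nu$.

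The main obstacle is the passage from the integral bound $\mathbb{E}^{\bm 0}[L^{\|Y\|}(S)] < \infty$ to the almost-sure finiteness of $S$ in Case (i); ruling out the event $\{S = \infty,\, L^{\|Y\|}(\infty) < \infty\}$ requires a recurrence argument for the time-changed \textsc{Walsh} Brownian motion, resting on Condition \ref{sigma}-type lower bounds on $\widetilde{\bm s}$ near the origin inherited from Condition \ref{bsigma}. A parallel bookkeeping difficulty in Case (ii) is organizing the Poisson point process of excursion angles uniformly over $n \in \mathbb{N}$, so as to justify the ``dense with unbounded excursions on $\bm\nu$-a.e.\ ray'' picture. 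Both obstacles are surmounted by a careful invocation of It\^o's excursion decomposition applied directly to the \textsc{Walsh} Brownian motion $Z$, which makes the excursion-counting intensities explicit and simultaneously delivers both the exit time finiteness and the exit-angle distribution \eqref{eq: XSdistri}.
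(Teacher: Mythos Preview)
Your approach to Case (i) contains a genuine error: you aim to prove $\mathbb{P}^{\bm 0}(S=\infty)=0$, but this is \emph{false} under the sole hypothesis ${\bm\nu}(\{\theta:p_\theta(\ell(\theta)-)<\infty\})>0$. Remark \ref{rmk: XS} already warns that Theorem \ref{Thm: XS}(i) makes no claim about the finiteness of $S$, and Theorem \ref{Thm: Sfinite}(iii) gives an entire regime in which $0<\mathbb{P}^{\bm 0}(S<\infty)<1$ while Case (i) holds. Concretely: take $\ell\equiv 1$, ${\bm b}\equiv 0$, and ${\bm s}^2(r,\theta)\sim(1-r)^2$ near $r=1$ on a set of angles of positive $\bm\nu$-measure. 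Then $p_\theta(\ell(\theta)-)=1<\infty$ everywhere, yet on those rays $v_\theta(1-)=\infty$, and one computes via the occupation-time formula that the time-change integral $\int_0^\sigma \widetilde{\bm s}^{-2}(Z(u))\,\mathrm{d}u$ diverges whenever the \textsc{Walsh} Brownian motion $Z$ first reaches $\partial J$ along such a ray; hence $S=\infty$ with positive probability. On this event $\langle U_Y\rangle(\infty)=\sigma<\infty$, so $L^{\|Y\|}(\infty)=L^{\|Z\|}(\sigma)<\infty$: your proposed resolution via ``recurrence of $Z$ and Condition \ref{sigma}-type bounds near the origin'' cannot rule out $\{S=\infty,\,L^{\|Y\|}(\infty)<\infty\}$, because the obstruction is at the boundary, not at the origin.

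The paper avoids this trap entirely. It never argues $S<\infty$; instead it obtains the existence of $\lim_{t\uparrow S}X(t)$ directly by \emph{supermartingale convergence}. Choosing $A_M$ with $0<\bm\nu(A_M)<1$ and $p_\theta(\ell(\theta)-)\le M$ on $A_M$, the function $p^{A_M}$ of \eqref{eq: pA} is bounded from below on $I$; hence $p^{A_M}(X(\cdot\wedge S))$ is a bounded-from-below supermartingale and converges as $t\uparrow S$, finite or not. This yields the limit of $X$ via the homeomorphism $\mathcal{P}^{A_M}$. The exit distribution \eqref{eq: XSdistri} is then obtained by a truncation-and-passage argument (the domains $I^A_{n,m}$ of \eqref{eq: n,m,A1}--\eqref{eq: n,m,A2}), applying Step~1's bounded-martingale computation on each truncated domain and letting $m,n\to\infty$. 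Your test-function/optional-stopping idea for the exit distribution is close in spirit to the paper's use of $p^A$, and would go through \emph{once} the limit is known to exist --- but you cannot get the limit from $S<\infty$. For Case (ii), your excursion-theoretic sketch is plausible but underdeveloped; the paper instead bootstraps from Case (i) applied to the truncated domains $I^A_{n,m}$, which gives \eqref{eq: XKnA1} directly and hence \eqref{eq: recur} and \eqref{eq: singleton} without any explicit excursion bookkeeping.
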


\begin{remark}\label{rmk: XS}
We stipulate $\, \frac{1}{\infty} = 0 \,$ in (\ref{eq: XSdistri}). Since $\, \theta \mapsto p_{\theta}(\ell (\theta)-)\,$ is bounded away from zero, we see that (\ref{eq: XSdistri}) makes good sense, provided   $\, {\bm \nu} (\{ \theta :   p_{\theta}(\ell (\theta)-) < \infty \}) > 0\,$ holds.  We make no claim  in {\bf (i)} regarding  the finiteness of $\, S\,$, and the result  holds there regardless of whether $\, S\,$ is finite or not. A full discussion regarding the finiteness of $\, S\,$ appears in Subsection 4.3.
\end{remark}
\begin{remark}
\label{rmk: XS2}
{\it If we replace ``$\ell_{n}(\theta)$" by ``$\ell(\theta)$" in the property (\ref{eq: recur}), then this new property no longer holds in general.}  Indeed, let $\,{\bm\nu}\,$ be the uniform  distribution on $\, [0, 2\pi)\,$ in {\bf (ii)}; then (\ref{eq: singleton}) holds for no $\, \theta \in [0, 2\pi)\,$, $\,\mathbb{P}^{\bm 0}-$a.e. This is because $\, X (\cdot) \,$ will be on different rays for any two of its excursions away from the origin.  
\end{remark}

\begin{proof}
We first note that the explosion time $\, S\,$ does not depend on the choice of the approximating sequence of functions $\,\{ \ell_{n}\}_{n=1}^{\infty}\,$, because $\, S  = \inf \{ t: \, X(t) \notin I \}\,$ always holds by (\ref{eq: explos}). Thus in the proof of {\bf (i)}, we will assume that 
 \begin{equation}
 \label{eq: pvlnfinite}
 \, p_{\theta}(\ell_{n} (\theta)) \leq n \,\qquad \text{and} \qquad\, v_{\theta}(\ell_{n} (\theta)) \leq n \, , \qquad \forall  \, \theta \in [0 , 2\pi)\, , \quad \forall \, n \in \mathbb{N}\, ;
 \end{equation}
for otherwise, we can define  
$$\,
\widetilde{\ell_{n}}(\theta) \,: =\, \sup \big\{ r: \, 0\leq r \leq \ell_{n}(\theta), \,\, p_{\theta}(r) \leq n ,\,\, v_{\theta}(r) \leq n \big\}\,
$$ 
and let the sequence $\,\{ \widetilde{\ell_{n}}\}_{n=1}^{\infty}\,$ play the role of $\,\{ \ell_{n}\}_{n=1}^{\infty}\,$. However, we will not assume (\ref{eq: pvlnfinite}) when proving {\bf (ii)}, because $\,\ell_{n}\,$ appears explicitly in the conclusion of {\bf (ii)}.

\medskip
\noindent
{\it Proof of {\bf (i)}.} {\it Step 1.} We shall prove {\bf (i)} in this step, albeit under the assumptions
\begin{equation}
\label{eq: vpfinite}
\sup_{\theta \in [0, 2\pi)}   p_{\theta}(\ell (\theta)-) \, < \, \infty\,  \qquad \text{and} \qquad \sup_{\theta \in [0, 2\pi)}  v_{\theta}(\ell (\theta)-) \, < \, \infty \, .\qquad
\end{equation}
With (\ref{eq: vpfinite}),  we have $\, \mathbb{E}^{\bm 0} [ S ] < \infty\,$ by Proposition \ref{prop: finiteexpectexplo}, thus $\, \mathbb{P}^{\bm 0} (S<\infty) = 1\,$. Thus, from (\ref{eq: explos}) we know that $\, X(S) = \lim_{t \uparrow S} X(t)\,$ exists under the tree-topology in $\, \{ (r, \theta): \, r = \ell(\theta) , \,\, 0 \le \theta < 2 \pi\} \,$, $\,\mathbb{P}^{\bm 0}$-a.e. It develops that $\, \Theta (S) \,$ is a well-defined random variable with values in $\, [0, 2\pi)\,$; we denote its distribution by   $\, \widetilde{\bm \nu} \,$,     a probability measure on  $\, ( [0, 2 \pi), \mathcal{B}([0, 2\pi)))\,$.

\smallskip
Let us define the {\it scale function associated with a set $\, A \in \mathcal{B} ([0,2\pi))\,$} by
\begin{equation}
\label{eq: pA}
p^{A}_{\theta}(r) = p^{A}(r, \theta):= \, p(r,\theta)\cdot \big( {\bm \nu}(A)\cdot  {\bf 1}_{A^{c}} (\theta) \, -\, {\bm \nu}(A^{c})\cdot {\bf 1}_{A} (\theta)\big) \, , \qquad (r, \theta) \in I \, .
\end{equation}
Clearly, we have $\, p^{A} \in \mathfrak{D}_{I}\,$ and $\,\int_{0}^{2\pi} (p^{A}_{\theta})^{\prime}(0+) \,{\bm\nu}({\rm d}\theta) \, = \, 0\,$. Now with the help of Proposition \ref{prop: pq} and Theorem \ref{Thm: Gen}, we can easily check that $\, p^{A}(X(\cdot \wedge S_{n})) \,$ is a local martingale -- and  actually a martingale, because (\ref{eq: vpfinite}) gives the boundedness of $\, p^{A}\,$ on $\, I \,$.  Then we may let $\, n \rightarrow \infty\,$ to obtain that $\, p^{A}(X(\cdot \wedge S)) \,$ is a bounded martingale. This gives
\[
0\, = \, p^{A} ({\bm 0})\, = \,\mathbb{E}^{\bm 0} \big[\,p^{A}\big( X(S) \big) \, \big]  \, = \,\mathbb{E}^{\bm 0} \big[\,p^{A} \big( \ell (\Theta (S)) , \Theta (S) \big) \, \big] \, =\, \int_{0}^{2\pi} p^{A} \big( \ell(\theta ) , \theta  \big)\, \widetilde{\bm \nu}({\rm d}\theta)
\]
\begin{equation}
\label{eq: pAX}
 \, = \, {\bm \nu}(A)\cdot \int_{A^{c}} p_{\theta} ( \ell (\theta )-  ) \, \widetilde{\bm \nu}({\rm d}\theta) - {\bm \nu}(A^{c})\cdot \int_{A} p_{\theta} ( \ell (\theta )-  ) \, \widetilde{\bm \nu}({\rm d}\theta) \, ; \qquad \qquad
\end{equation}
 here we have extended the function $\, p^{A} \,$ to $\, \overline{I}\,$ continuously, so that $\, p^{A}\big( X(S) \big)\,$ is well-defined.

From (\ref{eq: pAX}), we observe  that $\, \widetilde{\bm \nu}(A) = 0 \,$ holds whenever $\, {\bm \nu}(A)=0\,$. Thus the measure $\,\widetilde{\bm \nu}\,$ is absolutely continuous with respect to $\,\bm\nu\,,$ and we may assume that $\,\,\widetilde{\bm \nu}({\rm d}\theta) \, =\, \psi (\theta)\, {\bm \nu}({\rm d}\theta) \,\,$ for some function $\, \psi : [0, 2\pi) \rightarrow [0, \infty) \,$. Now for (\ref{eq: XSdistri}) to hold, we only need to show that %\newpage 
\begin{equation}
\label{eq: psi}
\psi (\theta) \, = \, \frac{ \frac{1}{p_{\theta}(\ell (\theta)-)}  }{\int_{0}^{2\pi} \frac{1}{p_{\theta}(\ell (\theta)-)} \, {\bm\nu}({\rm d} \theta)} \,\,\, , \quad\qquad\,\,\, {\bm \nu}\text{-a.e.} \,\,\, \theta \in [0,2\pi) \, . \qquad
\end{equation}\newpage 
To this effect, we consider the sets %\newpage 
\[
A_{1} : = \bigg\{ \theta: \, \psi (\theta) > \frac{ \frac{1}{p_{\theta}(\ell (\theta)-)}  }{\int_{0}^{2\pi} \frac{1}{p_{\theta}(\ell (\theta)-)} \, {\bm\nu}({\rm d} \theta)} \bigg\}  \,\qquad \text{and} \qquad \,  A_{2} : = \bigg\{ \theta: \, \psi (\theta) < \frac{ \frac{1}{p_{\theta}(\ell (\theta)-)}  }{\int_{0}^{2\pi} \frac{1}{p_{\theta}(\ell (\theta)-)} \, {\bm\nu}({\rm d} \theta)} \bigg\} \, .
\]
Letting $\, A= A_{1} \,$ in (\ref{eq: pAX}), it is easy to deduce that either $\, {\bm\nu}(A_{1}) =0 \,$ or $\, {\bm\nu}(A_{1}) =1\,$  must hold. But the latter cannot happen, for otherwise we would have $\, \widetilde{\bm \nu} ([0, 2\pi)) = \int_{0}^{2\pi} \psi (\theta)\, {\bm \nu}({\rm d}\theta) > 1 \,$. Thus $\, {\bm\nu}(A_{1}) =0 \,$ holds, and we deduce  $\, {\bm\nu}(A_{2}) =0 \,$ similarly. This way we get (\ref{eq: psi}), and Step 1 is now complete.    

\medskip
\noindent
{\it Step 2.} This step will complete the proof of {\bf (i)}. We first show the existence of $\, \lim_{t \uparrow S} X(t) \,$, $\,\mathbb{P}^{\bm 0}$-a.s. 

\smallskip
\noindent
{\it Case A:  $\, {\bm \nu} \,$ concentrates on   one angle $\, \theta_{0}\,$.} Then $\, p_{\theta_{0}}(\ell (\theta_{0})-) < \infty\,$, and  $\, X (\cdot) \,$  stays a.s.\,on the ray with angle $\, \theta_{0}\,$, by Proposition \ref{prop: zeroangle}. Thus the process $\, p(X(\cdot)) \,$ is bounded. But $\, p(X(\cdot   \wedge S_{n}))\,$ is a local submartingale (as a reflected local martingale), thus  a true submartingale, and so is $\, p(X(\cdot \, \wedge S)) \,$. We deduce that $\, \lim_{t \uparrow S} p(X(t)) \,$ exists $\,\mathbb{P}^{\bm 0}-$a.e. Since $\, X (\cdot) \,$ stays on the same ray, the existence of $\, \lim_{t \uparrow S} X(t) \,$ follows.

\smallskip
\noindent
{\it Case B:  $\, {\bm \nu} \,$ does not concentrate on one angle.} Since $\, {\bm \nu} (\{ \theta : \, p_{\theta}(\ell (\theta)-) < \infty \}) > 0\,$, we can choose an $\, M > 0 \,$ and an $\, A_{M} \subseteq [0, 2\pi)\,$, such that $\, p_{\theta}(\ell (\theta)-) \leq M\,$ for all $\,\theta \in A_{M} \,$, and that $\, 0< {\bm \nu}(A_{M}) < 1 \,$. Then the function $\, p^{A_{M}} \,$ is bounded from below.   Step 1 shows that $\, p^{A_{M}}(X(\cdot \wedge S_{n})) \,$ is a local martingale for every $\, n \in \mathbb{N}\,$, thus a supermartingale, and we may let $\, n\rightarrow \infty\,$  to obtain by \textsc{Fatou}'s lemma that $\, p^{A_{M}}(X(\cdot \wedge S)) \,$ is a bounded from below supermartingale. Therefore,  $\, \lim_{t \uparrow S} p^{A_{M}} (X(t)) \,$ exists $\,\mathbb{P}^{\bm 0}-$a.e. Now we set 
 $$\,
  \mathcal{P}^{A_{M}} (r, \theta):= \, \big( \big \vert p^{A_{M}} (r,\theta) \big \vert , \theta \big) \, \qquad \text{for} \qquad    (r,\theta)\in I\, , 
 $$ 
 and note $\, \mathcal{P}^{A_{M}} (I) = J^{A_{M}} \,$ where, thanks to $\, 0< {\bm \nu}(A_{M}) < 1 \,$,  the set 
 $$\, J^{A_{M}}\, : = \,\big\{ (r,\theta): 0\leq r< \vert p^{A_{M}}_{\theta}(\ell(\theta)-)\vert , \,\, 0 \le \theta < 2 \pi\big\}\,$$ is open in the tree-topology.  By the continuity of $\,X (\cdot)\,$ in the tree-topology, the existence of the limit $\, \lim_{t \uparrow S} p^{A_{M}} (X(t)) \,$ implies the existence of $\, \lim_{t \uparrow S} \mathcal{P}^{A_{M}} (X(t)) \,$ in $\, \overline{J^{A_{M}}}\,$, under the tree-topology.

\smallskip
By analogy with   Section 3.3, and thanks once again to $\, 0< {\bm \nu}(A_{M}) < 1 \,$, we can define the inverse mapping $\,\mathcal{Q}^{A_{M}}: J^{A_{M}}\rightarrow I \,$    of  $\, \mathcal{P}^{A_{M}}\,$, and both $\,\mathcal{Q}^{A_{M}}\,$ and $\, \mathcal{P}^{A_{M}}\,$ are continuous in the tree-topology. Moreover, we can extend $\,\mathcal{Q}^{A_{M}}\,$ to $\,  \overline{J^{A_{M}}}\,$ and $\, \mathcal{P}^{A_{M}}\,$ to $\, \overline{I}\,$ continuously. We see then, that  the existence of $\, \lim_{t \uparrow S} \mathcal{P}^{A_{M}} (X(t)) \,$ in $\, \overline{J^{A_{M}}}\,$ implies the existence of the limit $\, \lim_{t \uparrow S} X(t) \,$ in $\, \overline{I}\,$. 

\smallskip
Next, we turn to the proof of $\,  X(S)  \in \{ (r, \theta): \, r = \ell(\theta) , \,\, 0 \le \theta < 2 \pi\} \,$, as well as (\ref{eq: XSdistri}). Let us define 
\begin{equation}
\label{eq: n,m,A1}
\ell_{n,m}^{A}(\theta):=\, \ell_{n}(\theta) \cdot {\bf 1}_{A}(\theta) + \ell_{m}(\theta)\cdot {\bf 1}_{A^{c}}(\theta)\, , \qquad I_{n,m}^{A}: = \{ (r, \theta): \, 0 \leq r < \ell_{n,m}^{A}(\theta) , \,\, 0 \le \theta < 2 \pi\} \, ,
\end{equation}
\begin{equation}
\label{eq: n,m,A2}
S_{n,m}^{A} : = \, \inf \{ t\geq 0: \, \Vert X(t) \Vert \geq \ell_{n,m}^{A} (\Theta(t)) \} \, =\, \inf \{ t \geq 0: \, X(t) \notin I_{n,m}^{A} \} \,  \qquad \qquad
\end{equation}
for   $\, A \in \mathcal{B} ([0,2\pi))\,$ and $\, m ,\, n \in \mathbb{N}\,$ with $\, m \geq n\,$. By (\ref{eq: pvlnfinite}), we have $\, \sup_{\theta \in [0, 2\pi)}   p_{\theta}(\ell_{n,m}^{A} (\theta)) \, \leq \, m \,$ and $\,\, \sup_{\theta \in [0, 2\pi)}   v_{\theta}(\ell_{n,m}^{A} (\theta)) \, \leq \, m \,$. Thus Step 1 shows $\, \mathbb{P}^{\bm 0} (S_{n,m}^{A} <\infty) = 1\,$, and that 
\begin{equation}
\label{eq: XSnmA}
\mathbb{P}^{\bm 0} \big( \Theta(S_{n,m}^{A} ) \in A \big) \,\, = \,\, \frac{\int_{A} \, \frac{1}{p_{\theta}(\ell_{n}(\theta))} \, {\bm\nu}({\rm d} \theta) }{\int_{A} \, \frac{1}{p_{\theta}(\ell_{n}(\theta))} \, {\bm\nu}({\rm d} \theta) + \int_{A^{c}} \, \frac{1}{p_{\theta}(\ell_{m}(\theta))} \, {\bm\nu}({\rm d} \theta)} \,\,\, , \qquad\,\,\, \forall \, A \in \mathcal{B} ([0,2\pi))\, .
\end{equation}
Note that the events $\, \{ \Theta(S_{n,m}^{A} ) \in A \}\,$ are increasing in $\, m \,$. Setting $\, K_{n}^{A} : = \{ (r, \theta): \,  r \geq \ell_{n} (\theta), \, \,\, \theta \in A \}\,$, we have then
\[
\mathbb{P}^{\bm 0} \big( X \,\, \text{hits} \,\,\, K_{n}^{A} \, \big) \, \,\geq\,\,  \mathbb{P}^{\bm 0} \big( \Theta(S_{n,m}^{A} ) \in A \,\,\, \text{for some} \,\,\,  m \geq n \big)
~~~~~~~~~~~~~~~~~~ 
\]
\begin{equation}
\label{eq: XKnA}
~~~~~~~~~~~~~~~~~~~~~~~~~~~~~~~~
 \, = \, \lim_{m \rightarrow \infty} \mathbb{P}^{\bm 0} \big( \Theta(S_{n,m}^{A} ) \in A \big) \, = \,  \frac{\int_{A} \, \frac{1}{p_{\theta}(\ell_{n}(\theta))} \, {\bm\nu}({\rm d} \theta) }{\int_{A} \, \frac{1}{p_{\theta}(\ell_{n}(\theta))} \, {\bm\nu}({\rm d} \theta) + \int_{A^{c}} \, \frac{1}{p_{\theta}(\ell(\theta)-)} \, {\bm\nu}({\rm d} \theta)} \, .
\end{equation}
Since $\,  X(S):= \lim_{t\uparrow S} X(t) \,$ exists $\,\mathbb{P}^{\bm 0}$-a.e., we may let $\, n \rightarrow \infty\,$ in (\ref{eq: XKnA}) and obtain\newpage
\begin{equation}
\label{eq: XKA}
\mathbb{P}^{\bm 0}\big( X(S) \in  \{ (r, \theta): \, r = \ell(\theta), \,\, \theta \in A \} \big) \, =\, \mathbb{P}^{\bm 0}\big( X \,\, \text{hits} \,\,\, K_{n}^{A} \,\,\, \text{for every} \,\,\, n \in \N \big)\, \geq\, \frac{\int_{A} \, \frac{1}{p_{\theta}(\ell (\theta)-)} \, {\bm\nu}({\rm d} \theta) }{\int_{0}^{2\pi} \frac{1}{p_{\theta}(\ell (\theta)-)} \, {\bm\nu}({\rm d} \theta)} \, .
\end{equation} 
In particular, $\, \mathbb{P}^{\bm 0}\big( X(S) \in  \{ (r, \theta): \, r = \ell(\theta) , \,\, 0 \le \theta < 2 \pi\} \big) = 1 \,$. Replacing $\, A\,$ by $\, A^{c}\,$ in (\ref{eq: XKA}) and adding this back to (\ref{eq: XKA}), we find that the inequality sign in (\ref{eq: XKA}) can be replaced by an equality sign. Thus (\ref{eq: XSdistri}) follows, and the proof of Theorem \ref{Thm: XS}{\bf (i)} is now complete. 

\medskip
\noindent
{\it Proof of {\bf (ii)}.} Here we cannot assume (\ref{eq: pvlnfinite}), but   can use the result of {\bf (i)}. Because $\, p_{\theta}( \ell_{n,m}^{A}(\theta)) < \infty \,$ for every $\,\theta \in [0, 2\pi) \,$, we recover (\ref{eq: XSnmA}) by an application of (\ref{eq: XSdistri}). Thus we have
\[
\mathbb{P}^{\bm 0} \bigg( \sup_{ 0 \leq t < S  } R^{X}_{\{\theta\}}(t) \, \geq \, \ell_{n} (\theta)\,\, \,\text{for some} \,\,\, \theta \in A \, \bigg) \,\geq\, \mathbb{P}^{\bm 0} \big( \Theta(S_{n,m}^{A} ) \in A \,\,\, \text{for some} \,\,\,  m \geq n \big)
\]
\begin{equation}
\label{eq: XKnA1}
\, = \, \lim_{m \rightarrow \infty} \mathbb{P}^{\bm 0} \big( \Theta(S_{n,m}^{A} ) \in A \big)\, =\, \frac{\int_{A} \, \frac{1}{p_{\theta}(\ell_{n}(\theta))} \, {\bm\nu}({\rm d} \theta) }{\int_{A} \, \frac{1}{p_{\theta}(\ell_{n}(\theta))} \, {\bm\nu}({\rm d} \theta) + \int_{A^{c}} \, \frac{1}{p_{\theta}(\ell(\theta)-)} \, {\bm\nu}({\rm d} \theta)} \, = 1\,  
\end{equation}
for every $\, A \in \mathcal{B} ([0,2\pi))\,$ with $\,{\bm \nu}(A) > 0\,$, because $\,{\bm \nu} (\{ \theta : \, p_{\theta}(\ell (\theta)-) < \infty \}) = 0\,$. 

%\newpage
Now we can find an event $\, \Omega^{\ast} \in \cal F\,$ with $\, \mathbb{P}^{\bm 0} (\Omega^{\ast}) =1 \,$, such that for every $\, \omega \in \Omega^{\ast}\,$ and every $\, A = [a , b) \,$ with $\, a,\, b \in \mathbb{Q}\cap [0, 2\pi) \,$ and $\, {\bm \nu}(A) > 0\,$, we have $\, \sup_{ 0 \leq t < S  } R^{X}_{\{\theta\}}(t, \omega) \, \geq \, \ell_{n} (\theta)\,$ for some $\, \theta \in A \,$ and all $\, n \in \N\,,$ so (\ref{eq: recur}) is obtained. Moreover, if $\, {\bm \nu} (\{ \theta\}) > 0  \,$, we can take $\, A = \{ \theta\} \,$ in (\ref{eq: XKnA1}) and see that the inequality $\, \sup_{ 0 \leq t < S  } R^{X}_{\{\theta\}}(t) \, \geq \, \ell_{n} (\theta)\,$ holds $\,\mathbb{P}^{\bm 0}-$a.e., for every $\, n \in\mathbb{N}\,$; thus \eqref{eq: singleton} follows.   

\smallskip
Finally, we show that the nonexistence of $\, \lim_{t \uparrow S} X(t) \,$, and the property $\, S = \infty\,,$  follow directly, thanks to (\ref{eq: explos}). To this effect, we set $$\, A^{p} : = \big\{ \theta : \, p_{\theta}(\ell (\theta)-) < \infty \big\}\, ,%\footnote{~I changed $\, A^{0}\,$ to $\, A^{p}\,$, to be consistent with the proofs in the next subsection. Yet I kept the superscript in $\, \Gamma^{0}\,$. Please check. (Minghan)} 
\qquad \, I^{p}: = \big\{ (r, \theta): \, r> 0 , \,\, \theta \in A^{p} \big\} \,$$ and $\, \Gamma^{0}:= \{ \omega_{2} \in C(\overline{I}): \, \omega_{2} (t) = {\bm 0} \,\,\, \text{for some} \,\,\, t \in [0,\infty) \} \,$. Recalling (\ref{eq: transition}), and using the theory of one-dimensional diffusion (e.g. Propositions 5.5.22, 5.5.32 in  \cite{KS}), we deduce  
\begin{equation}
\label{eq: hx0}
\mathfrak{h} ( x ; \, \Gamma^{0} ) = 1 \, , \qquad \forall \, \,\,\,x \in  I \setminus I^{p} \, .
\end{equation}
 With $\, T_{n}: = S_{n} \wedge n \,$, we have $\, T_{n} < S \,$, $\,\mathbb{P}^{\bm 0}-$a.e. Since $\, {\bm\nu}(A^{p}) = 0 \,$, Proposition \ref{prop: zeroangle} shows that $\, X(T_{n}) \in I \setminus I^{p}\,$, $\,\mathbb{P}^{\bm 0}-$a.e. Now we  apply Proposition \ref{prop: SMarkov} and obtain   $\, \mathbb{P}^{\bm 0} \big(X (T_{n} + \cdot) \in \Gamma^{0}  \big) = 1 \,$, $\, \forall \, n \in \mathbb{N}\,$. It follows that, $\,\mathbb{P}^{\bm 0}-$a.e., if $\, \lim_{t \uparrow S} X(t) \,$ exists, it must be $\,\bm 0 \,$. Comparing this fact with (\ref{eq: recur}), we see that $\, \lim_{t \uparrow S} X(t) \,$ does not exist, $\,\mathbb{P}^{\bm 0}-$a.e.  
\end{proof}

Theorem \ref{Thm: XS} takes the origin $\, \bm 0 \,$ as the starting point of $ X (\cdot) $. For a starting point $  x \in \check{I}  $, by the strong \textsc{Markov} property, we can treat $ X (\cdot)  $ as a one-dimensional diffusion  before it hits the origin, and use Theorem \ref{Thm: XS} afterwards. The following result can be   derived in a very direct manner, so we omit its proof.

\begin{cor}
\label{cor: XS}
{\bf Starting Away from the Origin:} 
In the context specified at the beginning of this section, let $\, x = (r_{0} , \theta_{0}) \in \check{I} \,$. We distinguish two cases: 

\smallskip
\noindent
{\bf (i)} ~ ${\bm \nu} (\{ \theta : \, p_{\theta}(\ell (\theta)-) < \infty \}) > 0\,$. 
%\newpage
%\noindent
\\ 
Then $\,\, \lim_{t \uparrow S} X(t)\,$ exists $\,\mathbb{P}^{x}-$a.e. in $\, \{ (\ell(\theta), \theta):   \,  \theta \in [0, 2 \pi)\}\, $, and for every $\, A \in \mathcal{B} ([0,2\pi))\,$ we have 
\begin{equation}
\label{eq: XSxdistri}
\mathbb{P}^{x} \big( \Theta(S) \in A \big) \, = \, \frac{\int_{A} \, \frac{1}{p_{\theta}(\ell (\theta)-)} \, {\bm\nu}({\rm d} \theta) }{\int_{0}^{2\pi} \frac{1}{p_{\theta}(\ell (\theta)-)} \, {\bm\nu}({\rm d} \theta)} \cdot \bigg( 1- \frac{p_{\theta_{0}} (r_{0}) }{p_{\theta_{0}}(\ell (\theta_{0})-)} \bigg) \, + \, {\bf 1}_{A}(\theta_{0})\cdot \frac{p_{\theta_{0}} (r_{0}) }{p_{\theta_{0}}(\ell (\theta_{0})-)}  \, .
\end{equation}

\smallskip
\noindent
{\bf (ii)} ~~ ${\bm \nu} (\{ \theta : \, p_{\theta}(\ell (\theta)-) < \infty \}) = 0\,$. \\ Then   we have\newpage $$\, \mathbb{P}^{x} \big(\mathcal{L}^{x}\big) = \frac{p_{\theta_{0}} (r_{0}) }{p_{\theta_{0}}(\ell (\theta_{0})-)} \,, \qquad \text{for} \qquad \mathcal L^{x}:=  \Big\{ \lim_{t \uparrow S} X(t) = ( \ell (\theta_{0}), \theta_{0} ) \Big \} .$$ On the other hand, $\,\mathbb{P}^{x}-$a.e. on $\, \big(\mathcal{L}^{x} \big)^{c} \,$,  we have that $\, \lim_{t \uparrow S} X(t)\,$ does not exist, that $\, S = \infty\,$, and that 
\begin{equation}
\label{eq: xrecur}
{\bm \nu} \bigg( \, \overline{\,\Big\{ \theta: \, \sup_{ 0 \leq t < S  } R^{X}_{\{\theta\}}(t) \, \geq \, \ell_{n}(\theta) \Big\}\, } \,\bigg) \, = \, 1 \, , \qquad \forall \, \,\, n \in \mathbb{N}\, . \quad 
\end{equation}
Moreover, whenever $\, {\bm \nu} (\{ \theta\}) > 0  \,$, we have $\, \,\,\sup_{\, 0 \leq t < S  } R^{X}_{\{\theta\}}(t) \, = \,\ell (\theta) \,$, $\,\mathbb{P}^{x}-$a.e. on $\, \big(\mathcal{L}^{x} \big)^{c} \,$. 
\end{cor}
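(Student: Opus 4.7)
The plan is to decompose the sample path of $X(\cdot)$ at the first hitting time of the origin, $\tau_0 := \inf\{t \geq 0 : X(t) = \bm 0\}$. By Proposition \ref{prop: semirays}, the angle $\Theta(\cdot) \equiv \theta_0$ throughout $[0, \tau_0)$, so the radial part $\Vert X(\cdot \wedge \tau_0)\Vert$ behaves as a scalar diffusion on $[0, \ell(\theta_0))$ with local drift $\bm b(\cdot, \theta_0)$ and dispersion $\bm s(\cdot, \theta_0)$. On $\mathcal L^x$ the process reaches the outer boundary point $(\ell(\theta_0), \theta_0)$ without ever hitting $\bm 0$; on its complement, it must visit $\bm 0$, and the strong Markov property then lets me invoke Theorem \ref{Thm: XS} for the remainder of the trajectory.

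First I would compute $\mathbb{P}^x(\mathcal L^x)$. Recalling Proposition \ref{prop: pq}(ii), we have $p \in \mathfrak{D}_I$ with $p''_\theta = -2(\bm b/\bm s^2)\, p'_\theta$. Setting $T_n := \inf\{t : \Vert X(t)\Vert \geq \ell_n(\theta_0)\} \wedge \tau_0$ (so $T_n \leq S_n$ and $\Theta$ is constant on $[0, T_n)$), an application of Theorem \ref{Thm: Gen}(i) to $p$ shows that $p_{\theta_0}(\Vert X(\cdot \wedge T_n)\Vert)$ is a bounded continuous local martingale — the finite-variation correction $V^X_{p}$ vanishes here because up to $\tau_0$ the process has not yet accumulated local time at the origin — hence a true martingale. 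Optional sampling at $T_n$, followed by $n \to \infty$ and a martingale-convergence argument analogous to the one-dimensional scale-function theory (Section 5.5.B of \cite{KS}), yields $\mathbb{P}^x(\mathcal L^x) = p_{\theta_0}(r_0)/p_{\theta_0}(\ell(\theta_0)-)$ with the convention $1/\infty = 0$, and $\tau_0 < \infty$ $\mathbb{P}^x$-a.e. on $(\mathcal L^x)^c$.

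Next I would apply Proposition \ref{prop: SMarkov} at the stopping time $\tau_0$ on $(\mathcal L^x)^c$: conditional on $\mathcal F(\tau_0)$ the law of $X(\tau_0 + \cdot)$ is $\mathbb{P}^{\bm 0}$, so all conclusions of Theorem \ref{Thm: XS} transfer to $X$ on that event — existence of $\lim_{t\uparrow S} X(t) \in \partial I$ with the distribution \eqref{eq: XSdistri} of $\Theta(S)$ in case (i); and nonexistence of the limit together with $S = \infty$, the recurrence \eqref{eq: recur}, and \eqref{eq: singleton} in case (ii). Combining with the trivial description of $X$ on $\mathcal L^x$ (where $\Theta(S) = \theta_0$ and $\lim_{t\uparrow S} X(t) = (\ell(\theta_0), \theta_0)$) via
\[
\mathbb{P}^x(\Theta(S) \in A) \,=\, \mathbf{1}_A(\theta_0)\, \mathbb{P}^x(\mathcal L^x) + \mathbb{P}^{\bm 0}(\Theta(S) \in A)\cdot \big(1 - \mathbb{P}^x(\mathcal L^x)\big)
\]
gives \eqref{eq: XSxdistri} for part (i); part (ii) follows analogously, with the post-$\tau_0$ piece inheriting \eqref{eq: xrecur} and the singleton identity on $(\mathcal L^x)^c$ from Theorem \ref{Thm: XS}(ii). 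The main obstacle is the scale-function step: one must justify that the two possible a.s. asymptotics of the stopped radius — return to the origin versus convergence to $(\ell(\theta_0), \theta_0)$ — are exhaustive, including the boundary cases $\ell(\theta_0) = \infty$ or $p_{\theta_0}(\ell(\theta_0)-) = \infty$ (in which $\mathcal L^x$ is null), which is handled by a standard \textsc{Feller}-type argument paralleling Sections 5.5.B--C of \cite{KS}.
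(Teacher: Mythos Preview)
Your proposal is correct and follows exactly the approach the paper indicates (but omits): treat $X(\cdot)$ as a one-dimensional diffusion on the ray $\theta_0$ until the first hitting time of the origin, compute the scale-function exit probability $p_{\theta_0}(r_0)/p_{\theta_0}(\ell(\theta_0)-)$, and then invoke the strong \textsc{Markov} property (Proposition~\ref{prop: SMarkov}) together with Theorem~\ref{Thm: XS} on $(\mathcal L^x)^c$.
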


%%%%%%%%%%%%%%%%%
\subsection{Test for Explosions in Finite Time}
%%%%%%%%%%%%%%%%

This subsection provides   criteria for the finiteness of the explosion time. These involve the  scale and \textsc{Feller}  functions of (\ref{eq: p}), (\ref{eq: v}), and of course the measure ${\bm \nu}$. The proof of Theorem \ref{Thm: Sfinite} is in the Appendix, Section 6; whereas the proof of Corollary \ref{cor: Sfinite} is omitted, for the same reason as that of Corollary \ref{cor: XS}.

\begin{thm}
\label{Thm: Sfinite}
{\bf Starting At the Origin:} 
Let $\, x = \bm 0\,$ in the context specified at the beginning of this section. With the functions $ \, p \,$ and $\, v \,$ defined by (\ref{eq: p}) and (\ref{eq: v}) respectively, we distinguish three cases:

\smallskip
\noindent
{\bf (i)} ~ ${\bm \nu} (\{ \theta : \, v_{\theta}(\ell (\theta)-) < \infty \}) = 0 \,.$  \\Then we have $\, \mathbb{P}^{\bm 0} \big( S < \infty \big) = 0\,$.

\smallskip
\noindent
{\bf (ii)} ~${\bm \nu} (\{ \theta : \, v_{\theta}(\ell (\theta)-) < \infty \}) > 0\,$ and $\, {\bm \nu} (\{ \theta : \, v_{\theta}(\ell (\theta)-) = \infty , \, p_{\theta}(\ell (\theta)-) < \infty\}) = 0 \,.$ \\ Then we have $\, \mathbb{P}^{\bm 0} \big( S < \infty \big) = 1\,$.
 
\smallskip
\noindent
{\bf (iii)} ~${\bm \nu} (\{ \theta : \, v_{\theta}(\ell (\theta)-) < \infty \}) > 0\,$ and $\, {\bm \nu} (\{ \theta : \, v_{\theta}(\ell (\theta)-) = \infty , \, p_{\theta}(\ell (\theta)-) < \infty\}) > 0 \,.$ \\ Then we have $\, 0 < \mathbb{P}^{\bm 0} \big( S < \infty \big) < 1\,$.
\end{thm}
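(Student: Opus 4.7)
The central tool is the \textsc{Feller} function $v$ itself, viewed as a test function for the change-of-variable formula. By Proposition \ref{prop: v/p}(i), $v$ belongs to $\mathfrak D_I$ with $v'_\theta(0+) \equiv 0$, and satisfies $\mathbf b v' + \tfrac{1}{2}\mathbf s^2 v'' \equiv 1$ off the origin by \eqref{eq: vdifferential}. Applying Theorem \ref{Thm: Gen}(iii) in the $\mathfrak D_I$ form of Remark \ref{rmk: DIextend}, and using the non-occupation property (\ref{conditionWalsh}), produces
\[
v\bigl(X(t\wedge S_n)\bigr) \;=\; (t\wedge S_n) \;+\; \int_0^{t\wedge S_n} \mathbf 1_{\{X(s)\neq\bm 0\}}\,v'(X(s))\,\mathbf s(X(s))\,\mathrm d W(s).
\]
Following the device used at the start of the proof of Theorem \ref{Thm: XS}, I choose $\{\ell_n\}$ so that $v_\theta(\ell_n(\theta)) \le n$ uniformly in $\theta$. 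Then $v(X(\cdot\wedge S_n))$ is bounded by $n$, the stochastic integral is a true martingale, and Proposition \ref{prop: finiteexpectexplo} gives $S_n<\infty$ a.s. Taking expectations and letting $t\uparrow\infty$ yields $\mathbb E^{\bm 0}[v(X(S_n))] = \mathbb E^{\bm 0}[S_n]$; invoking Theorem \ref{Thm: XS}(i) on the domain $I_n$ (where $p_\theta(\ell_n(\theta))<\infty$ for every $\theta$) to identify the law of $\Theta(S_n)$, this recasts as the master formula
\begin{equation}
\label{eq: plan-master}
\mathbb E^{\bm 0}[S_n] \;=\; \frac{\int_0^{2\pi} (v_\theta/p_\theta)(\ell_n(\theta))\,\bm\nu(\mathrm d\theta)}{\int_0^{2\pi} (1/p_\theta)(\ell_n(\theta))\,\bm\nu(\mathrm d\theta)}.
\end{equation}

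For case (i), $\bm\nu\{\theta: v_\theta(\ell(\theta)-)<\infty\}=0$ implies $\bm\nu(A_K)=1$ for every $K>0$, where $A_K:=\{\theta: v_\theta(\ell(\theta)-)>2K\}$; Theorem \ref{Thm: XS}(i) on $I_n$ gives $\mathbb P^{\bm 0}(\Theta(S_n)\in A_K)=1$, and for $n$ large enough the monotone convergence $v_\theta(\ell_n(\theta))\uparrow v_\theta(\ell(\theta)-)$ forces $v_\theta(\ell_n(\theta))>K$ on $A_K$. Stopping the key identity at a finite time $T$ and taking expectations, $K\,\mathbb P^{\bm 0}(S_n\le T)\le \mathbb E^{\bm 0}[v(X(S_n\wedge T))] = \mathbb E^{\bm 0}[S_n\wedge T]\le T$; since $S\ge S_n$, $\mathbb P^{\bm 0}(S\le T)\le T/K$, and sending $K\uparrow\infty$ and $T\uparrow\infty$ in turn gives $\mathbb P^{\bm 0}(S<\infty)=0$.

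For cases (ii) and (iii) the common hypothesis $\bm\nu\{v(\ell-)<\infty\}>0$ makes the radial scale bounded on a $\bm\nu$-positive set of angles, so I complement $v$ with the bounded, generator-harmonic test function
\[
h_\theta(r) \;:=\; \bigl(1 - p_\theta(r)/p_\theta(\ell(\theta)-)\bigr)\mathbf 1_{\{p_\theta(\ell(\theta)-)<\infty\}} \,+\, \mathbf 1_{\{p_\theta(\ell(\theta)-)=\infty\}}\,,
\]
which lies in $\mathfrak D_I$ and satisfies $\mathbf b h' + \tfrac{1}{2}\mathbf s^2 h'' \equiv 0$. Theorem \ref{Thm: Gen}(iii) renders $h(X(\cdot\wedge S_n))$ a bounded martingale up to a local-time term at the origin with explicit coefficient; optional sampling combined with Theorem \ref{Thm: XS}(i) pins down $\mathbb P^{\bm 0}(\{S<\infty\}\cap\{\Theta(S)\in B\})$ for $B:=\{v(\ell-)<\infty\}$. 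In case (ii) the further hypothesis $\bm\nu\{v=\infty,p<\infty\}=0$ forces $\Theta(S)\in B$ a.s.\ on $\{S<\infty\}$; the strong \textsc{Markov} property (Proposition \ref{prop: SMarkov}) together with the one-dimensional \textsc{Feller} test on the radial process on each $B$-ray upgrade this to $\mathbb P^{\bm 0}(S<\infty)=1$. In case (iii), the positive $\bm\nu$-mass on $\{v=\infty,p<\infty\}$ produces a positive probability that $X$ settles on such a ray before escaping, on which one-dimensional \textsc{Feller} gives infinite hitting time of $\ell(\theta)$; combined with the case-(ii)-style analysis on $B$ this yields the strict inequalities $0<\mathbb P^{\bm 0}(S<\infty)<1$.

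The main obstacle will be the case (ii)/(iii) passage from "$\Theta(S)\in B$ a.s.\ on $\{S<\infty\}$" to actual finiteness or positivity of $\mathbb P^{\bm 0}(S<\infty)$: the process can perform infinitely many excursions away from the origin before a given one reaches $\partial I$, and one must rule out the pathological scenario in which all such excursions occur along rays where the one-dimensional \textsc{Feller} criterion gives infinite escape time. The natural resolution is a \textsc{Borel--Cantelli} argument applied at successive returns of $X$ to a small tree-ball around $\bm 0$, in which the partition-of-local-time identity \eqref{eq: LTA} converts the $\bm\nu$-mass of "good" (resp.\ "bad") angular sets into a uniform per-excursion probability of eventual escape (resp.\ of capture).
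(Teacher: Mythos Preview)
Your proposal takes a genuinely different route from the paper's proof, and parts of it can be made to work, but there are real gaps.

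\medskip
\textbf{Comparison with the paper.} The paper does not work with $v$ alone: it introduces the function $u := \sum_{n\ge 0} u_n$ (Lemma \ref{lm: u} in the Appendix), the unique increasing solution of $\mathbf b\,u'_\theta + \tfrac12\mathbf s^2\,u''_\theta = u_\theta$ with $u(\bm 0)=1$, $u'_\theta(0+)=0$, satisfying $1+v\le u\le e^v$. Then $e^{-(t\wedge S_n)}u(X(t\wedge S_n))$ is a nonnegative local martingale, hence a supermartingale with an a.s.\ finite limit. On $\{S<\infty\}\cap\{\Theta(S)\in\{v(\ell-)=\infty\}\}$ this limit would be $+\infty$, which immediately yields (i) and the ``$<1$'' half of (iii). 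For (ii) and the ``$>0$'' half of (iii), the paper combines Theorem \ref{Thm: XS} (giving $\Theta(S)\in A^v$ a.s.) with Proposition \ref{prop: finiteexpectexplo} on the truncated domains $I_n^v$, then uses a short \emph{pathwise} argument (Claim \ref{cl: Svn=S}): since $\sup_{t<S}v(X(t))<\infty$ a.s., for some random $n(\omega)$ one has $S^v_{n(\omega)}(\omega)=S(\omega)<\infty$. No Borel--Cantelli, no function $h$.

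\medskip
\textbf{Specific gaps in your argument.}

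\emph{Case (i).} The step ``for $n$ large the monotone convergence forces $v_\theta(\ell_n(\theta))>K$ on $A_K$'' asserts uniform convergence from pointwise convergence. This fails as stated. The fix is to \emph{define} $\ell_n$ via $v$: set $\ell_n^v(\theta):=\sup\{r<\ell(\theta):v_\theta(r)\le n\}$. In case (i), $\bm\nu$-a.e.\ $\theta$ has $v_\theta(\ell(\theta)-)=\infty$, hence $v_\theta(\ell_n^v(\theta))=n$; then Proposition \ref{prop: zeroangle} gives $v(X(S_n^v))=n$ a.s., and your inequality $n\,\mathbb P^{\bm 0}(S_n^v\le T)\le T$ goes through.

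\emph{The function $h$.} Your decomposition for $h(X(\cdot\wedge S_n))$ carries an \emph{unknown} local-time term $-c\,L^{\|X\|}(\cdot)$ with $c>0$; optional sampling then produces $\mathbb E^{\bm 0}[h(X(S_n))]=1-c\,\mathbb E^{\bm 0}[L^{\|X\|}(S_n)]$, which does not ``pin down'' $\mathbb P^{\bm 0}(\{S<\infty\}\cap\{\Theta(S)\in B\})$ without separate control on the local time. This branch is a dead end; Theorem \ref{Thm: XS}(i) already gives the law of $\Theta(S)$ directly.

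\emph{The Borel--Cantelli resolution.} As written (``per-excursion probability of eventual escape''), this is false: each individual \textsc{It\^o} excursion of $\|X\|$ from $0$ is a.s.\ bounded, so has probability \emph{zero} of reaching $\partial I$, and the partition-of-local-time identity \eqref{eq: LTA} does not convert $\bm\nu$-mass into a positive per-excursion escape probability. What does work is a macro-trial argument: fix $\epsilon>0$, let $\rho_1:=\inf\{t:\|X(t)\|=\epsilon\}$, and apply the strong Markov property at $\rho_1$; Theorem \ref{Thm: XS}(i) on the ball $\{\|x\|<\epsilon\}$ gives the law of $\Theta(\rho_1)$, and then one-dimensional theory on the ray $\Theta(\rho_1)$ (specifically Propositions 5.5.22 and 5.5.32 in \cite{KS}) tells you whether $X$ reaches $\ell(\theta)$ in finite time, returns to $\bm 0$, or approaches $\ell(\theta)$ in infinite time. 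This handles (ii) and both inequalities in (iii), but it is substantially more work than the two lines the paper needs once $u$ is in hand, and your writeup does not carry it out. In particular, for the ``$<1$'' in (iii) you must argue that on a ray with $p_\theta(\ell-)<\infty$, $v_\theta(\ell-)=\infty$, the one-dimensional diffusion from $(\epsilon,\theta)$ has positive probability of approaching $\ell(\theta)$ without ever reaching it; your phrase ``one-dimensional \textsc{Feller} gives infinite hitting time'' is the right slogan, but the verification (that $\{\lim X=\ell(\theta)\}\subseteq\{S=\infty\}$ a.s.\ on such rays) is exactly the content of Proposition 5.5.32 in \cite{KS} and must be invoked, not assumed.
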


\begin{cor}
\label{cor: Sfinite}
{\bf Starting Away from the Origin:} 
In the context specified at the beginning of this section, let $\, x = (r_{0} , \theta_{0}) \in \check{I} \,$.   We distinguish three cases: 

\smallskip
\noindent
{\bf (i)} ~ ${\bm \nu} (\{ \theta : \, v_{\theta}(\ell (\theta)-) < \infty \}) = 0\,$. \\ Then we have $\, \mathbb{P}^{x} \big( S < \infty \big) = 0\,$ if $\, v_{\theta_{0}}(\ell (\theta_{0})-) = \infty\,$, and $\, 0 < \mathbb{P}^{x} \big( S < \infty \big) < 1\,$ otherwise. 

\smallskip
\noindent
{\bf (ii)} ~${\bm \nu} (\{ \theta : \, v_{\theta}(\ell (\theta)-) < \infty \}) > 0\,$ and $\, {\bm \nu} (\{ \theta : \, v_{\theta}(\ell (\theta)-) = \infty , \, p_{\theta}(\ell (\theta)-) < \infty\}) = 0\,$. \\ Then we have $\, \mathbb{P}^{x} \big( S < \infty \big) = 1\,$ if either $\, v_{\theta_{0}}(\ell (\theta_{0})-) < \infty\,$ or $\, p_{\theta_{0}}(\ell (\theta_{0})-) = \infty\,$ hold, \\ whereas we have  $\, 0 < \mathbb{P}^{x} \big( S < \infty \big) < 1\,$ otherwise.

\smallskip
\noindent
{\bf (iii)} ~${\bm \nu} (\{ \theta : \, v_{\theta}(\ell (\theta)-) < \infty \}) > 0\,$ and $\, {\bm \nu} (\{ \theta : \, v_{\theta}(\ell (\theta)-) = \infty , \, p_{\theta}(\ell (\theta)-) < \infty\}) > 0\,$. \\ Then we always have $\, 0 < \mathbb{P}^{x} \big( S < \infty \big) < 1\,$.
\end{cor}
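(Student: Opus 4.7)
The plan is to reduce Corollary \ref{cor: Sfinite} to Theorem \ref{Thm: Sfinite} by conditioning on the first visit of $X(\cdot)$ to the origin, $\tau_0 := \inf\{t \geq 0 : X(t) = {\bm 0}\}$, and then applying the strong \textsc{Markov} property (Proposition \ref{prop: SMarkov}). Before $\tau_0$, Proposition \ref{prop: semirays} forces $X(\cdot)$ to stay on the single ray of angle $\theta_0$, so $\|X(\cdot)\|$ evolves on $(0, \ell(\theta_0))$ as a one-dimensional \textsc{It\^o} diffusion with drift ${\bm b}(\,\cdot\,,\theta_0)$ and dispersion ${\bm s}(\,\cdot\,,\theta_0)$ (the local-time term in Definition \ref{def: WalshDiff}(iv) vanishes until $\tau_0$).

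On this first stretch, I would invoke the classical scale-and-speed theory on the interval $(0, \ell(\theta_0))$: the scale function is exactly $p_{\theta_0}$ and the \textsc{Feller}  function is $v_{\theta_0}$, as defined in \eqref{eq: p}, \eqref{eq: v}. Standard arguments (cf.\ Proposition 5.5.22 and Theorem 5.5.29 of \cite{KS}) then yield the two quantities
\[
\mathbb{P}^x(\tau_0 < S) \,=\, 1 - \frac{p_{\theta_0}(r_0)}{p_{\theta_0}(\ell(\theta_0)-)} \qquad \text{(with $1/\infty := 0$),}
\]
and $\mathbb{P}^x(S < \tau_0,\, S < \infty)$, which equals $p_{\theta_0}(r_0)/p_{\theta_0}(\ell(\theta_0)-)$ when $v_{\theta_0}(\ell(\theta_0)-) < \infty$ and $0$ otherwise (using Proposition \ref{prop: v/p}(iii) to note that $v_{\theta_0}(\ell(\theta_0)-) < \infty$ forces $p_{\theta_0}(\ell(\theta_0)-) < \infty$). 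Meanwhile, on $\{\tau_0 < S\}$ we have $X(\tau_0) = {\bm 0}$, and Proposition \ref{prop: SMarkov} gives
\[
\mathbb{P}^x\bigl(S < \infty \,\bigm|\, \tau_0 < S,\, \mathcal{F}(\tau_0)\bigr) \,=\, \mathbb{P}^{\bm 0}(S < \infty) \,=:\, q ,
\]
where $q$ is classified by Theorem \ref{Thm: Sfinite}.

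Combining the disjoint decomposition $\{S < \infty\} = \{S < \tau_0,\, S < \infty\} \sqcup \{\tau_0 < S,\, S < \infty\}$ with the two displays above yields the master formula
\[
\mathbb{P}^x(S < \infty) \,=\, \mathbb{P}^x(S < \tau_0,\, S < \infty) \,+\, \mathbb{P}^x(\tau_0 < S)\cdot q.
\]
The remainder is mechanical bookkeeping across the three regimes of Theorem \ref{Thm: Sfinite}. In case (i) we have $q = 0$, so $\mathbb{P}^x(S<\infty)$ equals $0$ when $v_{\theta_0}(\ell(\theta_0)-) = \infty$ and otherwise equals $p_{\theta_0}(r_0)/p_{\theta_0}(\ell(\theta_0)-) \in (0,1)$. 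In case (ii), $q=1$ and a short case analysis on whether $v_{\theta_0}(\ell(\theta_0)-)<\infty$, $p_{\theta_0}(\ell(\theta_0)-) = \infty$, or $v_{\theta_0}(\ell(\theta_0)-) = \infty$ with $p_{\theta_0}(\ell(\theta_0)-) < \infty$ recovers the stated dichotomy. In case (iii), $q \in (0,1)$ and the master formula is a strictly convex combination, immediately forcing $\mathbb{P}^x(S<\infty) \in (0,1)$ in every subcase.

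The main (mild) obstacle is justifying the strong \textsc{Markov} reduction at $\tau_0$ on the event $\{\tau_0 < S\}$, together with the \textsc{Feller} computation of $\mathbb{P}^x(S<\tau_0, S<\infty)$ for the one-dimensional piece; the former is supplied by Proposition \ref{prop: SMarkov} (after noting $\tau_0$ is an $\mathbb{F}$-stopping time by continuity of $X(\cdot)$ in the tree-topology and that $\{\tau_0 < S\} \subseteq \{\tau_0 < \infty\}$), while the latter is the standard one-dimensional theory applied to the $\mathbb{P}^x$-diffusion $\|X(\cdot \wedge \tau_0 \wedge S_n)\|$ and a passage to the limit $n \to \infty$. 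Once these two ingredients are in place, the corollary is a case-by-case verification requiring no new ideas.
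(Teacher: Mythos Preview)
Your proposal is correct and follows precisely the route the paper indicates (but omits): the paper states that Corollary \ref{cor: Sfinite} is derived ``for the same reason as that of Corollary \ref{cor: XS}'', namely by treating $X(\cdot)$ as a one-dimensional diffusion on the ray of angle $\theta_0$ until the first visit $\tau_0$ to the origin, then invoking the strong \textsc{Markov} property (Proposition \ref{prop: SMarkov}) together with Theorem \ref{Thm: Sfinite}. Your decomposition and case analysis implement exactly this; the only point worth making explicit is that Condition \ref{bsigma}(iii) guarantees the left endpoint $0$ is attainable in finite time for the one-dimensional piece, so that $\{\tau_0 < S\}$ genuinely has the scale-function probability you wrote down.
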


\begin{remark}
\label{compare}  %\footnote{~I added this remark, please check. (Minghan)}
{\it Comparison with one-dimensional results:} Theorems \ref{Thm: XS} and \ref{Thm: Sfinite} include the results of Propositions 5.5.22, 5.5.32  and Theorem 5.5.29 in \cite{KS},   since a scalar diffusion can be seen as a \textsc{Walsh} diffusion with $\, {\bm \nu}(\{ 0\}) = {\bm \nu}(\{ \pi\}) =  1/2 \,$. %In fact, Walsh diffusions provide a better setting to study these problems because of the natural origin on the plane, while in one dimension, a number in the interval is picked up to start the integral of the scale function from. 
These more general results are actually {\it simpler to present, and more revealing:}     cases (b)-(d) of Proposition  5.5.22 in \cite{KS}, and all cases of Proposition 5.5.32 in \cite{KS}, are summarized by case {\bf (i)} of Theorem \ref{Thm: XS} and by case {\bf (ii)} of Theorem \ref{Thm: Sfinite}, respectively --- and each of them with just one, concise condition.
\end{remark}

%%%%%%%%%%%%%%%%%%%%%%%%
\section{Optimal Control / Stopping of a \textsc{Walsh} Semimartingale on the Unit Disc}
%%%%%%%%%%%%%%%%%%%%%%%%%

We consider a \textsc{Walsh} semimartingale $\, X (\cdot) \,$ as in Definition \ref{def: Walshsemi},   i.e., a semimartingale on rays with the  property   (\ref{eq: LTA})   for a  fixed   measure $\,\bm\nu\,$. This process $\, X (\cdot)\,$ takes values in the closed unit disc $\, \overline{B}\,$  with
\begin{equation}
\label{eq: unit_disc}
B\,: = \, \big\{ (r, \theta): \, r \in [0 ,1 ), \, \theta \in [0, 2 \pi) \big\} \, ,
\end{equation}
and is driven by an \textsc{It\^o} process $\, U (\cdot)\,$ whose local drift and dispersion processes $\,  \beta (\cdot), \sigma(\cdot) \,$ are controlled. 

\newpage
%\smallskip
More precisely, we assume now that, for every $\, \xi \in \check{B} = B \setminus \{ \bm 0 \} \,,$  there is a nonempty subset $\, \mathcal{K} (\xi)\,$ of $\, \R \times (0, \infty) \,$, serving as the ``control space" at $\, \xi \,$; i.e., the process $\,  (\beta(\cdot), \sigma(\cdot) )\,$ takes value in $\, \mathcal{K} (\xi)\,$ at time $\, t \in [0, \infty)\,$, whenever the current position is $\, X(t) = \xi \,$. We also set $\, \mathcal{K} (\xi) = \{ (0,0) \}\,$ whenever $\, \Vert \xi \Vert = 1 \,$, meaning that $\, X (\cdot) \,$ is absorbed upon reaching the boundary of $\, B \,$. We do {\it not} assume, however, that there is a control space at the origin;    we posit rather that, when at the origin, the process $\, X (\cdot)\,$ is ``immediately   dispatched  along some ray", i.e., that $\, X (\cdot)\,$ satisfies the non-stickiness requirement in  (\ref{conditionWalsh}).

\medskip
To make all this more precise, consider on a filtered probability space $\,(\Omega, \mathcal{F}, \mathbb{P}), \,$ $\, \mathbb{F} = \{ \mathcal{F}(t) \}_{0 \leq t < \infty}  \,$ an $\, \overline{B}-$valued \textsc{Walsh} semimartingale $\, X (\cdot) \,$ with angular measure $\,\bm\nu$%\footnote{~I also directly use ``Walsh semimartingale" here. I think this paper should make readers comfortable when seeing this terminology, rather than writing its definition quite a few times. Please check if you think this is okay. (Minghan)}
, such that $\, X(\cdot)\,$ satisfies (\ref{conditionWalsh})  and  
\begin{equation}
\label{eq: controlled||X||}
{\rm d} \Vert X (t) \Vert \, =\,  {\bf 1}_{\{ X(t) \neq {\bm 0} \}}\, \big( \, \beta (t) \, {\rm d} t \, +\, \sigma (t)\, {\mathrm d} W(t) \,\big) \, + \, {\rm d} L^{\Vert X \Vert} (t)\, , \quad\, X(0) = x \in B \, .
\end{equation}
Here $\, W (\cdot) \,$ is an $\,\mathbb{F}-$Brownian motion, and $\,  \beta(\cdot), \sigma(\cdot) \,$ are $\,\mathbb{F}-$progressively measurable processes, satisfying almost surely  the integrability and consistency conditions 
\begin{equation}
\label{eq: betasigma}
\int_{0}^{t} {\bf 1}_{\{ X(u) \neq {\bm 0} \}}\, \big( \vert \beta (u) \vert  \, +\, \sigma^{2} (u)\big) \, {\rm d} u \, < \infty,\,\,\,\forall\,t \in [0, \infty), \, \,\,\, ~ \text{and} \,\,~\,\, (\beta(t), \sigma(t) )  \in \mathcal{K} \big( X(t) \big) \,\,\, \text{when}~\, X(t)\neq {\bm 0} . 
\end{equation}
Given an initial position $\, x \in B\,$, we denote by $\, \mathcal{A} (x) \,$ the collection of all \textsc{Walsh} semimartingales $\, X (\cdot)\,$ which can be constructed as above, and are thus   ``available"   to the controller at initial position $\,  x \in B \,$.  

For every planar semimartingale $\, X  (\cdot) \in \mathcal{A} (x) \,$, we denote by $\, \mathcal{J}_{X}\,$   the class of all $\, \mathbb{F}^{X}-$stopping times, from which the controller can also choose a way to stop the controlled process $\, X (\cdot)\,$. We refer to \cite{KS1} and \cite{PS} for similar considerations  regarding the collection of all   available processes (the   ``gambling house" in the terminology of \textsc{Dubins \& Savage} \cite{DS}). We use   the convention $\, U (X(\infty)) = \limsup_{t \rightarrow \infty} U(X(t))\,$.  

\begin{prob}
\label{probe}
{\bf Control and Stopping of a \textsc{Walsh} Semimartingale:} Consider as our ``reward  function" a bounded, measurable   $\, U: \overline{B} \rightarrow \R\,,$   continuous in the tree-topology. We want to find, for each starting position $\, x \in B \,$, a process $\, X^{\ast} (\cdot)\in \mathcal{A} (x) \,$ and a stopping time $\, \tau_{\ast} \in  \mathcal{J}_{X^{\ast}}\,$ that attain the supremum    
\begin{equation}
\label{eq: value}
V(x) := \, \sup_{X \in \mathcal{A} (x) , \, \tau \in \mathcal{J}_{X}} \mathbb{E} \big[ U (X(\tau)) \big] \, . \quad
\end{equation}
\end{prob}

This is a {\it stochastic control problem with discretionary stopping}   in the spirit of \cite{DZ}, \cite{KO}, \cite{KS1},  for a \textsc{Walsh} semimartingale. We shall solve this problem fairly explicitly  under some mild additional  regularity assumptions and   in a manner inspired by  \cite{KS1}, which  treats a one-dimensional analogue. It is surprising, to us at least, that this problem should admit such a very explicit solution; this   is given in Theorem \ref{thm: ControlStop}, Subsection 5.3, with the help of the results developed in Sections 2-4.

%%%%%%%%%%%%%%%%%
\subsection{Optimal Stopping of a \textsc{Walsh} Diffusion on the Unit Disc}\label{subs: OS}
%%%%%%%%%%%%%%%%%

Let $\, X(\cdot) \,$ be a \textsc{Walsh} diffusion with values in the unit disc $\, B \,$ of \eqref{eq: unit_disc}, associated with some given triple $\, ({\bm b}, {\bm s}, {\bm \nu})\,,$  where the functions $\, {\bm b} : \check{B} \rightarrow \R \,$ and $\, {\bm s} : \check{B} \rightarrow \R \,$ satisfy Condition \ref{bsigma} with $\, \ell(\theta) \equiv 1\,$. We recall the radial scale function of  (\ref{eq: p}),   and assume    
\begin{equation}
\label{eq: pfinite}
p_{\theta} (1-) < \infty \, , \qquad \forall \, \,\,\theta \in [0, 2\pi)\, .
\end{equation}
Considering the same function $\, U \,$ as in Problem \ref{probe}, we define the value function of the {\it optimal stopping problem}  for $\, X (\cdot)  \,$ by 
\begin{equation}
\label{eq: valuestop}
Q(x) : = \, \sup_{\tau \in \mathcal{J}_{X}} \mathbb{E}^{x} \big[ U(X(\tau)) \big] \, , \quad x \in B \, .
\end{equation}
We are using here the superscript $\, x \,$ for the starting position, as in Section 4; we note that there is no superscript in (\ref{eq: value}), as the starting point $\, x \,$ is implied through the requirement $\, X(\cdot) \in \mathcal{A}(x)\,$. (\ref{eq: valuestop}) gives a pure   optimal stopping  problem    for the \textsc{Walsh} diffusion process $X(\cdot)$, without any   element of control.

In the standard theory of optimal stopping for one-dimensional diffusions  on a finite interval, the value function is given by the smallest $\, \mathcal{S}-$concave  majorant of the reward function, where $\, \mathcal{S}\,$ is the scale function of the one-dimensional diffusion under consideration. We recall  that {\it a function is said to be $\, \mathcal{S}-$concave, if and only if it is a concave function of $\, \mathcal{S} \,$.} This $\, \mathcal{S}-$concavity is the precise characterization of all excessive functions for a one-dimensional diffusion; those functions turn the diffusion into a (local) supermartingale. We refer to the works \cite{DY}, \cite{DK} and to the references cited there  for treatments of the optimal stopping problem in the  context of one-dimensional diffusions, and for some properties of $\, \mathcal{S}-$concave functions. 

\smallskip
For a given \textsc{Walsh} diffusion $\, X  (\cdot)  $, a natural guess from the change-of-variable formula of Theorem \ref{prop: itoformula}  is that an excessive function $\, g \,$ for $\, X  (\cdot)\,$ should have for every $\,\theta\,$ the $\, p_{\theta}-$concavity property along the ray of angle $\,\theta\,$, and satisfy the additional requirement  
\begin{equation}
\label{eq: concaveorigin}
\int_{0}^{2\pi} D^{+} g_\theta(0) \,  {\bm \nu}({\mathrm d} \theta) \, \leq \, 0 \,.
\end{equation}
This requirement ensures the supermartingale property of $\, g (X(\cdot))\,$ when $\, X (\cdot)\,$ passes through the origin. 

Condition (\ref{eq: concaveorigin}) was  considered also in \cite{FK}, where a characterization of all excessive functions for a \textsc{Walsh} Brownian motion was obtained. In the more general setting of a \textsc{Walsh} diffusion as considered here, we cannot obtain  such a characterization, due to the angular dependence   in the drift and dispersion characteristics that prevents the use of one-dimensional excursion theory. We can, however, use the above idea to describe  precisely the value function $\, Q\,$ of the pure optimal stopping problem in (\ref{eq: valuestop}), with the help of the \textsc{Freidlin-Sheu}-type change-of-variable formula in Theorem \ref{prop: itoformula}.

\begin{definition}  {\bf Concavity:} 
\label{def: Walshcon}
A function $\, g:  \, \overline{B}\rightarrow \R\,$ is said to be {\it $\, p-$concave with angular measure $\, \bm \nu\,$,} if

\smallskip
\noindent
{\bf (i)} \,\,\, for every $\, \theta \in [0, 2\pi) \,$, the function $\, r \mapsto g_{\theta}(r) \,$ is $\, p_{\theta}-$concave, i.e., $\, g_{\theta}(r) = \widetilde{g}_{\theta} \big( p_{\theta}(r) \big)\,$, $\, r \in [0,1]\,$  holds for some concave function $\, \widetilde{g}_{\theta}: [0, p_{\theta}(1-)] \rightarrow \R\,$, and 

\noindent
{\bf (ii)} \,\,   the condition  (\ref{eq: concaveorigin}) is satisfied.
\end{definition}
 
\begin{definition} {\bf Pencil of Least Concave Majorants:} 
\label{def: concavec}
For the reward function $\, U\,$of Problem \ref{probe},  and for every constant $\, c \geq U({\bm 0}) \,$, we define the function $\, U^{(c)}: \overline{B} \rightarrow \R\,$ via
\begin{equation}
\label{eq: Uc}
U^{(c)} (r, \theta) \equiv U^{(c)}_{\theta} (r) : =  \inf \big\{ \varphi (r) : \, \varphi (\cdot) \geq U_{\theta}(\cdot) ,  \,\,\, \varphi :  [0,1]\rightarrow \R \,\, \text{is $\, p_{\theta}-$concave,} \,\, \varphi (0) \geq c  \big\} .~~
\end{equation}
\end{definition}

The functions  introduced in Definition \ref{def: concavec} will be seen in Theorem \ref{thm: solstop} to provide the crucial link between the problem of finding the smallest $\, p-$concave majorant of $\, U\,$ with angular measure $\, \bm \nu\,,$ and the analogous problem along each ray. The following result gives some useful properties of the function $\, U^{(c)}\,$ in \eqref{eq: Uc}; its proof is in the Appendix, Section 6. Analogues of statement (ii) in Proposition \ref{prop: Uc} have been considered already; see Section III.7 of \cite{DY}, Section 4 of \cite{S}, and Section 3 of \cite{KS2}.    
%\newpage
\begin{prop}
\label{prop: Uc}
{\bf (i)} \,\, For every real constant $\, c \geq U({\bm 0}) \,$,  the function $\, U^{(c)}\,$ of (\ref{eq: Uc}) is continuous in the tree topology  and satisfies $\, U^{(c)} ({\bm 0}) = c\,,$ as well as  $\, U^{(c)} (1, \theta) = U(1, \theta) \,$ for all $\, \theta \in [0, 2\pi)\,$. 

\smallskip
\noindent
{\bf (ii)}\,\,
Whenever we have $\, U^{(c)}_{\theta} (r) > U_{\theta} (r) \,$   for some $\,\theta\,$ and for all $\, r \,$ in some  interval $\, (r_{1} , r_{2}) \subset [0,1]\,$, the mapping  $\, r\mapsto U^{(c)}_{\theta} (r)\,$ is an affine    transformation of $\, r\mapsto p_{\theta} (r) \,$ on $\, [r_{1} , r_{2}] \,$.  

\smallskip
\noindent
{\bf (iii)}\,\,
If $\, c > U({\bm 0}) \,$, then the function $\, U^{(c)} \big \vert_{B}\,$ belongs to the class $\, \mathfrak{C}_{B} \,$ (cf. Definition \ref{def: D3} and Remark \ref{rmk: DIextend}).

\smallskip
\noindent
{\bf (iv)}\,  The function $  \Phi : [U({\bm 0}) , \infty) \rightarrow \R \cup \{ \infty \}  $ below is well-defined, continuous, and strictly decreasing:
\begin{equation}
\label{eq: phi}
\Phi (c)\, : =  \int_{0}^{2 \pi} D^{+} U^{(c)}_{\theta} (0)\, {\bm \nu} ({\rm d} \theta).
\end{equation}
\end{prop}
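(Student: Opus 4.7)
My plan is to straighten each ray via the scale map, so that the problem on the ray of angle $\theta$ becomes a classical one-dimensional problem: finding the smallest concave majorant of a bounded continuous function on a bounded interval, with an added lower bound on the left-endpoint value. Concretely, fix $L_\theta:=p_\theta(1-)<\infty$ (by \eqref{eq: pfinite}) and set $V_\theta(s):=U\!\left(p_\theta^{-1}(s),\theta\right)$ on $[0,L_\theta]$; then let $\widetilde U^{(c)}_\theta(s)$ be the pointwise infimum over all concave majorants $\widetilde\varphi$ of $V_\theta$ on $[0,L_\theta]$ with $\widetilde\varphi(0)\ge c$. This class is non-empty (take the constant $\max(c,\sup|U|)$), and a short direct computation shows that the infimum of concave functions is concave. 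So $\widetilde U^{(c)}_\theta$ is concave on $[0,L_\theta]$ and $U^{(c)}_\theta(r)=\widetilde U^{(c)}_\theta(p_\theta(r))$ is $p_\theta$-concave.

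For part (i), the constraints yield $U^{(c)}({\bm 0})\ge c$ and $U^{(c)}(1,\theta)\ge U(1,\theta)$ immediately. For the reverse inequalities I will construct near-optimal affine-in-$p_\theta$ candidates. When $c>U({\bm 0})$, the function $\varphi(r)=c+Kp_\theta(r)$ is $p_\theta$-concave and dominates $U_\theta$ once $K\ge\sup_{r\in(0,1]}(U_\theta(r)-c)/p_\theta(r)$; this supremum is finite because $U_\theta(r)-c\to U({\bm 0})-c<0$ as $r\downarrow 0$ and $U$ is bounded. This gives $U^{(c)}_\theta(0)\le c$, and the case $c=U({\bm 0})$ follows by taking $c'\downarrow U({\bm 0})$ and using the obvious monotonicity of $c\mapsto U^{(c)}_\theta$. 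For the equality $U^{(c)}(1,\theta)=U(1,\theta)$, I use a two-piece candidate, affine in $p_\theta$ on $[0,1-\epsilon]$ and on $[1-\epsilon,1]$, pinned at $(0,c)$, $(1-\epsilon,M)$ with $M$ large, and $(1,U(1,\theta)+\delta)$ with $\delta\downarrow 0$; continuity of $U_\theta$ at $r=1$ makes the second piece dominate $U_\theta$ for $\epsilon$ small. Continuity of $U^{(c)}$ in the tree-topology then follows: along each ray, $U^{(c)}_\theta$ is concave-in-$p_\theta$, hence continuous on $(0,1)$, continuous at $r=1$ by the above, and continuous at the origin uniformly in $\theta$ since the affine candidate $c+Kp_\theta(r)\to c$ as $r\downarrow 0$ pinches $U^{(c)}_\theta(r)$ down to $c$.

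Part (ii) is the classical \emph{flatten-on-slack} argument. In $s$-coordinates, suppose $\widetilde U^{(c)}_\theta>V_\theta$ on $(s_1,s_2)=(p_\theta(r_1),p_\theta(r_2))$ but $\widetilde U^{(c)}_\theta$ is not affine on $[s_1,s_2]$; then at some interior point the chord is strictly below $\widetilde U^{(c)}_\theta$, and I can replace $\widetilde U^{(c)}_\theta$ on a suitable open subinterval of $(s_1,s_2)$ by a slightly lower affine function while preserving concavity (by choosing the new slope between the left- and right-derivatives of $\widetilde U^{(c)}_\theta$ at the subinterval's endpoints) and the lower-bound-at-$0$ constraint (which is untouched). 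The modified function is still $\ge V_\theta$ by the strict inequality on $(s_1,s_2)$, but strictly smaller than $\widetilde U^{(c)}_\theta$ somewhere, contradicting minimality.

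The crux, and the main obstacle, is producing the uniform-in-$\theta$ estimates of part (iii); once these are in hand, part (iv) follows softly. Uniform continuity of $U$ at the origin in the tree-topology yields $\sup_\theta|U_\theta(r)-U({\bm 0})|\to 0$ as $r\downarrow 0$, so for $c>U({\bm 0})$ there is $\eta_0>0$ with $U_\theta(r)<c$ for all $\theta$ and all $r\le\eta_0$. Condition \ref{bsigma}(iii) together with Proposition \ref{prop: pq}(ii) gives $p_\theta'(0+)=1$ and a uniform two-sided bound on $p_\theta'$ near $0$, so that $p_\theta(\eta_0)\ge\delta>0$ uniformly in $\theta$. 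By part (ii), $\widetilde U^{(c)}_\theta$ is then affine on $[0,\delta]$, i.e., $U^{(c)}_\theta$ is affine in $p_\theta$ on $[0,\eta_0]$; reading off the slope yields $|D^+U^{(c)}_\theta(0)|\le(\sup|U|+|c|)/\delta$, which supplies property (ii) of Definition \ref{def: D3}, and the second-derivative measure of $U^{(c)}_\theta$ on $(0,\eta_0]$ is governed solely by $p_\theta''$, controlled by a $\theta$-independent integrable density via Proposition \ref{prop: pq}(ii) and Condition \ref{bsigma}(iii) -- giving property (iii) of Definition \ref{def: D3}. The difference-of-convex-functions property on compact subintervals of $[0,1)$ follows from composing the concave $\widetilde U^{(c)}_\theta$ with $p_\theta$, whose derivative is bounded and of bounded variation on such subintervals. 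For part (iv), a standard geometric description identifies $D^+U^{(c)}_\theta(0)$ with the slope of the tangent line from $(0,c)$ to the free concave envelope of $V_\theta$ on $[0,L_\theta]$ (equal to $0$ once $c$ exceeds that envelope's supremum), which is a continuous and non-increasing function of $c$, strictly decreasing until it reaches $0$; the uniform bound from part (iii) then permits dominated convergence under the integral against $\bm\nu$, delivering continuity of $\Phi$, while strict monotonicity is inherited since strict decrease holds pointwise for $\bm\nu$-a.e.\ $\theta$ in every range where $\Phi(c)>0$.
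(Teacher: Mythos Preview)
Your approach closely mirrors the paper's: straighten each ray by the scale map, reduce to the one-dimensional least-concave-majorant problem with a left-endpoint constraint, and then read off the estimates of Definition~\ref{def: D3} from the affinity of $U^{(c)}_\theta$ near the origin. Two points, however, need repair.

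First, in part (iii) you never address the \textsc{Borel}-measurability of $(r,\theta)\mapsto U^{(c)}_\theta(r)$, which is part of the definition of $\mathfrak C_B$. The paper handles this by writing $U^{(c)}_\theta(r)=\inf_n\{a_n(\theta)\,p_\theta(r)+b_n(\theta)\}$ over all rational pairs $(a,b)$ with $b\ge c$ for which the affine function $a\,p_\theta(\cdot)+b$ dominates $U_\theta$; measurability of each $a_n,b_n$ in $\theta$ follows from the joint measurability and continuity-in-$r$ of $p$ and $U$.

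Second, your geometric claim in part (iv) is wrong: the slope $D^+U^{(c)}_\theta(0)$ does \emph{not} become $0$ once $c$ exceeds $U^*_\theta$. For such $c$ the function $\widetilde U^{(c)}_\theta$ is still affine near $0$, running from $(0,c)$ to a supporting point of the concave envelope (and eventually to the right endpoint $(L_\theta,V_\theta(L_\theta))$), so its initial slope is strictly negative and continues to decrease strictly in $c$. Consequently your conclusion---strict decrease of $\Phi$ only ``in every range where $\Phi(c)>0$''---is both weaker than what the proposition asserts (strict decrease on all of $[U(\mathbf 0),\infty)$) and based on a faulty picture. The paper's argument is direct and works for every $\theta$ and every pair $c_1<c_2$: with $r_{2,\theta}:=\inf\{r:U^{(c_2)}_\theta(r)=U_\theta(r)\}\in(0,1]$ one has
\[
D^+U^{(c_2)}_\theta(0)=\frac{U_\theta(r_{2,\theta})-c_2}{p_\theta(r_{2,\theta})}\,,\qquad
D^+U^{(c_1)}_\theta(0)\ \ge\ \frac{U^{(c_1)}_\theta(r_{2,\theta})-c_1}{p_\theta(r_{2,\theta})}\ >\ \frac{U_\theta(r_{2,\theta})-c_2}{p_\theta(r_{2,\theta})}\,.
\]
A minor addendum to part (i): your affine candidate $c+K\,p_\theta(r)$ produces a $\theta$-uniform upper bound only when $c>U(\mathbf 0)$ (so that $K$ is uniformly finite); the case $c=U(\mathbf 0)$ then follows by the monotonicity $U^{(c)}_\theta\le U^{(c')}_\theta$ and $c'\downarrow c$, which you invoked for the value at the origin but should also invoke for tree-continuity. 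You will also want the matching lower bound $U^{(c)}_\theta(r)\ge c+(\text{uniformly bounded slope})\,p_\theta(r)$ from $p_\theta$-concavity; the paper writes this out explicitly.
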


We have the following crucial result, regarding the problem of optimal stopping in \eqref{eq: valuestop}. 

\begin{thm}
\label{thm: solstop}
{\bf Solving the Optimal Stopping Problem:} 
In the context specified at the beginning of this subsection, the value function $\, Q : \overline{B} \rightarrow \R \,$ of the optimal stopping problem  defined as in (\ref{eq: valuestop}) and with  $\, Q(x) : = U(x) \,$ for  $\,\Vert x \Vert = 1\,,\,$ is continuous in the tree-topology. \newpage

 \smallskip
\noindent 
(i) This function $Q$ is the smallest $\, p-$concave majorant of $\,U $  with angular measure $\,\bm \nu\,;$  in particular, $\, Q \,$ itself is $\, p-$concave with angular measure $\, \bm \nu\,$, and can be written as  the lower envelope   
\begin{equation}
\label{eq: Q}
Q(x) \, = \, \inf \big\{ g(x) : \, \,g(\cdot) \geq U(\cdot) , \,\,\, g: \, \overline{B}\rightarrow \R \,\,\, \text{is $\, p-$concave with angular measure $\,\bm \nu\,$} \big\} 
\end{equation}
of all such functions that dominate $U$. Moreover, the stopping time 
\begin{equation}
\label{eq: tauast}
\tau_{\star} := \, \inf \big \{ t \geq 0 : \, U\big( X(t) \big) = Q \big( X(t) \big) \big \}
\end{equation}
belongs to the class $\,\mathcal{J}_{X} \,$  and attains the supremum in (\ref{eq: valuestop}).

\smallskip
\noindent 
(ii) 
The function $\, Q\,$ can also be cast in the form
\begin{equation}
\label{eq: Qalter}
Q(x) \, = \, U^{(c_{0})} (x)\, , \qquad \text{with} \quad c_{0} : = \, \inf \big \{ c \geq U({\bm 0}): \, \Phi (c) \leq 0 \big \} \, ;
\end{equation}
 here $\, U^{(c_{0})}\,$ is as   in Definition \ref{def: concavec}, and $\, \Phi\,$ is given by (\ref{eq: phi}). Moreover, if $\, Q({\bm 0}) = c_{0} > U({\bm 0})\,$, then $\, Q \,$ ``has no concavity at the origin", in the sense that
\begin{equation}
\label{eq: slopeaver}
\int_{0}^{2\pi} D^{+} Q_\theta (0) \,  {\bm \nu}({\mathrm d} \theta) \, = \, 0 \, .
\end{equation}
\end{thm}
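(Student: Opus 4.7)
My plan is to prove the theorem in three stages: (a) show every $p$-concave majorant $g$ of $U$ with angular measure $\bm\nu$ satisfies $Q \le g$; (b) verify that $U^{(c_0)}$ is itself such a majorant and is the smallest one; (c) show $\tau_\star$ attains the supremum. The main obstacle throughout is the delicate treatment of the local time at the origin: the refined calculus of Theorem \ref{prop: itoformula} is exactly what is needed to make the supermartingale argument of Stage (a) work for arbitrary $p$-concave $g$ satisfying only (\ref{eq: concaveorigin}), and the identity (\ref{eq: slopeaver}) in Stage (c) is what transforms the local-time contribution from ``merely nonincreasing'' to ``identically zero'', closing the gap between upper and lower bounds.

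\textbf{Stage (a).} Let $g$ be a $p$-concave majorant with angular measure $\bm\nu$. Along each ray $g_\theta = \widetilde g_\theta \circ p_\theta$ with $\widetilde g_\theta$ concave, which places $g\big\vert_B$ in the class $\mathfrak C_B$ of Remark \ref{rmk: DIextend}. Applying the refined \textsc{Freidlin-Sheu} formula of Theorem \ref{prop: itoformula} to $g(X(\cdot))$ --- or, more cleanly, passing first through the scale map $Y = \mathcal P(X)$ of Proposition \ref{prop: scaling} to reduce to a driftless \textsc{Walsh} diffusion whose radial part is a reflected local martingale and observing that $g(X)$ is concave in $r$ along each ray of the transformed state-space --- every term in the decomposition of $g(X(\cdot))$ is either a local martingale or a nonincreasing process: the second-derivative-measure term by concavity along each ray, and the local-time-at-origin term by (\ref{eq: concaveorigin}). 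A standard localization with exit times $S_n = \inf\{t : \|X(t)\| \ge 1-1/n\}$, boundedness of $U$, and optional sampling yield $\mathbb{E}^x[U(X(\tau))] \le \mathbb{E}^x[g(X(\tau))] \le g(x)$ for every $\tau \in \mathcal J_X$, whence $Q(x) \le g(x)$.

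\textbf{Stage (b).} First, $U^{(c_0)}$ is a $p$-concave majorant with angular measure $\bm\nu$: each $U^{(c_0)}_\theta$ is $p_\theta$-concave and dominates $U_\theta$ by Definition \ref{def: concavec}, while the origin condition (\ref{eq: concaveorigin}) reads $\Phi(c_0) \le 0$, which holds by the definition of $c_0$ together with the continuity of $\Phi$ (Proposition \ref{prop: Uc}(iv)); if $c_0 > U(\bm 0)$, continuity in fact forces $\Phi(c_0) = 0$, which is (\ref{eq: slopeaver}). Stage (a) then delivers $Q \le U^{(c_0)}$. Conversely, for any $p$-concave majorant $g$ with angular measure $\bm\nu$, set $c := g(\bm 0)$: each $g_\theta \ge U_\theta$ is $p_\theta$-concave with $g_\theta(0) = c$, so $g_\theta \ge U^{(c)}_\theta$ directly from the infimum-definition of $U^{(c)}_\theta$, and since both agree at the origin this implies $D^+ g_\theta(0) \ge D^+ U^{(c)}_\theta(0)$; integrating against $\bm\nu$ and using (\ref{eq: concaveorigin}) for $g$ yields $\Phi(c) \le 0$, hence $c \ge c_0$, and since $c \mapsto U^{(c)}$ is monotone increasing, $g \ge U^{(c)} \ge U^{(c_0)}$. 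This proves (\ref{eq: Q}) and (\ref{eq: Qalter}) simultaneously, and continuity of $Q$ in the tree-topology follows from Proposition \ref{prop: Uc}(i).

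\textbf{Stage (c).} On the continuation set $\{U < Q\}$, Proposition \ref{prop: Uc}(ii) says $Q_\theta = U^{(c_0)}_\theta$ is affine in $p_\theta$ along each ray. Hence on each excursion of $X(\cdot)$ away from $\bm 0$ before $\tau_\star$, $Q(X(\cdot))$ is an affine function of the reflected local martingale $p(X(\cdot))$, hence itself a local martingale. At the origin, the local-time contribution has coefficient $\int_0^{2\pi} D^+ Q_\theta(0)\,\bm\nu(d\theta)$: if $c_0 > U(\bm 0)$ this equals $\Phi(c_0) = 0$ by (\ref{eq: slopeaver}); and if $c_0 = U(\bm 0)$, the origin lies in the stopping region $\{U = Q\}$, so no local time accumulates before $\tau_\star$. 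Thus $Q(X(\cdot \wedge \tau_\star))$ is a bounded local martingale, and optional sampling (noting $\tau_\star < \infty$ a.s., since $X(\cdot)$ is eventually absorbed at $\{\|x\|=1\}$ under the standing assumption (\ref{eq: pfinite})) gives $Q(x) = \mathbb{E}^x[Q(X(\tau_\star))] = \mathbb{E}^x[U(X(\tau_\star))]$, matching the upper bound of Stage (a).
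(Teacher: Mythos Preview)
Your overall architecture---supermartingale upper bound, function-theoretic identification of the smallest $p$-concave majorant as $U^{(c_0)}$, then martingale lower bound via $\tau_\star$---is exactly the paper's, and Stages (b) and (c) are essentially correct sketches of the paper's arguments. But Stage (a) contains a genuine gap: the assertion that an \emph{arbitrary} $p$-concave majorant $g$ with angular measure $\bm\nu$ lies in $\mathfrak C_B$ is not justified, and in fact need not hold. Definition~\ref{def: D3} (via Remark~\ref{rmk: DIextend}) requires that $\theta\mapsto D^+g_\theta(0)$ be \emph{bounded} and that the second-derivative measures $D^2g_\theta$ be dominated, uniformly in $\theta$, by a single finite measure near the origin. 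Mere $p_\theta$-concavity along each ray gives neither: one can easily construct $p$-concave majorants with $D^+g_\theta(0)=+\infty$ on a set of angles, or with no uniform control of $D^2g_\theta$ near $r=0$. Without $g\in\mathfrak C_B$ you cannot invoke Theorem~\ref{prop: itoformula}, and passing to $Y=\mathcal P(X)$ does not help, since the transformed function $\widetilde g$ faces the same obstruction.

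The paper sidesteps this entirely: it never applies the stochastic calculus to a general $g$. Instead it first proves your Stage (b) (the purely function-theoretic equivalence \eqref{eq: Q}$\,\Leftrightarrow\,$\eqref{eq: Qalter}), and then runs the supermartingale/martingale argument \emph{only} for $U^{(c)}$ with $c>U(\bm 0)$, where Proposition~\ref{prop: Uc}(iii) guarantees membership in $\mathfrak C_B$; the boundary case $c_0=U(\bm 0)$ is handled by letting $c\downarrow c_0$ in the supermartingale inequality (using $U^{(c_0)}\le U^{(c)}\le U^{(c_0)}+(c-c_0)$). Your Stage (b) already shows every $p$-concave majorant dominates $U^{(c_0)}$, so once you have $Q\le U^{(c_0)}$ the inequality $Q\le g$ for general $g$ follows for free---Stage (a) in full generality is unnecessary. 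The repair is simply to restrict Stage (a) to $g=U^{(c)}$, $c>U(\bm 0)$, cite Proposition~\ref{prop: Uc}(iii), and add the one-line approximation for $c_0=U(\bm 0)$. A secondary point: your claim in Stage (c) that $\tau_\star<\infty$ a.s.\ does not follow from \eqref{eq: pfinite} alone (that would require control of the \textsc{Feller} function $v$); what \eqref{eq: pfinite} does give, via Theorem~\ref{Thm: XS}(i), is that $\lim_{t\uparrow S}X(t)$ exists in $\partial B$ and $U^{(c_0)}=U$ there, which together with bounded-martingale convergence is all that optional sampling needs.
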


\begin{remark}
\label{rmk: optstop}
The property (\ref{eq: slopeaver}) is the counterpart at the origin  of the property in Proposition \ref{prop: Uc}(ii).    Taken  together, these two properties ensure that the process $\, Q  ( X(\cdot)  )\,$   ``is a martingale  before entering the  stopping region" $\,\{ x \in \overline{B}: \, U(x) = Q(x) \} \,;$ to wit, that $\, Q \big( X(\cdot \, \wedge \tau_\star) \big)\,$ is a martingale. On the other hand, the $p-$concavity with angular measure $\, \bm \nu\,$ of the function $Q,$ ensures that $\, Q  ( X(\cdot)  )\,$ is a supermartingale.
\end{remark}

\medskip
\noindent
{\it Proof of Theorem \ref{thm: solstop}:} 
We shall show first that the representations (\ref{eq: Q}) and (\ref{eq: Qalter}) are equivalent;  then   that (\ref{eq: Qalter}) holds, and  the stopping time of (\ref{eq: tauast}) attains the supremum in (\ref{eq: valuestop}).  The remaining  claims will follow   directly from (\ref{eq: Qalter}) and   Proposition \ref{prop: Uc}. 

\smallskip
\noindent
$\bullet~$ From Proposition \ref{prop: Uc}, it is clear that the function $\,   U^{(c_{0})} \,$ is $\, p-$concave with angular measure $\,\bm \nu\,$. On the other hand, taking any function $\, g: \, \overline{B}\rightarrow \R\,$ that is $\, p-$concave with angular measure $\, \bm\nu\,$ and dominates $\,U ,$  we have $\, g({\bm 0}) = U^{(g({\bm 0}))}({\bm 0})\,$ and $\, g(\cdot) \geq U^{(g({\bm 0}))}(\cdot)\,$, therefore
\[
\Phi (g({\bm 0})) = \int_{0}^{2 \pi} D^{+} U^{(g({\bm 0}))}_{\theta} (0)\, {\bm \nu} ({\rm d}   \theta) \leq  \int_{0}^{2 \pi} D^{+} g_{\theta} (0)\, {\bm \nu} ({\rm d} \theta) \leq 0 \, .
\]
It follows that $\, g({\bm 0}) \geq c_{0}\,,$ and consequently $\, g(\cdot) \geq U^{(g({\bm 0}))}(\cdot) \geq U^{(c_{0})}(\cdot)\,$. We have thus shown that (\ref{eq: Q}) and (\ref{eq: Qalter}) are equivalent.

\smallskip
\noindent
$\bullet~$
Next, we show that $\, U^{(c_{0})} (x) = Q(x) \,$. The main idea lies in the following claim.

\begin{claim}
\label{cl: optimal}
The process $\,U^{(c_{0})} \big( X(\cdot) \big) \,$ is a bounded supermartingale; moreover, with $$\, \widetilde{\tau}_{\star} \,: = \,\inf \big\{ t \geq 0 : \, U\big( X(t) \big) = U^{(c_{0})} \big( X(t) \big) \big\} \, ,$$    the stopped process $\,U^{(c_{0})} \big( X(\cdot \wedge \widetilde{\tau}_{\star} ) \big) \,$ is a bounded martingale.
\end{claim}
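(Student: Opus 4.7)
\emph{Proof plan for Claim \ref{cl: optimal}.} The plan is to apply the generalized Freidlin-Sheu identity of Theorem \ref{prop: itoformula} to $g := U^{(c_0)}$ along the \textsc{Walsh} diffusion $X(\cdot)$, and read off the supermartingale / martingale properties from the signs of the finite-variation terms in the resulting decomposition.

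\medskip
\noindent\textbf{Boundedness.} Since $U$ is bounded, the constant $\varphi \equiv \sup_{\overline B} U$ is a $p_\theta$-concave majorant of every $U_\theta$ with $\varphi(0) \geq \sup U$, so $c_0 \leq \sup U$, and consequently $\inf U \leq U \leq g \leq \sup U$ uniformly on $\overline B$.

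\medskip
\noindent\textbf{Supermartingale property.} When $c_0 > U({\bm 0})$, Proposition \ref{prop: Uc}(iii) places $g\vert_{B}$ in the class $\mathfrak{C}_{B}$, and Theorem \ref{prop: itoformula} applies. Substituting the dynamics \eqref{eq: WalshDiff} of $\Vert X \Vert$ into \eqref{eq: ito}, the decomposition of $g(X(\cdot))$ consists of a continuous local martingale, an interior finite-variation contribution supported on $\{X\neq{\bm 0}\}$, and the singular term $V_{g}^{X}(\cdot)=\Phi(c_0)\,L^{\Vert X\Vert}(\cdot)$. For the interior part, writing $g_\theta(r)=\widetilde g_\theta(p_\theta(r))$ with $\widetilde g_\theta$ concave and using the ODE $p_\theta''=-2({\bm b}/{\bm s}^{2})p_\theta'$ of Proposition \ref{prop: pq}(ii), the first-order terms cancel and what remains, at the level of its density against $L^{\Vert X\Vert}(dt,r)$, equals $(p_\theta'(r))^{2}{\bm s}^{2}(r,\theta)$ times the pull-back of the nonpositive second-derivative measure of $\widetilde g_\theta$; this is a nonincreasing random measure. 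For the singular term, the definition \eqref{eq: Qalter} of $c_0$ together with Proposition \ref{prop: Uc}(iv) gives $\Phi(c_0)\leq 0$, so it is nonincreasing as well. Hence $g(X(\cdot))$ is a bounded local supermartingale, and therefore a supermartingale. The degenerate case $c_0=U({\bm 0})$ is obtained by applying the previous step to $U^{(c_0+\varepsilon)}$ and letting $\varepsilon\downarrow 0$; $U^{(c_0+\varepsilon)}\downarrow U^{(c_0)}$ by the monotonicity built into Definition \ref{def: concavec}, and Proposition \ref{prop: Uc}(iv) supplies the continuity needed to pass to the limit in $\Phi$.

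\medskip
\noindent\textbf{Martingale property on $[0,\widetilde{\tau}_\star]$.} On $\{t<\widetilde{\tau}_\star\}$ we have $g(X(t))>U(X(t))$, and by continuity of $g-U$ this strict inequality holds on an open neighborhood of $\Vert X(t) \Vert$ along the occupied ray $\theta$. Proposition \ref{prop: Uc}(ii) then forces $g_\theta$ to be an affine function of $p_\theta$ on this neighborhood; equivalently $\widetilde g_\theta''$ vanishes locally, so the interior finite-variation contribution is identically zero on $[0,\widetilde{\tau}_\star]$. For the singular term, two sub-cases arise. If $c_0>U({\bm 0})$, the strict monotonicity and continuity of $\Phi$ in Proposition \ref{prop: Uc}(iv), combined with the minimality defining $c_0$ in \eqref{eq: Qalter}, force $\Phi(c_0)=0$, so the singular contribution vanishes outright. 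If $c_0=U({\bm 0})$, then ${\bm 0}$ lies in the stopping region $\{U=g\}$, so $\widetilde{\tau}_\star$ is dominated by the first hitting time of ${\bm 0}$ by $X(\cdot)$ and $L^{\Vert X\Vert}(\cdot\wedge\widetilde{\tau}_\star)\equiv 0$ by the support property of local time. In either case $g(X(\cdot\wedge\widetilde{\tau}_\star))$ is a bounded local martingale, hence a martingale.

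\medskip
\noindent The principal obstacle is the rigorous identification, within the Freidlin-Sheu decomposition, of the combined first- and second-order contributions along each ray as a nonpositive random measure when $\widetilde g_\theta$ is only concave (so that $\widetilde g_\theta''$ is a Radon measure rather than a pointwise function). This rests on marrying the occupation-density formula \eqref{eq: density} with the scale-function ODE of Proposition \ref{prop: pq}(ii), so that the $D^-g_\theta\cdot{\bm b}\,dt$ term and the $D^{2}g_\theta$-measure term against $L^{\Vert X\Vert}(dt,r)$ blend into the single nonpositive expression coming from $\widetilde g_\theta''$.
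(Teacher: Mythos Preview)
Your proposal is correct and follows essentially the same route as the paper: apply Theorem~\ref{prop: itoformula} to $U^{(c_0)}$, use the scale-function ODE $p_\theta''=-(2{\bm b}/{\bm s}^2)p_\theta'$ to collapse the interior finite-variation part to $p'_\theta(r)\,{\rm d}\big(D^-\widetilde g_\theta(p_\theta(r))\big)$ (this is exactly the paper's computation \eqref{eq: Uc0Ito}--\eqref{eq: UcXboundedvari}; your stated density factor $(p'_\theta)^2{\bm s}^2$ is a slip), and treat the case $c_0=U({\bm 0})$ by approximation with $U^{(c_0+\varepsilon)}$ for the supermartingale part.

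Two small points the paper handles more explicitly. First, the convergence $U^{(c_0+\varepsilon)}\to U^{(c_0)}$ is not automatic from monotonicity alone; the paper uses the two-sided bound $U^{(c_0)}\le U^{(c)}\le U^{(c_0)}+(c-c_0)$, which you should record. Second, when $c_0=U({\bm 0})$, Proposition~\ref{prop: Uc}(iii) no longer guarantees $U^{(c_0)}\in\mathfrak{C}_B$, so Theorem~\ref{prop: itoformula} is not directly available for the martingale half of the claim; the paper instead observes (as you do) that $X(\cdot\wedge\widetilde\tau_\star)$ never reaches the origin, hence stays on a single ray, and then invokes the \emph{one-dimensional} generalized It\^o rule. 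Your argument implicitly relies on the Freidlin--Sheu decomposition still being valid in this degenerate case; it is cleaner to bypass it as the paper does.
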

  
\begin{proof}
{\bf (A)} {\it We   consider     $\, c_{0} > U({\bm 0})\,$ first.} \,Then $\, \int_{0}^{2 \pi} D^{+} U^{(c_{0})}_{\theta} (0)\, {\bm \nu} ({\rm d}   \theta) = 0\,$ and $\, U^{(c_{0})} \in \mathfrak{C}_{B}\,$ hold, thanks to Proposition \ref{prop: Uc}\,(iii),\,(iv).  We recall the explosion time $\, S: = \inf \{ t: \, \Vert X(t) \Vert = 1\}\,,$ and consider the stopping times $\, S_{n}: = \inf \{ t: \, \Vert X(t) \Vert = 1-(1/n)\}\,$, $\, n \in \mathbb{N}\,$. Now (\ref{eq: ito})-(\ref{eq: VXg2}) of Theorem \ref{prop: itoformula}   give  
\[
U^{(c_{0})} \big(X(\cdot \wedge S_{n})\big) = U^{(c_{0})} \big( X(0) \big) \,+ \int_0^{\cdot \wedge S_{n}} \mathbf{ 1}_{\{ X(t) \neq \mathbf{ 0}\}}  D^{-} U^{(c_{0})}_{\Theta (t)} \big( \Vert X(t)\Vert  \big) \, \big[ {\bm b} \big(X(t)\big) \, {\rm d}t + {\bm s} \big(X(t)\big) \, {\rm d} W(t) \big] 
\]
\[
 + \, \sum_{\theta \in [0, 2\pi) } \int_{0}^{\cdot \wedge S_{n}} {\bf 1}_{ \{ X(t) \neq {\bm 0}, \,\, \Theta (t) = \theta \} } \int_{0}^{\infty}   L^{\Vert X \Vert} ({\rm d}t , r) \, D^{2} U^{(c_{0})}_\theta ({\rm d} r) \, + \,\Big( \int_{0}^{2\pi} D^{+} U^{(c_{0})}_\theta(0) \,  {\bm \nu}({\mathrm d} \theta) \Big)\, L^{\Vert X\Vert }(\cdot)
\]%\newpage
\[
= \,\, U^{(c_{0})} \big( X(0) \big) \,+ \int_0^{\cdot \wedge S_{n}} \mathbf{ 1}_{\{ X(t) \neq \mathbf{ 0}\}}  D^{-} U^{(c_{0})}_{\Theta (t)}\big( \Vert X(t)\Vert  \big) \, {\bm s} \big(X(t)\big) \, {\rm d} W(t) \qquad\qquad\qquad\qquad
\]\newpage
\begin{equation}
\label{eq: Uc0Ito}
 + \, \sum_{\theta \in [0, 2\pi) } \int_{0}^{\cdot \wedge S_{n}} {\bf 1}_{ \{ X(t) \neq {\bm 0}, \,\, \Theta (t) = \theta \} } \int_{0}^{\infty}   L^{\Vert X \Vert} ({\rm d}t , r) \, \bigg[ D^{-} U^{(c_{0})}_{\theta}(r) \frac{{2 \bm b} (r,\theta)}{{\bm s}^{2}(r,\theta)}{\rm d}r + D^{2} U^{(c_{0})}_\theta ({\rm d} r) \bigg]\, .
\end{equation}
Let us  assume that the function $\, U^{(c_{0})}_{\theta}\,$ is of the form $\, U^{(c_{0})}_{\theta}(\cdot) = \widetilde{U}^{(c_{0})}_{\theta}\big(p_{\theta}(\cdot)\big)\,$, with $\, \widetilde{U}^{(c_{0})}_{\theta} : [0,p_{\theta}(1-)]\rightarrow \R\,$   concave. We have then
\[
 \bigg[ D^{-} U^{(c_{0})}_{\theta}(r) \frac{{2 \bm b} (r,\theta)}{{\bm s}^{2}(r,\theta)}{\rm d}r + D^{2} U^{(c_{0})}_\theta ({\rm d} r) \bigg] \, = \,   D^{-} \widetilde{U}^{(c_{0})}_{\theta}(p_{\theta}(r)) \, p^{\prime}_{\theta}(r) \frac{{2 \bm b} (r,\theta)}{{\bm s}^{2}(r,\theta)}{\rm d}r \,+ \, {\rm d} \big( D^{-} \widetilde{U}^{(c_{0})}_{\theta}(p_{\theta}(r)) \,p^{\prime}_{\theta}(r) \big)
\]
\[
\, =\, - D^{-} \widetilde{U}^{(c_{0})}_{\theta}(p_{\theta}(r)) \, p^{\prime \prime}_{\theta}(r) {\rm d}r \, + \, D^{-} \widetilde{U}^{(c_{0})}_{\theta}(p_{\theta}(r)) \, p^{\prime \prime}_{\theta}(r) {\rm d}r  + \, p^{\prime}_{\theta}(r) \, {\rm d} \big( D^{-} \widetilde{U}^{(c_{0})}_\theta (p_{\theta}(r))\big)\,
\]
\begin{equation}
\label{eq: UcXboundedvari}
 =\,  p^{\prime}_{\theta}(r)\, {\rm d} \big( D^{-} \widetilde{U}^{(c_{0})}_\theta (p_{\theta}(r))\big)\, .
\end{equation}
The last expression is nonpositive, since $\, \widetilde{U}^{(c_{0})}_{\theta}\,$ is concave; yet it vanishes near $\, r\,$ if $\, U^{(c_{0})}_{\theta} (r) > U_{\theta} (r)\,$, thanks to Proposition \ref{prop: Uc}(ii). On the other hand, if $\,U^{(c_{0})}_{\theta} (r) = U_{\theta}(r)\,$, then by the definition of $\,\widetilde{\tau}_{\star}\,$ and the nature of local times, the process $\, L^{\Vert X \Vert} (\cdot \wedge \widetilde{\tau}_{\star} ,r ) \,$ does not increase when $\, X(\cdot) \,$ is on the ray with angle $\,\theta\,$. 

Putting these observations together, we see that  the right-most side in (\ref{eq: Uc0Ito}) is a local supermartingale; and that if we stop this process at time $\, \widetilde{\tau}_{\star}\,$, we get a local martingale. As it is clear that the function $\, U^{(c_{0})}\,$ is  bounded, we   let $\, n \rightarrow \infty\,$ and obtain the claim.    

\smallskip
\noindent
{\bf (B)} \,{\it We consider next the case $\, c_{0} = U({\bm 0}) \,$.} Then $\, \Phi (c_{0}) \leq 0 $ holds, and therefore also does  $  \Phi (c) < 0 $ for any $  c> c_{0} $. Thus for any such $\, c> c_{0}\,$, we   apply Theorem \ref{prop: itoformula} as above and show that $\, U^{(c)} \big( X(\cdot) \big)\,$ is a bounded supermartingale. Since $$\, U^{(c_{0})}(\cdot) \leq U^{(c)} (\cdot) \leq U^{(c_{0})}(\cdot) + c - c_{0}\,$$ (clearly, $\, U^{(c_{0})}_{\theta} (\cdot) + (c-c_{0})\,$ is $\,p_{\theta}-$concave and dominates $\, U $), we   let $\, c \downarrow c_{0}\,$ and obtain that  the process $\, U^{(c_{0})} \big( X(\cdot) \big)\,$ is also a bounded supermartingale. 

\smallskip
On the other hand, since $\, c_{0} = U({\bm 0}) \,$, the process $\, X(\cdot   \wedge \widetilde{\tau}_{\star})  \,$ stops at the origin once it finds itself there; so it never changes the ray it is on, and $\, L^{\Vert X\Vert }(\cdot \,\wedge \widetilde{\tau}_{\star} ) \equiv 0\,$. Thus,   the one-dimensional generalized \textsc{It\^o} rule shows that $\, U^{(c)} \big( X(\cdot \, \wedge \widetilde{\tau}_{\star}) \big)\,$ is a bounded (local) martingale, following the same idea as above.
\end{proof}

From the Claim \ref{cl: optimal}, and for any stopping time $\,\tau \in \mathcal{J}_{X}\,$, we have 
$$
\, U^{(c_{0})} (x) \geq \mathbb{E}^{x} \big[ U^{(c_{0})} \big( X(\tau) \big) \big] \geq \mathbb{E}^{x} \big[ U \big( X(\tau) \big) \big] \, .
$$ 
Furthermore, $\, U^{(c_{0})} (x) = \mathbb{E}^{x} \big[ U^{(c_{0})} \big( X( \widetilde{\tau}_{\star}  ) \big) \big] = \mathbb{E}^{x} \big[ U \big( X(  \widetilde{\tau}_{\star}  ) \big) \big] \,$, where the last equality holds because $\, X(S) \in \partial B \,$ and $\, U^{(c_{0})} (\cdot)= U(\cdot)\,$ on $\,\partial B \,$. These facts come from (\ref{eq: pfinite}), Theorem \ref{Thm: XS}, and Proposition \ref{prop: Uc}(i).  We conclude that $\, U^{(c_{0})} (x) = Q(x) \,$, and that the stopping time $\, \widetilde{\tau}_{\star} \,$ ($= \tau_{\star}\,$) is optimal. 

The proof of Theorem  \ref{thm: solstop} is complete.   \qed

%%%%%%%%%%%%%%%%%%%
\subsection{Solution to the  Problem of Optimal Stochastic Control with Discretionary Stopping}
%%%%%%%%%%%%%%%%%%%

Let us return now to the context of Subsection 5.1, and deal with Problem \ref{probe} of stochastic control with discretionary stopping. We shall provide a characterization of the value function of this problem, as well as an explicit description of a control strategy and of a stopping time  that attain the supremum in (5.4). 

\begin{ass}
\label{ass: sigtonoise}
 There are two pairs $\, ({\bm b}_{0}, {\bm s}_{0}) \,$, $\,  ({\bm b}_{1}, {\bm s}_{1})\,$ of \textsc{Borel}-measurable functions on $\, \check{B}\,$, which  

\smallskip
\noindent
(i)\,\, satisfy Condition \ref{bsigma} with $\, \ell (\theta) \equiv 1\,$, and whose   corresponding radial scale functions 
  satisfy (\ref{eq: pfinite}); and

\smallskip
\noindent
(ii)\,\,  are such    that $\, ({\bm b}_{i}(x), {\bm s}_{i}(x)) \in \mathcal{K}(x)\,$ holds for all $\, x \in \check{B} \,$ and $\, i = 0,1\,$, and \newpage
\begin{equation}
\label{eq: bs01}
\frac{{\bm b}_{0}(x)}{{\bm s}^{2}_{0}(x)} \, = \, \inf \Big\{ \frac{\beta}{\sigma^{2}} : \, (\beta, \sigma) \in \mathcal{K}(x) \Big\}, \qquad \, \frac{{\bm b}_{1}(x)}{{\bm s}^{2}_{1}(x)} \, = \, \sup \Big\{ \frac{\beta}{\sigma^{2}} : \, (\beta, \sigma) \in \mathcal{K}(x) \Big\}. 
\end{equation}
\end{ass}
Results in this subsection will rely on the above Assumption \ref{ass: sigtonoise}, which is inspired by \cite{KS1}. Following principles of stochastic control and stopping (e.g. Theorems 3.6 and 4.5 in \cite{T}), we may write  informally, using the stochastic calculus in Section 2, the following \textsc{Hamilton-Jacobi-Bellman}-type variational inequalities for the value function $\,(r, \theta) \mapsto V (r, \theta) = V_{\theta}(r) \,$ of (\ref{eq: value}), namely
\begin{equation}
\label{eq: DPE}
\min \Big \{ - \sup_{(\beta, \sigma) \in \mathcal{K} (x) } \big\{ \beta \, D V_{\theta}(r) \, + \, \frac{1}{2} \, \sigma^{2} \, D^{2} V_{\theta}(r)\big\} \, ,\, V(x) - U(x)  \Big \} \, = \, 0\, , \quad x = (r, \theta) \in B  \,,
\end{equation}
and
\begin{equation}
\label{eq: DPEorigin}
\min \Big \{ - \int_{0}^{2 \pi} D^{+} V_{\theta} (0)\, {\bm \nu} ({\rm d}   \theta) \, ,\,\,\, V({\bm 0}) - U({\bm 0})  \Big \} \, = \, 0\,  .
\end{equation}
Equation (\ref{eq: DPE}) implies that, outside the stopping region (i.e., where $\, V(x) > U(x)$), we should have
\[
\sup_{(\beta, \sigma) \in \mathcal{K} (x) } \Big\{ \, \frac{\beta}{\sigma^{2}} \, D V_{\theta}(r)  +  \frac{1}{2} \,   D^{2} V_{\theta}(r)\Big\} \, = \, 0\, ,
\] 
as $\,\sigma\,$ is not allowed to be zero. This suggests   maximizing the {\it ``signal-to-noise ratio" $\, \frac{\beta}{\sigma^{2}}\,$} where $\, D V_{\theta}(r) > 0 \,$, and minimize it where $\, D V_{\theta}(r) < 0 \,$. Now to analyze the sign of $\, D V_{\theta}(r) \,$, we introduce (again inspired by \cite{KS1}), for every $\, \theta \in [0,2\pi) ,$ the maximum of the reward function $\, U \,$ on the corresponding ray, as well as the left-most and right-most    locations where this maximum is   attained, namely:
\begin{equation}
\label{eq: Umax}
U_{\theta}^{\ast}: = \, \max_{0\leq r \leq 1} U_{\theta}(r) \, ,
\end{equation}
\begin{equation}
\label{eq: Umaxloca}
\lambda_{\theta} : = \inf \{ r \in [0,1] : U_{\theta}(r) = U_{\theta}^{\ast} \} \, , \qquad  \varrho_{\theta} : = \sup \{ r \in [0,1] : U_{\theta}(r) = U_{\theta}^{\ast} \} \, .
\end{equation}
We note  however that,   in contrast to the one-dimensional problem, here the left endpoint (i.e., the origin) is not an  absorbing boundary, and thus the value $\, V({\bm 0})\,$ is not known in advance.

\smallskip
Thus, we shall treat every real number $\, c \geq U(\bm 0)\,$ as a ``candidate" for the value $\, V (\bm 0)\,$, and choose a pair of functions $\, ({\bm b}^{(c)}, {\bm s}^{(c)}) \,$ on $\, \check{B}\,$ that will generate a   \textsc{Walsh} diffusion which will be optimal for this problem. If indeed   $\, c=V (\bm 0)\,$, the function $\, V \,$ is then the value function of the optimal stopping problem for this \textsc{Walsh} diffusion % associated with $\, ({\bm b}^{(c)}, {\bm s}^{(c)}, {\bm \nu}) \,$
 and reward function $\,U\,$. From Theorem \ref{thm: solstop} (ii) we know that, with $\, U^{(c, p^{(c)})}\,$ as in Definition \ref{def: Ucpc} below, we should have $\, V = U^{(c, p^{(c)})}\,$ with $\, c=V (\bm 0)\,$.%, is the value function of the optimal stopping problem for the \textsc{Walsh} diffusion associated with $\, ({\bm b}^{(c)}, {\bm s}^{(c)}, {\bm \nu}) \,$ and reward function $\,U\,$. 

\smallskip
To implement this program %and with an eye towards  Proposition \ref{prop: Ucpc} below, 
we choose the pairs of functions $\, ({\bm b}^{(c)}, {\bm s}^{(c)}) \,$ as follows. %a candidate optimal strategy. If  indeed   $\, c=V (\bm 0)\,$, then the optimal control strategy is to take $\, ({\bm b}^{(c)}, {\bm s}^{(c)}) \,$ as  
%coefficients, and optimally stop the resulting \textsc{Walsh} diffusion; the value function $\, Q^{(c)}\,$ for this optimal stopping problem should then be equal to $\, c \,$ at the origin. 

\begin{definition} 
{\bf Candidate Optimal Control Strategies:} 
\label{def: bcsc}
For every real constant $\, c \geq U(\bm 0)\,$, we consider a pair $\, ({\bm b}^{(c)}, {\bm s}^{(c)}) \,$ of \textsc{Borel}-measurable functions on $\, \check{B}\,$, which satisfies:
%\footnote{~ Small re-arrangement below; please check. (IK)}

\smallskip
\noindent
(i) \,\,  $\, ({\bm b}^{(c)}(x), {\bm s}^{(c)}(x)) \in \mathcal{K}(x)\,$ for all $\, x \in \check{B} \,$; %and Condition \ref{bsigma} with $\, \ell (\theta) \equiv 1\,$, and the  corresponding radial scale functions   satisfy (\ref{eq: pfinite}).   
  
\smallskip
\noindent
(ii) \,   %satisfies 
$\, \big( {\bm b}^{(c)}(r,\theta), {\bm s}^{(c)}(r,\theta)\big) = \big( {\bm b}_{0}(r,\theta), {\bm s}_{0}(r,\theta)\big) \,$ for all $\, (r, \theta) \in \check{B} \,$ with $\, U_{\theta}^{\ast} < c \,$; %$\, r \in (0,1)$ and $\, \theta \in [0,2\pi)\,$ with $\, U_{\theta}^{\ast} < c \,$;

\smallskip
\noindent
(iii) \,   %satisfies 
$  \big( {\bm b}^{(c)}(r,\theta), {\bm s}^{(c)}(r,\theta)\big) = \big( {\bm b}_{0}(r,\theta), {\bm s}_{0}(r,\theta)\big)  $ for all $\, (r, \theta) \in \check{B} \,$ with $\, U_{\theta}^{\ast} \geq c \,$ and $ \, r \in (\varrho_{\theta},1)$; %$ \, r \in (\varrho_{\theta},1)$ and $ \, \theta \in [0,2\pi) $ with $\, U_{\theta}^{\ast} \geq c\, $; 

\smallskip
\noindent
(iv) \,    %satisfies  
$  \big( {\bm b}^{(c)}(r,\theta), {\bm s}^{(c)}(r,\theta)\big) = \big( {\bm b}_{1}(r,\theta), {\bm s}_{1}(r,\theta)\big)  \,$ for all $\, (r, \theta) \in \check{B} \,$ with $\, U_{\theta}^{\ast} > c \,$ and $ \, r \in (0, \lambda_{\theta})$;%$ \, r \in (0, \lambda_{\theta})$ and $ \, \theta \in [0,2\pi) $ with $ \, U_{\theta}^{\ast} > c \,$.
\end{definition}

%\begin{remark}
%Every real number $\, c \geq U(\bm 0)\,$ can be seen as a ``tentative guess" for the value $\, V (\bm 0)\,$. If  indeed   $\, c=V (\bm 0)\,$, then the optimal control strategy is to take $\, ({\bm b}^{(c)}, {\bm s}^{(c)}) \,$ as  
%coefficients, and optimally stop the resulting \textsc{Walsh} diffusion; the value function $\, Q^{(c)}\,$ for this optimal stopping problem should then be equal to $\, c \,$ at the origin. The   choice of $\, ({\bm b}^{(c)}, {\bm s}^{(c)}) \,$ in Definition \ref{def: bcsc} is inspired by \cite{KS1}; see also Remark \ref{rem: int}.  
%\end{remark}

%\smallskip
%In conjunction with 
%From the previous section  we know that, if $\, V({\bm 0}) = c\,$, then the function $\, U^{(c, p^{(c)})}\,$ defined as below is the value function of the optimal stopping problem for the \textsc{Walsh} diffusion associated with $\, ({\bm b}^{(c)}, {\bm s}^{(c)}, {\bm \nu}) \,$ and reward function $\,U\,$. This property will help us find $\, V (\bm 0)\,$. 

\begin{definition}
\label{def: Ucpc}
For every real constant  $\, c \geq U({\bm 0}) \,$, we define the function $\,U^{(c, p^{(c)})} : \overline{B} \rightarrow \R \,$ as $\, U^{(c, p^{(c)})} (r,\theta) \equiv U^{(c, p^{(c)})}_{\theta} (r) \,$, where 
\begin{equation}
\label{eq: Ucpc}
 U^{(c, p^{(c)})}_{\theta} (r): =  \inf \Big\{ \varphi(r): \, \varphi (\cdot) \geq U_{\theta}(\cdot) ,  \quad \varphi :  [0,1]\rightarrow \R\, \,\, \text{is $\, \,p^{(c)}_{\theta}-$concave with} \,\,\, \varphi (0) \geq c  \Big\} \,.
\end{equation}
Here $\, p^{(c)} \,$ is the radial scale function that corresponds,   via (\ref{eq: p}), to the above pair of  functions $\, ({\bm b}^{(c)}, {\bm s}^{(c)}) \,$.% in Definition \ref{def: bcsc}. 
\end{definition}

 %From the previous section  we know that  the function $\, U^{(c, p^{(c)})}\,$ defined above with $\,\, c=V({\bm 0})  \,$ is the value function of the optimal stopping problem for the \textsc{Walsh} diffusion associated with $\, ({\bm b}^{(c)}, {\bm s}^{(c)}, {\bm \nu}) \,$ and reward function $\,U\,$. 
%This property will help us find $\, V (\bm 0)\,$. \footnote{~ Small re-arrangement here; please check. (IK)}

%\newpage
 \begin{remark}
 By Definition \ref{def: bcsc} (i) and Assumption \ref{ass: sigtonoise}, every pair of functions $\, ({\bm b}^{(c)}, {\bm s}^{(c)}) \,$ also satisfies condition (i) of  Assumption \ref{ass: sigtonoise}, so Theorem \ref{thm: solstop} applies to the \textsc{Walsh} diffusion it generates.\newpage
 
We also note that, in Definition \ref{def: bcsc}, we did not specify the values $\, ({\bm b}^{(c)}(r,\theta), {\bm s}^{(c)}(r,\theta)) \,$ in the case  $\, U_{\theta}^{\ast} = c \,$ and $\, r \in (0, \varrho_{\theta}]\,,$ or in the case $\, U_{\theta}^{\ast} > c \,$ and $\, r \in [\lambda_{\theta}, \varrho_{\theta}]\,$. 

In these cases, the values in question  need only be chosen suitably,  to make the resulting functions $\, ({\bm b}^{(c)}, {\bm s}^{(c)}) \,$ satisfy the property (i) of Definition \ref{def: bcsc}. For example, $\, ({\bm b}_{0}(r,\theta), {\bm s}_{0}(r,\theta)) \,$ and $\, ({\bm b}_{1}(r,\theta), {\bm s}_{1}(r,\theta)) \,$ are two choices. Fortunately,  this ambiguity does not carry over to the function $\,U^{(c, p^{(c)})}\,$, as shown below.
\end{remark}
 
\begin{prop}
\label{prop: Ucpc}
For every real constant $\, c \geq U({\bm 0})\,$, the function 
$\,U^{(c, p^{(c)})}\,$ is uniquely determined, regardless of the ambiguity in the choice of $\, ({\bm b}^{(c)}, {\bm s}^{(c)}) \,$ in Definition \ref{def: bcsc}.  

 \smallskip
Moreover,  for any given $\, \theta \in [0,2\pi) \,$, the following hold:

\smallskip
\noindent
{\bf (i)} \, If $\, c < U_{\theta}^{\ast} \,$,   we have \\ $\, ~~~~~~D^{-} U^{(c, p^{(c)})}_{\theta} (\cdot) \geq 0 \,$ ~on $\, (0, \lambda_{\theta}) \,$, $\, \,\,D^{-}U^{(c, p^{(c)})}_{\theta} (\cdot) \leq 0 \,$ ~on $\, (\varrho_{\theta}, 1) \,$, and  $\, \,\,U^{(c, p^{(c)})}_{\theta} (\cdot) = U_{\theta}^{\ast}\,$ ~on $\, [ \lambda_{\theta},  \varrho_{\theta}]\,$.

 \smallskip
\noindent
{\bf (ii)} \, If $\, c = U_{\theta}^{\ast} \,$,  we have \\ $\, ~~~~~~~~~~~~~~~~~~~~~~~~~~~~~~~~~~~~~~D^{-} U^{(c, p^{(c)})}_{\theta} (\cdot) \leq 0 \,$  ~~on $\, (\varrho_{\theta}, 1) \,$,~~~~~~~~ and ~~~~~~~~ $\, U^{(c, p^{(c)})}_{\theta} (\cdot) = U_{\theta}^{\ast}\,$ ~~on $\, [ 0,  \varrho_{\theta}]\,$.

\smallskip
\noindent
{\bf (iii)} \, If $\, c > U_{\theta}^{\ast} \,$,  we have \\ $\,~~~~~~~~~~~~~~~~~~~~~~~~~~~~~~~~~~~~~~~~~~~~~~~~~~~~~~~~~~~~~~~ D^{-} U^{(c, p^{(c)})}_{\theta} (\cdot) \leq 0 \,$ ~~on $\, (0, 1) \,$. 

\smallskip
\noindent
{\bf (iv)} \, With $\,\big( U^{(c)}, p \big)\,$ replaced by $\,\big(  U^{(c, p^{(c)})}, \,p^{(c)} \big)\,,$  the statements of Proposition \ref{prop: Uc} hold here as well.
\end{prop}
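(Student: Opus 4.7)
The plan is to fix $\theta \in [0, 2\pi)$ and work one ray at a time, reducing to the classical problem of the least concave majorant via the change of variables $y = p^{(c)}_\theta(r)$. Under this substitution, $y \mapsto U^{(c,p^{(c)})}_\theta\big( (p^{(c)}_\theta)^{-1}(y)\big)$ is just the smallest (ordinary) concave majorant of $\hat U_\theta(y):=U_\theta\big((p^{(c)}_\theta)^{-1}(y)\big)$ on $[0, p^{(c)}_\theta(1-)]$ taking value at least $c$ at $y=0$.

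I first prove the shape statements (i)--(iii). The constant $\varphi \equiv \max(c,U^\ast_\theta)$ is a trivially $p^{(c)}_\theta$-concave majorant of $U_\theta$ with $\varphi(0)\ge c$, hence $U^{(c,p^{(c)})}_\theta \le \max(c,U^\ast_\theta)$ pointwise. In case (i) this yields $U^{(c,p^{(c)})}_\theta \le U^\ast_\theta$ globally; combined with the lower bound $U^{(c,p^{(c)})}_\theta \ge U_\theta$ evaluated at the argmax points $\lambda_\theta$ and $\varrho_\theta$, equality holds there, and the $p^{(c)}_\theta$-concave chord across $[\lambda_\theta,\varrho_\theta]$ pins $U^{(c,p^{(c)})}_\theta \equiv U^\ast_\theta$ throughout this interval. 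The derivative-sign statements on $(0,\lambda_\theta)$ and $(\varrho_\theta,1)$ follow by contradiction, exploiting that under $p^{(c)}_\theta$-concavity the sign of $D^- U^{(c,p^{(c)})}_\theta(r)$ is monotone in $r$: a wrong-sign derivative in one of these open intervals would propagate and force $U^{(c,p^{(c)})}_\theta(\lambda_\theta)$ or $U^{(c,p^{(c)})}_\theta(\varrho_\theta)$ strictly below $U^\ast_\theta$. Cases (ii) and (iii) are handled by the same candidate combined with the constraint $U^{(c,p^{(c)})}_\theta(0)\ge c$.

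Uniqueness then drops out. The only ambiguity in Definition~\ref{def: bcsc} lies on $[\lambda_\theta,\varrho_\theta]$ (case (i)) or on $(0,\varrho_\theta]$ (case (ii)), and on these intervals the shape analysis forces $U^{(c,p^{(c)})}_\theta \equiv U^\ast_\theta$ regardless of the choice of $({\bm b}^{(c)},{\bm s}^{(c)})$. Away from these intervals, $p^{(c)}_\theta$ is either unchanged (on $[0,\lambda_\theta]$, where only the unambiguous pair $({\bm b}_1,{\bm s}_1)$ enters the integral~(\ref{eq: p})) or differs by a positive affine transformation (on $[\varrho_\theta,1]$, where the ambiguous block of $({\bm b}^{(c)},{\bm s}^{(c)})$ contributes only a multiplicative constant to the exponential kernel of~(\ref{eq: p})); since $p^{(c)}_\theta$-concavity is invariant under positive affine changes of $p^{(c)}_\theta$, and the boundary values at $\lambda_\theta,\varrho_\theta$ are pinned to $U^\ast_\theta$, the residual least-concave-majorant problems on the complementary intervals are uniquely solvable.

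For part (iv), the verbatim analogues of Proposition~\ref{prop: Uc}(i)--(iii) transfer without difficulty: continuity in the tree-topology, the boundary identities $U^{(c,p^{(c)})}({\bm 0})=c$ and $U^{(c,p^{(c)})}(1,\theta)=U(1,\theta)$, the affine-in-$p^{(c)}_\theta$ character on intervals of strict majorization, and membership in $\mathfrak{C}_B$ when $c>U({\bm 0})$ are local statements about a single ray to which the replacement $p \rightsquigarrow p^{(c)}$ is transparent. The main obstacle is the strict monotonicity (and continuity) of $\Phi(c):=\int_0^{2\pi} D^+ U^{(c,p^{(c)})}_\theta(0)\,{\bm\nu}({\rm d}\theta)$, since the scale function itself moves with $c$ and actually switches between $({\bm b}_0,{\bm s}_0)$ and $({\bm b}_1,{\bm s}_1)$ as $c$ crosses $U^\ast_\theta$ on a given ray. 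I plan to partition $[0,2\pi)$ according to the three sub-cases (i)--(iii), use them to write down an explicit ray-by-ray expression for $D^+ U^{(c,p^{(c)})}_\theta(0)$, and then combine the classical monotonicity of the slope at $0$ of the least concave majorant under an increase of its boundary value at $0$ with a dominated-convergence bookkeeping of the ``transition set'' $\{\theta:c_1\le U^\ast_\theta\le c_2\}$ that appears when comparing $\Phi(c_1)$ and $\Phi(c_2)$.
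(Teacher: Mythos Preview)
Your treatment of (i)--(iii) and of the uniqueness claim is correct and essentially identical to the paper's: the paper also pins $U^{(c,p^{(c)})}_\theta$ to $U^\ast_\theta$ on the ambiguous interval and then observes that on the complementary intervals the two candidate scale functions $p^{(c,1)}_\theta,\,p^{(c,2)}_\theta$ are affine transformations of each other, so the residual least-$p^{(c)}_\theta$-concave-majorant problems coincide.

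Where you diverge from the paper is in part (iv), specifically the strict monotonicity and continuity of $c\mapsto D^{+}U^{(c,p^{(c)})}_\theta(0)$. Your plan---partition $[0,2\pi)$ by cases, write ray-by-ray formulas, and then do dominated-convergence bookkeeping on the transition set $\{\theta:c_1\le U^\ast_\theta\le c_2\}$---can be made to work, but it is laborious precisely because the scale $p^{(c)}_\theta$ itself jumps between the $({\bm b}_0,{\bm s}_0)$-regime and the $({\bm b}_1,{\bm s}_1)$-regime as $c$ crosses $U^\ast_\theta$. The paper sidesteps this entirely by exploiting the very non-uniqueness you just proved is harmless: for each fixed $\theta$, one is free to choose $({\bm b}^{(c)}(\cdot,\theta),{\bm s}^{(c)}(\cdot,\theta))$ to be the \emph{same} pair for every $c$ in the sub-range $[U^\ast_\theta,\infty)$ (take $({\bm b}_0,{\bm s}_0)$ throughout, which is compatible with Definition~\ref{def: bcsc} at $c=U^\ast_\theta$ since the whole interval $(0,\varrho_\theta]$ is free there), and likewise the same pair for every $c$ in $[U({\bm 0}),U^\ast_\theta]$. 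With $p^{(c)}_\theta$ thus frozen on each sub-range, Proposition~\ref{prop: Uc}(iv) applies verbatim to give continuity and strict decrease of $c\mapsto D^{+}U^{(c,p^{(c)})}_\theta(0)$ on each closed sub-range separately; the two pieces agree at $c=U^\ast_\theta$ because there $U^{(c,p^{(c)})}_\theta\equiv U^\ast_\theta$ near $0$ regardless of scale, so the one-sided derivative is $0$ either way. This eliminates the transition-set analysis altogether.
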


\begin{proof}
The proof of (i)-(iii) is elementary, using the definition of $\,U^{(c, p^{(c)})}\,$; see also the end of Section 3 of \cite{KS1}, where   similar properties   are considered.  

\smallskip
Next, we show the non-ambiguity in the definition of the function $\,U^{(c, p^{(c)})}\,$ in (\ref{eq: Ucpc}). Let $\, ({\bm b}^{(c,1)}, {\bm s}^{(c,1)}) \,$ and $\, ({\bm b}^{(c,2)}, {\bm s}^{(c,2)}) \,$ be two choices of $\, ({\bm b}^{(c)}, {\bm s}^{(c)}) \,$, and $\, p^{(c,1)}\,$ and $\, p^{(c,2)}\,$  the corresponding radial scale functions. Fix a ray with angle $\,\theta \,$. If $\, U^{\ast}_{\theta} < c\,$, there is no ambiguity in $\, ({\bm b}^{(c)}, {\bm s}^{(c)}) \,,$ and therefore in $\,U^{(c, p^{(c)})}\,,$ on this ray. If $\, U^{\ast}_{\theta} = c\,$, then $\,U^{(c, p^{(c)})}_{\theta} = U_{\theta}^{\ast}\,$ on $\, [0, \varrho_{\theta}]\,$, and it follows  that 
\begin{equation}
\label{eq: Ucrho1}
U^{(c, p^{(c)})}_{\theta} (r): =  \inf \Big\{ \varphi (r): \, \varphi(\cdot) \geq U_{\theta}(\cdot) ,  ~~ \varphi:  [\varrho_{\theta}, 1]\rightarrow \R \,\, \text{is $\, p^{(c)}_{\theta}-$concave}  \Big\} \, , ~~~~~ r \in [\varrho_{\theta},1]\, .
\end{equation}
But when restricted to $\, [\varrho_{\theta},1]\,$  we have $\, \big({\bm b}^{(c,1)}(\cdot \,,\theta),\, {\bm s}^{(c,1)}(\cdot \,,\theta)\big) = \big({\bm b}^{(c,2)}(\cdot \,,\theta), \,{\bm s}^{(c,2)}(\cdot \,,\theta)\big) \,,$ and therefore the functions $\, p^{(c,1)}_{\theta}(\cdot)\,$, $\, p^{(c,2)}_{\theta}(\cdot)\,$ are affine transformations of each other. Hence, the two choices $\, p^{(c)} = p^{(c,1)}\,$ and $\, p^{(c)} = p^{(c,2)}\,$ in (\ref{eq: Ucrho1}) lead to the same result. The case $\, U^{\ast}_{\theta} > c\,$ is dealt with similarly.

\smallskip
Finally, we address (iv).  It is easy to see that   Proposition \ref{prop: Uc} carries  over to the present context essentially unchanged, except for the claim that the mapping $\, c \mapsto \int_{0}^{2 \pi} D^{+} U^{(c, p^{(c)})}_{\theta} (0)\, {\bm \nu} ({\rm d} \theta)\,$ is continuous and strictly decreasing. For this claim  it is enough to show that the mapping $\, c \mapsto D^{+} U^{(c, p^{(c)})}_{\theta} (0)\,$ is continuous and strictly decreasing, given any $\,\theta\in[0,2\pi)\,$. We now observe that  we have the freedom to choose $$\, \big({\bm b}^{(U_{\theta}^{\ast})}(\cdot , \theta), {\bm s}^{(U_{\theta}^{\ast})}(\cdot , \theta)\big) \, = \, \big({\bm b}_{0}(\cdot , \theta), {\bm s}_{0} (\cdot , \theta)\big)\, ,$$ so that $\, ({\bm b}^{(c)}(\cdot, \theta), {\bm s}^{(c)}(\cdot, \theta)) \,$ is the same for all $\, c\in [U_{\theta}^{\ast}, \infty)\,$. Then the proof of Proposition \ref{prop: Uc}\,(iv) yields that the mapping $\, c \mapsto D^{+} U^{(c, p^{(c)})}_{\theta} (0)\,$ is continuous and strictly decreasing on $\, [U_{\theta}^{\ast}, \infty)\,$. The argument is similar for $\,c \in [U({\bm 0}), U_{\theta}^{\ast}]\,$.  
\end{proof}

The last task now, is to determine $\, V(\bm{0})\,$. Following (\ref{eq: DPEorigin}) and Proposition \ref{prop: Ucpc} (iv), we naturally conjecture  $$ V({\bm 0}) = \inf \Big\{ c \geq U({\bm 0 }): \, \int_{0}^{2 \pi} D^{+} U^{(c, p^{(c)})}_{\theta} (0)\, {\bm \nu} ({\rm d} \theta) \leq 0 \Big\}\, .$$

\smallskip
We can state now and prove the following fundamental result, regarding  the optimal control problem with discretionary stopping  for \textsc{Walsh} semimartingales, posed in the present section.

\begin{thm}
\label{thm: ControlStop}
 {\bf Solving the   Control Problem with Discretionary Stopping:}
With Assumption \ref{ass: sigtonoise} and the above notations, the value function $\, V   \,$ of the control problem with discretionary stopping in (\ref{eq: value}), is  given by
\begin{equation}
\label{eq: V}
V(x) \, = \, U^{(c_{\ast}, p^{(c_{\ast})})}(x)\, , \qquad c_{\ast}: = \, \inf \bigg\{ c \geq U({\bm 0 }): \, \int_{0}^{2 \pi} D^{+} U^{(c, p^{(c)})}_{\theta} (0)\, {\bm \nu} ({\rm d} \theta) \leq 0 \bigg\}\, ,
\end{equation}
and therefore satisfies $\, V ({\bm 0}) = c_{\ast} \,$.

The  supremum in (\ref{eq: value}) is attained by the \textsc{Walsh} diffusion $\, X^{\ast} (\cdot)\,$ associated with the triple $\, \big({\bm b}^{(c_{\ast})}, {\bm s}^{(c_{\ast})}, {\bm \nu}\big)\, $ as in Definition \ref{def: bcsc}, and the corresponding stopping time 
\begin{equation}
\label{eq: optimalstop}
\tau_{\ast}:= \, \inf \big \{ t \geq 0 : \, U\big( X^{\ast}(t) \big) = V \big( X^{\ast}(t) \big) \big \} .
\end{equation}
\end{thm}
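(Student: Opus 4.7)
The plan is to follow the classical two-sided argument for control-with-stopping problems: establish an \emph{upper bound} $V(x) \le U^{(c_\ast, p^{(c_\ast)})}(x)$ valid for every admissible pair $(X,\tau)$, and then a \emph{lower bound} by exhibiting the explicit strategy $(X^\ast,\tau_\ast)$ that achieves equality.

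For the upper bound, fix any $X\in\mathcal A(x)$ with control processes $(\beta,\sigma)$ and any $\tau\in\mathcal J_X$. I would show that the process $U^{(c_\ast, p^{(c_\ast)})}(X(\cdot))$ is a bounded supermartingale. To this end I would apply the Freidlin-Sheu identity of Theorem~\ref{prop: itoformula} (in the form adjusted to $\overline B$, cf.~Remark~\ref{rmk: DIextend}), noting that by Proposition~\ref{prop: Ucpc}(iv) we have $U^{(c_\ast, p^{(c_\ast)})}\in\mathfrak C_B$ and its restriction to each ray is $p^{(c_\ast)}_\theta$-concave, so we can write $U^{(c_\ast, p^{(c_\ast)})}_\theta(r)=\widetilde U_\theta(p^{(c_\ast)}_\theta(r))$ with $\widetilde U_\theta$ concave. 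A computation exactly parallel to \eqref{eq: UcXboundedvari}, but now using the relation $p^{(c_\ast)\prime\prime}_\theta=-\,2(\mathbf b^{(c_\ast)}/\mathbf s^{(c_\ast)2})p^{(c_\ast)\prime}_\theta$ rather than $p''$ tied to the dynamics of $X$, yields that the bounded-variation part of $U^{(c_\ast, p^{(c_\ast)})}(X(\cdot))$ is
\[
\int_0^{\,\cdot}\mathbf 1_{\{X(t)\neq\bm 0\}}\,\sigma^2(t)\,p^{(c_\ast)\prime}_{\Theta(t)}(\|X(t)\|)\,D^-\widetilde U_{\Theta(t)}(p^{(c_\ast)}_{\Theta(t)}(\|X(t)\|))\Bigl(\tfrac{\beta(t)}{\sigma^2(t)}-\tfrac{\mathbf b^{(c_\ast)}(X(t))}{\mathbf s^{(c_\ast)2}(X(t))}\Bigr)\mathrm dt
\]
plus the analogue of the negative Stieltjes term in \eqref{eq: UcXboundedvari} and the local-time term $\bigl(\int_0^{2\pi}D^+U^{(c_\ast, p^{(c_\ast)})}_\theta(0)\,\bm\nu(\mathrm d\theta)\bigr)L^{\|X\|}(\cdot)$. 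The crux is that the sign structure in Proposition~\ref{prop: Ucpc}(i)--(iii) dovetails exactly with Definition~\ref{def: bcsc}: wherever $D^-U^{(c_\ast, p^{(c_\ast)})}_\theta>0$ we have $\mathbf b^{(c_\ast)}/\mathbf s^{(c_\ast)2}=\sup\beta/\sigma^2$, wherever $D^-U^{(c_\ast, p^{(c_\ast)})}_\theta<0$ we have $\mathbf b^{(c_\ast)}/\mathbf s^{(c_\ast)2}=\inf\beta/\sigma^2$, and on the flat stretches $D^-U^{(c_\ast, p^{(c_\ast)})}_\theta=0$ kills the first-order term. The local-time contribution at the origin is nonpositive by the very definition of $c_\ast$ combined with the continuity of $c\mapsto\int D^+U^{(c,p^{(c)})}_\theta(0)\bm\nu(\mathrm d\theta)$ from Proposition~\ref{prop: Ucpc}(iv). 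Optional stopping then gives $\mathbb E[U(X(\tau))]\le\mathbb E[U^{(c_\ast, p^{(c_\ast)})}(X(\tau))]\le U^{(c_\ast, p^{(c_\ast)})}(x)$.

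For the lower bound, consider the Walsh diffusion $X^\ast$ associated with $(\mathbf b^{(c_\ast)},\mathbf s^{(c_\ast)},\bm\nu)$; by Assumption~\ref{ass: sigtonoise}(i) and Definition~\ref{def: bcsc}(i) it satisfies Condition~\ref{bsigma} and \eqref{eq: pfinite}, hence Theorem~\ref{thm: solstop} is applicable and the value of its pure optimal stopping problem equals $U^{(c_0,p^{(c_\ast)})}(x)$ with $c_0:=\inf\{c\ge U(\bm 0):\int D^+U^{(c,p^{(c_\ast)})}_\theta(0)\bm\nu(\mathrm d\theta)\le0\}$. I would then argue $c_0=c_\ast$: the direction $c_0\le c_\ast$ is immediate from the definition of $c_\ast$ (the function $U^{(c_\ast,p^{(c_\ast)})}$ is itself a $p^{(c_\ast)}$-concave majorant with angular measure $\bm\nu$), while $c_0\ge c_\ast$ follows by noting that any admissible $c$ for $c_0$ produces a $p^{(c_\ast)}$-concave majorant dominating $U^{(c,p^{(c)})}$ and then exploiting the monotonicity of $\Phi$-type functionals in Proposition~\ref{prop: Ucpc}(iv); a short identification argument, analogous to the equivalence of \eqref{eq: Q} and \eqref{eq: Qalter} in the proof of Theorem~\ref{thm: solstop}, closes this loop. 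Consequently the value of the stopping problem for $X^\ast$ equals $U^{(c_\ast,p^{(c_\ast)})}(x)=V(x)$, with optimizer the hitting time \eqref{eq: tauast} of the set $\{U=V\}$, which is exactly $\tau_\ast$ of \eqref{eq: optimalstop}.

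The main obstacle I anticipate is the supermartingale verification in the upper bound: one must simultaneously manage (a) the interplay between the general control $(\beta,\sigma)$ and the fixed scale $p^{(c_\ast)}$, which is tuned to the specific $(\mathbf b^{(c_\ast)},\mathbf s^{(c_\ast)})$, ensuring the sign of the first-order drift term on each of the regions described in Proposition~\ref{prop: Ucpc}; (b) the Stieltjes integral against $D^2U^{(c_\ast,p^{(c_\ast)})}_\theta$ supported on the complement of the flat plateau, which must be recognized as nonpositive via the concavity of $\widetilde U_\theta$; and (c) the local time at the origin, where the angular average of the right-derivatives is only known to be $\le 0$ by an appeal to the continuity and monotonicity of $\Phi^{(c)}:=\int D^+U^{(c,p^{(c)})}_\theta(0)\bm\nu(\mathrm d\theta)$. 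A subsidiary technical nuisance is the borderline case $c_\ast=U(\bm 0)$, handled as in step~(B) of Claim~\ref{cl: optimal} by approximation from $c\downarrow c_\ast$; otherwise, if $c_\ast>U(\bm 0)$, the required regularity $U^{(c_\ast,p^{(c_\ast)})}\in\mathfrak C_B$ comes directly from Proposition~\ref{prop: Ucpc}(iv) and validates the use of Theorem~\ref{prop: itoformula}.
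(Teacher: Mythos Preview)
Your proposal is correct and mirrors the paper's two-step verification argument: the supermartingale upper bound via the \textsc{Freidlin--Sheu} formula with the sign structure of Proposition~\ref{prop: Ucpc}(i)--(iii) dovetailing with Definition~\ref{def: bcsc}, and the lower bound via Theorem~\ref{thm: solstop} applied to $X^\ast$; you have also correctly flagged the borderline case $c_\ast=U(\bm 0)$ and its treatment by approximation.

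One small comment on the lower bound. You introduce the auxiliary constant $c_0=\inf\{c:\int D^+U^{(c,p^{(c_\ast)})}_\theta(0)\,\bm\nu(\mathrm d\theta)\le 0\}$ and propose to argue $c_0=c_\ast$; your sketch for the direction $c_0\ge c_\ast$ (comparing the varying-scale majorant $U^{(c,p^{(c)})}$ with the fixed-scale one $U^{(c,p^{(c_\ast)})}$) is not obviously correct, because the two scale functions differ on rays with $U^\ast_\theta$ between $c$ and $c_\ast$. The paper bypasses this entirely: once you have written out \eqref{eq: UcpcIto} for a general $X\in\mathcal A(x)$, simply specialize to $X=X^\ast$, so the inequality there becomes an equality; then the argument of Claim~\ref{cl: optimal} (with $c_\ast$ in place of $c_0$, justified by Proposition~\ref{prop: Ucpc}(iv)) shows directly that $U^{(c_\ast,p^{(c_\ast)})}(X^\ast(\cdot\wedge\tau_\ast))$ is a bounded martingale, yielding $U^{(c_\ast,p^{(c_\ast)})}(x)=\mathbb E[U(X^\ast(\tau_\ast))]\le V(x)$ without ever needing to identify $c_0$. (A posteriori this also proves $c_0=c_\ast$, but you do not need to establish it beforehand.)
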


 \begin{remark} 
 \label{rem: int}
 {\it On Interpretation:} 
In conjunction with Definition \ref{def: bcsc} this result states that,  before entering the stopping region $\, \{ x \in \overline{B}: \, U(x) = V(x) \}\,$, it is optimal to control the state process $\, X (\cdot) \,$ thus:

\smallskip
\noindent
{\bf (i)} \, Along any ray of angle $\, \theta \,$ with $\, U_{\theta}^{\ast} >   V ({\bm 0})  :$  maximize the ``signal-to-noise" ratio $\,  \beta / \sigma^{2}  \,$ on the interval $\, (0, \lambda_{\theta})\,;$ minimize the ``signal-to-noise"  ratio $\,  \beta / \sigma^{2}  \,$ on the interval $\, (\rho_{\theta}, 1) \,; $ and follow  on the interval  $\, [\lambda_{\theta}, \varrho_{\theta}] \,$ any strategy that will bring the process $\, X (\cdot)  \,$ to one of the   endpoints of the interval. 

\smallskip
\noindent
{\bf (ii)} \, Along any ray of angle $\, \theta \,$ with $\, U_{\theta}^{\ast}  = V ({\bm 0})  :$ minimize the ``signal-to-noise" ratio $\,  \beta / \sigma^{2}  \,$ on %the interval 
$\, (\varrho_{\theta}, 1) \,,$ and follow on  the interval 
$\, (0, \rho_{\theta}] \,$ any strategy that will bring the process $\, X (\cdot)  \,$ to one of its endpoints. 

\smallskip
\noindent
{\bf (iii)} \, Along any ray of angle $\, \theta \,$ with $\, U_{\theta}^{\ast} <  V ({\bm 0})  :$  minimize the ``signal-to-noise" ratio $\,  \beta / \sigma^{2}  \,$.

\smallskip
Since the function $\, V \,$ is obtained via (\ref{eq: V}), the above strategy can indeed be implemented.
 \end{remark}

\begin{proof}
{\bf (A)} We first show that $\, U^{(c_{\ast}, \,p^{(c_{\ast})})} (x) \geq V(x)\,$. Let us fix a starting point $\, x \in B\,$, pick up an arbitrary  process $\, X (\cdot) \in \mathcal{A}(x)\,$, a stopping time $\,\tau \in \mathcal{J}_{X}\,$, and recall the dynamics of (\ref{eq: controlled||X||}). We claim that we have 
\begin{equation}
\label{eq: claim}
U^{(c_{\ast}, \,p^{(c_{\ast})})} (x)  \,   \geq  \,   \mathbb{E} \Big[ U^{(c_{\ast}, \,p^{(c_{\ast})})} \big(X(\tau )\big)\Big]\, .
\end{equation}
This   implies $ U^{(c_{\ast}, p^{(c_{\ast})})} (x)     \geq     \mathbb{E} \big[ U  \big(X(\tau )\big)\big]  $ for all $ X (\cdot) \in \mathcal{A}(x) ,$   $\,\tau \in \mathcal{J}_{X},$ thus also $\, U^{(c_{\ast}, p^{(c_{\ast})})} (x) \geq V(x)\,$. 

\smallskip
\noindent
$\bullet~$ Now we  establish the claim  (\ref{eq: claim}). Assume first that $\, c_{\ast} > U({\bm 0})\,$.  Proposition \ref{prop: Ucpc} (iv) gives then $\, \int_{0}^{2 \pi} D^{+} U^{(c_{\ast}, p^{(c_{\ast})})}_{\theta} (0)\, {\bm \nu} ({\rm d}   \theta) = 0\,$ and $\, U^{(c_{\ast}, p^{(c_{\ast})})} \in \mathfrak{C}_{B}\,$. In the same manner  as in the derivation  of (\ref{eq: Uc0Ito}), (\ref{eq: UcXboundedvari}), and recalling the stopping times $\, S \,$, $\, S_{n}\,$ given there, we obtain here
\[
U^{(c_{\ast}, p^{(c_{\ast})})} \big(X(\cdot \wedge S_{n})\big) = U^{(c_{\ast}, p^{(c_{\ast})})} (x ) \,+ \int_0^{\cdot \wedge S_{n}} \mathbf{ 1}_{\{ X(t) \neq \mathbf{ 0}\}} \, D^{-} U^{(c_{\ast}, p^{(c_{\ast})})}_{\Theta (t)} \big( \Vert X(t)\Vert  \big) \, \big[ \beta(t) \, {\rm d}t + \sigma (t) \, {\rm d} W(t) \big] 
\]
\[
 + \, \sum_{\theta \in [0, 2\pi) } \int_{0}^{\cdot \wedge S_{n}} {\bf 1}_{ \{ X(t) \neq {\bm 0}, \,\, \Theta (t) = \theta \} } \int_{0}^{\infty}   L^{\Vert X \Vert} ({\rm d}t , r) \, D^{2} U^{(c_{\ast}, p^{(c_{\ast})})}_\theta ({\rm d} r) 
\]
\[
\leq \, U^{(c_{\ast}, p^{(c_{\ast})})} (x ) \,+ \int_0^{\cdot \wedge S_{n}} \mathbf{ 1}_{\{ X(t) \neq \mathbf{ 0}\}}  D^{-} U^{(c_{\ast}, p^{(c_{\ast})})}_{\Theta (t)} \big( \Vert X(t)\Vert  \big) \,\Big[  \frac{{\bm b}^{(c_{\ast})}\big(X(t)\big)}{({\bm s}^{(c_{\ast})})^{2}\big(X(t)\big)} \cdot \sigma^{2} (t)\, {\rm d}t + \sigma (t) \, {\rm d} W(t) \Big] 
\]
\[
 + \, \sum_{\theta \in [0, 2\pi) } \int_{0}^{\cdot \wedge S_{n}} {\bf 1}_{ \{ X(t) \neq {\bm 0}, \,\, \Theta (t) = \theta \} } \int_{0}^{\infty}   L^{\Vert X \Vert} ({\rm d}t , r) \, D^{2} U^{(c_{\ast}, p^{(c_{\ast})})}_\theta ({\rm d} r) 
\]
\[
= \, U^{(c_{\ast}, p^{(c_{\ast})})} (x ) \,+ \int_0^{\cdot \wedge S_{n}} \mathbf{ 1}_{\{ X(t) \neq \mathbf{ 0}\}}  D^{-} U^{(c_{\ast}, p^{(c_{\ast})})}_{\Theta (t)} \big( \Vert X(t)\Vert  \big) \, \sigma (t) \, {\rm d} W(t) 
\]
\begin{equation}
\label{eq: UcpcIto}
 + \, \sum_{\theta \in [0, 2\pi) } \int_{0}^{\cdot \wedge S_{n}} {\bf 1}_{ \{ X(t) \neq {\bm 0}, \,\, \Theta (t) = \theta \} } \int_{0}^{\infty}   L^{\Vert X \Vert} ({\rm d}t , r) \, (p^{(c_{\ast})}_{\theta})^{\prime} (r)\, {\rm d} \big( D^{-} \widetilde{U}^{(c_{\ast}, p^{(c_{\ast})})}_\theta (p_{\theta}(r)) \big)\, ,
\end{equation}
%\newpage
\noindent
where $\, \widetilde{U}^{(c_{\ast}, p^{(c_{\ast})})}_{\theta} : [0,p_{\theta}(1-)] \to \R\,$ is concave, and such that $\, U^{(c_{\ast}, p^{(c_{\ast})})}_{\theta}(\cdot) = \widetilde{U}^{(c_{\ast}, p^{(c_{\ast})})}_{\theta}\big(p_{\theta}(\cdot)\big)\,$. %\newpage

We have used Definition \ref{def: bcsc} and Proposition \ref{prop: Ucpc} (i)-(iii) for the above inequality; namely, we observe \newpage 
\[
D^{-} U^{(c_{\ast}, p^{(c_{\ast})})}_{\Theta (t)} \big( \Vert X(t)\Vert  \big) \, > (<) \,  \,0 \,\,\, \,\,\,\Longrightarrow \,\,\, \,\,\,\frac{{\bm b}^{(c_{\ast})}\big(X(t)\big)}{\big({\bm s}^{(c_{\ast})}\big)^{2}\big(X(t)\big)} \,\geq (\leq) \, \,\frac{\beta (t)}{\sigma^{2} (t)} \, .
\]
The claim for the case $\, c_{\ast} > U({\bm 0})\,$ now follows by localizing (\ref{eq: UcpcIto}) and taking expectations, with the help of the concavity of the function $\, \widetilde{U}^{(c_{\ast}, p^{(c_{\ast})})}_{\theta}\,$ and the boundedness of the function $\, U^{(c_{\ast}, p^{(c_{\ast})})}\,$.
 % \footnote{~ I think we need to provide a bit more detail here, this is a crucial step. Please check. (IK) ~ I added a bit more explanation. How do you think about it now? (Minghan)} 

Next, we consider the case $\, c_{\ast} = U({\bm 0})\,$. Then we have $\, \int_{0}^{2 \pi} D^{+} U^{(c, p^{(c)})}_{\theta} (0)\, {\bm \nu} ({\rm d}   \theta) < 0\,$ and $\, U^{(c, p^{(c)})} \in \mathfrak{C}_{B}\,$, for any $\, c>c_{\ast}\,$.  Thus, similarly as above, we see that $$\, U^{(c, p^{(c)})} (x) \, \geq   \,  \mathbb{E} \big[ U^{(c, p^{(c)})} \big(X(\tau )\big)\big]\, .$$ On the strength of the following paragraph, we may let $\, c \downarrow c_{\ast}\,$ and obtain the claim in this case. 

\smallskip
Fix $\,\theta\in [0,2\pi)\,$. By making $\,  ({\bm b}^{(c)}(\cdot\,, \theta), {\bm s}^{(c)}(\cdot\,, \theta))\,$ the same for all $\,c  \geq U_{\theta}^{\ast}\,$ (cf. the proof of Proposition \ref{prop: Ucpc} (iv)), we note that there exists an $\,\varepsilon (\theta) > 0\,$ such that $\, p^{(c)}_{\theta}(\cdot)\,$ is the same for $\, c \in [c_{\ast}, c_{\ast}+\varepsilon(\theta)]\,$. Thus $\, U^{(c, p^{(c)})} (\cdot\,, \theta) \leq  U^{(c_{\ast}, p^{(c_{\ast})})}(\cdot\,, \theta) + c - c_{\ast}\,$ for $\, c \in [c_{\ast}, c_{\ast}+\varepsilon(\theta)]\,$.  

\smallskip
\noindent
%$\bullet~$ 
{\bf (B)} We need to argue 
%On the other hand, 
$\, U^{(c_{\ast}, p^{(c_{\ast})})} (x) \leq V(x)\,$ as well. But this follows from the fact that, by Theorem \ref{thm: solstop}, $\, U^{(c_{\ast}, p^{(c_{\ast})})} \,$ is the value function of the optimal stopping problem for the same reward function $\, U\,$ and the \textsc{Walsh} diffusion $\, X^{\ast} (\cdot) \,$ associated with the triple $\, ({\bm b}^{(c_{\ast})}, {\bm s}^{(c_{\ast})}, {\bm \nu})\,$. 

We conclude that $\, U^{(c_{\ast}, p^{(c_{\ast})})} (x) = V(x)\,$; the other claims of the theorem follow then directly.  
\end{proof}

The importance -- and advantage -- of the   purely probabilistic   approach we have developed, is that it obviates the need to give rigorous meaning to the fully nonlinear variational inequalities (\ref{eq: DPE}), (\ref{eq: DPEorigin});  it constructs, rather, the value function and the optimal control and stopping strategies of the problem {\it from first principles} and using educated guesses. We regard the fact, that  such a problem can be shown to admit a very explicit solution, as   testament to the power of the stochastic calculus developed in the present paper.    

%%%%%%%%%%%%%%%%%%%
%\subsection{The Underlying Dynamic Programming Equations}
%\label{rmk: DPE}

% Combining the features of control and stopping (e.g. Theorems 3.6 and 4.5 in \cite{T}), we may write informally the following \textsc{Hamilton-Jacobi-Bellman}-type variational inequalities for the value function $\,(r, \theta) \mapsto V (r, \theta) = V_{\theta}(r) \,,$ namely

% in conjunction with the refined stochastic calculus developed in this paper.  

%%%%%%%%%%%%%%
\section{Appendix: Proofs of Selected Results} 
%%%%%%%%%%%%%%

{\bf PROOF OF THEOREM \ref{prop: itoformula}:} {\it Step 1:} In this first step we extend Proposition \ref{prop: D} and Lemma \ref{lm: gprime} to functions in the class $\,\mathfrak{C}\,$. Except for Lemma \ref{lm: gprime}(ii), it is straightforward to state and prove the extension. For the extension of Lemma \ref{lm: gprime}(ii)  we shall show that,  whenever $\, g \in \mathfrak{C} \,$, the process
\begin{equation}
\label{eq: processvari}
\sum_{\theta \in [0, 2\pi) } \int_{0}^{T} {\bf 1}_{ \{ X(t) \neq {\bm 0}, \,\, \Theta (t) = \theta \} } \int_{0}^{\infty}   L^{\Vert X \Vert} ({\rm d}t , r)  \,  D^{2} g_\theta ({\rm d} r) \, , \qquad 0 \leq T < \infty \, 
\end{equation}
is well-defined, adapted, continuous and of finite variation on compact intervals. Following the idea and notation  in the proof of Lemma \ref{lm: finite} and using (iii) of Definition \ref{def: D3}, we derive
\[
\sum_{\theta \in [0, 2\pi) } \int_{0}^{T} {\bf 1}_{ \{ X(t) \neq {\bm 0}, \,\, \Theta (t) = \theta \} } \int_{0}^{\infty}   L^{\Vert X \Vert} ({\rm d}t , r)  \,  \big| D^{2} g_\theta ({\rm d} r) \big|
\]
\[
\leq \, \sum_{\theta \in [0, 2\pi) } \int_{0}^{T} {\bf 1}_{ \{ 0< \Vert X(t)\Vert \leq \eta, \,\, \Theta (t) = \theta \} } \int_{0}^{\infty}   L^{\Vert X \Vert} ({\rm d}t , r)  \,  \mu ({\rm d} r) \, + \,\sum_{\{ \ell: \,\, \tau^\eta_{2 \ell + 1} < T\}} \int_{0}^{\infty} L^{\Vert X \Vert} (T , r)  \, \big| D^{2} g_{\Theta( \tau^\eta_{2 \ell + 1})} ({\rm d} r) \big| \, .
\] 
The second  term in the above expression represents a continuous process of finite variation on compact intervals; indeed, the process 
$\,
 \int_{0}^{\infty} L^{\Vert X \Vert} (\cdot \,, r)  \, \big| D^{2} g_{\theta} ({\rm d} r) \big|\,$ has these properties   for every fixed $\, \theta \in [0,2\pi) ,$  and the set $\, \{ \ell: \,\, \tau^\eta_{2 \ell + 1} < T\}\,$ is almost surely finite. On the other hand, the first  term can be written as 
\[
\int_{0}^{T} {\bf 1}_{ \{ 0< \Vert X(t)\Vert \leq \eta \} } \int_{0}^{\infty}   L^{\Vert X \Vert} ({\rm d}t , r)  \,  \mu ({\rm d} r) \, = \,  \int_{0}^{\eta}   L^{\Vert X \Vert} (T , r)  \,  \mu ({\rm d} r) \, 
\]
via interchanging first the summation and the integration, then the two integrals; this is justified by the finiteness of the last expression above. It is now easy to see that the process given by (\ref{eq: processvari}) is well-defined, continuous and of finite variation on compact intervals.%\newpage
 
For adaptedness, it is standard to show, by the \textsc{Borel}-measurability of $\, g \,$ and the joint measurability of $\, (t, r, \omega) \mapsto L^{\Vert X \Vert} (t , r, \omega) \,$,  that for any $\, T \in [0,\infty)\,$ the mapping
\[
(t, \theta ,  \omega) \, \longmapsto  \, \int_{0}^{\infty} L^{\Vert X \Vert} (t , r, \omega) \, D^{2} g_\theta ({\rm d} r) 
\]
is $\, \mathcal{B} \big( [0,\infty) \big) \otimes \mathcal{B} \big( [0,2\pi) \big) \otimes \mathcal{F} (T)-$measurable when restricted to $\, [0,\infty) \times [0,T] \times \Omega \,$. Let $\, (s_{k,T} , t_{k,T}) \,,$  $\, k \in \mathbb{N}\,$ be an enumeration of all excursion intervals of the path $\, \Vert X(t) \Vert\,$, $\, 0 < t < T\,$ away from $\, 0\,$, such that all random variables  $\, \big\{s_{k,T}  \big\}_{  k \in \mathbb{N}}\,$, $\, \big\{  t_{k,T}\big\}_{  k \in \mathbb{N}}\,$  are   $\,\mathcal{F} (T)-$measurable. Let $\, \Theta (t) = \theta_{k,T}\,$ for all $\, t\in (s_{k,T} , t_{k,T})\,$, and thus $\, \theta_{k,T}\,$ is also $\,\mathcal{F} (T)-$measurable for every $k \in \N$. Since (\ref{eq: processvari}) may be rewritten as
\[
\sum_{k=1}^{\infty}  \int_{0}^{\infty} \Big( L^{\Vert X \Vert} (t_{k,T} , r, \omega) - L^{\Vert X \Vert} (s_{k,T} , r, \omega) \Big) \, D^{2} g_{\theta_{k,T}} ({\rm d} r) \, ,\qquad 0 \leq T < \infty \, ,
\]
it is thus adapted to the filtration $\, \mathbb{F}\,$. Step 1 is now complete.

\smallskip
\noindent
{\it Step 2:}   With Proposition \ref{prop: D} and Lemma \ref{lm: gprime} having been  extended, we can follow exactly the same arguments as in the proof of Theorem \ref{Thm: Gen}, to prove (\ref{eq: ito}) and (\ref{eq: VXg2}); we note here that  Theorem 3.7.1(v) in \cite{KS} should be used here for the generalized \textsc{It\^o}'s rule. Finally, we  observe that any function $\, f \,$ as in Lemma \ref{lm: finite} is the second derivative (in the sense of Definition \ref{def: G}) of some function in $\, \mathfrak{D}\,$, hence also in $\,\mathfrak{C}\,$. Thus,  both Theorem \ref{Thm: Gen} and the just obtained (\ref{eq: ito}) apply;  comparing the results, we obtain (\ref{eq: density}). \qed

%%%%%%%%%%%

\bigskip
\noindent
{\bf PROOF OF PROPOSITION \ref{nonexplo}:}
Let $\, E_{t} : = \big \{ \int_{0}^{t \wedge S} {\bf 1}_{\{ X(u) \neq {\bm 0}\} } \, {\bm s}^{2}(X(u)) {\rm d} u \, = \infty \big \}\,$. Following the idea of the solutions to Problem 3.4.11 and Problem 5.5.3 in \cite{KS}, we have $\, \underline{\lim}_{n\rightarrow\infty} \Vert X(t \wedge S_{n})\Vert = 0 \,$  and $\,\overline{\lim}_{n\rightarrow\infty} \Vert X(t \wedge S_{n})\Vert = \infty \,,$ a.e. on $\, E_{t}\,$. Thus $\, \mathbb{P}(E_{t}) = 0\,$ by the continuity  of $\, X(\cdot)\,$  in the tree-topology.

\smallskip
Therefore,   $\, \int_{0}^{t \wedge S} {\bf 1}_{\{ X(t) \neq {\bm 0}\} } \, {\bm s}^{2}(X(u)) {\rm d} u < \infty\,$ holds a.e., and we obtain the existence in $\, \R^{2} \,$ of the limit $\, \lim_{n\rightarrow\infty} X(t \wedge S_{n}) \,$ in the tree-topology, in the same spirit as in the second-to-last paragraph in the proof of Proposition \ref{prop: DDS}. Thus $\, X(t \wedge S)\,$ is valued in $\, \R^{2} \,,$ a.e., for every $\, t\geq 0\,$, and consequently $\, S= \infty \,$ a.e. \qed

\bigskip
\noindent
{\bf PROOF OF THEOREM \ref{Thm: Walshnodrift}:} Omitting from the notation  the underlying probability space, we begin with a standard one-dimensional Brownian motion $\, \{ \widetilde{B}(s), \widetilde{\mathcal{G}}(s) ; 0\leq s < \infty \} \,$ and an independent two-dimensional random variable $\, \xi\,$ with distribution $\, \bm\mu\,$. Let $\, \{ Z(\cdot), \mathcal{G}(\cdot) \}\,$ be a \textsc{Walsh} Brownian motion starting at $\, Z(0) = \xi \,$  and driven by the Brownian motion $\, B(\cdot) = \Vert \xi \Vert + \widetilde{B}(\cdot)\,,$   with angular measure $\,\bm\nu\,$. This \textsc{Walsh} Brownian motion can be constructed as in the proof of Theorem 2.1 in \cite{IKPY} (even though in that proof the process starts at a nonrandom point, the same method applies to a random initial condition). Let %We introduce 
\[
T(s): = \int_{0}^{s+} \frac{{\bf 1}_{\{ Z(u) \neq {\bm 0}\} }\, {\rm d} u}{{\bm s}^{2}\big(Z(u)\big)} \, , \,\,\, ~0 \leq s < \infty , \qquad A(t): = \inf \big \{ s \geq 0 : T(s) > t \big \} \, , \,\,\, 0 \leq t < \infty \, .
\]

\begin{lm}
\label{lm: infinite}
We have $\, T(\infty) = \infty\,$, a.s.
\end{lm}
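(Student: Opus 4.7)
The strategy is to exploit the Poisson point-process structure of the excursions of reflected Brownian motion away from the origin, combined with the IID angular marking inherent in the Walsh construction of $Z(\cdot)$.

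Since $\Vert Z(\cdot) \Vert = B(\cdot) + \Lambda(\cdot)$ is reflected Brownian motion, its excursions away from $0$, parametrized by the local time $\ell = L^{\Vert Z \Vert}(\cdot\,, 0)$, form a Poisson point process of intensity $\mathrm d \ell \otimes n^{\ast}(\mathrm d e)$ on $[0,\infty) \times \mathcal{E}$, where $n^{\ast}$ denotes It\^o's (infinite) excursion measure on the space $\mathcal{E}$ of nontrivial positive excursions. By the Walsh construction of $Z(\cdot)$ in Theorem 2.1 of \cite{IKPY} (cf.\ also the proof of Proposition \ref{prop: zeroangle}), each excursion is further marked by an independent angle drawn from $\bm \nu$; hence the collection of (local-time, excursion, angle) triples is a Poisson random measure $\Pi$ on $[0,\infty) \times \mathcal{E} \times [0,2\pi)$ with intensity $\mu := \mathrm d \ell \otimes n^{\ast}(\mathrm d e) \otimes \bm \nu(\mathrm d\theta)$.

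Setting $F(e,\theta) := \int_{0}^{\,\mathrm{life}(e)} \mathrm d u / {\bm s}^{2}\!\left(e(u),\theta\right) \in [0,\infty]$ for the $\bm s^{-2}$-weighted lifetime of an excursion on a given ray, one recognizes
\[
T(\infty) \,=\, \int F(e,\theta)\,\Pi(\mathrm d \ell, \mathrm d e, \mathrm d \theta) \,.
\]
By the classical $0$-$1$ criterion for a.s.\ finiteness of Poisson integrals of nonnegative functionals, this integral is a.s.\ finite if and only if $\int (F \wedge 1)\, \mathrm d \mu < \infty$. Since $\int_{0}^{\infty} \mathrm d \ell = +\infty$, the latter reduces to the requirement that $F \equiv 0$ for $n^{\ast} \otimes \bm \nu$-a.e.\ $(e,\theta)$; it then suffices to rule out this degenerate case.

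This final step uses $\mathcal{I}({\bm s}) = \mathcal{Z}({\bm s})$ together with Condition \ref{sigma}: combined, these force $1/{\bm s}^{2}(\cdot,\theta)$ to be strictly positive Lebesgue-almost everywhere on $(0,\infty)$, so that $F(e,\theta) > 0$ for $n^{\ast}$-a.e.\ nontrivial excursion $e$ and every $\theta \in [0,2\pi)$; hence $\int (F \wedge 1)\, \mathrm d \mu = +\infty$, and thus $T(\infty) = +\infty$ almost surely. The main obstacle will be aligning the excursion-theoretic description of the Walsh Brownian motion (which underlies the IID-angle marking) with the time-change definition of $T(\cdot)$: this matching is routine but essential, and it is at the very last step --- in the form of positivity of $F$ on a set of positive $n^{\ast} \otimes \bm \nu$-measure --- that the hypothesis $\mathcal{I}({\bm s}) = \mathcal{Z}({\bm s})$ enters in an essential way.
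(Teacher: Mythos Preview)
Your excursion-theoretic route is valid in outline and does lead to the conclusion, but it is considerably heavier than what the paper does, and your identification of the ``essential hypothesis'' is off.

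The paper's argument is elementary: with the stopping times $\{\tau_k^\varepsilon\}$ of \eqref{eq: tau rec} applied to $Z$, the strong \textsc{Markov} property of \textsc{Walsh} Brownian motion at the times $\tau_{2m}^\varepsilon$ (where $Z = \mathbf 0$) makes the increments
\[
\widehat T_m := \int_{\tau_{2m}^\varepsilon}^{\tau_{2m+2}^\varepsilon} \frac{\mathbf 1_{\{Z(u)\neq\mathbf 0\}}\,\mathrm d u}{{\bm s}^2(Z(u))}\,,\qquad m\in\mathbb N_0
\]
an I.I.D.\ sequence of strictly positive random variables, so $T(\infty)\geq\sum_m \widehat T_m=\infty$ a.s. No Poisson machinery, no $0$--$1$ criterion.

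More importantly, your final step invokes $\mathcal I({\bm s})=\mathcal Z({\bm s})$ as the reason $F(e,\theta)>0$. This is doubly mistaken. First, the lemma sits \emph{inside} the proof of Theorem~\ref{Thm: Walshnodrift}, before any case split on $\mathcal I({\bm s})$ versus $\mathcal Z({\bm s})$; only Condition~\ref{sigma} is in force, so the equality is not an available hypothesis. Second, it is not needed: since ${\bm s}:\check{\mathbb R}^2\to\mathbb R$ is real-valued, one has $1/{\bm s}^2(x)\in(0,\infty]$ for every $x\in\check{\mathbb R}^2$, hence $F(e,\theta)>0$ for \emph{every} nontrivial excursion $e$ and every $\theta$, with no further assumption. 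The same triviality is what makes each $\widehat T_m$ strictly positive in the paper's proof. So your plan is salvageable --- just drop the appeal to $\mathcal I({\bm s})=\mathcal Z({\bm s})$ --- but the paper's two-line argument via I.I.D.\ cycles is both simpler and uses no more than the strong \textsc{Markov} property.
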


\noindent
{\it Proof of Lemma \ref{lm: infinite}:} Consider the stopping times $\,\{\tau_{k}^{\varepsilon}\}_{k \in \mathbb N_{-1}}\,$ as in (\ref{eq: tau rec}), with $\, X \,$ replaced by $\, Z \,$. Since $\, Z(\cdot) \,$ is time-homogeneous strongly-Markovian (as a \textsc{Walsh} Brownian motion) and $\, Z(\tau_{2 m}^{\varepsilon}) \equiv {\bm 0} ,\,\, \forall \, m \in \mathbb{N}_{0} \,$, we deduce that the random variables 
$$\,
 \widehat{T}_{m} \,: = \, \int_{\tau_{2 m}^{\varepsilon}}^{\tau_{2 m +2 }^{\varepsilon}}\frac{{\bf 1}_{\{ Z(u) \neq {\bm 0}\} }\,{\rm d} u}{{\bm s}^{2}(Z(u))}\,  , \qquad \,\, m \in \mathbb{N}_{0} 
$$ are I.I.D and strictly positive. Therefore, we have $\, T(\infty) \geq \sum_{m \in \mathbb{N}_{0}} \widehat{T}_{m} = \infty \,$, a.e. \qed\newpage

\smallskip
We also note that $\, T(\cdot)\,$ is strictly increasing when it is finite, because $\, Z (\cdot)\,$ spends zero amount of time at the origin $\, \bm 0\,$. Now it is easy to see that the analogue of relationships (5.10)-(5.14) at the beginning of Section 5.5.A in \cite{KS}, as well as the discussions between them, all hold   here as well. Define
\begin{equation}
\label{eq: R}
R: = \inf \big \{ s \geq 0 : \, Z(s) \in \mathcal{I}({\bm s})   \big \} .
\end{equation}

\begin{lm}\label{lm: R=Ainfty}
We have $\, R = A(\infty)\,$, a.s. 
\end{lm}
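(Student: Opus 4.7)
My plan is to prove the two inequalities $A(\infty) \geq R$ and $A(\infty) \leq R$ separately, in each case reducing to the one-dimensional \textsc{Engelbert-Schmidt} $0$-$1$ law applied along a single ray during an excursion of $\|Z\|$ away from the origin. The key observation is that, by Proposition \ref{prop: semirays} combined with the tree-topology continuity of $Z$, on each excursion interval of $\|Z\|$ away from $\bm 0$ the process $Z$ stays on a single ray of some angle $\theta$, and the radial part coincides (up to an additive constant picked up at the start of the excursion) with the driving Brownian motion $B$. Thus the analysis on each excursion is exactly one-dimensional.

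For $A(\infty) \geq R$, I will fix $s < R$ and show $T(s) < \infty$. Since no excursion of $\|Z\|$ in $[0,s]$ visits $\mathcal{I}(\bm s)$, I will split $T(s)$ into contributions from the ``small'' excursions (those confined to $\|Z\| \leq \eta$, where $\eta$ is as in Condition \ref{sigma}) and the ``large'' ones. For the small part, the occupation-density identity \eqref{eq: density} together with $\langle \|Z\|\rangle(u)=u$ and Condition \ref{sigma} yields the bound $\leq 2 \int_0^\eta c(r) L^{\|Z\|}(s, r)\, {\rm d}r$, which is almost surely finite since $c$ is integrable and $r \mapsto L^{\|Z\|}(s, r)$ is right-continuous with left-limits, hence locally bounded. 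For the large part, the number of excursions crossing height $\eta$ in $[0, s]$ is a.s.\ finite, and each contributes finitely by the classical one-dimensional \textsc{Engelbert-Schmidt} $0$-$1$ law applied to $1/\bm s^2(\cdot, \theta)$ along the ray of that excursion, whose range avoids the corresponding section of $\mathcal{I}(\bm s)$.

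For $A(\infty) \leq R$, I will assume $R < \infty$, write $Z(R) = (r_0, \theta_0)$ with $r_0 > 0$ (since $\bm 0 \notin \mathcal{I}(\bm s)$), and use the tree-topology continuity of $Z$ together with $\|Z(R)\| = r_0 > 0$ to find $\delta > 0$ with $\|Z(u)\| \geq r_0/2$ on $(R-\delta, R+\delta)$. Proposition \ref{prop: semirays} then forces $\Theta(u) \equiv \theta_0$ on this interval, so $\|Z(u)\| = B(u) - B(R) + r_0$ there. Since $(r_0, \theta_0) \in \mathcal{I}(\bm s)$, the one-dimensional \textsc{Engelbert-Schmidt} $0$-$1$ law applied at the level $r_0$ gives $\int_{R-\delta}^{R+\delta} {\rm d}u /\bm s^2(\|Z(u)\|, \theta_0) = \infty$ almost surely on $\{R < \infty\}$, so $T(R+\delta) = \infty$; sending $\delta \downarrow 0$ yields $A(\infty) \leq R$.

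The main obstacle will be the careful bookkeeping needed to split $T(s)$ into a sum over the countably many excursions and to dominate the accumulation of infinitely many small excursions piling up near the origin; this is precisely where Condition \ref{sigma} is essential, furnishing the integrable dominating function $c$, and where the density formula \eqref{eq: density} of Theorem \ref{prop: itoformula} replaces the usual one-dimensional occupation-time identity. Once these two ingredients are in place, the per-excursion reduction to the scalar \textsc{Engelbert-Schmidt} result is routine.
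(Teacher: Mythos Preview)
Your approach is correct and essentially identical to the paper's: for $R\le A(\infty)$ the paper invokes the adaptation of Lemma~5.5.2 in \cite{KS} via Condition~\ref{sigma}, Lemma~\ref{lm: finite} and the tree-metric (which is precisely your small/large-excursion decomposition), and for $A(\infty)\le R$ the paper also restarts the driving Brownian motion at the stopping time $R$, stays on the single ray $\theta_0$ until $\|Z\|$ could reach $0$, and applies Lemma~3.6.26 of \cite{KS}. Two cosmetic points: the finiteness on each large excursion is really the occupation-time formula plus local integrability on the (compact) range, not the \textsc{Engelbert--Schmidt} $0$--$1$ law per se; and the backward interval $(R-\delta,R)$ and the limit $\delta\downarrow 0$ are unnecessary, since the forward blow-up $\int_R^{R+s}=\infty$ for every $s>0$ already yields $A(\infty)\le R$.
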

\noindent
{\it Proof of Lemma \ref{lm: R=Ainfty}:} The proof of $\, R \leq A(\infty)\,$ follows   as in the proof of Lemma 5.5.2 in \cite{KS}, with the help of Condition \ref{sigma}, Lemma \ref{lm: finite}, and the tree-metric.

\smallskip
As for the reverse inequality $\, A(\infty) \leq R\,$, it suffices to prove it on the event $\, \{ R \leq n \} \,$ for every $\, n \in \mathbb{N}\,$. We define the standard Brownian motion $\, B_{n} (\cdot) : = B \big( (R \wedge n) + \cdot \big) - B(R \wedge n)  ,$ and the stopping time $\, \tau : = \{ s \geq 0 : B_{n} (s) \leq - \eta \}\,$. Then on the event $\, \{ R \leq n \}\,$, we have for any $\, 0 < s < \tau \,$ the comparison 
\[
\int_{0}^{R+s} \frac{{\bf 1}_{\{ Z(u) \neq {\bm 0}\} }\,{\rm d} u }{{\bm s}^{2} \big(Z(u)\big)} \,\geq\, \int_{R}^{R+s} \frac{{\bf 1}_{\{ Z(u) \neq {\bm 0}\} }\,{\rm d} u }{{\bm s}^{2} \big(Z(u)\big)} \, = \, \int_{0}^{s} \frac{{\rm d} u}{{\bm s}^{2} \big( \Vert Z(R) \Vert + B_{n}(u) , \text{arg}\big(Z(R)\big)\big)}\, .
\]
The last equality comes here from the fact $\, Z(\cdot) \neq {\bm 0} \,$ holds on the interval $\, [R, R+ \tau) \,$, which is because $\, Z(R) \in \mathcal{I}({\bm s}) \subseteq \{(r, \theta):  r \geq \eta , \,\, 0 \le \theta < 2 \pi\} $. It follows from Lemma 3.6.26 in \cite{KS} that the last integral above is infinite, thus $\, T(R) = \infty\,$ holds on $\, \{ R \leq n \} \,,$ and therefore $\, A(\infty) \leq R\,$ holds on $\, \{ R \leq n \} \,$. \qed

\smallskip
\noindent
$\bullet~$
Now we adapt the proof of Theorem 5.5.4 in \cite{KS}; i.e., we shall show that, under the Condition \ref{sigma},  a  \textsc{Walsh} Diffusion with state-space  $\, I \,$ associated with the triple $\, ({\bf 0}, {\bm s}, {\bm \nu}) \,$ exists, if and only if $\,\,  \mathcal{I}({\bm s}) \subseteq \mathcal{Z}({\bm s}) $. 

\noindent
(i) Let us first assume $\,  \mathcal{I}({\bm s}) \subseteq \mathcal{Z}({\bm s})\,$ and define
\begin{equation}
\label{eq: Xtimechange}
X(t): = Z(A(t)) , \qquad U(t): = B(A(t)), \qquad \mathcal{F}(t):= \mathcal{G}(A(t)) , \quad \qquad 0\leq t< \infty . 
\end{equation}
It follows that $\, X(T(u)) = Z(u)\,$ for $\, u < A(\infty)\,$. Thus,  for every $\, t \in [0, \infty)\,$ we have 
\begin{equation}
\label{eq: Xzerolebe}
\int_{0}^{t} \, {\bf 1}_{\{  X(v)  = {\bm 0} \} }\, {\rm d} v \, = \, \int_{0}^{A(t)} \, {\bf 1}_{\{  X(T(u))  = {\bm 0} \} }\, {\rm d} T(u) \, = \,  \int_{0}^{A(t)} \, {\bf 1}_{\{  Z(u)  = {\bm 0} \} }\, {\rm d} T(u) \, = \, 0 \, , 
\end{equation}
verifying (\ref{conditionWalsh}). Moreover, with (\ref{eq: Xzerolebe}) and all the previous preparations, we can proceed   as in the proof of Theorem 5.5.4 in \cite{KS}, and obtain that the process $\, U(\cdot) - \Vert \xi \Vert \,$ is a scalar local martingale with $\,\langle U \rangle (\cdot) = A(\cdot) \,$, as well as the representation
\begin{equation}
\label{eq: Aquadratic}
A(t) = \, \int_{0}^{t}{\bf 1}_{\{ X(v) \neq {\bm 0}\} }\, {\bm s}^{2}(X(v)) \,{\rm d} v \, , \qquad\qquad 0\leq t < \infty \,.
\end{equation}
Then there exists a Brownian motion $\, W(\cdot)\,$ on a possibly extended probability space, with the property  $\, U(t) = \Vert \xi \Vert + \int_{0}^{t} {\bf 1}_{\{ X(v) \neq {\bm 0}\} }\,{\bm s}(X(v)) {\rm d} W(v) \,$, $\, \,\,0\leq t < \infty\,$. 

\smallskip
Let us note that $\, \Vert Z(\cdot) \Vert\,$ is the \textsc{Skorokhod} reflection of $\, B(\cdot)\,$; thus the same relationship is true for $\, \Vert X(\cdot) \Vert\,$ and $\, U(\cdot)\,$  by (\ref{eq: Xtimechange}), and so  (\ref{eq: ||X||2}) gives 
\begin{equation}
\label{eq: ||X||nodrift}
\Vert X(\cdot) \Vert = \Vert \xi \Vert + \int_{0}^{\,\cdot} {\bf 1}_{\{ \Vert X(t) \Vert >0 \} } \,  {\bm s}\big( X(t)\big) {\rm d} W(t)  + L^{\Vert X \Vert}(\cdot)\, .
\end{equation}
%\newpage
Finally, the ``partition of local time" property and the continuity in the tree-topology  for $\, X  ( \cdot) \,$ are both inherited from $\, Z(\cdot)\,$, as the proof of Proposition \ref{prop: DDS} illustrates. We have thus verified that the just constructed $\, X   ( \cdot)\,$ is a \textsc{Walsh} diffusion as described in the Theorem.

 \smallskip
\noindent
(ii) Conversely, let us assume the existence of the \textsc{Walsh} diffusion $\,X   ( \cdot)\,$ described in Theorem \ref{Thm: Walshnodrift}, with any given initial condition. Consider such a \textsc{Walsh} diffusion $\,X   ( \cdot)\,$ with $\, X(0) = x \in \mathcal{Z}({\bm s})^{c}\,$ and the underlying Brownian motion $\, W(\cdot)\,$. We introduce the scalar local martingale %\newpage
\begin{equation}
\label{eq: U}
U(\cdot) : = \Vert X(\cdot) \Vert - L^{\Vert X \Vert}(\cdot)  = \Vert X(0) \Vert + \int_{0}^{\,\cdot}{\bf 1}_{\{ \Vert X(t) \Vert >0 \} }\, {\bm s}(X(t)) {\rm d} W(t) \, . \quad
\end{equation}\newpage
Then $\, \Vert X(\cdot) \Vert\,$ is the \textsc{Skorokhod} reflection of $\, U(\cdot)\,$, and therefore $\, X(\cdot)\,$ is a \textsc{Walsh} semimartingale driven by  $  U(\cdot)$. By Proposition \ref{prop: DDS}, there exists a \textsc{Walsh} Brownian motion $\, Z(\cdot)\,$ on a possibly extended probability space, such that $\, X(\cdot) = Z(\langle U \rangle (\cdot))\,$. We can follow the proof of Theorem 5.5.4 in \cite{KS} with $\, T(s) : = \inf\{ t \geq 0 : \langle U \rangle (t) > s \}\,$, first to derive that 
\[
\int_{0}^{s \wedge \langle U \rangle (\infty)} \, \frac{{\bf 1}_{\{ Z(u) \neq {\bm 0}\}} \, {\rm d} u }{{\bm s}^{2}(Z(u))} \, = \, \int_{0}^{T(s)}   \frac{{\bf 1}_{\{ X(v) \neq {\bm 0} \} } \, {\rm d} \langle U \rangle (v)}{{\bm s}^{2}(X(v))}\, = \, \int_{0}^{T(s)} {\bf 1}_{\{ X(v) \neq {\bm 0} , \,\,\, {\bm  s}(X(v)) \neq 0\} } {\rm d} v \, \leq T(s)  \quad
\]
holds for all $\, 0 \leq s < \infty\,$, then to argue   $\, \mathbb{P} \big( T(s) < \infty, \,\, \langle U \rangle (\infty) > 0 \big) \, > 0 \,$ for sufficiently small $\, s> 0 \,$, and finally to show that    $\, x \in \mathcal{I}({\bm s})\,$ cannot hold. It follows that $\,  \mathcal{I}({\bm s}) \subseteq \mathcal{Z}({\bm s})\,$. 

\smallskip
\noindent
$\bullet ~$
Next, we assume the validity  of Condition \ref{sigma} and $\,  \mathcal{I}({\bm s}) \subseteq \mathcal{Z}({\bm s})\,$, and show that uniqueness in distribution is then equivalent to the condition $\,  \mathcal{I}({\bm s}) \supseteq \mathcal{Z}({\bm s})\,$. 

\noindent
(i) First, we suppose that the inclusion $\,  \mathcal{I}({\bm s}) \supseteq \mathcal{Z}({\bm s})\,$ does not hold.  
By picking a starting point $\, x \in \mathcal{Z}({\bm s}) \setminus \mathcal{I}({\bm s}) \,$, we see that uniqueness in distribution is violated for the \textsc{Walsh} diffusion described in Theorem \ref{Thm: Walshnodrift} and starting at $\, x \,$, in the spirit of Remark 5.5.6 in \cite{KS}. 

\noindent
(ii) Conversely, let us assume in addition that $\,  \mathcal{I}({\bm s}) \supseteq \mathcal{Z}({\bm s})\,$ holds. Let $\, X (\cdot) \,$ be a \textsc{Walsh} diffusion described in Theorem \ref{Thm: Walshnodrift} and with an arbitrarily given initial distribution $\, \bm \mu \,$. With $\, U(\cdot)\,$ as in (\ref{eq: U}), we can adapt the proof of Theorem 5.5.7 in  \cite{KS} in a manner similar to what we did before, and obtain   the existence of a \textsc{Walsh} Brownian motion $\, Z(\cdot)\,$  such that $\, X(\cdot) = Z(\langle U \rangle (\cdot)) \,$ and    
\begin{equation}
\label{eq: <U>}
\langle U \rangle (t) \, = \, \inf \Big \{ s\geq 0: \, \int_{0}^{s+} \frac{{\bf 1}_{\{ Z(u) \neq {\bm 0} \} }{\rm d} u}{{\bm  s}^{2}(Z(u))} > t \Big\} \, , \qquad 0 \leq t < \infty \, .
\end{equation}
It develops that the process $\, X(\cdot)\,$ can be expressed as a measurable functional of the \textsc{Walsh} Brownian motion $\, Z(\cdot)\,$, with initial distribution $\, \bm \mu \,$ and angular measure $\, \bm \nu \,$. Since this $\, Z(\cdot)\,$ has a uniquely determined probability distribution, thanks to Proposition 7.2 in \cite{IKPY} (again, this can be generalized from a nonrandom starting point to a random initial condition), we deduce the uniqueness of $\, X(\cdot)\,$ in distribution.   \qed
%\newpage
%\smallskip
%The proof of Theorem \ref{Thm: Walshnodrift} is now complete. 

\bigskip
\noindent
{\bf ON THE PROOF OF THEOREM \ref{Thm: Sfinite}:} 
We need some   preparation before proving Theorem \ref{Thm: Sfinite}. By analogy with  Section 5.5.C in \cite{KS}, we define a sequence $\, \{ u_{n} \}_{n=0}^{\infty}\,$ of functions on $\, I \,$ via $\, u_{0} \equiv 1\,$ and 
\begin{equation}
\label{eq: un}
u_{n}(r, \theta) := \, \int_{0}^{r} p_{\theta}^{\prime}(y) \int_{0}^{y} u_{n-1}(z, \theta) {\bm m}_{\theta}\,({\rm d} z) \, , \qquad\quad (r, \theta) \in I \, , \quad n \in \mathbb{N}, 
\end{equation}
recursively. Note that $\, u_{1} \equiv v \,$. We have the following analogue of Lemma 5.5.26 in \cite{KS}.

\begin{lm}\label{lm: u}
Under Condition \ref{bsigma}, the series 
\begin{equation}
\label{eq: u}
u (r, \theta) : =  \, \sum_{n=0}^{\infty} \, u_{n}(r, \theta) \, , \qquad \quad (r, \theta) \in I
\end{equation}
converges on $\, I \,$ and defines a function in the class $\, \mathfrak{D}_{I}\,$. Furthermore, for every $\, \theta \in [0, 2\pi) \,$, the mapping $\, r \mapsto u_{\theta}(r): = u (r, \theta) \,$ is strictly increasing on $\, [0, \ell(\theta)) \,$, and satisfies $\, u_{\theta} (0) = 1 \,$, $\, u_{\theta}^{\prime} (0+) = 0 \,$, as well as 
\begin{equation}
\label{eq: utheta}
{\bm b} (r, \theta) u_{\theta}^{\prime}(r) + \frac{1}{2} {\bm s}^{2} (r, \theta) u_{\theta}^{\prime\prime}(r) \, = \, u_{\theta}(r) \, , \qquad \text{a.e.} \,\, \, r \in (0, \ell(\theta) )\, . \quad 
\end{equation}
 
Moreover, we have $\,\, 1+ v(x) \leq u(x) \leq e^{v(x)}\,$, $\, \forall \, x \in I \,$.
\end{lm}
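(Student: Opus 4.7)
\medskip
\noindent
\textbf{Proof proposal for Lemma \ref{lm: u}.} The plan is to mimic the classical scheme of Lemma 5.5.26 in \cite{KS}, the key new twist being that we must keep the angular parameter $\theta$ under uniform control, since Condition \ref{bsigma} provides bounds only near the origin and uniformly in $\theta$.

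First I would establish by induction on $\, n \in \N\,$ the pointwise estimate
\[
0 \,\leq\, u_{n}(r, \theta) \,\leq\, \frac{\, v_{\theta}(r)^{n}\,}{n!}\,, \qquad (r, \theta) \in I\, .
\]
The case $\, n=1\,$ is the very definition $\, u_{1}\equiv v\,$. For the inductive step, since $\, v_{\theta}\,$ is nondecreasing, the inductive hypothesis yields
\[
\int_{0}^{y} u_{n-1}(z, \theta)\, {\bm m}_{\theta}({\rm d}z) \,\leq\, \frac{v_{\theta}(y)^{n-1}}{(n-1)!}\, {\bm m}_{\theta}\big([0,y]\big)\,,
\]
and plugging this into \eqref{eq: un} identifies the right-hand side with $\int_{0}^{r} \frac{d}{dy}\big[ v_{\theta}(y)^{n}/n!\big]\,{\rm d}y\,$, because $\, v_{\theta}^{\prime}(y) = p_{\theta}^{\prime}(y)\, {\bm m}_{\theta}([0,y])\,$. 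This bound gives simultaneously the convergence of the series \eqref{eq: u} (uniformly on compact subsets of $\, I\,$, since $\, v_{\theta}\,$ is bounded there), the upper bound $\, u(x)\leq e^{v(x)}\,$, and --- combined with $\, u_{n}\geq 0\,$ --- also the lower bound $\, u(x)\geq u_{0}(x)+u_{1}(x) = 1+v(x)\,$.

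Next, differentiating \eqref{eq: un} in $\,r\,$ gives the   recursion
\[
u_{n}^{\prime}(r,\theta) \, =\, p_{\theta}^{\prime}(r)\int_{0}^{r} u_{n-1}(z,\theta)\, {\bm m}_{\theta}({\rm d}z)\,,
\]
which is again dominated by $\, p_{\theta}^{\prime}(r)\cdot v_{\theta}(r)^{n-1} {\bm m}_{\theta}([0,r])/(n-1)!\,$. This uniform-on-compacts domination justifies term-by-term differentiation of \eqref{eq: u}, yielding $\, u_{\theta}(0)=1\,$, $\, u_{\theta}^{\prime}(0+)=0\,$, and
\[
\frac{u_{\theta}^{\prime}(r)}{p_{\theta}^{\prime}(r)} \, =\, \int_{0}^{r} u_{\theta}(z)\, {\bm m}_{\theta}({\rm d}z)\,.
\]
Differentiating this identity once more --- using the density $\, 2/(p_{\theta}^{\prime}(r){\bm s}^{2}(r,\theta))\,$ of the speed measure, and replacing $\, p_{\theta}^{\prime\prime}\,$ via Proposition \ref{prop: pq}(ii) --- produces the announced ODE \eqref{eq: utheta}. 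Strict monotonicity of $\, r\mapsto u_{\theta}(r)\,$ is immediate from the integral representation of $\, u_{\theta}^{\prime}\,$ together with $\, u\geq 1\,$ and $\, p_{\theta}^{\prime}>0\,$.

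The final and most delicate step is verifying $\, u\in\mathfrak{D}_{I}\,$. Absolute continuity of $\, u_{\theta}^{\prime}\,$ on $\, [0,\ell(\theta))\,$ follows from the explicit formula for $\, u_{\theta}^{\prime}\,$, and boundedness of $\, \theta\mapsto u_{\theta}^{\prime}(0+)\equiv 0\,$ is trivial. The main obstacle will be condition (iii) of Definition \ref{def: D2}: to produce an integrable $\, c\,$ dominating $\, |u_{\theta}^{\prime\prime}(r)|\,$ uniformly in $\,\theta\,$ on some $\, (0,\eta]\,$. For this I  would invert the ODE to write
\[
|u_{\theta}^{\prime\prime}(r)| \,=\, \frac{\,2\,|u_{\theta}(r)-{\bm b}(r,\theta)\,u_{\theta}^{\prime}(r)|\,}{{\bm s}^{2}(r,\theta)}\,,
\]
and   observe that, shrinking $\,\eta\,$ if needed, the bounds $\, u_{\theta}(r)\leq e^{v_{\theta}(r)}\,$ and $\, u_{\theta}^{\prime}(r)\leq p_{\theta}^{\prime}(r)\,e^{v_{\theta}(r)} {\bm m}_{\theta}([0,r])\,$ are uniform in $\,\theta\in[0,2\pi)\,$ for $\, r\in(0,\eta]\,$, thanks to Condition \ref{bsigma}(iii) (which controls $\, p_{\theta}^{\prime}\,$ and the speed-measure density uniformly near the origin, hence also $\, v_{\theta}\,$). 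The resulting estimate has the form $\, |u_{\theta}^{\prime\prime}(r)|\leq C\,(1+|{\bm b}(r,\theta)|)/{\bm s}^{2}(r,\theta)\,$, which by Condition \ref{bsigma}(iii) is bounded above by a constant, hence a Lebesgue-integrable function of $\, r\,$ on $\,(0,\eta]\,$. This completes the verification that $\, u\in\mathfrak{D}_{I}\,$. \qed
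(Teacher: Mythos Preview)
Your proposal is correct and follows essentially the same approach as the paper: the paper defers the bulk of the argument to Lemma 5.5.26 in \cite{KS} (which is precisely the factorial estimate $u_{n}\leq v^{n}/n!$ and its consequences that you have spelled out), and then handles the only genuinely new point --- condition (iii) of Definition \ref{def: D2} --- exactly as you do, namely by inverting the ODE \eqref{eq: utheta} and using the bounds $u_{\theta}(r)\leq e^{v_{\theta}(r)}$ and $u_{\theta}^{\prime}(r)\leq v_{\theta}^{\prime}(r)\,e^{v_{\theta}(r)}$ together with Condition \ref{bsigma}(iii). Your bound $u_{\theta}^{\prime}(r)\leq p_{\theta}^{\prime}(r)\,e^{v_{\theta}(r)}\,{\bm m}_{\theta}([0,r])$ is identical to the paper's $u_{\theta}^{\prime}\leq v_{\theta}^{\prime}\,e^{v_{\theta}}$, since $v_{\theta}^{\prime}(r)=p_{\theta}^{\prime}(r)\,{\bm m}_{\theta}([0,r])$.
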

\begin{proof}
Apart from (iii) of Definition \ref{def: D2} for the claim that $\, u \in \mathfrak{D}_{I}\,$, Lemma \ref{lm: u} can be proved in the same way as in the proof of Lemma 5.5.26 in \cite{KS}. And (iii) of Definition \ref{def: D2} for $\, u\,$ can be seen through Condition  \ref{bsigma}, (\ref{eq: utheta}), the fact $\, u_{\theta}(r)\leq e^{v_{\theta}(r)}\,$,  as well as the fact $\, u^{\prime}_{\theta}(r)  \leq  v^{\prime}_{\theta}(r) \cdot e^{v_{\theta}(r)}  \,$ derived from the proof of Lemma 5.5.26 in \cite{KS}.
\end{proof}

 \smallskip
 \noindent
 {\it Proof of Theorem \ref{Thm: Sfinite}:}  
 Thanks to Lemma \ref{lm: u}, we can apply   
 Theorem \ref{Thm: Gen}  to $\, u\,$  and obtain that the process $\, \{ e^{- t \wedge S_{n}} u (X(t \wedge S_{n})) ; \, 0 \leq t < \infty \} \,$ is a local martingale  for every $\, n \in \N\,$. But  this process is also nonnegative, thus   a supermartingale. Then we may let $\, n \rightarrow \infty\,$ to obtain that $\, \{ e^{- t \wedge S} u (X(t \wedge S)) ; \, 0 \leq t < \infty \} \,$ is a nonnegative supermartingale, thus%\newpage  
\begin{equation}
\label{eq: uXS}
\, \lim_{t \uparrow S} e^{- t } u (X(t)) \quad\,\, \text{exists and is finite,} \quad \mathbb{P}^{\bm 0}-\text{a.e.} \qquad\quad
\end{equation}

\noindent
{\it Proof of {\bf (i)}.} By (\ref{eq: explos}),  $ \,\lim_{t \uparrow S} X(t)$ exists in $\{ (r, \theta): \, r = \ell(\theta) , \,\, 0 \le \theta < 2 \pi\} $, $\, \mathbb{P}^{\bm 0}-$a.e. on $\, \{ S < \infty\}$. Since $\, {\bm \nu} (\{ \theta : \, v_{\theta}(\ell (\theta)-) < \infty \}) = 0$, Proposition \ref{prop: zeroangle} implies that $\, \lim_{t \uparrow S} v(X(t)) =  \infty $,  $\, \mathbb{P}^{\bm 0}-$a.e. on $\, \{ S < \infty\}$. Thus $\, \lim_{t \uparrow S} u(X(t)) = \infty $,  $\, \mathbb{P}^{\bm 0}-$a.e.  on $\, \{ S < \infty\}$, by Lemma \ref{lm: u}.  It follows that $\, \lim_{t \uparrow S} e^{- t } u (X(t)) = \infty \,$ holds $\, \mathbb{P}^{\bm 0}-$a.e. on $\, \{ S < \infty\}$. Comparing this with (\ref{eq: uXS}), we deduce $\, \mathbb{P}^{\bm 0} \big( S < \infty \big) = 0$.

\medskip
\noindent
{\it Proof of {\bf (ii)}.} ~With $$\, A^{p}:= \{ \theta: \, p_{\theta}(\ell (\theta)-) < \infty \}\,\qquad \text{and} \qquad \, A^{v}:= \{ \theta: \, v_{\theta}(\ell (\theta)-) < \infty \}\, ,$$ we have $\, A^{v} \subseteq  A^{p}\,$ by Proposition \ref{prop: v/p}(iii) and  $\, {\bm \nu} (A^{v}) > 0 \,$   by assumption, thus $\, {\bm \nu} (A^{p}) > 0 \,$. By Theorem \ref{Thm: XS}, the limit $\, \lim_{t \uparrow S} X(t)\,$ exists $\,\mathbb{P}^{\bm 0}-$a.e. in $\, \{ (r, \theta): \, r = \ell(\theta) , \,\, 0 \le \theta < 2 \pi\} \,$, and $\, \mathbb{P}^{\bm 0} \big( \Theta(S) \in A^{p} \big) =1 \,$. We also have the assumption $\, {\bm \nu} (A^{p} \setminus A^{v}) = 0 \,$, thus $\, \mathbb{P}^{\bm 0} \big( \Theta(S) \in A^{p} \setminus A^{v} \big) = 0 \,$ and therefore $\, \mathbb{P}^{\bm 0} \big( \Theta(S) \in A^{v} \big) =1 \,$. 
For every $\, n \in \mathbb{N}\,$, let us define  \begin{equation}
\label{eq: lvn}
\ell^{v}_{n} (\theta): = \,\sup \{ r: \, 0 \leq r < \ell(\theta), \,\, v_{\theta}(r) \leq n \}\, , \quad I_{n}^{v}: = \{ (r, \theta): \, 0 \leq r < \ell^{v}_{n} (\theta) , \,\, 0 \le \theta < 2 \pi\} \, , \,\,
\end{equation}
\begin{equation}
\label{eq: Svn}
S_{n}^{v} : = \, \inf \{ t\geq 0: \, \Vert X(t) \Vert \geq \ell^{v}_{n} (\Theta(t)) \} \, =\, \inf \{ t \geq 0: \, X(t) \notin I^{v}_{n} \} \, .  \qquad \qquad
\end{equation}
 By Proposition \ref{prop: finiteexpectexplo}, we have $\,\mathbb{E}^{\bm 0} [ S_{n}^{v} ] < \infty\,$, thus $\, \mathbb{P}^{\bm 0} \big( S_{n}^{v} < \infty \big) =1 \,$, $\, \forall\, n \in \mathbb{N}\,$.
 
\smallskip
Therefore, there is an event $\, \Omega^{\star} \in \cal F\,$ with $\, \mathbb{P}^{\bm 0} (\Omega^{\star}) =1 \,$, such that for every $\, \omega \in \Omega^{\star}\,$, we have that: $\, \lim_{t \uparrow S(\omega)} X(t, \omega) \,$ exists in $\, \{ (r, \theta): \, r = \ell(\theta) , \,\, 0 \le \theta < 2 \pi\} \,$; that $\, \Theta(S (\omega), \omega)\in A^{v}\,$; and that $\, S_{n}^{v}(\omega) < \infty \,$ for every $\, n \in \mathbb{N}\,$. We fix now an $\, \omega \in \Omega^{\star}\,$. Since  $\, \Theta(S (\omega), \omega)\in A^{v}\,$, the limit  $\, \lim_{t \uparrow S(\omega)} v (X(t, \omega)) \,$ exists and is finite. Thus we can choose $\, n(\omega) \in \mathbb{N}\,$, such that $\, n(\omega) > \sup_{t \in [0, S(\omega))} v (X(t, \omega))\,$.

\begin{claim}\label{cl: Svn=S}
We have $\, S_{n(\omega)}^{v}(\omega) = S(\omega)\,$, thus $\, S(\omega) < \infty \,$.
\end{claim}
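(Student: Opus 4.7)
The plan is to prove the two inequalities $S_{n(\omega)}^{v}(\omega) \le S(\omega)$ and $S_{n(\omega)}^{v}(\omega) \ge S(\omega)$ separately; the finiteness of $S(\omega)$ then follows immediately, since $S_{n(\omega)}^{v}(\omega) < \infty$ by the choice of $\omega \in \Omega^\star$.

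For the easy direction $S_{n(\omega)}^{v}(\omega) \le S(\omega)$, I would simply observe that $\ell_{n}^{v}(\theta) \le \ell(\theta)$ for every $\theta \in [0, 2\pi)$ and every $n \in \mathbb{N}$ by the very definition \eqref{eq: lvn}, so $I_{n}^{v} \subseteq I$, and the exit time \eqref{eq: Svn} from the smaller set occurs no later than the exit time from $I$.

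For the reverse direction, I would fix $t \in [0, S(\omega))$ and show $X(t, \omega) \in I_{n(\omega)}^{v}$. By the choice of $n(\omega)$ one has $v(X(t, \omega)) < n(\omega)$. If $X(t, \omega) = \bm 0$, then $X(t,\omega) \in I_{n(\omega)}^{v}$ is immediate, since $v_{\theta}(0) = 0$ and the continuity of $r \mapsto v_{\theta}(r)$ make $\ell_{n(\omega)}^{v}(\theta)$ strictly positive for every $\theta$. If $X(t, \omega) \ne \bm 0$, let $\theta := \Theta(t, \omega)$ and $\rho := \Vert X(t, \omega) \Vert$. The inequality $v_{\theta}(\rho) < n(\omega)$, together with the continuity and strict monotonicity of $r \mapsto v_{\theta}(r)$ (which follow from \eqref{eq: v}), yield some $r' \in \bigl(\rho, \ell(\theta)\bigr)$ with $v_{\theta}(r') \le n(\omega)$; hence $\rho < r' \le \ell_{n(\omega)}^{v}(\theta)$, i.e., $X(t, \omega) \in I_{n(\omega)}^{v}$, as desired.

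The main point requiring a little care is just that $\ell_{n}^{v}(\theta)$ is defined as a supremum, so one must pass from the strict inequality $v_{\theta}(\rho) < n(\omega)$ to a strict inequality $\rho < \ell_{n(\omega)}^{v}(\theta)$; this is precisely what the continuity and strict monotonicity of $v_{\theta}$ deliver, and no deeper obstacle arises. Combining the two inequalities gives $S_{n(\omega)}^{v}(\omega) = S(\omega)$, completing the proof of the Claim.
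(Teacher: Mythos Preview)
Your proof is correct. The argument is sound: the easy inequality $S_{n(\omega)}^{v}(\omega)\le S(\omega)$ follows from $I_{n(\omega)}^{v}\subseteq I$, and for the reverse inequality you show directly that the trajectory remains in $I_{n(\omega)}^{v}$ on $[0,S(\omega))$. The only delicate step---passing from $v_\theta(\rho)<n(\omega)$ to the \emph{strict} inequality $\rho<\ell_{n(\omega)}^{v}(\theta)$---is handled correctly via continuity of $r\mapsto v_\theta(r)$. (Strict monotonicity is not actually needed for this step; continuity alone gives you an $r'\in(\rho,\ell(\theta))$ with $v_\theta(r')<n(\omega)$.)

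The paper takes a slightly different route: rather than confining the trajectory to $I_{n(\omega)}^{v}$ on $[0,S(\omega))$, it looks at the exit point $X(S_{n(\omega)}^{v}(\omega),\omega)$, which lies on $\{(r,\theta):r=\ell_{n(\omega)}^{v}(\theta)\}$, and argues that the exit angle must belong to $A_{n(\omega)}^{v}:=\{\theta:v_\theta(\ell(\theta)-)\le n(\omega)\}$. The reasoning is that for $\theta\notin A_{n(\omega)}^{v}$ one has $v_\theta(\ell_{n(\omega)}^{v}(\theta))=n(\omega)$, contradicting $v(X(S_{n(\omega)}^{v}(\omega),\omega))<n(\omega)$; and for $\theta\in A_{n(\omega)}^{v}$ one has $\ell_{n(\omega)}^{v}(\theta)=\ell(\theta)$, so the exit point already lies on $\partial I$. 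Both arguments rest on the same key observation---that $v(X(t,\omega))<n(\omega)$ throughout $[0,S(\omega))$---but yours is a ``confinement'' argument while the paper's is an ``exit-point'' argument. Your version is arguably more direct, as it avoids introducing the auxiliary sets $A_n^v$; the paper's version has the minor advantage of making explicit where on $\partial I$ the process exits.
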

\begin{proof}
Since $  S_{n(\omega)}^{v}(\omega) < \infty $, we have $  X( S_{n(\omega)}^{v}(\omega) , \, \omega)  \in \, \{ (r, \theta): \, r = \ell_{n(\omega)}^{v}(\theta) , \,\, 0 \le \theta < 2 \pi\}$. With $\, A^{v}_{n}: = \{ \theta: \, v_{\theta}(\ell (\theta)-) \leq n \} \,$ for every $\, n \in \mathbb{N}\,$, we claim that $\, \Theta ( S_{n(\omega)}^{v}(\omega) , \, \omega) \,\in \, A^{v}_{n(\omega)} \,$. 

Indeed, whenever $\, \theta \notin A^{v}_{n(\omega)}\,$, we have $\, v_{\theta}(\ell (\theta)-) > n(\omega) \,$ and therefore $\, v_{\theta}(\ell _{n(\omega)}^{v} (\theta) ) = n(\omega) \,$. But $\, v (X ( S_{n(\omega)}^{v}(\omega) , \, \omega)) \leq \sup_{t \in [0, S(\omega))} v (X(t, \omega)) < n(\omega)\,$, so we must have $\, \Theta ( S_{n(\omega)}^{v}(\omega) , \, \omega) \,\in \, A^{v}_{n(\omega)} \,$.

\smallskip
We also observe that, whenever $\, \theta \in A^{v}_{n(\omega)}\,$, we have $\, v_{\theta}(\ell (\theta)-) \leq n(\omega) \,$ and therefore $\, \ell _{n(\omega)}^{v} (\theta) = \ell (\theta) \,$. Thus the fact $\, \Theta ( S_{n(\omega)}^{v}(\omega) , \, \omega) \,\in \, A^{v}_{n(\omega)} \,$ implies that $\, X( S_{n(\omega)}^{v}(\omega) , \, \omega) \, \in \, \{ (r, \theta): \, r = \ell(\theta) , \,\, 0 \le \theta < 2 \pi\}\,$. We have then $\, S_{n(\omega)}^{v}(\omega) = S(\omega)\,$, and $\, S(\omega) < \infty \,$ follows.
\end{proof}
Since Claim \ref{cl: Svn=S} holds   for every $\, \omega \in \Omega^{\star}\,$, the proof of {\bf (ii)} is complete.

%\newpage
\medskip
\noindent
{\it Proof of {\bf (iii)}.} Since $\, {\bm \nu} (\{ \theta : \, v_{\theta}(\ell (\theta)-) < \infty \}) > 0\,$, we can choose an integer $\, N \in \mathbb{N}\,$, such that $\, A^{v}_{N} = \{ \theta: \, v_{\theta}(\ell (\theta)-) \leq N \} \,$ satisfies $\,{\bm \nu}(A^{v}_{N} ) > 0\,$.  Recalling (\ref{eq: lvn}) and (\ref{eq: Svn}), we have by Proposition \ref{prop: v/p}(iii) that $\, p_{\theta} (\ell_{N}^{v} (\theta)) < \infty \,$ for all $\,\theta \in [0, 2\pi)\,$. Then an application of Theorem \ref{Thm: XS} yields $\, \mathbb{P}^{\bm 0} \big( \Theta(S_{N}^{v}) \in A^{v}_{N} \big) > 0 \,$.

\smallskip
 We have also $\, S_{N}^{v} = S \,$, $\, \mathbb{P}^{\bm 0}$-a.e. on $\, \{ \Theta(S_{N}^{v}) \in A^{v}_{N} \}\,$, in light of the last paragraph of the proof of Claim \ref{cl: Svn=S}. Thus $\,\mathbb{P}^{\bm 0} \big( S_{N}^{v} =S \big) > 0 \,$. But $\, \mathbb{P}^{\bm 0} \big( S_{N}^{v} < \infty \big) =1 \,$, so $\, \mathbb{P}^{\bm 0} \big( S < \infty \big)> 0 \,$ follows. 
 
\smallskip
It remains only to show that $\, \mathbb{P}^{\bm 0} \big( S < \infty \big) <1 \,$ holds under the assumptions of {\bf (iii)}. We have $$\, {\bm\nu}(A^{p}\setminus A^{v} )  =  {\bm\nu} \big( \big\{ \theta: \, v_{\theta}(\ell (\theta)-) = \infty , \, p_{\theta}(\ell (\theta)-) < \infty \big\} \big)> 0  \,$$ by assumption. Another application of 
Theorem \ref{Thm: XS} yields that $\, \lim_{t \uparrow S} X(t)\,$ exists $\,\mathbb{P}^{\bm 0}-$a.e. in the set $\, \{ (r, \theta): \, r = \ell(\theta) , \,\, 0 \le \theta < 2 \pi\} \,$, and that $\, \mathbb{P}^{\bm 0} \big( \Theta(S) \in A^{p}\setminus A^{v} \big) > 0 \,$. But since $\, \lim_{t \uparrow S} v(X(t)) = \infty\,$ and therefore $\, \lim_{t \uparrow S} u(X(t)) = \infty \,$ on $\, \{ \Theta(S) \in  A^{p}\setminus A^{v} \}\,$, we may recall (\ref{eq: uXS}) to obtain $\, S = \infty \,$, $\,\mathbb{P}^{\bm 0}-$a.e. on $\, \{ \Theta(S) \in  A^{p}\setminus A^{v} \}\,$. It follows that $\, \mathbb{P}^{\bm 0} \big( S= \infty \big)> 0 \,$, thus $\, \mathbb{P}^{\bm 0} \big( S < \infty \big) <1 \,$. \qed

%%%%%%%%%%%%

\bigskip
\noindent
{\bf PROOF OF PROPOSITION \ref{prop: Uc}:} {\bf (i).} {\it Step 1.} We shall show in this step that for every $\, \theta \in [0,2\pi)\,$, $\, U^{(c)}_{\theta}(\cdot)\,$ is continuous on $\, [0,1] \,$ with $\, U^{(c)}_{\theta}(0) = c \,$ and $\, U^{(c)}_{\theta}(1) = U_{\theta}(1) \,$. It is easy to show that, $\, U^{(c)}_{\theta}(\cdot)\,$ itself is also $\, p_{\theta}-$concave and therefore continuous on $\, (0,1) \,$ with finite limits at the two endpoints, such that $\, \lim_{r \downarrow 0} U^{(c)}_{\theta}(r) \geq U^{(c)}_{\theta}(0) \geq c \,$. Thus to finish this step, it suffices to show $\,\lim_{r \downarrow 0} U^{(c)}_{\theta}(r)\leq c\,$ (the situation at $\,r \uparrow 1\,$ can be treated in the same way, thanks to condition (\ref{eq: pfinite})). 

We  need only construct, for every $\, c^{\prime} > c \,,$ a continuous and $\, p_{\theta}-$concave function $\, \varphi\,$ on $\, [0,1]\,$ with $\, \varphi(\cdot) \geq U_{\theta}(\cdot)\,$ and $\,\varphi (0) = c^{\prime} \,$. Let $\, M: = \sup_{x \in \overline{B}} \vert U(x) \vert < \infty\,$. If $\, c^{\prime} \geq M\,$, we take $\, \varphi \equiv c^{\prime}\,$. If $\, c^{\prime} < M\,$, by the continuity of $\, U_{\theta} (\cdot) \,$, we choose $\, r^{\prime} > 0\,$ such that $\, U_{\theta} (\cdot) \leq c^{\prime}\,$ on $\, [0, r^{\prime} ]\,$, and take 
$$\, \varphi (r) = c^{\prime} + \Big(M-c^{\prime} \Big) \,\frac{p_{\theta}(r)}{p_{\theta}(r^{\prime})}\, , \quad r \in [0, r^{\prime}] \, ; \qquad \quad \varphi (r) = M \, , \quad r \in [r^{\prime} , 1]\, .$$ 
\smallskip
\noindent
{\it Step 2.} By Step 1, the only remaining issue in proving (i), is the continuity in the tree-topology at the origin. By $\, p_{\theta}-$concavity we have
\begin{equation}
\label{eq: Ulowerbound}
U^{(c)}_{\theta} (r) \,\geq\, U^{(c)}_{\theta} (1) \, \frac{p_{\theta}(r)}{p_{\theta}(1-)} + c \Big(1- \frac{p_{\theta}(r)}{p_{\theta}(1-)}\Big) \,\geq\, c - \big(c+M\big) \, \frac{p_{\theta}(r)}{p_{\theta}(1-)}\, , \quad  r \in (0,1).
\end{equation}
Since $\, p\,$ is continuous in the tree-topology and $\, p_{\theta}(1-) \,$ is bounded away from zero, we see that $$\, \lim_{r \downarrow 0} \inf_{\tilde{r} \leq r, \, \theta \in [0,2\pi)} U^{(c)}_{\theta} (\tilde{r}) \geq c\, .$$

On the other hand,  since $\, U \,$ is continuous in the tree-topology, given $\,\varepsilon > 0\,$ we can choose $\,\delta >0\,$  such that $\, U (r,\theta) \leq U({\bm 0}) + \varepsilon \,$ for all $\, r \leq 2 \delta\,$ and $\,\theta \in [0,2\pi)\,$. Fixing $\, r \leq \delta\,$ and   $\, \theta\in[0,2\pi)\,$, we distinguish two cases:

\smallskip
\noindent
{\it Case 1.} $\, U^{(c)}_{\theta} (r) = U (r,\theta) \,$. \\ Then $\, U^{(c)}_{\theta} (r) \leq U({\bm 0}) + \varepsilon \leq c+\varepsilon \,$.

\smallskip
\noindent
{\it Case 2.} {\it The point 
$  r $  belongs to some connected component $\, (r_{1}, r_{2})\,$ of the set} $\, \{ \rho \in (0,1): \, U^{(c)}_{\theta} (\rho) > U (\rho,\theta) \}\,$.  By (ii) of this proposition (whose proof will not use the continuity of $\, U^{(c)}\,$ at the origin under the tree-topology), $\, U^{(c)}_{\theta}\,$ is a linear function of $\,p_{\theta}\,$ on $\, [r_{1}, r_{2}]\,$.  

If $\, r_{2} \leq 2 \delta\,$, then $\, U^{(c)}_{\theta} (r_{i}) = U_{\theta} (r_{i}) \leq c+\varepsilon\,$ for $\,i =1,2\,$, and it follows that $\, U^{(c)}_{\theta} (r) \leq c+\varepsilon\,$. If on the other hand $\, r_{2} > 2 \delta\,$, then the slope of the just mentioned linear function $\, U^{(c)}_{\theta}\,$ does not exceed $\, \frac{ \max (M, c) + M}{p_{\theta}(2\delta) - p_{\theta}(\delta) }\,$, because $\, r_{1}< r \leq \delta\,$. Therefore,
$$ U^{(c)}_{\theta} (r) \leq U^{(c)}_{\theta} (r_{1}) + \frac{\max (M, c) + M}{p_{\theta}(2\delta) - p_{\theta}(\delta) } \big(p_{\theta}(r) - p_{\theta}(r_{1})\big) \leq c+\varepsilon + \frac{\max (M, c) + M}{p_{\theta}(2\delta) - p_{\theta}(\delta) }p_{\theta}(r) . $$

By Condition \ref{bsigma}, the mapping $\, \theta \mapsto p_{\theta}(2\delta) - p_{\theta}(\delta)\,$ is bounded away from zero when $\, 2\delta \leq \eta \,$. Thus we obtain $\, \lim_{\,r \downarrow 0} \sup_{\,\tilde{r} \leq r, \, \theta \in [0,2\pi)} U^{(c)}_{\theta} (\tilde{r}) \leq c+\varepsilon\,$ from the above two cases. It is now clear that $\, U^{(c)}\,$ is continuous at the origin in the tree-topology. 

 \medskip
\noindent
{\bf (ii).} Without loss of generality, we may assume $\, p_{\theta} (r) \equiv r\,$. By way of contradiction, we assume that there exist some $\,\theta \in [0,2\pi)\,$ and $\, (r_{1} , r_{2}) \subset [0,1]\,$, such that $\, U^{(c)}_{\theta} (r) > U_{\theta} (r) \,$ holds for $\, r \in (r_{1} , r_{2})\,$, yet $\, r\mapsto U^{(c)}_{\theta} (r)\,$ is {\it not} linear on $\, [r_{1} , r_{2}] \,$. We shall then construct a concave function $\, \varphi \,$ on $\, [0,1]\,$ that dominates $\, U_{\theta}\,$ and satisfies $\, \varphi (0) = c\,;$   yet does not dominate $\,U^{(c)}_{\theta}\,$, thus contradicting (\ref{eq: Uc}). \newpage  

%\smallskip
Since $\, U^{(c)}_{\theta} (\cdot)\,$ is concave and not linear on $\, [r_{1} , r_{2}] \,$,  we have $\, D^{-} U^{(c)}_{\theta} (r_{2}) < D^{+} U^{(c)}_{\theta} (r_{1})\,$. Choose $\, [r_{3}, r_{4}] \subset (r_{1}, r_{2})\,$ with $\, D^{-} U^{(c)}_{\theta} (r_{4}) < D^{+} U^{(c)}_{\theta} (r_{3})\,$, and we have 
$$
\, \min_{r \in [r_{3}, r_{4}]} \big(U^{(c)}_{\theta} (r) - U_{\theta}(r) \big) > 0 \, .
$$ 
Thus by dividing $\, [r_{3}, r_{4}]\,$ into small enough subintervals, we can find $\, [r_{5}, r_{6}] \subset [r_{3}, r_{4}]\,$, such that 
\begin{equation}
\label{eq: r5r6}
\, D^{-} U^{(c)}_{\theta} (r_{6}) < D^{+} U^{(c)}_{\theta} (r_{5})\, \qquad \text{and} \qquad  \, \max_{r \in [r_{5}, r_{6}]} U_{\theta}(r) < \min_{r \in [r_{5}, r_{6}]}U^{(c)}_{\theta} (r)\, .
\end{equation}
Let $\, \varphi :[0,1] \rightarrow \R\,$ be a linear function on $\, [r_{5},r_{6}]\,$ which equals $\, U^{(c)}_{\theta}  \,$ on $\, [0,1]\setminus (r_{5}, r_{6})\,$. Then $\, \varphi\,$ is concave and satisfies $\, \varphi (0) = U^{(c)}_{\theta}(0) = c\,$; also,  the two inequalities of (\ref{eq: r5r6}) imply $\, \varphi (r) < U^{(c)}_{\theta} (r)\,$ for $\, r \in (r_{5}, r_{6})\,,$ and $\, \varphi (\cdot)\geq U_{\theta}(\cdot)\,$, respectively. Thus $\, \varphi\,$ is our desired function that leads to the contradiction.   

\medskip
\noindent
{\bf (iii).} Property (i) in Definition \ref{def: D3} is obvious for $\,U^{(c)}\,$. For the \textsc{Borel}-measurability, we may write (in the spirit of the proposition in Section 3 of \cite{KS1})
\begin{equation}
\label{eq: Ucalter}
U^{(c)}_{\theta} (r) \, = \, \inf \big\{ a_{n} (\theta) p_{\theta}(r) + b_{n} (\theta)\,, \,\, \, n \in \mathbb{N} \big\} ,
\end{equation}
where $\, \{ a_{n} \}_{ n \in \mathbb{N} }\,$, $\,\{ b_{n} \}_{ n \in \mathbb{N} }\,$ are two sequences of measurable functions on $\, [0,2\pi)\,$, such that for every $\,\theta\in[0,2\pi)\,$, the set $\, \{ (a_{n} (\theta), b_{n} (\theta)), \,\, n \in \mathbb{N} \}\,$ is the collection of all rational pairs $\, (a, b) \,$ with $\, b \geq c\,$, and for which $\, a p_{\theta} (\cdot) + b \,$ dominates $\, U_{\theta}(\cdot)\,$. This is due to the continuity in $\, r \,$ and the measurability of both $\, (r,\theta) \mapsto p_{\theta}(r) \,$ and $\, (r,\theta) \mapsto U_{\theta}(r) \,$. The representation (\ref{eq: Ucalter}) yields the \textsc{Borel}-measurability of $\,U^{(c)}\,$.   

 \smallskip
Now let us assume $\, c> U({\bm 0})\,$. Since both functions $\,U^{(c)}\,$ and $\, U\,$ are continuous in the tree-topology, we can find an $\,\eta > 0\,$, such that $\,U^{(c)}_{\theta}(\cdot) > U_{\theta}(\cdot)\,$ on $\, [0,\eta)\,$, for all $\,\theta\in[0,2\pi)\,$. Hence, we may write $\,U^{(c)}_{\theta}(r) = a_{\theta}\, p_{\theta}(r) + c\,$ for $\, r \in [0,\eta]\,$, and thus 
\[
a_{\theta} = \frac{U^{(c)}_{\theta}(\eta) - c }{p_{\theta}(\eta)} ~~\qquad \text{but}\qquad 
-\frac{c+M}{p_{\theta}(\eta)} \,\leq \,\frac{U^{(c)}_{\theta}(\eta) - c }{p_{\theta}(\eta)} \, \leq \, \max \Big(0, \frac{M-c}{p_{\theta}(\eta)}\Big)\, .
\]
As the function $\, \theta \mapsto p_{\theta}(\eta)\,$ is bounded away from zero, we see that $\,\theta \mapsto a_{\theta}\,$ is bounded. Thus property (ii) in Definition \ref{def: D3} holds for $\,U^{(c)}\,$. Property (iii) also follows, using in addition that $\, p \in \mathfrak{C}_{B}\,$.

\medskip
\noindent
{\bf (iv).} The inequality (\ref{eq: Ulowerbound}) shows that the function  $\,\theta \mapsto D^{+} U^{(c)}_{\theta}(0)\,$ is bounded from below, so the function $\, \Phi\,$ is well-defined by \eqref{eq: phi} and takes values in $\,\R \cup \{ \infty\}\,$. In fact, from the just proved property (iii), we see that $\, \Phi\,$ takes the value $\,\infty\,$ only possibly at $\, U({\bm 0})\,$.

For the other two claimed properties for $\, \Phi\,$, it suffices to show that the mapping $\, c \mapsto D^{+} U^{(c)}_{\theta}(0)\,$ is continuous and strictly decreasing for every $\,\theta \in [0,2\pi)\,$. Fix $\,\theta \in [0,2\pi)\,$ and consider $\, c_{2} > c_{1} \geq U({\bm 0})\,$. With 
$$\, 
r_{2,\theta} \,: = \, \inf \big \{ r \in [0,1]: \, U^{(c_{2})}_{\theta}(r) = U_{\theta}(r) \big \} > 0 \, ,
$$ 
the function  $\,U^{(c_{2})}_{\theta}(\cdot)\, $ is a linear transformation of $\, p_{\theta}(\cdot) \,$ on $\, [0, r_{2,\theta}]\,,$ and   $\, U^{(c_{2})}_{\theta}(r_{2,\theta}) = U_{\theta}(r_{2,\theta}) \,$. Hence   
\[
D^{+} U^{(c_{2})}_{\theta}(0) = \frac{U_{\theta}(r_{2,\theta}) - c_{2}}{p_{\theta}(r_{2,\theta})}~~ \qquad \text{and} \qquad ~~
D^{+} U^{(c_{1})}_{\theta}(0) \geq \frac{U^{(c_{1})}_{\theta}(r_{2,\theta}) - c_{1}}{p_{\theta}(r_{2,\theta})} > \frac{U_{\theta}(r_{2,\theta}) - c_{2}}{p_{\theta}(r_{2,\theta})}\, ,
\]
thanks to $\, p_{\theta}-$concavity; we have also used the fact $\, p^{\prime}_{\theta} (0+) \equiv 1\,$.  We have thus obtained the strict decrease of the mapping  $\, c \mapsto D^{+} U^{(c)}_{\theta}(0)\,$. Therefore, we may let $\, c_{2} \downarrow c_{1} \,$ then $\,r\downarrow 0\,$ in the observation
\[
D^{+} U^{(c_{2})}_{\theta}(0) \geq \frac{U^{(c_{2})}_{\theta}(r) - c_{2}}{p_{\theta}(r)} \geq \frac{U^{(c_{1})}_{\theta}(r) - c_{2}}{p_{\theta}(r)}\, ,
\]
and obtain the right-continuity of $\, c \mapsto D^{+} U^{(c)}_{\theta}(0)\,$. \newpage

To show   left-continuity, we assume   $\, c_{2} > c_{1} > U({\bm 0})\,$ and set $\, r_{1,\theta} : = \inf \big\{ r \in [0,1]: \, U^{(c_{1})}_{\theta}(r) = U_{\theta}(r) \big\} > 0 \,$. It follows that $\,U^{(c)}_{\theta}(\cdot)\,$ is a linear transformation of $\, p_{\theta}(\cdot) \,$ on $\, [0, r_{1,\theta}]\,$ whenever $\, c \geq c_{1}\,$. Thus for $\, r \in (0, r_{1,\theta}]\,$, we have
\[
 D^{+} U^{(c_{2})}_{\theta}(0) = \frac{U^{(c_{2})}_{\theta}(r) - c_{2}}{p_{\theta}(r)}\,, \qquad \text{and} \qquad 
D^{+} U^{(c)}_{\theta}(0) = \frac{U^{(c)}_{\theta}(r) - c}{p_{\theta}(r)} \leq \frac{U^{(c_{2})}_{\theta}(r) - c}{p_{\theta}(r)} \, ,\quad c \in [c_{1}, c_{2}] \, .
\]
Letting $\, c \uparrow c_{2} \,$, we obtain the left-continuity of $\, c \mapsto D^{+} U^{(c)}_{\theta}(0)\,$. \qed

\end{document}